\documentclass[12pt,a4paper]{amsart}
\usepackage[top=25mm, bottom=25mm, left=25mm, right=25mm]{geometry}
\usepackage[T1]{fontenc}
\usepackage{mathptmx}
\usepackage{amsmath,amssymb,mathtools,mathrsfs}
\usepackage{xcolor}
\usepackage{enumitem}
\usepackage[colorlinks=true,citecolor=blue]{hyperref}

\allowdisplaybreaks[3]
\setlist[enumerate]{leftmargin=*}
\newcommand{\UClim}{\operatorname*{UC\!-\!lim}}

\newtheorem{maintheorem}{Theorem}

\newtheorem{theorem}{Theorem}[section]
\newtheorem{proposition}[theorem]{Proposition}
\newtheorem{lemma}[theorem]{Lemma}
\newtheorem{corollary}[theorem]{Corollary}
\theoremstyle{definition}

\theoremstyle{remark}
\newtheorem{remark}[theorem]{Remark}
\numberwithin{equation}{section}

\newtheorem*{ac}{Acknowledgements}

\title[Polynomial diagonal orbit closures]
{Connectedness of polynomial diagonal orbit closures for minimal nilrotations and applications}

\author[K.~Ouyang]{Kangbo Ouyang}
\email{oy19981231@mail.ustc.edu.cn}

\author[J.~Qiu]{Jiahao Qiu}
\email{qiujh@mail.ustc.edu.cn}

\author[X.~Ye]{Xiangdong Ye}
\email{yexd@ustc.edu.cn}

\address[K.~Ouyang, J.~Qiu, X.~Ye]
{School of Mathematical Sciences, University of Science and Technology of
China, Hefei, Anhui 230026, P.R. China}

\date{\today}
\subjclass[2020]{Primary 37B20; Secondary 37A45, 37A30, 22E25, 05D10}
\keywords{nilsystems, polynomial diagonal orbit closures, polynomial multiple recurrence}

\begin{document}

\begin{abstract}
For a minimal nilrotation on a compact connected nilmanifold, 
we prove that the polynomial diagonal orbit closure associated 
with any finite family of polynomials with integer coefficients 
vanishing at the origin is connected. This resolves a conjecture 
of Glasscock, Koutsogiannis, Le, Moreira, Richter, and Robertson. 
Combined with their equivalence theorem, our result yields 
polynomial multiple recurrence in every prescribed residue class 
in topological dynamics, provided that the corresponding power 
of the transformation is minimal. Furthermore, we independently 
establish the measure-theoretic counterpart of this recurrence 
phenomenon. Finally, we construct a totally minimal nilsystem 
for which the lower central series identity proposed by Leibman  fails.
\end{abstract}

\maketitle

\section{Introduction}

Multiple recurrence connects topological dynamics, ergodic theory, and 
additive combinatorics. Its classical origins lie in Szemer\'edi's 
theorem on arithmetic progressions \cite{Szemeredi1975} and Furstenberg's 
ergodic-theoretic proof \cite{Furstenberg1977,Furstenberg1981}, which 
interprets a finite combinatorial configuration as the simultaneous 
return of a measurable set under several iterates of a single 
transformation. Furstenberg and Weiss established the topological 
counterpart \cite{FurstenbergWeiss1978}, thereby obtaining the 
corresponding dynamical form of van der Waerden's theorem. These 
developments led naturally to the study of multiple recurrence under 
additional arithmetic restrictions on the recurrence time. In particular, 
Bergelson and Leibman proved polynomial extensions of both the 
measure-theoretic and topological multiple recurrence theorems for 
polynomials with integer coefficients vanishing at the origin 
\cite{BergelsonLeibman1996}.

One of the most basic arithmetic restrictions is the requirement that 
the recurrence time belong to a prescribed residue class. To describe 
the problem, let $(X,\mathcal B,\mu,T)$ be an invertible 
measure-preserving system, let $A\in\mathcal B$ have positive measure, 
and let $d\geq1$. Ordinary linear multiple recurrence yields a nonzero 
$n\in\mathbb Z$ such that
\[
\mu\bigl(
A\cap T^{-n}A\cap\cdots\cap T^{-dn}A
\bigr)>0.
\]

Given $k\in\mathbb N$ and $0\leq j<k$, one may ask whether such an $n$ 
can always be chosen from the progression $k\mathbb Z+j$, under the 
natural assumption that $T^k$ is ergodic. Host and Kra established 
this result for $d=3$ \cite{HostKra2002}, and Frantzikinakis 
extended it to arbitrary $d$ \cite{Frantzikinakis2004}. 
Glasner, Huang, Shao, Weiss, and Ye later established the corresponding 
topological theorem and formulated its polynomial extension 
\cite{GHSWY2025}.

The polynomial extension is not a formal consequence of ordinary 
polynomial multiple recurrence. Let $p_1,\ldots,p_d\in\mathbb Z[t]$ 
satisfy $p_i(0)=0$ for $1\leq i\leq d$. If the recurrence time is 
restricted by writing $n=kq+j$, then
\[
p_i(kq+j)
=
kQ_{i,j}(q)+p_i(j),
\qquad
Q_{i,j}(q)
:=
\frac{p_i(kq+j)-p_i(j)}{k}
\in\mathbb Z[q]
\qquad (1\leq i\leq d).
\]
Although $Q_{i,j}(0)=0$, the constants $p_i(j)$ shift the target sets
appearing in the multiple intersection. Consequently, restricting the
recurrence time to a residue class does not reduce directly to ordinary
polynomial multiple recurrence for $T^k$ with the family
$(Q_{i,j})_{i=1}^d$. The problem instead requires a comparison of
polynomial recurrence across the congruence classes while retaining
control of these shifted targets.

Nilsystems arise naturally in this problem through characteristic 
factor theory. In the measure-theoretic setting, Host and Kra 
\cite{HostKra2005Nil} and Ziegler \cite{Ziegler2007} showed that 
suitable pro-nilfactors are characteristic for linear multiple 
averages. The corresponding polynomial results were obtained by 
Host and Kra \cite{HostKra2005Polynomial} and by Leibman 
\cite{Leibman2005Averages}. In topological dynamics, Host, Kra, 
and Maass \cite{HostKraMaass2010} and Shao and Ye \cite{ShaoYe2012} 
developed higher-order regionally proximal relations and the 
associated pro-nilfactors. Glasner, Huang, Shao, Weiss, and Ye 
\cite{GHSWY2025} then identified the maximal infinite-step 
pro-nilfactor as a characteristic factor for linear configurations. 
Polynomial characteristic factor results were subsequently obtained by Qiu \cite{Qiu2023} 
and by Huang, Shao, and Ye \cite{HuangShaoYe2025}, while Ye and Yu 
\cite{YeYu2025} showed that a finite-step pro-nilfactor suffices 
for fixed polynomial family.

Using this theory, Glasscock, Koutsogiannis, Le, Moreira, Richter, and
Robertson established a return set comparison for essentially distinct
polynomial families in minimal systems. The factor return set,
defined using the interiors of the factor images, contains the original
return set, while the additional return times in the factor do not form a
piecewise syndetic set
\cite{GlasscockEtAl2026}.
They further showed that polynomial recurrence in prescribed residue classes is equivalent 
to connectedness of the associated polynomial diagonal orbit closures in 
totally minimal nilsystems. Consequently, 
the recurrence problem is reduced to this concrete connectedness question for 
polynomial orbits on nilmanifolds.

\subsection{Connectedness and recurrence statements}

A \emph{nilmanifold} is a compact homogeneous space $G/\Gamma$, where $G$ is 
a finite-dimensional nilpotent Lie group and $\Gamma\leq G$ is a discrete 
cocompact subgroup. For $a\in G$, the map $T_a(g\Gamma)=ag\Gamma$ is a 
\emph{nilrotation}, and $(G/\Gamma,T_a)$ is a \emph{nilsystem}. The group $G$ 
in a chosen presentation need not be connected, even when $G/\Gamma$ is 
connected. 

Let $m,d\in\mathbb N$, let $(X,T)$ be a nilsystem, and let
\[
\mathbf p=(p_1,\ldots,p_d)
\in\mathbb Z[t_1,\ldots,t_m]^d,
\qquad
\Delta_X^d:=\{(x,\ldots,x):x\in X\}.
\]
We write
\begin{equation}\label{eq:intro-orbit-closure}
\overline{\mathcal O_{\mathbf p}(\Delta_X^d)}
:=
\overline{\bigl\{
\bigl(T^{p_1(\mathbf n)}x,\ldots,T^{p_d(\mathbf n)}x\bigr):
x\in X,\ \mathbf n\in\mathbb Z^m
\bigr\}}
\subseteq X^d.
\end{equation}

\noindent\textbf{Convention.}
Unless stated otherwise, all polynomials are nonzero.

For the recurrence and connectedness statements below, this convention
entails no loss.
Zero polynomials are redundant in the recurrence statements, while
diagonal orbit closures are invariant under common polynomial shifts:
for every
$h\in\mathbb Z[t_1,\ldots,t_m]$ with $h(\mathbf 0)=0$,
\[
\overline{\mathcal O_{(p_1+h,\ldots,p_d+h)}(\Delta_X^d)}
=
\overline{\mathcal O_{\mathbf p}(\Delta_X^d)},
\]
since, for each fixed $\mathbf n$, the map
$x\mapsto T^{h(\mathbf n)}x$ is a bijection of $X$.
For all but finitely many $M\in\mathbb Z$, taking
$h(\mathbf t)=Mt_1$ makes every $p_i+h$ nonzero.

The use of the diagonal reflects the fact that recurrence concerns the
simultaneous motion of a single initial point along several polynomial times.

Leibman's polynomial mapping framework was developed in 
\cite{Leibman1998,Leibman2002}. Results describing the closures and 
distributions of point orbits are given in \cite{Leibman2005,Leibman2005Zd}, 
while rationality and the orbits of moving subnilmanifolds are treated in 
\cite{Leibman2006Rational,Leibman2007}. Green and Tao developed quantitative 
factorization and equidistribution results for polynomial nilorbits 
\cite{GreenTao2012}. For polynomial diagonal orbits, Leibman obtained 
explicit descriptions for linear families, for presentations by connected 
groups, and for presentations in which the identity component of the 
presenting group is abelian \cite{Leibman2010}.
Outside these explicit cases, the general orbit theory yields a finite
decomposition into connected subnilmanifolds but does not rule out the
presence of more than one component \cite{Leibman2010}.
The issue addressed in this paper is whether this finite decomposition collapses to a single component when the nilmanifold is connected, 
the nilrotation is minimal, and $p_i(\mathbf 0)=0$.

We distinguish the following three statements.
\begin{description}[
style=multiline,
leftmargin=1.75cm,
labelwidth=1.45cm,
labelsep=0.15cm,
itemsep=0.75em,
topsep=0.5em,
parsep=0pt,
partopsep=0pt,
font=\normalfont\bfseries
]

\item[\textup{(Top)}]
Let $(X,T)$ be a topological dynamical system, let $k,d\in\mathbb N$, and 
suppose that $(X,T^k)$ is minimal. Then, for every nonempty open set 
$U\subseteq X$, every $\mathbf p=(p_1,\ldots,p_d)\in\mathbb Z[t]^d$ 
satisfying $p_i(0)=0$ for $1\leq i\leq d$, and every $0\leq j<k$, there 
exists a nonzero $n\in\mathbb Z$ with $n\equiv j\pmod{k}$ such that
\[
U\cap T^{-p_1(n)}U\cap\cdots\cap T^{-p_d(n)}U\neq\emptyset.
\]

\item[\textup{(Conn)}]
Let $(X,T)$ be a connected minimal nilsystem, and let $d\in\mathbb N$. 
Then, for every $\mathbf p=(p_1,\ldots,p_d)\in\mathbb Z[t]^d$ satisfying 
$p_i(0)=0$ for $1\leq i\leq d$, 
$\overline{\mathcal O_{\mathbf p}(\Delta_X^d)}$ is connected.

\item[\textup{(Meas)}]
Let $(X,\mathcal B,\mu,T)$ be an invertible measure-preserving system, let $k,d\in\mathbb N$, and suppose that $T^k$ 
is ergodic. Then, for every $A\in\mathcal B$ with $\mu(A)>0$, every 
$\mathbf p=(p_1,\ldots,p_d)\in\mathbb Z[t]^d$ satisfying $p_i(0)=0$ for 
$1\leq i\leq d$, and every $0\leq j<k$, there exists a nonzero $n\in\mathbb Z$ 
with $n\equiv j\pmod{k}$ such that
\[
\mu\bigl(
A\cap T^{-p_1(n)}A\cap\cdots\cap T^{-p_d(n)}A
\bigr)>0.
\]
\end{description}

By the common shift observation above, \textup{(Conn)} is equivalent to
\cite[Conjecture~1.5]{GlasscockEtAl2026}, which is presented there as a
formal special case of \cite[Conjecture~11.4]{Leibman2010}.
By \cite[Theorem~4.23]{GlasscockEtAl2026}, \textup{(Conn)} is equivalent
to the syndetic form of \textup{(Top)}. We use only the implication from
\textup{(Conn)} to \textup{(Top)}; \textup{(Meas)} is proved separately.

\subsection{Main results}

\begin{maintheorem}
\label{thm:main}
Let $m,d\in\mathbb N$, and let $X=G/\Gamma$ be a connected nilmanifold, where 
$G$ is a nilpotent Lie group and $\Gamma\leq G$ is a discrete cocompact 
subgroup. Let $a\in G$, and suppose that the nilrotation $T=T_a$ is minimal. 
If $\mathbf p=(p_1,\ldots,p_d)\in\mathbb Z[t_1,\ldots,t_m]^d$ satisfies 
$p_i(\mathbf 0)=0$ for $1\leq i\leq d$, then 
$\overline{\mathcal O_{\mathbf p}(\Delta_X^d)}$ is connected.
\end{maintheorem}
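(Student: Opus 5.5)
The plan is to reduce to a structure theorem for polynomial orbits on nilmanifolds. I then exploit that every $p_i$ vanishes at $\mathbf 0$, so the diagonal $\Delta_X^d$ lies in the orbit closure and meets every component.

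\textbf{Step 1 (normalization).} First I replace the presentation so that $G$ is generated by its identity component $G^\circ$ and $a$. This is possible since $X$ is connected and $T_a$ is minimal: we may pass to the closed subgroup generated by $G^\circ$ and $a$, which still acts transitively. In particular $G=G^\circ\Gamma$, and we may assume $G/G^\circ$ is cyclic, generated by the image of $a$.

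\textbf{Step 2 (Leibman's decomposition).} Next I consider the polynomial map $g(\mathbf n)=(a^{p_1(\mathbf n)},\dots,a^{p_d(\mathbf n)})$ into $G^d$, acting on $X^d$. By Leibman's orbit theory \cite{Leibman2005Zd,Leibman2010} there is $r\in\mathbb N$ with the following property. For each $x$ and each residue class $\mathbf c\in(\mathbb Z/r)^m$, the closure of $\{g(\mathbf n)(x,\dots,x):\mathbf n\equiv\mathbf c\}$ is a connected subnilmanifold $Y_{\mathbf c}(x)$. Moreover, these vary continuously in a suitable sense, namely as translates $h_{\mathbf c}(x)H_{\mathbf c}$-orbits of a fixed subgroup $H_{\mathbf c}\le G^d$. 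Then $\overline{\mathcal O_{\mathbf p}(\Delta_X^d)}$ is the closure of $\bigcup_{\mathbf c}\bigcup_{x\in X} Y_{\mathbf c}(x)$.

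\textbf{Step 3 (joining the pieces).} The key step is to show that each set $Z_{\mathbf c}:=\overline{\bigcup_x Y_{\mathbf c}(x)}$ is connected and meets $\Delta_X^d$. Since $\Delta_X^d\cong X$ is connected, the union is then connected. Connectedness of $Z_{\mathbf c}$ should follow because it is the closure of the image of $X\times H_{\mathbf c}$ (a connected set, taking $H_{\mathbf c}$ connected) under a continuous map. To meet the diagonal, I pick $\mathbf n_0\equiv\mathbf c$ and write $g(\mathbf n_0)(x,\dots,x)=(a^{p_i(\mathbf n_0)}x)_i$. The difficulty is that this is not on the diagonal unless the $p_i(\mathbf n_0)$ agree. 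Here I use minimality of $T$ together with $G=G^\circ\Gamma$: each $a^{k}$ can be written as $u_k\gamma_k$ with $u_k\in G^\circ$. I then deform the point $(a^{p_i(\mathbf n_0)}x)_i$ inside $Z_{\mathbf c}$ along a path in $(G^\circ)^d$ back to the diagonal. Alternatively, I would use that the Leibman factorization $g(\mathbf n)=\epsilon(\mathbf n)g'(\mathbf n)\gamma(\mathbf n)$, with $g'$ taking values in the connected group and $\epsilon$ periodic mod $r$, can be arranged so that $\epsilon$ is trivial on the diagonal coset. This uses $p_i(\mathbf 0)=0$: $\epsilon(\mathbf c)\epsilon(\mathbf 0)^{-1}$ should lie in $\Delta_{G}^d\cdot(G^\circ)^d$. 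Because $\Delta_G$ acts on $\Delta_X^d$ and $(G^\circ)^d$ is connected, every $Z_{\mathbf c}$ then lies in the same connected component as $Z_{\mathbf 0}\supseteq\Delta_X^d$.

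\textbf{Main obstacle.} I expect the hard part to be the claim at the end of Step 3. It requires controlling the periodic part $\epsilon$ modulo $\Delta_G\cdot(G^\circ)^d$ when $G^\circ$ is non-abelian, which is exactly where Leibman's explicit descriptions stop. My first attempt would be induction on the nilpotency class via the vertical torus fibration $X\to X/[G^\circ,G^\circ]$-type factors. A closure that is connected and fibres over a connected base with connected fibres is connected. So I would prove connectedness of the fibres by applying the abelian case \cite{Leibman2010} to the vertical rotation, using minimality of $T$ to guarantee that the base orbit closure is the full abelian projection.
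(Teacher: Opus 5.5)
There is a genuine gap, and it sits where you place the main obstacle. The claim at the end of Step~3 is not a technical point; it is the whole content of the theorem.

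\textbf{What must be proved.} Apply Leibman's theorem directly to the moving subnilmanifold $\Delta_X^d$. This avoids assembling pointwise closures $Y_{\mathbf c}(x)$, for which you have not justified a uniform period $r$ or continuous dependence on $x$. It already gives $C=\bigsqcup_\nu Hy_\nu\Gamma^d/\Gamma^d$ for one connected subgroup $H$, with $\Delta_X^d$ in the component $H\Gamma^d/\Gamma^d$. What has to be proved is $y_\nu\in H\Gamma^d$ for every $\nu$.

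\textbf{Why your mechanisms do not give this.} A path ``in $(G^\circ)^d$'' from $(a^{p_i(\mathbf n_0)}x)_i$ to the diagonal proves nothing unless it stays inside $C$. Since $(G^\circ)^d$ acts transitively on the connected manifold $X^d$, any two points are joined this way. Nor is either $(G^\circ)^d$ or $\Delta_G^d$ known to preserve $C$: translating a slice by $(h,\ldots,h)$ yields a slice for $hah^{-1}$, not for $a$. So the condition $\epsilon(\mathbf c)\epsilon(\mathbf 0)^{-1}\in\Delta_G^d(G^\circ)^d$ does not make the corresponding components coincide. In the one-parameter case, the statement that each $Z_{\mathbf c}$ meets $\Delta_X^d$ follows from the theorem via Corollary~\ref{cor:reparameterization}; it is not a route to it.

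\textbf{The fallback induction.} This does not close the gap either. The topological principle is fine for compact $C$. However, the fibres of $C$ over its image $\overline C$ in the quotient by the last central subgroup are not polynomial diagonal orbit closures of any ``vertical rotation''. The central coordinates of $T^{p_i(\mathbf n)}x$ are driven by a cocycle depending on the base point. When $G$ is disconnected, they are further twisted by conjugation by the discrete part of $a$, a unipotent automorphism of $G^\circ$. So neither Leibman's abelian case nor minimality applies to these fibres, and their connectedness is precisely the residual problem. Also, $\overline C$ being the image of $C$ is just compactness; minimality plays no role there, and $\overline C$ is in general a proper subset of the base.

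\textbf{What the paper does instead.} Its induction also passes to the last central quotient, but the inductive step is different. It conjugates to a minimal unipotent affine map $F(x\Lambda)=bA(x)\Lambda$, suspends it to a nilrotation, and builds an exact real interpolation $g(\mathbf t)$ of the slices. Induction gives only $g(\mathbf t)\in M=H_0Z^d$. The residual torus $K=M/(H_0\Lambda_M)$ is then killed by a rational-versus-irrational comparison:
\begin{itemize}
\item Finitely many components force each character phase $Q_\chi(\mathbf n)=\widetilde f_\chi(g(\mathbf n))$ to take finitely many values mod $1$. Hence $\partial_jQ_\chi(\mathbf k)=\eta_{\chi,j,\mathbf k}(v)\in\mathbb Q$.
\item Tangent-direction arguments give $E_{\mathbf t}(\mathfrak n)\subseteq\mathfrak h_0$ and, using minimality, $\Psi_{j,\mathbf t}(\mathfrak n)\subseteq\mathfrak m$. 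These make $\eta_{\chi,j,\mathbf k}$ rational, horizontal and $A_*$-invariant.
\item Minimality then forces $\eta(v)\notin\mathbb Q$ for every nonzero such $\eta$.
\end{itemize}
Your Step~3 is missing an argument of this type, one that plays minimality against the finiteness of the component decomposition.
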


Repetitions are allowed. By the standing convention, each $p_i$ is nonzero
and hence nonconstant. The presenting group $G$ may be disconnected; passage
to its identity component generally turns the nilrotation into a unipotent
affine transformation.

Subtracting the constant terms changes the orbit closure by the
coordinatewise homeomorphism
\[
(x_1,\ldots,x_d)
\longmapsto
\bigl(T^{-p_1(\mathbf 0)}x_1,\ldots,T^{-p_d(\mathbf 0)}x_d\bigr).
\]
Together with the common shift observation above, this shows that the
normalization $p_i(\mathbf 0)=0$ entails no loss.

The multiparameter statement follows from the one-parameter case. For
$\mathbf v\in\mathbb Z^m$, set
$q_{i,\mathbf v}(t):=p_i(t\mathbf v)$.
Choose $M_{\mathbf v}\in\mathbb Z$ so that every
$q_{i,\mathbf v}(t)+M_{\mathbf v}t$ is nonzero. The one-parameter theorem
and the common shift identity show that every corresponding line orbit
closure is connected. All these closures contain $\Delta_X^d$ at $t=0$,
so their union is connected. Every multiparameter orbit point belongs to
this union by taking $\mathbf v=\mathbf n$ and $t=1$, while each line orbit
closure is contained in the full orbit closure. Hence the closure of the
union is $\overline{\mathcal O_{\mathbf p}(\Delta_X^d)}$, which is
connected.

By Proposition~\ref{prop:minimal-total-minimal},
Theorem~\ref{thm:main} proves \textup{(Conn)} and hence \textup{(Top)}.
The stronger syndetic conclusion is recorded in
Corollary~\ref{cor:return-times}.
We next establish a stronger measure-theoretic result.

\begin{maintheorem}
\label{thm:measure-common-limit}
Let $(X,\mathcal B,\mu,T)$ be an invertible measure-preserving system, let $k,d\in\mathbb N$, and suppose that $T^k$ is 
ergodic. Let $A\in\mathcal B$, and let $\mathbf p=(p_1,\ldots,p_d)\in\mathbb Z[t]^d$ 
satisfy $p_i(0)=0$ for $1\leq i\leq d$. For $n\in\mathbb Z$, define
\begin{equation}\label{eq:intro-correlation}
c(n)
:=
\mu\bigl(
A\cap T^{-p_1(n)}A\cap\cdots\cap T^{-p_d(n)}A
\bigr).
\end{equation}
Then there exists a constant $C\geq0$ such that the first limit below 
holds and, for each fixed $j\in\{0,1,\ldots,k-1\}$, the second limit holds.
\begin{equation}\label{eq:measure-moving-intervals}
\begin{aligned}
&\lim_{N\to\infty}\sup_{M\in\mathbb Z}
\left|
\frac1N\sum_{n=M}^{M+N-1}c(n)-C
\right|=0,\\
&\lim_{N\to\infty}\sup_{M\in\mathbb Z}
\left|
\frac1N\sum_{q=M}^{M+N-1}c(kq+j)-C
\right|=0.
\end{aligned}
\end{equation}
\end{maintheorem}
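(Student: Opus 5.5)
The plan is to reduce to nilsystems through polynomial characteristic factors, and then read off both limits from the equidistribution of polynomial orbits of the diagonal. Set $f=\mathbf 1_A$. By Host--Kra \cite{HostKra2005Polynomial} and Leibman \cite{Leibman2005Averages}, there is $s=s(\mathbf p)$ such that the uniform Ces\`aro averages
\[
\frac1N\sum_{n=M}^{M+N-1}\int f\cdot T^{p_1(n)}f\cdots T^{p_d(n)}f\,d\mu
\]
change by $o(1)$, uniformly in $M$, when $f$ is replaced by $\mathbb E(f\mid\mathcal Z_s)$.

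The same holds along each progression $kq+j$. Write $p_i(kq+j)=kQ_{i,j}(q)+p_i(j)$ and apply the characteristic factor result to the family $(Q_{i,j})_i$ of polynomials in $q$ for the transformation $T^k$, with the shifted functions $T^{p_i(j)}f$. Here I use that the Host--Kra factors of $T$ and $T^k$ coincide when $T^k$ is ergodic. Next I approximate $\mathbb E(f\mid\mathcal Z_s)$ in $L^2$ by continuous functions on an ergodic $s$-step nilsystem factor $(Y=G/\Gamma,T_a,m_Y)$. The multilinear form is $L^2$-Lipschitz in each coordinate, uniformly in $n$, so it suffices to prove both limits with a common constant $C(F)$ for a continuous function $F$ on $Y$. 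The general case then follows by a $3\varepsilon$ argument, since $C$ is a limit of the constants $C(F)$.

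For the nilsystem, I use Leibman's theorem on polynomial sequences \cite{Leibman2005,Leibman2010}. Fix $y\in Y$ and consider
\[
g_y(n)=\bigl(T^{p_1(n)}y,\ldots,T^{p_d(n)}y\bigr)\in Y^d .
\]
Integrated over $y$, this sequence is the orbit of the diagonal under a polynomial sequence in $G^d$ acting on $Y^{d+1}$ (including the zeroth coordinate). Leibman's theorem gives an $L\in\mathbb N$ and subnilmanifolds $Z_0,\dots,Z_{L-1}$ such that the orbit is \emph{well} distributed, hence uniformly in $M$, on $Z_r$ along $n\equiv r\pmod L$. Consequently the first average tends to $\frac1L\sum_r\int F^{\otimes(d+1)}\,d\lambda_{Z_r}$. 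The average along $kq+j$ tends to the average of $\int F^{\otimes(d+1)}\,d\lambda_{Z_r}$ over the residues $r\equiv j\pmod{\gcd(k,L)}$. It therefore suffices to show that $\lambda_{Z_r}$ is independent of $r$, and in fact that the orbit closure $\overline{\mathcal O_{\mathbf p}(\Delta_Y^{d+1})}$ (with $p_0=0$) is a single connected subnilmanifold. In that case every $Z_r$ equals the whole closure, and all limits agree with $C=\int F^{\otimes(d+1)}\,d\lambda$, which is $\ge0$ when $F\ge0$.

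The main obstacle is that Theorem~\ref{thm:main} needs a connected nilmanifold and a minimal rotation, whereas here only $T^k$ is ergodic and $Y$ may be disconnected. I would handle this as follows. The components of $Y$ are cyclically permuted by $T_a$ with some period $r_0$. Ergodicity of $T^k$ forces $\gcd(k,r_0)=1$. Pass to $Y_0$, the identity component, with the rotation $T^{r_0}$, which is minimal on $Y_0$ (totally minimal, by Proposition~\ref{prop:minimal-total-minimal}). Write $n=r_0u+v$, so that
\[
p_i(r_0u+v)=r_0\widetilde Q_{i,v}(u)+p_i(v).
\]
Theorem~\ref{thm:main}, applied to $T^{r_0}$ on $Y_0$ together with the common-shift observation, shows that each piece, indexed by $v$ mod $r_0$, of the diagonal orbit closure over $Y_0$ is connected, and equal to a translate of one fixed connected closure by $(T^{p_i(v)})_i$. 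Integrating over the components $T^{w}Y_0$ shows that the limit measure along $n\equiv v\pmod{r_0}$ is an average of these translates over $w$ in $\mathbb Z/r_0$. The remaining check, which I expect to be the delicate bookkeeping, is that this averaged measure does not depend on the residue of $n$ modulo $k$. This should follow from $\gcd(k,r_0)=1$: as $q$ runs through $kq+j$, the residue of $kq+j$ modulo $r_0$ runs uniformly through all of $\mathbb Z/r_0$. So the progression average is the same average over $v$ as the full average, giving a common $C$.
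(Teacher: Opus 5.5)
Your overall route is the paper's. You use a characteristic pro-nilfactor serving both the full family and the sliced families, reduce to an ergodic nilsystem $Y$ with $R^k$ ergodic, and use the component count $r_0$ (the paper's $\kappa$) with $\gcd(k,r_0)=1$. You then apply Theorem~\ref{thm:main} to $R^{r_0}$ on the identity component, using the common-shift trick for $p_0\equiv0$, and finish by noting that $kq+j$ runs evenly through all residues mod $r_0$.

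Two small points on the first step. First, you must discard repeated $p_i$ before conditioning, using $\mathbf 1_A^2=\mathbf 1_A$. Otherwise the characteristic-factor replacement is false: if $T$ is weakly mixing and $p_1=p_2$, the averages of $c(n)$ tend to $\mu(A)^2$, while the conditioned correlations equal $\mu(A)^3$. Second, you do not need $\mathcal Z_s(T)=\mathcal Z_s(T^k)$. The paper applies Leibman's theorem for $T$ itself to the families $t\mapsto p_i(kt+j)$, absorbing their constant terms into the functions.

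The step you call bookkeeping is more than counting residues. Fix $v$ mod $r_0$. The times $n\equiv v\pmod{r_0}$ with $n\equiv j\pmod k$ are $n=r_0u+v$ with $u$ confined to a single class mod $k$. So you need the limit $L_v$ along all $u$ to equal the limit along every progression $u=a\ell+b$.

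Your ingredients do not yet give this. Knowing that $u\mapsto h(u)V$ is well distributed on the connected set $C_v$ does not formally give well distribution along $u\in k\mathbb Z+u_0$. Applying Theorem~\ref{thm:main} to the reparametrized (and shifted) family only shows that the closure along $a\ell+b$ is connected, not that it equals $C_v$.

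The paper fills this with the reparametrization result for connected polynomial orbit closures, Proposition~\ref{prop:connected-orbit-reparameterization} from \cite{GlasscockEtAl2026}. Combined with moving-subnilmanifold equidistribution, this gives Corollary~\ref{cor:arithmetic-reparametrized-haar} and hence \eqref{eq:component-slice-limit}. With that in hand, your permutation argument is exactly \eqref{eq:sliced-component-average}.

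Separately, your claim that every piece is a translate of one fixed connected closure is not justified, since the families $(\widetilde Q_{i,v})_i$ change with $v$. It is also unnecessary: the paper keeps a separate $C_u$ and $L_u$ for each residue $u$ and only uses that each $L_u$ is stable under passage to subprogressions.
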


\begin{corollary}
\label{cor:measure-positive-threshold}
Under the hypotheses of Theorem~\ref{thm:measure-common-limit}, assume that 
$\mu(A)>0$. Then the constant $C$ in \eqref{eq:measure-moving-intervals} is 
strictly positive. Moreover, for every $0<\varepsilon<C$ and every $0\leq j<k$, 
the set
\[
\bigl\{
q\in\mathbb Z:
\mu\bigl(
A\cap T^{-p_1(kq+j)}A\cap\cdots\cap T^{-p_d(kq+j)}A
\bigr)>\varepsilon
\bigr\}
\]
is syndetic.
\end{corollary}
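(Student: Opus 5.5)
The plan is to derive Corollary~\ref{cor:measure-positive-threshold} from Theorem~\ref{thm:measure-common-limit} by combining it with the polynomial multiple recurrence theorem of Bergelson and Leibman~\cite{BergelsonLeibman1996} in its uniform (Ces\`aro) form. We first show that $C>0$; syndeticity then follows formally from the uniform convergence over moving intervals in the second limit of \eqref{eq:measure-moving-intervals}.

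\emph{Positivity of $C$.} We use the first limit in \eqref{eq:measure-moving-intervals}, with $M=0$. It shows that $\frac1N\sum_{n=0}^{N-1}c(n)\to C$. The Bergelson--Leibman theorem gives
\[
\liminf_{N\to\infty}\frac1N\sum_{n=0}^{N-1}\mu\bigl(A\cap T^{-p_1(n)}A\cap\cdots\cap T^{-p_d(n)}A\bigr)>0
\]
for an invertible measure-preserving system, $\mu(A)>0$, and integer polynomials vanishing at $0$. This positivity of the lower Ces\`aro average is exactly what their proof yields, and it is commonly recorded in that form. Hence $C>0$. If one prefers the $\liminf$-over-uniform-averages version, the same conclusion follows from the moving-interval limit.

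\emph{Syndeticity.} Fix $0<\varepsilon<C$ and $j$. Using the second limit, choose $N$ so that
\[
\frac1N\sum_{q=M}^{M+N-1}c(kq+j)>\frac{C+\varepsilon}{2}
\]
for every $M\in\mathbb Z$. If no $q\in[M,M+N-1]$ had $c(kq+j)>\varepsilon$, the average would be at most $\varepsilon<\frac{C+\varepsilon}2$, a contradiction. So every interval of length $N$ meets the set, and the set is syndetic with gap bound $N$. A slightly more quantitative version uses $c\le\mu(A)\le1$: the proportion of good $q$ in each window is at least $(C-\varepsilon)/2$. This is not needed here.

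The main obstacle is making sure the cited form of Bergelson--Leibman gives a positive $\liminf$ of averages rather than merely the existence of one good $n$. The plan for this is as follows. Since the limit $C$ exists along all windows by Theorem~\ref{thm:measure-common-limit}, it suffices to rule out $C=0$. If $C=0$, then $c(n)\to0$ in uniform density. That contradicts the standard strengthened conclusion of~\cite{BergelsonLeibman1996}, namely that $\{n:c(n)>\delta\}$ is syndetic for some $\delta>0$, or its uniform density version. We would invoke that form directly.
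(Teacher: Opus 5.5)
Your proposal is correct and follows essentially the same route as the paper. You obtain positivity of $C$ from the Bergelson--Leibman bound $\liminf_{N\to\infty}\frac1N\sum_{n=0}^{N-1}c(n)>0$; their theorem is stated in exactly this form, so the fallback in your last paragraph is unnecessary. You then obtain syndeticity because a window of length $N$ missing the superlevel set would have average at most $\varepsilon$, whereas all window averages exceed $(C+\varepsilon)/2$, which is the direct form of the paper's contrapositive argument.
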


For each fixed $j$, the syndetic superlevel set corresponding to
$\varepsilon=C/2$ is infinite, so it contains some $q$ with $kq+j\neq0$. This proves \textup{(Meas)}.
Since the intersections in Corollary~\ref{cor:measure-positive-threshold} contain an additional
unshifted copy of $A$, the theorem and corollary also prove the invertible case of Conjecture~1.6 in \cite{GlasscockEtAl2026}.

\subsection{A counterexample to Leibman's conjectured lower central series identity}

The next result, which is not used in the proof of
Theorem~\ref{thm:main}, disproves the lower central series identity proposed
in \cite[Conjecture~11.4]{Leibman2010}.

Let $(X,T_a)$ be a connected nilsystem with $X=G/\Gamma$, where $T_a$ is 
minimal and $G=\langle G^0,a\rangle$. Put $N:=G^0$. Since $G/\Gamma$ is 
connected, one has $G=N\Gamma$. Choose $\alpha_0\in N$ and $\gamma\in\Gamma$ 
such that $a=\alpha_0\gamma^{-1}$. For this fixed $\gamma$, define
\[
g_\alpha(r):=(\alpha\gamma^{-1})^r\gamma^r\in N
\qquad
(\alpha\in N,\ r\in\mathbb Z).
\]
Given $\mathbf p=(p_1,\ldots,p_d)\in\mathbb Z[t_1,\ldots,t_m]^d$ with 
$p_i(\mathbf 0)=0$ for $1\leq i\leq d$, define
\begin{equation}\label{eq:intro-Hhat}
\widehat H
:=
\bigl\langle
\Delta_N^d,
\bigl(g_\alpha(p_1(\mathbf n)),\ldots,
g_\alpha(p_d(\mathbf n))\bigr)
\colon
\alpha\in N,\ \mathbf n\in\mathbb Z^m
\bigr\rangle
\leq N^d.
\end{equation}
For a group $K$, let $K_1:=K$ and $K_{r+1}:=[K,K_r]$. In this notation, 
\cite[Conjecture~11.4]{Leibman2010} asserts that
\begin{equation}\label{eq:intro-lower-central-identity}
\widehat H_r=\widehat H\cap N_r^d
\qquad (r\geq1).
\end{equation}
Here the angle brackets denote the abstract subgroup generated by the
indicated elements; no topological closure is taken.
Our notation reverses Leibman's symbols: his $r$ denotes the number of 
coordinates and his $d$ the lower central series index, whereas these roles 
are played here by $d$ and $r$, respectively.

The inclusion
$\widehat H_r\subseteq\widehat H\cap N_r^d$ is automatic, but the theorem
below shows that the reverse inclusion can fail for a connected
nilmanifold presented by a disconnected group.

\begin{maintheorem}
\label{thm:counterexample}
There exist a three-step nilpotent Lie group $G$, a lattice $\Gamma\leq G$, 
and elements
\[
\alpha_0\in N:=G^0,
\qquad
\gamma\in\Gamma,
\qquad
a:=\alpha_0\gamma^{-1}\in G,
\]
such that $G=\langle N,a\rangle$, the nilmanifold $G/\Gamma$ is connected, 
the nilrotation $T_a$ on $G/\Gamma$ is totally minimal, and 
$N\cong H_3(\mathbb R)\times\mathbb R^2$. For the four linear polynomials 
$p_i(n)=i\;n$, $1\leq i\leq4$, let $\widehat H$ be the subgroup defined in 
\eqref{eq:intro-Hhat} using the fixed element $\gamma$ above. Then
\[
\widehat H_2
\subsetneq
\widehat H\cap N_2^4,
\qquad
(\widehat H\cap N_2^4)/\widehat H_2
\cong
(\mathbb R,+)
\]
as topological groups. In particular, 
\eqref{eq:intro-lower-central-identity} fails at $r=2$.
\end{maintheorem}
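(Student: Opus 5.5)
The plan is an explicit construction. I take $N:=H_3(\mathbb R)\times\mathbb R^2$ and $G:=N\rtimes_\phi\mathbb Z$, with the generator $\gamma$ acting on $N$ by a unipotent automorphism $\phi$. I choose $\phi$ so that it shears the $\mathbb R^2$ factor into the Heisenberg generators and those into the centre, so that $G$ is three-step nilpotent while $N$ itself is only two-step. For the lattice I take $\Gamma:=\Gamma_N\rtimes\langle\gamma\rangle$ with $\Gamma_N=H_3(\mathbb Z)\times\mathbb Z^2$, after checking that $\phi(\Gamma_N)=\Gamma_N$. Then $G=N\Gamma$, so $G/\Gamma\cong N/\Gamma_N$ is connected. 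With $a=\alpha_0\gamma^{-1}$, the rotation $T_a$ acts on $N/\Gamma_N$ as the unipotent affine map $x\mapsto\alpha_0\phi^{-1}(x)$. Total minimality then reduces, by the Leibman/Parry criterion, to total minimality of the induced affine map on the maximal torus factor $G/[G^0,G^0]\Gamma$. I will choose $\alpha_0$ with suitably rationally independent coordinates so that every power $T_a^k$ is minimal on this torus. This takes care of all the structural requirements, together with $G=\langle N,a\rangle$.

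Next I compute the generating sequences. Writing $\psi$ for conjugation by $\gamma^{-1}$, one has $g_\alpha(r)=\alpha\,\psi(\alpha)\cdots\psi^{r-1}(\alpha)$ for $r\ge0$. Since $\psi$ is unipotent and $N$ is two-step, this is an explicit polynomial map in $r$. Its image in $N/N_2\cong\mathbb R^4$ is a polynomial of degree at most $2$ in $r$, linear in $\alpha$. Its central coordinate in $N_2\cong\mathbb R$ is a polynomial of degree at most $3$ in $r$, with coefficients quadratic in $\alpha$. Specialising to $p_i(n)=in$ gives the generators $(g_\alpha(n),g_\alpha(2n),g_\alpha(3n),g_\alpha(4n))$ of $\widehat H$, where $N_2^4\cong\mathbb R^4$ is central.

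From this I identify $\widehat H_2$. Because $N_2^4$ is central, $\widehat H_2$ is generated by commutators of generators. These are given by the Heisenberg symplectic form applied coordinatewise to the abelianized images, and the images are polynomial vectors $(P(n),P(2n),P(3n),P(4n))$ with $\deg P\le2$, together with diagonal vectors. I will show that all these commutators lie in a proper linear subspace $V\subsetneq\mathbb R^4$, which I expect to be $3$-dimensional: the symplectic pairing of two such vectors is a polynomial of degree at most $4$ in $i$ with a structural constraint, cut out by an explicit linear functional $\lambda$. I will also show that they span $V$ as a group, using the real parameters in $\alpha$ to get real multiples, so that $\widehat H_2=V$.

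Then I exhibit an element of $\widehat H\cap N_2^4$ outside $V$. Such an element is a product of generators for several $\alpha$ and $n$, together with diagonal elements, whose abelianized parts cancel. Its central part collects the cubic-in-$r$ terms of $g_\alpha$, and these are not of commutator type, so $\lambda\neq0$ on it. The main obstacle is the reverse bound: $\widehat H\cap N_2^4\subseteq V\oplus\mathbb R v$ for a single vector $v$. My first attempt is to build a function $\Phi\colon\widehat H\to\mathbb R$, given by $\lambda$ of the central part corrected by an explicit quadratic expression in the abelianized coordinates. I will check that $\Phi$ is additive on generators modulo $\widehat H_2$, so that it descends to a homomorphism $\widehat H/\widehat H_2\to\mathbb R$. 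Restricted to $N_2^4$, this homomorphism should coincide with $\lambda$, and its kernel there should be exactly $V$. This gives $(\widehat H\cap N_2^4)/\widehat H_2\cong\mathbb R$ algebraically. The identification as topological groups then follows because both $V$ and $\widehat H\cap N_2^4=V+\mathbb R v$ are closed linear subspaces of $\mathbb R^4$.
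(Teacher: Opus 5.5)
Your construction fails at the first step, and the failure matters. The automorphism you describe does not exist. The $\mathbb R^2$ factor of $N=H_3(\mathbb R)\times\mathbb R^2$ is central, and every automorphism preserves $Z(N)=Z(H_3(\mathbb R))\times\mathbb R^2$. So $\mathbb R^2$ can only be sheared inside $Z(N)$, never into the Heisenberg generators.

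More seriously, your degree count (pairings of degree up to $4$ in $i$, cut down to a hyperplane by an unspecified functional $\lambda$) presupposes that the Heisenberg coordinates $\mathbf x,\mathbf y$ of the generators grow quadratically in $n$. That destroys the counterexample rather than producing it. For instance, suppose the automorphism shears $Y$ into $X$, which is allowed. Then for $\alpha=\exp Y$ one gets $\mathbf x(\mathbf g_{\alpha,n})=\bigl(\binom{\sigma_in}{2}\bigr)_{i=1}^4$ and $\mathbf y(\mathbf g_{\alpha,n})=n\boldsymbol\sigma$, and the coordinatewise commutator formula gives
\[
[\mathbf g_{\alpha,2},\mathbf g_{\alpha,1}]=\iota_Z(\boldsymbol\sigma^3).
\]
The directions $\mathbf 1,\boldsymbol\sigma,\boldsymbol\sigma^2$ are obtained exactly as in the linear case. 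Hence $\widehat H_2=N_2^4$, and there is no proper subspace $V$ at all. As written, your plan does not contain the idea that makes $\widehat H_2$ small; it only expects that some $\lambda$ will appear.

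The missing idea is to separate two roles.
\begin{itemize}
\item The Heisenberg coordinates must grow only linearly. The paper's $B(x,y,z,u,w)=(x,y,z+u,u+w,w)$ fixes $X$ and $Y$, so $\mathbf x(\mathbf g),\mathbf y(\mathbf g)\in\operatorname{span}\{\mathbf 1,\boldsymbol\sigma\}$ for every $\mathbf g\in\widehat H$. This forces $\widehat H_2=\iota_Z(\operatorname{span}\{\mathbf 1,\boldsymbol\sigma,\boldsymbol\sigma^2\})$ exactly.
\item The cubic growth must come from directions central in $N$. The length-three unipotent chain $W\mapsto W+U$, $U\mapsto U+Z$ lies entirely in the centre of $N$. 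So $g_{\exp(tW)}(r)=\exp\bigl(t(rW+\binom r2U+\binom r3Z)\bigr)$ lies in an abelian subgroup and is linear in $t$. The third difference $\mathbf c_{3t}(1)\mathbf c_{-3t}(2)\mathbf c_t(3)=\iota_Z(t\boldsymbol\sigma^3)$ then supplies the extra direction.
\end{itemize}

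Finally, the step you call the main obstacle is vacuous. Since $N_2=\exp(\mathbb RZ)$ is one-dimensional, $N_2^4\cong\mathbb R^4$. The space $E=\operatorname{span}\{\mathbf 1,\boldsymbol\sigma,\boldsymbol\sigma^2\}$ is a hyperplane, because the Vandermonde determinant is $12\neq0$. Once $\widehat H_2=\iota_Z(E)$ and $\iota_Z(\mathbb R\boldsymbol\sigma^3)\subseteq\widehat H$, you get $\widehat H\cap N_2^4=N_2^4$ and $(\widehat H\cap N_2^4)/\widehat H_2\cong\mathbb R^4/E\cong(\mathbb R,+)$ directly. No auxiliary homomorphism $\Phi$ is needed.
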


Since the quotient in Theorem~\ref{thm:counterexample} is isomorphic to 
$(\mathbb R,+)$, the failure is not a finite-index phenomenon. Nevertheless, 
the example satisfies the hypotheses of Theorem~\ref{thm:main}, and its 
polynomial diagonal orbit closure is connected. Thus connectedness of the 
orbit closure does not imply the lower central series identity.

\subsection{Proof strategy and organization}
The main difficulty in proving Theorem~\ref{thm:main} is to show that the
finite component decomposition supplied by polynomial orbit theory consists
only of the component containing the diagonal. We address the difficulties
caused by disconnected presentations and discrete polynomial parameters
through an affine reduction, interpolation in real parameters, and
induction on the nilpotency step.

Since $G/\Gamma$ is connected, one has $G=G^0\Gamma$. After decomposing the 
translation element and passing to a simply connected cover of $G^0$, we 
reduce the system to a minimal affine transformation
$F(x\Lambda)=bA(x)\Lambda$,
where $N$ is connected and simply connected, $\Lambda$ is a lattice, and $A$ 
is a unipotent automorphism preserving $\Lambda$. A nilpotent suspension 
realizes this affine system as a nilrotation on a larger nilmanifold 
\cite[Section~4]{DaniShahSharma2015}.
We then construct a curve that agrees with the affine iterates at every
integer time. Composing this curve with the polynomials $p_i$ gives curves
whose logarithmic derivatives provide the polynomial tangent data needed
for the component argument.

Since $p_i(\mathbf 0)=0$, the diagonal $\Delta_X^d$ is contained in
$\overline{\mathcal O_{\mathbf p}(\Delta_X^d)}$. Let $C_0$ denote the
connected component containing $\Delta_X^d$. We prove that 
the polynomial diagonal orbit is contained in $C_0$ by induction on the 
nilpotency step. After quotienting by the last nontrivial term of the lower 
central series, the induction hypothesis reduces the remaining obstruction to 
a compact connected torus. A character of this torus converts the 
interpolation into a real polynomial phase. The finite component decomposition 
imposes rational constraints on this phase at integer parameters, whereas 
minimality forces the corresponding nonzero invariant horizontal functional 
to take an irrational value on the translation direction. 
This contradiction eliminates the residual torus obstruction and
completes the induction.

For Theorem~\ref{thm:measure-common-limit}, we choose a single pro-nilfactor 
that is simultaneously characteristic  for the original polynomial 
family and for the finitely many reparameterizations associated with the 
residue classes. At a finite nilsystem stage with $\kappa$ connected components, the
$k$th power of the induced nilrotation is ergodic, since it is a factor
of $T^k$. Hence $\gcd(k,\kappa)=1$. Theorem~\ref{thm:main} and 
Haar distribution determine the limit on each component, while multiplication 
by $k$ permutes the component classes. The full sequence and each fixed 
subsequence $q\mapsto c(kq+j)$ therefore have the same uniform Ces\`aro limit. 
Uniform $L^1$ stability and characteristic factor estimates then transfer
this conclusion first to the inverse limit and then to the original system.

The counterexample is constructed from an explicit three-step semidirect
product in which a cubic slope direction belongs to
$\widehat H\cap N_2^4$ but not to $\widehat H_2$.

Section~\ref{sec:preliminaries} introduces the required conventions.
Sections~\ref{sec:affine-suspension}--\ref{sec:induction} develop the
affine reduction and prove Theorem~\ref{thm:main}.
Section~\ref{sec:applications} gives its recurrence and combinatorial
consequences. 
Section~\ref{sec:counterexample} proves
Theorem~\ref{thm:counterexample}, while
Appendix~\ref{sec:two-step-model} presents a two-step model of the
central induction.

\begin{ac}
The authors thank Professors Wen Huang and Song Shao for many helpful 
discussions. This research was supported by the National Key Research 
and Development Program of China (Nos.~2024YFA1013601 and 2024YFA1013600) 
and the National Natural Science Foundation of China (Nos.~12031019, 
12201599, 12401243, 12426201, and 12471188).
\end{ac}

\section{Preliminaries}\label{sec:preliminaries}

This section fixes notation and recalls the background used below.
We write $\mathbb N=\{1,2,\ldots\}$ and use $e$ for the identity element
of a group. All Lie groups are finite-dimensional, and
$\operatorname{Aut}(G)$ denotes the automorphism group of $G$.
Homogeneous spaces are written as $G/\Gamma$, with $G$ acting by left
translation.
For a compact abelian group $K$, we write $\widehat K$ for its group of
continuous homomorphisms into $\mathbb R/\mathbb Z$, written additively.

\subsection{Dynamical systems}

A \emph{topological dynamical system} is a pair $(X,T)$, where $X$ is
a compact metric space and $T:X\to X$ is a homeomorphism. 
A continuous surjection $\pi:X\to Y$ satisfying
$\pi\circ T=S\circ\pi$ is a \emph{topological factor map} from $(X,T)$
to $(Y,S)$. If $\pi$ is a homeomorphism, the two systems are
\emph{topologically conjugate}.

The system $(X,T)$ is \emph{transitive} if it has a point with a dense
orbit, and it is \emph{minimal} if every orbit is dense. Equivalently,
minimality means that $X$ has no proper nonempty closed $T$-invariant
subset. It is \emph{totally minimal} if $(X,T^k)$ is minimal for every $k\in\mathbb N$.

A probability space $(X,\mathcal B,\mu)$ is called \emph{standard} if it is
isomorphic modulo null sets to a Borel probability space on a Polish space.
A \emph{measure-preserving system} is a quadruple
$(X,\mathcal B,\mu,T)$, where $(X,\mathcal B,\mu)$ is a standard
probability space and $T:X\to X$ is an invertible measure-preserving
transformation. A measurable factor may be described either by a factor map
modulo null sets or, equivalently, by a $T$-invariant
sub-$\sigma$-algebra of $\mathcal B$. The system is \emph{ergodic} if every
$T$-invariant measurable set has measure $0$ or $1$, and it is
\emph{totally ergodic} if $T^k$ is ergodic for every $k\in\mathbb N$.

\subsection{Nilmanifolds and nilsystems}

For background on nilmanifolds, nilsystems, and polynomial nilorbits, see
\cite{Parry1969,Raghunathan1972,Leibman2005,Leibman2005Zd,HostKra2018}.

\subsubsection{Nilmanifolds and subnilmanifolds}

Let $G$ be a Lie group with identity $e$.
Its lower central series is defined recursively by
$G_1:=G$ and $G_{j+1}:=[G,G_j]$.
Here, $[G,G_j]$ is the subgroup generated by the commutators
$[x,y]:=x^{-1}y^{-1}xy$ for $x\in G$ and $y\in G_j$.
A group is \emph{nilpotent} if $G_{s+1}=\{e\}$ for some $s\geq0$.
The least such $s$ is its nilpotency class; a nontrivial group of class
$s$ is called \emph{$s$-step nilpotent}.

A \emph{lattice} in a Lie group $G$ is a discrete subgroup
$\Gamma\leq G$ such that $G/\Gamma$ is compact; 
we write the points of $G/\Gamma$ as $g\Gamma$.

If $G$ is nilpotent and $\Gamma$ is a lattice in $G$, the compact
homogeneous space $X=G/\Gamma$ is called a \emph{nilmanifold}. 
The group $G$ acts continuously and
transitively on $X$ by left translation.
The presenting group $G$ may be disconnected even when $G/\Gamma$ is
connected.

Every nilmanifold carries a unique $G$-invariant Borel probability
measure, denoted by $m_{G/\Gamma}$ and called its \emph{Haar measure}.

 A closed subgroup $H\leq G$ is
\emph{rational with respect to $\Gamma$} if $H\cap\Gamma$ is a lattice in
$H$. Equivalently, $H\Gamma$ is closed in $G$; see
\cite{Leibman2006Rational,Raghunathan1972}.

For a closed subgroup $H\leq G$ and $g\in G$, the orbit
$Y:=Hg\Gamma/\Gamma$ is closed if and only if
$H\cap g\Gamma g^{-1}$ is a lattice in $H$, or equivalently, if
$g^{-1}Hg$ is rational with respect to $\Gamma$. Such a closed orbit is
called a \emph{subnilmanifold}, and its unique $H$-invariant probability measure is denoted by $m_Y$.
A \emph{connected subnilmanifold} is one admitting such a presentation with $H$ connected.

\subsubsection{Nilsystems}
For $a\in G$, let
\[
T_a:G/\Gamma\to G/\Gamma,
\qquad
T_a(g\Gamma):=ag\Gamma.
\]
The map $T_a$ is called a \emph{nilrotation}, and
$(G/\Gamma,T_a)$ is called a \emph{nilsystem}. Every nilrotation
preserves the Haar measure.

For a nilsystem $(X,T)$ with Haar measure $m_X$, transitivity and
minimality of $(X,T)$ are equivalent to ergodicity of
$(X,m_X,T)$. In this case, $m_X$ is the unique $T$-invariant Borel
probability measure
\cite[Theorem~4.1(1)]{BergelsonHostKra2005}.

We use the following characterization of total minimality; see
\cite{Parry1969,Leibman2005}.

\begin{proposition}\label{prop:minimal-total-minimal}
Let $(X,T)$ be a nilsystem, and let $m_X$ denote its Haar
measure. Then the following statements are equivalent:
\begin{enumerate}[label=\textup{(\roman*)}]
\item $(X,T)$ is totally minimal;
\item $(X,m_X,T)$ is totally ergodic;
\item $(X,T)$ is minimal and $X$ is connected.
\end{enumerate}
\end{proposition}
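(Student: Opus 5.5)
The plan is to prove the cycle (i)$\Rightarrow$(iii)$\Rightarrow$(ii)$\Rightarrow$(i), using two facts recalled above for a nilsystem $(G/\Gamma,T_a)$: minimality of $T_a$ is equivalent to ergodicity of Haar measure $m_X$, and in that case $m_X$ is the unique $T_a$-invariant measure \cite[Theorem~4.1(1)]{BergelsonHostKra2005}. These apply equally to every power $T^k=T_{a^k}$, which is again a nilrotation on the same nilmanifold. Hence, for each fixed $k$, minimality of $T^k$ is equivalent to ergodicity of $(X,m_X,T^k)$. Taking all $k\in\mathbb N$ gives (i)$\Leftrightarrow$(ii) at once. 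It therefore remains to show (i)$\Leftrightarrow$(iii).

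For (i)$\Rightarrow$(iii), minimality is the case $k=1$. For connectedness, I argue by contraposition. A nilmanifold is a compact manifold, so it has finitely many connected components $X_1,\dots,X_\kappa$, all open. Since $T$ is a homeomorphism, it permutes these components. Let $k$ be the order of this permutation. Then $T^k$ maps $X_1$ to itself, and $X_1$ is a proper nonempty closed invariant set if $\kappa\ge 2$. This contradicts minimality of $T^k$, so $X$ must be connected.

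The main step is (iii)$\Rightarrow$(i). Assume $T$ is minimal and $X$ connected, and fix $k\ge1$. I take a minimal subset $Y\subseteq X$ for $T^k$. Then $X=\bigcup_{i=0}^{k-1}T^iY$, since this union is closed, nonempty and $T$-invariant. Each $T^iY$ is $T^k$-minimal, so any two of these sets are either equal or disjoint. The distinct ones therefore form a finite partition of $X$ into closed sets, hence into clopen sets. Connectedness forces there to be just one, so $Y=X$ and $T^k$ is minimal.

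This argument is purely topological: it is the standard fact that a minimal homeomorphism of a connected compact space is totally minimal. It does not use the nilstructure at all; that structure enters only through the equivalence with (ii). The one point I expect to need care is justifying that $T^iY$ is $T^k$-minimal and that two such sets cannot partially overlap. Both follow because $T^i$ commutes with $T^k$ and the intersection of two minimal sets is closed and $T^k$-invariant.
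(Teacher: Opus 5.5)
Your proof is correct, but it does not follow the paper, which gives no argument of its own here and simply refers to Parry and Leibman, where these equivalences belong to the general structure theory of nilrotations. You instead make the statement almost self-contained, in two pieces:
\begin{itemize}
\item For (i)$\Leftrightarrow$(ii) you use only the single recalled fact that a nilrotation is minimal if and only if Haar measure is ergodic. You apply it to each power $T^k=T_{a^k}$, which is a nilrotation on the same nilmanifold.
\item For (i)$\Leftrightarrow$(iii) you use pure topology. A minimal homeomorphism of a connected compact space is totally minimal. Conversely, a homeomorphism permuting finitely many open components has a power fixing each of them.
\end{itemize}

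Your route shows exactly where the nil-structure enters: only through the minimality--ergodicity equivalence and through the finiteness of the set of components. That finiteness is genuinely needed for (i)$\Rightarrow$(iii), since Sturmian subshifts are totally minimal but totally disconnected. The citation buys brevity and access to Leibman's sharper factor-torus criterion, which the paper uses later in Proposition~\ref{prop:total-minimal}.

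The one routine detail you leave implicit is $T^kY=Y$, which is needed for $\bigcup_{i<k}T^iY$ to be $T$-invariant. It holds because $T^kY$ is a nonempty closed $T^k$-invariant subset of the $T^k$-minimal set $Y$.
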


The connected components of a nilmanifold have the following standard
description; see \cite[Chapter~II]{Raghunathan1972}.

\begin{lemma}\label{lem:nilmanifold-components}
Let $X=G/\Gamma$ be a compact nilmanifold, and let $G^0$ be the identity
component of $G$. Then
\[
X_e:=G^0\Gamma/\Gamma
\cong
G^0/(G^0\cap\Gamma)
\]
is the connected component
of $X$
containing the identity coset. Every connected
component of $X$ is a left translate of $X_e$, and $X$ has only finitely
many connected components. In particular,
$X$ is connected if and only if $G=G^0\Gamma$.
\end{lemma}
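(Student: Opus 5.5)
The statement to prove is Lemma~\ref{lem:nilmanifold-components}. The natural approach is to use the projection $\pi:G\to G/\Gamma$ and the fact that $G^0$ is an open subgroup of the Lie group $G$.

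First, $G^0$ is open in $G$, since a Lie group is locally connected, and it is normal in $G$. Hence every coset $gG^0$ is open and closed in $G$. The quotient map $\pi$ is open, so for each $g\in G$ the set $\pi(gG^0)=gG^0\Gamma/\Gamma$ is open in $X$. These sets partition $X$: two such sets either coincide or are disjoint, because they are the images of the double cosets $gG^0\Gamma$, which are either equal or disjoint. Each of them is therefore also closed, being the complement of the union of the others. Each is connected, since it is the continuous image of the connected set $gG^0$. It follows that the sets $gG^0\Gamma/\Gamma$ are exactly the connected components of $X$. The component containing $e\Gamma$ is $X_e=G^0\Gamma/\Gamma$, and the component through $g\Gamma$ is $gG^0\Gamma/\Gamma=g\cdot X_e$, which gives the translate statement.

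Finiteness follows from compactness: the components form an open cover of $X$ by pairwise disjoint sets, so only finitely many can occur. For the identification $X_e\cong G^0/(G^0\cap\Gamma)$, consider the map $G^0/(G^0\cap\Gamma)\to X_e$ given by $h(G^0\cap\Gamma)\mapsto h\Gamma$.
\begin{itemize}
\item It is well defined and injective, since for $h,h'\in G^0$ we have $h^{-1}h'\in\Gamma$ if and only if $h^{-1}h'\in G^0\cap\Gamma$.
\item It is surjective by the definition of $X_e$.
\item It is continuous, being induced by the restriction of $\pi$ to $G^0$.
\item It is open: if $U\subseteq G^0$ is open, then $U$ is open in $G$ because $G^0$ is open, so $\pi(U)$ is open in $X$, hence in $X_e$.
\end{itemize}
So this map is a homeomorphism. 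As a consequence, $G^0\cap\Gamma$ is cocompact in $G^0$, since $X_e$ is closed in the compact space $X$ and therefore compact.

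Finally, $X$ is connected if and only if $X=X_e$, which means $G^0\Gamma=\pi^{-1}(X_e)=G$. No step here is a real obstacle. The only point requiring care is the openness of $G^0$, and this is exactly where the Lie (locally connected) hypothesis enters; for a general topological group the argument would fail.
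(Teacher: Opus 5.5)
Your proof is correct. The paper gives no argument of its own for this lemma: it records it as standard and cites \cite[Chapter~II]{Raghunathan1972}. What you have written is the usual self-contained proof, and each step checks out:
\begin{itemize}
\item $G^0$ is open and normal in the Lie group $G$;
\item the quotient map $\pi$ is open;
\item the saturated double cosets $gG^0\Gamma=G^0g\Gamma$ partition $G$, so their images form a partition of $X$ into open, hence clopen, connected sets;
\item compactness makes this partition finite;
\item the induced bijection $G^0/(G^0\cap\Gamma)\to X_e$ is a homeomorphism.
\end{itemize}
Your route has two advantages over the bare citation. First, it shows that nothing beyond local connectedness of $G$ is really used: no nilpotency, and no discreteness of $\Gamma$ beyond its being a closed subgroup. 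Compactness enters only in the finiteness assertion, so the description of the components holds for any Lie group and closed subgroup. Second, it yields as a by-product that $G^0\cap\Gamma$ is a lattice in $G^0$. The paper uses exactly this fact, without separate justification, in the proof of Lemma~\ref{lem:affine-reduction}.
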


We identify $(G/\Gamma)^d$ with $G^d/\Gamma^d$. For $H\leq G$, write
\[
\Delta_H^d:=\{(h,\ldots,h):h\in H\},
\qquad
\Delta_{G/\Gamma}^d
:=
\{(g\Gamma,\ldots,g\Gamma):g\in G\}.
\]
The latter is a closed subnilmanifold naturally homeomorphic to
$G/\Gamma$.

We shall repeatedly use the following universal cover construction; see
\cite{Raghunathan1972}.

\begin{lemma}\label{lem:universal-cover-package}
Let $G$ be a connected nilpotent Lie group, and let
$\pi:\widetilde G\to G$ be its universal covering homomorphism. Then
$\widetilde G$ is connected, simply connected, and nilpotent, and
$\ker\pi$ is discrete and central. If $\Gamma$ is a lattice in $G$, then
$\widetilde\Gamma:=\pi^{-1}(\Gamma)$ is a lattice in $\widetilde G$, and
$\pi$ induces a homeomorphism
\[
\widetilde G/\widetilde\Gamma\longrightarrow G/\Gamma.
\]
Every $A\in\operatorname{Aut}(G)$ has a unique lift
$\widetilde A\in\operatorname{Aut}(\widetilde G)$ satisfying
$\pi\circ\widetilde A=A\circ\pi$. If $A(\Gamma)=\Gamma$, then
$\widetilde A(\widetilde\Gamma)=\widetilde\Gamma$. Moreover,
$d_e\widetilde A$ is unipotent if and only if $d_eA$ is unipotent.
\end{lemma}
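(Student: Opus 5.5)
The plan is to derive everything from covering space theory together with the fact that $\pi$ is a local isomorphism of Lie groups, so that $d_e\pi:\widetilde{\mathfrak g}\to\mathfrak g$ is a Lie algebra isomorphism. Throughout, $\operatorname{Aut}(G)$ is taken to mean continuous (hence smooth) automorphisms.

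First come the structural claims on $\widetilde G$ and $\ker\pi$.
\begin{enumerate}[label=\textup{(\alph*)}]
\item \emph{Nilpotency.} $\widetilde G$ is connected and simply connected by construction. Its Lie algebra is isomorphic to $\mathfrak g$, which is nilpotent because $G$ is. For a connected Lie group, the terms of the lower central series are the connected (virtual Lie) subgroups generated by the terms of the lower central series of the Lie algebra. Hence $\widetilde G$ is nilpotent of the same step.
\item \emph{Discreteness and centrality of the kernel.} $\ker\pi$ is discrete because $\pi$ is a local homeomorphism near $e$. It is also a normal subgroup. For fixed $k\in\ker\pi$, the map $g\mapsto gkg^{-1}$ sends the connected space $\widetilde G$ into the discrete set $\ker\pi$, so it is constant and equal to $k$. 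Thus $\ker\pi$ is central.
\end{enumerate}

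Next, the lattice and quotient statements. Set $\widetilde\Gamma:=\pi^{-1}(\Gamma)$, which is a closed subgroup.
\begin{enumerate}[label=\textup{(\alph*)},resume]
\item \emph{Discreteness of $\widetilde\Gamma$.} Take a neighbourhood $U$ of $e$ in $\widetilde G$ on which $\pi$ is injective and with $\pi(U)\cap\Gamma=\{e\}$. Then $U\cap\widetilde\Gamma\subseteq U\cap\ker\pi=\{e\}$.
\item \emph{The induced map.} Define $\bar\pi(g\widetilde\Gamma):=\pi(g)\Gamma$. It is well defined since $\pi(\widetilde\Gamma)\subseteq\Gamma$, and surjective since $\pi$ is. It is injective because $\pi(g)\Gamma=\pi(h)\Gamma$ gives $\pi(h^{-1}g)\in\Gamma$, that is, $h^{-1}g\in\widetilde\Gamma$.
\item \emph{Homeomorphism and cocompactness.} $\bar\pi$ is continuous, and it is open because $\pi$ and the quotient maps are open. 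So $\bar\pi$ is a homeomorphism. In particular $\widetilde G/\widetilde\Gamma$ is compact, and $\widetilde\Gamma$ is a lattice.
\end{enumerate}

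Finally, the automorphism statements.
\begin{enumerate}[label=\textup{(\alph*)},resume]
\item \emph{Existence and uniqueness of the lift.} The map $A\circ\pi:\widetilde G\to G$ is a continuous homomorphism. Since $\widetilde G$ is simply connected, the lifting criterion gives a unique continuous map $\widetilde A$ with $\pi\circ\widetilde A=A\circ\pi$ and $\widetilde A(e)=e$. It is a homomorphism because $(x,y)\mapsto\widetilde A(xy)$ and $(x,y)\mapsto\widetilde A(x)\widetilde A(y)$ are both lifts of the same map on the connected space $\widetilde G\times\widetilde G$, and they agree at $(e,e)$.
\item \emph{Invertibility.} Applying the same construction to $A^{-1}$ gives a lift $\widetilde{A^{-1}}$. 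The composites $\widetilde A\circ\widetilde{A^{-1}}$ and $\widetilde{A^{-1}}\circ\widetilde A$ are lifts of the identity fixing $e$, so by uniqueness they equal the identity. Hence $\widetilde A\in\operatorname{Aut}(\widetilde G)$.
\item \emph{Invariance of $\widetilde\Gamma$.} If $A(\Gamma)=\Gamma$ and $x\in\widetilde\Gamma$, then $\pi(\widetilde Ax)=A\pi(x)\in\Gamma$, so $\widetilde A(\widetilde\Gamma)\subseteq\widetilde\Gamma$. The same argument for $A^{-1}$ gives equality.
\item \emph{Unipotence.} Differentiating $\pi\circ\widetilde A=A\circ\pi$ at $e$ gives $d_e\pi\circ d_e\widetilde A=d_eA\circ d_e\pi$. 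Since $d_e\pi$ is invertible, $d_e\widetilde A$ and $d_eA$ are conjugate linear maps, so one is unipotent exactly when the other is.
\end{enumerate}

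None of these steps presents a genuine obstacle. The only point needing care is the uniqueness of lifts on connected spaces. It is used twice: to show that $\widetilde A$ is a homomorphism, and to show that it is invertible. Both are applications of the standard unique lifting property for coverings.
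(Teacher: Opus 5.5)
Your proof is correct. The paper does not prove this lemma; it records it as standard background with a citation to Raghunathan, and your covering-space argument is exactly the standard proof that citation stands in for: unique lifting on the connected spaces $\widetilde G$ and $\widetilde G\times\widetilde G$ for the homomorphism and inverse properties, openness of $\pi$ for the induced homeomorphism, and conjugacy of $d_e\widetilde A$ and $d_eA$ via the isomorphism $d_e\pi$.

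A minor simplification is available for nilpotency. Once you have (b), $\widetilde G$ is a central extension of $G$ by $\ker\pi$, and nilpotency follows at once. This avoids invoking the correspondence between the lower central series of a connected Lie group and that of its Lie algebra.
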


\subsection{Lie algebras and rational structures}

\subsubsection{Exponential coordinates and tangent conventions}

The lower central series of a Lie algebra
$\mathfrak g$ is
$\mathfrak g_1:=\mathfrak g$ and
$\mathfrak g_{j+1}:=[\mathfrak g,\mathfrak g_j]$.
If $G$ is a connected and simply connected nilpotent Lie group with Lie
algebra $\mathfrak g$, then the exponential map
$\exp:\mathfrak g\to G$ is a diffeomorphism, its inverse is denoted by
$\log$, and the Baker--Campbell--Hausdorff formula is a finite Lie
polynomial. For $g\in G$ and $r\in\mathbb R$, define
$g^r:=\exp(r\log g)$. The lower central series of $G$ and
$\mathfrak g$ are related by
$G_j=\exp(\mathfrak g_j)$ for $j\geq1$.
In particular, every $G_j$ is closed and connected, and the last nontrivial
term is central. We refer to \cite[Chapter~1]{CorwinGreenleaf1990} for
these facts.

For a Lie subgroup $H\leq G$, its Lie algebra is denoted by
$\operatorname{Lie}(H)$.  For $g\in G$, define
$\operatorname{Int}(g)(h):=ghg^{-1}$ and
$\operatorname{Ad}(g):=d_e\operatorname{Int}(g)$. For
$X\in\mathfrak g$, set $\operatorname{ad}(X)Y:=[X,Y]$.

Write $\lambda_g(h):=gh$ and $\rho_g(h):=hg$. For an interval
$I\subseteq\mathbb R$ and a differentiable curve $c:I\to G$, define
\[
\omega_L(c)(t)
:=
(d\lambda_{c(t)^{-1}})_{c(t)}c'(t),
\qquad
\omega_R(c)(t)
:=
(d\rho_{c(t)^{-1}})_{c(t)}c'(t).
\]
We also write
$c(t)^{-1}c'(t):=\omega_L(c)(t)$ and
$c'(t)c(t)^{-1}:=\omega_R(c)(t)$.

The following elementary identities record the
tangent-space conventions used throughout the proof.

\begin{lemma}\label{lem:tangent-trivialization}
Let $H\leq G$ be a Lie subgroup with Lie algebra $\mathfrak h$, and let
$y\in G$.
\begin{enumerate}[label=\textup{(\roman*)}]
\item Under the right trivialization at $y$, the tangent space
$T_y(Hy)$ is identified with $\mathfrak h$; under the left trivialization, it
is identified with $\operatorname{Ad}(y^{-1})\mathfrak h$.
\item If $Z\in\mathfrak g$ and $c_1(t):=y\exp(tZ)$, then
$\omega_L(c_1)(0)=Z$ and
$\omega_R(c_1)(0)=\operatorname{Ad}(y)Z$.
If $c_2(t):=\exp(tZ)y$, then
$\omega_L(c_2)(0)=\operatorname{Ad}(y^{-1})Z$ and
$\omega_R(c_2)(0)=Z$.
\end{enumerate}
If $\Gamma\leq G$ is a discrete subgroup and
$\pi_\Gamma:G\to G/\Gamma$ is the quotient map, then $\pi_\Gamma$ is a
local diffeomorphism, and the
differential $(d\pi_\Gamma)_y$ transports the preceding tangent-space
identities to $T_{y\Gamma}(G/\Gamma)$.
\end{lemma}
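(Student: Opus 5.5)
The plan is to verify everything directly from the definitions of $\omega_L$ and $\omega_R$, using the identity $\exp(t\operatorname{Ad}(g)Z)=g\exp(tZ)g^{-1}$ (which follows from $\operatorname{Int}(g)\circ\exp=\exp\circ\operatorname{Ad}(g)$, since $\operatorname{Int}(g)$ is a Lie group automorphism with differential $\operatorname{Ad}(g)$). Part (ii) is the basic computation, and part (i) will be deduced from it.

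For (ii), let $c_1(t)=y\exp(tZ)$. Then $\lambda_{c_1(0)^{-1}}\circ c_1(t)=\exp(tZ)$. Because $c_1(0)=y$ is fixed, the chain rule lets us trade the moving base point $c_1(t)^{-1}$ at $t=0$ for the fixed map $\lambda_{y^{-1}}$. Differentiating at $t=0$ gives $\omega_L(c_1)(0)=Z$. On the right,
\[
c_1(t)y^{-1}=y\exp(tZ)y^{-1}=\exp\bigl(t\operatorname{Ad}(y)Z\bigr),
\]
so $\omega_R(c_1)(0)=\operatorname{Ad}(y)Z$. The curve $c_2(t)=\exp(tZ)y$ is symmetric. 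First, $c_2(t)y^{-1}=\exp(tZ)$ gives $\omega_R(c_2)(0)=Z$. Second,
\[
y^{-1}c_2(t)=y^{-1}\exp(tZ)y=\exp\bigl(t\operatorname{Ad}(y^{-1})Z\bigr)
\]
gives $\omega_L(c_2)(0)=\operatorname{Ad}(y^{-1})Z$.

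For (i), observe that $Hy$ is the image of $H$ under the diffeomorphism $\rho_y$ of $G$. Hence it is an immersed submanifold of dimension $\dim\mathfrak h$, and
\[
T_y(Hy)=(d\rho_y)_e\,\mathfrak h=\{c_2'(0):c_2(t)=\exp(tZ)y,\ Z\in\mathfrak h\}.
\]
Applying (ii) to these curves gives the two descriptions. The right trivialization sends $c_2'(0)$ to $Z$, so $T_y(Hy)$ corresponds to $\mathfrak h$. The left trivialization sends it to $\operatorname{Ad}(y^{-1})Z$, so $T_y(Hy)$ corresponds to $\operatorname{Ad}(y^{-1})\mathfrak h$. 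Both trivializations are linear isomorphisms onto $\mathfrak g$, and $\operatorname{Ad}(y^{-1})$ is injective, so these identifications are bijective.

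For the final assertion, $\Gamma$ is discrete, so it acts freely and properly discontinuously on $G$ by right multiplication. Therefore $\pi_\Gamma$ is a smooth covering map, and in particular a local diffeomorphism; each $(d\pi_\Gamma)_y$ is then an isomorphism onto $T_{y\Gamma}(G/\Gamma)$. Moreover $\pi_\Gamma\circ\lambda_g=T_g\circ\pi_\Gamma$, so pushing curves forward by $\pi_\Gamma$ carries the descriptions above to $G/\Gamma$. For example, the orbit $Hy\Gamma$ has tangent space $(d\pi_\Gamma)_y\,T_y(Hy)$ at $y\Gamma$.

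None of these steps is a real obstacle. The only point needing care is the justification of $\operatorname{Int}(g)\circ\exp=\exp\circ\operatorname{Ad}(g)$, together with the bookkeeping of which side the base point is trivialized on.
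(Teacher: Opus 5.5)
Your proposal is correct and is essentially the paper's argument: the paper likewise uses $T_y(Hy)=(d\rho_y)_e\mathfrak h$ together with $(d\lambda_{y^{-1}})_y(d\rho_y)_e=\operatorname{Ad}(y^{-1})$, which is exactly your computation for the curves $c_2(t)=\exp(tZ)y$ with $Z\in\mathfrak h$, and it treats (ii) as immediate from the definitions. The only differences are that you derive (i) from (ii) rather than the reverse, and that you spell out the covering-map justification of the final assertion about $\pi_\Gamma$, which the paper leaves implicit.
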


\begin{proof}
Since $\rho_y$ maps $H$ diffeomorphically onto $Hy$,
we have $T_y(Hy)=(d\rho_y)_e\mathfrak h$. Right trivialization therefore
identifies $T_y(Hy)$ with $\mathfrak h$. Since
$(d\lambda_{y^{-1}})_y(d\rho_y)_e=\operatorname{Ad}(y^{-1})$, left
trivialization identifies $T_y(Hy)$ with
$\operatorname{Ad}(y^{-1})\mathfrak h$. This proves \textup{(i)}. The
formulas in \textup{(ii)} follow directly from the definitions of
$\omega_L$ and $\omega_R$.
\end{proof}

For differentiable curves $c_1,c_2:I\to G$, their
pointwise product satisfies
\[
\omega_L(c_1c_2)
=
\operatorname{Ad}(c_2^{-1})\omega_L(c_1)+\omega_L(c_2).
\]
If $\varphi:G\to H$ is a Lie group homomorphism and
$c$ is a differentiable curve in $G$, then
\[
\omega_L(\varphi\circ c)
=
d_e\varphi\bigl(\omega_L(c)\bigr).
\]

\begin{lemma}\label{lem:zero-logarithmic-derivative}
Let $G$ be a Lie group, let $I\subseteq\mathbb R$ be an interval, and let
$c:I\to G$ be a continuously differentiable curve. If
\[
\omega_L(c)(t)=0
\qquad
(t\in I),
\]
then $c$ is constant. The same conclusion holds if the right logarithmic
derivative vanishes identically.
\end{lemma}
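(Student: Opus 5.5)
We will reduce the statement to the uniqueness of solutions of a linear ordinary differential equation. We use the fact that the curve is constant if and only if its translate by a fixed element is constant. Fix $t_0\in I$ and put $y:=c(t_0)$. Consider the $C^1$ curve $u(t):=y^{-1}c(t)$, which satisfies $u(t_0)=e$. Left translation by $y^{-1}$ commutes with the left trivialization. Differentiating $\lambda_{u(t)^{-1}}\circ\lambda_{y^{-1}}=\lambda_{c(t)^{-1}}$ then gives $\omega_L(u)=\omega_L(c)\equiv0$. It therefore suffices to show that $u\equiv e$.

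By the definition of $\omega_L$, the hypothesis says $u'(t)=(d\lambda_{u(t)})_e\,\omega_L(u)(t)=0$ for every $t\in I$. So $u$ is a $C^1$ curve in the manifold $G$ whose velocity vanishes identically.

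We prove $u\equiv e$ by a connectedness argument on $I$. Let $S:=\{t\in I:u(t)=e\}$. This set contains $t_0$, and it is closed in $I$ by continuity. For openness, take $t_1\in S$ and a chart around $e$. On a subinterval around $t_1$ that $u$ maps into the chart, the coordinate functions of $u$ have zero derivative, so they are constant by the one-variable mean value theorem. Hence that subinterval lies in $S$. Since $I$ is connected, $S=I$, and $c\equiv y$ is constant.

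For the right logarithmic derivative, apply the same argument to $u(t):=c(t)y^{-1}$, using $\omega_R$ and right translations. Alternatively, observe that $\omega_R(c)=\operatorname{Ad}(c)\omega_L(c)$ pointwise, since $(d\rho_{c^{-1}})(d\lambda_c)=\operatorname{Ad}(c)$. As $\operatorname{Ad}(c(t))$ is invertible, $\omega_R(c)\equiv0$ is equivalent to $\omega_L(c)\equiv0$, and this case reduces to the first. There is no real obstacle: the only point needing care is that $\omega_L(c)=0$ is equivalent to $c'=0$. This holds because $(d\lambda_{c(t)^{-1}})_{c(t)}$ is a linear isomorphism, and the rest is the standard fact that a curve with zero derivative on an interval is constant.
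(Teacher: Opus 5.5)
Your proof is correct and follows the paper's own argument: the paper simply notes that the logarithmic trivializations are linear isomorphisms, so $c'\equiv0$ and $c$ is constant. Your preliminary translation to $u(t)=y^{-1}c(t)$ is harmless but unnecessary, since the zero-velocity and connectedness argument applies to $c$ directly; your identity $\omega_R(c)=\operatorname{Ad}(c)\,\omega_L(c)$ for the right-hand case is also correct.
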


\begin{proof}
Both logarithmic trivializations are linear isomorphisms, so a vanishing
logarithmic derivative implies $c'(t)=0$ for every $t\in I$. Hence $c$
is constant on $I$.
\end{proof}

An automorphism $A\in\operatorname{Aut}(G)$ is \emph{unipotent} if
$A_*:=d_eA$ is a unipotent linear transformation, or equivalently, if
$(A_*-\operatorname{id}_{\mathfrak g})^r=0$ for some $r\in\mathbb N$.

\subsubsection{Mal'cev rational structures}
Let $G$ be a connected and simply connected nilpotent Lie group with Lie
algebra $\mathfrak g$, and let $\Gamma\leq G$ be a lattice. The associated
Mal'cev rational structure
on $\mathfrak g$
is
\[
\mathfrak g_{\mathbb Q}
:=
\operatorname{span}_{\mathbb Q}(\log\Gamma),
\qquad
\mathfrak g_{\mathbb Q}\otimes_{\mathbb Q}\mathbb R
=
\mathfrak g.
\]
A linear subspace $\mathfrak h\leq\mathfrak g$ is \emph{rational} if
\[
\mathfrak h
=
(\mathfrak h\cap\mathfrak g_{\mathbb Q})
\otimes_{\mathbb Q}\mathbb R.
\]
A connected Lie subgroup $H\leq G$ is \emph{rational} if
$\operatorname{Lie}(H)$ is rational. By
Proposition~\ref{prop:malcev-package}, this agrees with the preceding
group-theoretic definition.
Rationality is always understood relative to the lattice currently under
consideration. For the product presentation $(G^d,\Gamma^d)$, the rational
Lie algebra is $(\mathfrak g_{\mathbb Q})^d$. A linear map between Lie
algebras equipped with rational structures is called \emph{rational} if it
maps the rational structure of the domain into that of the codomain. In
particular, a functional $\ell\in\mathfrak g^*$ is rational if
$\ell(\mathfrak g_{\mathbb Q})\subseteq\mathbb Q$.

A \emph{full lattice} in a finite-dimensional real vector space $V$ is a
discrete subgroup $\Omega\leq V$ such that $V/\Omega$ is compact. Its dual
lattice is
\[
\Omega^*
:=
\{\ell\in V^*: \ell(\Omega)\subseteq\mathbb Z\}.
\]

We collect the Mal'cev rationality facts used below; see
\cite[Chapter~5]{CorwinGreenleaf1990}, \cite{Malcev1962}, and
\cite[Chapter~II]{Raghunathan1972}.

\begin{proposition}\label{prop:malcev-package}
Let $G$ be a connected and simply connected nilpotent Lie group with Lie
algebra $\mathfrak g$ and lattice $\Gamma$.
Then the following statements hold:
\begin{enumerate}[label=\textup{(\roman*)}]
\item $\mathfrak g_{\mathbb Q}$ is a Lie algebra over $\mathbb Q$, and every
term of the lower central series of $\mathfrak g$ is rational.
\item If $\mathfrak h\leq\mathfrak g$ is a rational Lie subalgebra and
$H:=\exp(\mathfrak h)$, then $H\cap\Gamma$ is a lattice in $H$ and
\[
\operatorname{span}_{\mathbb Q}\bigl(\log(H\cap\Gamma)\bigr)
=
\mathfrak h\cap\mathfrak g_{\mathbb Q}.
\]
Conversely, a connected Lie subgroup $H\leq G$ is rational if and only if
$H\cap\Gamma$ is a lattice in $H$, equivalently, if and only if
$H\Gamma/\Gamma$ is closed.
\item If $\mathfrak h$ is a rational ideal, $H:=\exp(\mathfrak h)$, and
$\pi:G\to G/H$ is the quotient homomorphism, then $G/H$ is connected and
simply connected, $\pi(\Gamma)$ is a lattice in $G/H$, and
the rational Lie algebra of $G/H$
is $(\mathfrak g_{\mathbb Q}+\mathfrak h)/\mathfrak h$.
\item The commutator subgroup $[G,G]$ is rational, and
$\Gamma[G,G]/[G,G]$ is a lattice in the vector group $G/[G,G]$.
\end{enumerate}
\end{proposition}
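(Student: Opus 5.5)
The plan is to reduce everything to Mal'cev's structure theory through the existence of a \emph{strong Mal'cev basis} adapted to the lower central series, citing \cite[Chapter~5]{CorwinGreenleaf1990} and \cite{Malcev1962}, and then to read off each item. First, for \textup{(i)}, I would use Mal'cev's theorem that there is a basis $X_1,\ldots,X_n$ of $\mathfrak g$ with $\Gamma=\exp(\mathbb ZX_1)\cdots\exp(\mathbb ZX_n)$ and with rational structure constants. Via the Baker--Campbell--Hausdorff formula, which is a finite Lie polynomial with rational coefficients, this shows that $\log\Gamma\subseteq\operatorname{span}_{\mathbb Q}\{X_i\}$. Conversely, a bounded power $\gamma^{N}$ of each basis exponential lies in $\Gamma$, so $\mathfrak g_{\mathbb Q}=\operatorname{span}_{\mathbb Q}\{X_i\}$. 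Hence $\mathfrak g_{\mathbb Q}$ is closed under the bracket and spans $\mathfrak g$ over $\mathbb R$. Since $\mathfrak g_{j+1}$ is spanned by iterated brackets, and brackets of a $\mathbb Q$-basis of $\mathfrak g_{\mathbb Q}$ lie in $\mathfrak g_{\mathbb Q}$, every $\mathfrak g_j$ is spanned by rational vectors, which gives rationality.

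For \textup{(ii)}, I would invoke the theorem that a rational subalgebra $\mathfrak h$ admits a Mal'cev basis of $\mathfrak g$ passing through $\mathfrak h$ (\cite[Theorem~5.1.11]{CorwinGreenleaf1990}). This yields that $H\cap\Gamma$ is cocompact in $H$, and $\log(H\cap\Gamma)$ spans $\mathfrak h\cap\mathfrak g_{\mathbb Q}$ over $\mathbb Q$ by the argument from \textup{(i)} applied inside $H$. Conversely, if $H\cap\Gamma$ is a lattice in the connected, simply connected group $H$, then $\log(H\cap\Gamma)$ spans $\mathfrak h$ over $\mathbb R$, because a lattice is Zariski dense. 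Since these logarithms lie in $\mathfrak g_{\mathbb Q}$, it follows that $\mathfrak h$ is rational. The equivalence with closedness of $H\Gamma/\Gamma$ is \cite[Theorem~1.13]{Raghunathan1972}. \textbf{This step is the main obstacle}, namely the Mal'cev basis through a rational subalgebra; I would cite it rather than reprove it.

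For \textup{(iii)}, $G/H$ is a connected and simply connected nilpotent Lie group, since $H$ is a closed connected normal subgroup of a simply connected nilpotent group. By \textup{(ii)}, $H\Gamma$ is closed. Therefore $\pi(\Gamma)=H\Gamma/H$ is discrete, and it is cocompact as the image of a compact fundamental domain. Since $\log$ commutes with $\pi$ through $d\pi$, the rational span of $\log\pi(\Gamma)$ is the image $(\mathfrak g_{\mathbb Q}+\mathfrak h)/\mathfrak h$.

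Finally, \textup{(iv)} follows by applying \textup{(i)} with $j=2$ to get rationality of $[G,G]=\exp(\mathfrak g_2)$, and then applying \textup{(iii)} with $\mathfrak h=\mathfrak g_2$. The quotient $G/[G,G]$ is abelian, connected and simply connected, hence a vector group, and $\Gamma[G,G]/[G,G]$ is a lattice in it.
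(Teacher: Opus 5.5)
Your proposal is correct and follows the same route as the paper. The paper gives no argument of its own for this proposition; it only cites Corwin--Greenleaf (Chapter~5), Mal'cev, and Raghunathan (Chapter~II), which is where the strong Mal'cev basis facts you reduce everything to come from. The few steps you leave implicit are routine: in \textup{(iii)}, $\pi(\Gamma)$ is discrete because it is a countable closed subgroup of the Lie group $G/H$, and in \textup{(ii)}, passing from closedness of $H\Gamma/\Gamma$ to cocompactness of $H\cap\Gamma$ in $H$ uses the standard homeomorphism $H/(H\cap\Gamma)\cong H\Gamma/\Gamma$.
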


We next record two standard consequences of the structure theory of
connected and simply connected nilpotent Lie groups. The first concerns
connected Lie subgroups; see
\cite{CorwinGreenleaf1990}.

\begin{lemma}\label{lem:connected-subgroups-simply-connected}
Let $G$ be a connected and simply connected nilpotent Lie group. If
$H\leq G$ is a connected Lie subgroup with Lie algebra $\mathfrak h$, then
$
H=\exp(\mathfrak h).
$
In particular, $H$ is closed, embedded, and simply connected.
\end{lemma}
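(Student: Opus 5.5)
The strategy is to use the exponential map to turn the subgroup $H$ into a Lie subalgebra, and then exploit that $\exp:\mathfrak g\to G$ is a diffeomorphism. Write $\mathfrak h:=\operatorname{Lie}(H)$, a Lie subalgebra of $\mathfrak g$. We will show $H=\exp(\mathfrak h)$; the remaining claims then follow from this identity.

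\emph{Step 1: $\exp(\mathfrak h)$ is a closed subgroup.} The Baker--Campbell--Hausdorff formula is a finite Lie polynomial in $G$. Therefore, for $X,Y\in\mathfrak h$, the product $\exp(X)\exp(Y)=\exp(\mathrm{BCH}(X,Y))$ lies in $\exp(\mathfrak h)$, because $\mathfrak h$ is closed under brackets. Inverses are handled by $\exp(X)^{-1}=\exp(-X)$. Since $\exp$ is a diffeomorphism and $\mathfrak h$ is a closed linear subspace, the image $\exp(\mathfrak h)$ is closed in $G$. It is also an embedded submanifold diffeomorphic to $\mathfrak h\cong\mathbb R^{\dim\mathfrak h}$.

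\emph{Step 2: $H=\exp(\mathfrak h)$.} The inclusion $\exp(\mathfrak h)\subseteq H$ holds because $H$ is a Lie subgroup with Lie algebra $\mathfrak h$, so it contains every one-parameter subgroup $t\mapsto\exp(tX)$ with $X\in\mathfrak h$. For the reverse inclusion, note that $H$ is connected, so it is generated by any neighborhood of $e$ in $H$. The restriction of $\exp$ to $\mathfrak h$ gives such a neighborhood, namely $\exp(U)$ for a small neighborhood $U$ of $0$ in $\mathfrak h$. Hence $H$ is generated by $\exp(\mathfrak h)$, which is a group by Step 1. This gives $H\subseteq\exp(\mathfrak h)$.

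\emph{Step 3: closed, embedded, simply connected.} These properties transfer from Step 1 through the identity $H=\exp(\mathfrak h)$. One subtlety needs care: the intrinsic Lie group topology on $H$ must agree with the subspace topology. The map $\exp|_{\mathfrak h}:\mathfrak h\to H$ is a continuous bijective homomorphism-compatible chart for the intrinsic structure, and composing with the inclusion gives the diffeomorphism $\exp|_{\mathfrak h}$ onto its closed image in $G$. So the inclusion $H\hookrightarrow G$ is a homeomorphism onto its image, which shows $H$ is embedded and closed. Finally, $H$ is simply connected because it is diffeomorphic to the vector space $\mathfrak h$.

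\emph{Main obstacle.} The delicate point is Step 1. One must use the global bijectivity of $\exp$ together with the fact that BCH converges as a finite Lie polynomial in the nilpotent case. Without these, $\exp(\mathfrak h)$ need not be a subgroup.
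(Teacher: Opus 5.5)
Your proposal is correct: it shows that $\exp(\mathfrak h)$ is closed under products via BCH, and that the connected group $H$ is generated by the neighborhood $\exp_H(U)=\exp_G(U)$ of $e$. The paper gives no proof of its own and simply cites Corwin--Greenleaf for this standard fact. Your handling of the intrinsic versus subspace topology in Step 3 is also sound, because the inverse of the inclusion on its image is $\exp_H\circ(\exp_G|_{\mathfrak h})^{-1}$, which is a composition of continuous maps.
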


The second records the lattice and quotient properties that will be used
below; see \cite[Chapter~II]{Raghunathan1972}.

\begin{lemma}\label{lem:malcev}
Let $G$ be a connected and simply connected nilpotent Lie group with lattice
$\Gamma$.
\begin{enumerate}[label=\textup{(\roman*)}]
\item If $H\leq G$ is a connected Lie subgroup, then
$H\Gamma/\Gamma$ is closed if and only if $H\cap\Gamma$ is a lattice in
$H$. In that case, if $\pi_\Gamma:G\to G/\Gamma$ denotes the quotient map,
then $H$ is the connected component containing $e$ in
$
\pi_\Gamma^{-1}(H\Gamma/\Gamma)=H\Gamma.
$

\item Let $M\leq G$ be connected and rational, and let
$H\trianglelefteq M$ be connected and rational. Put
$
\Gamma_M:=M\cap\Gamma,
$
and let $\pi:M\to M/H$ be the quotient map. Then
$\pi(\Gamma_M)$ is a lattice in $M/H$ and
$
H\Gamma_M
=
\pi^{-1}\bigl(\pi(\Gamma_M)\bigr).
$
\end{enumerate}
\end{lemma}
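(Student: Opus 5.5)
For part (i), I will use Proposition~\ref{prop:malcev-package}(ii), which shows that for a connected Lie subgroup $H$ the two conditions ``$H\Gamma/\Gamma$ closed'' and ``$H\cap\Gamma$ a lattice in $H$'' are equivalent. It then remains to identify $H$ as the identity component of $H\Gamma$. Here $H\Gamma=\pi_\Gamma^{-1}(H\Gamma/\Gamma)$ is a closed subset of $G$, and it is the disjoint union of the cosets $H\gamma$ for $\gamma$ ranging over representatives of $H\backslash H\Gamma$. I will show that $H$ is open in $H\Gamma$, using the fact that $\Gamma$ is discrete and $H\cap\Gamma$ is cocompact in $H$.

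I would argue through a tubular neighbourhood. Choose a complementary subspace $\mathfrak m$ to $\mathfrak h$ in $\mathfrak g$, so that $(X,h)\mapsto \exp(X)h$ is a local diffeomorphism near $(0,e)$. Since $H\cap\Gamma$ is cocompact in $H$, we may write $H=K(H\cap\Gamma)$ with $K$ compact. Suppose $h\gamma$ approaches $H$ with $h\in H$. Then, after right-multiplying by elements of $H\cap\Gamma$ and translating by elements of $K$, we reduce to $\gamma$ itself lying close to $H$ in a compact region. Discreteness of $\Gamma$ then gives only finitely many candidates for $\gamma$. Shrinking the neighbourhood to exclude those with $\gamma\notin H$ shows that $H\Gamma\setminus H$ is closed. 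Since $H$ is also closed by Lemma~\ref{lem:connected-subgroups-simply-connected} and connected, $H$ is a clopen connected subset of $H\Gamma$ containing $e$, and hence it is the component of $e$.

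For part (ii), the first step is to observe that $\Gamma_M$ is a lattice in $M$ by Proposition~\ref{prop:malcev-package}(ii). Next I apply Proposition~\ref{prop:malcev-package} inside $M$, which is itself simply connected. The rationality of $\mathfrak h$ with respect to $\Gamma$ gives rationality with respect to $\Gamma_M$, because $\operatorname{span}_{\mathbb Q}\log\Gamma_M=\operatorname{Lie}(M)\cap\mathfrak g_{\mathbb Q}$. Proposition~\ref{prop:malcev-package}(iii) then shows that $\pi(\Gamma_M)$ is a lattice in $M/H$. The identity $H\Gamma_M=\pi^{-1}(\pi(\Gamma_M))$ is purely set-theoretic, since $\ker\pi=H$: the preimage of $\pi(\Gamma_M)$ is $\Gamma_M H=H\Gamma_M$, where the last equality uses the normality of $H$.

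The main obstacle I expect is the openness argument in (i). Specifically, one has to make the compactness reduction uniform, so that cosets $H\gamma$ with $\gamma\notin H$ stay a definite distance away from $H$ on every compact set. I would handle this by working modulo $H\cap\Gamma$: this lets me pass to the compact quotient and use discreteness there.
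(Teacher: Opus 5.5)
Your proof is correct. The paper gives no argument of its own for this lemma: it records it as a standard fact with a reference to Raghunathan, Chapter~II. Your proposal therefore fills in details rather than competing with a different route.

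For (ii), reducing to Proposition~\ref{prop:malcev-package} inside $M$ is exactly right. The key observation is that the Mal'cev structure of $(M,\Gamma_M)$ is $\operatorname{Lie}(M)\cap\mathfrak g_{\mathbb Q}$. Hence $\mathfrak h\cap\operatorname{Lie}(M)_{\mathbb Q}=\mathfrak h\cap\mathfrak g_{\mathbb Q}$, and rationality of the ideal $\mathfrak h$ passes from $\Gamma$ to $\Gamma_M$. The set identity then uses only $\ker\pi=H$ and normality.

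For (i), you take the equivalence from Proposition~\ref{prop:malcev-package}(ii), which the paper also only cites. That is legitimate in this framework, but note the asymmetry. The direction ``lattice $\Rightarrow$ closed'' is immediate, because $H\Gamma/\Gamma$ is the continuous image of the compact space $H/(H\cap\Gamma)$. The converse is where a Baire or open-mapping argument is needed.

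Your openness argument for the component claim is sound, with two tightenings.
\begin{enumerate}
\item The complementary subspace and tubular neighbourhood are unnecessary. The compact set $K\subseteq H$ with $H=K(H\cap\Gamma)$, together with discreteness of $\Gamma$, does all the work.
\item The argument should be phrased locally, not as ``approaching $H$''. Take $h\gamma$ converging to a point $h_0\in H$; by left translation one may assume $h_0=e$.
\end{enumerate}

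In that local form the proof runs as follows. Write $h=k\delta$ with $k\in K$ and $\delta\in H\cap\Gamma$. Then $\delta\gamma\in\Gamma\cap K^{-1}V$ for a fixed compact neighbourhood $V$ of $h_0$, and this is a finite set. Shrink $V$ so that it avoids the finitely many compact sets $K\gamma'$ with $\gamma'\in\Gamma\cap K^{-1}V$ and $\gamma'\notin H$. Each such set is disjoint from $H$ because $K\subseteq H$. For $h\gamma$ in the shrunken neighbourhood, this forces $\delta\gamma\in H$, and hence $h\gamma\in H$.

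This is precisely the ``definite distance on compact sets'' uniformity you flagged as the main obstacle. It only needs to hold locally, and the finiteness above delivers it directly.
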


The following lemma identifies the compact torus used in the central induction.

\begin{lemma}\label{lem:rational-torus}
Let $G$ be a connected and simply connected nilpotent Lie group with lattice
$\Gamma$.
\begin{enumerate}[label=\textup{(\roman*)}]
\item If $H,Z\leq G$ are
connected rational subgroups
and $Z$ is central, then
$HZ$ is a connected rational subgroup, $H\trianglelefteq HZ$, and
\[
\operatorname{Lie}(HZ)
=
\operatorname{Lie}(H)+\operatorname{Lie}(Z).
\]
\item Let $M\leq G$ be connected and rational, and let
$H\trianglelefteq M$ be connected and rational. Assume that
$[M,M]\subseteq H$. Put
\[
\mathfrak m:=\operatorname{Lie}(M),
\qquad
\mathfrak h:=\operatorname{Lie}(H),
\qquad
\Gamma_M:=M\cap\Gamma,
\]
and let $\pi:M\to M/H$ be the quotient map. Then
\[
\Omega
:=
\log_{M/H}\bigl(\pi(\Gamma_M)\bigr)
\subseteq
\mathfrak m/\mathfrak h
\]
is a full lattice,
$H\Gamma_M$ is a closed normal subgroup of $M$, and
\begin{equation}\label{eq:torus-quotient-general}
M/(H\Gamma_M)
\cong
(\mathfrak m/\mathfrak h)/\Omega
\end{equation}
is a compact connected torus. Under this identification, the differentials
of
the characters of this torus
are exactly the elements of $\Omega^*$.
\end{enumerate}
\end{lemma}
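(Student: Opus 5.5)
For part (i), I will work at the level of Lie algebras and use Proposition~\ref{prop:malcev-package} together with Lemma~\ref{lem:connected-subgroups-simply-connected}. Put $\mathfrak h:=\operatorname{Lie}(H)$ and $\mathfrak z:=\operatorname{Lie}(Z)$.

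\emph{The subgroup $HZ$.} Since $Z$ is central, $\mathfrak z$ lies in the center of $\mathfrak g$. Hence $\mathfrak h+\mathfrak z$ is a Lie subalgebra, and $\mathfrak h$ is an ideal in it, because $[\mathfrak h+\mathfrak z,\mathfrak h]=[\mathfrak h,\mathfrak h]\subseteq\mathfrak h$. Centrality of $Z$ also makes $HZ$ a subgroup, connected as the image of $H\times Z$, in which $H$ is normal. By the Baker--Campbell--Hausdorff formula with $Y\in\mathfrak z$ central, $\exp(X)\exp(Y)=\exp(X+Y)$. Therefore $HZ=\exp(\mathfrak h+\mathfrak z)$, which gives the formula for $\operatorname{Lie}(HZ)$.

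\emph{Rationality of $HZ$.} The sum of two rational subspaces is rational: $\mathfrak h+\mathfrak z$ is spanned by $(\mathfrak h\cap\mathfrak g_{\mathbb Q})+(\mathfrak z\cap\mathfrak g_{\mathbb Q})\subseteq(\mathfrak h+\mathfrak z)\cap\mathfrak g_{\mathbb Q}$. This proves (i).

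For part (ii), I will pass to the quotient $M/H$, which is abelian.

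\emph{The lattice $\Omega$.} Equip $M$ with the lattice $\Gamma_M$ (Proposition~\ref{prop:malcev-package}(ii)); its rational structure is $\mathfrak m\cap\mathfrak g_{\mathbb Q}$, and $\mathfrak h$ is a rational ideal of $\mathfrak m$. By Proposition~\ref{prop:malcev-package}(iii) applied inside $M$, the group $M/H$ is connected and simply connected and $\pi(\Gamma_M)$ is a lattice in it. Since $[M,M]\subseteq H$, the group $M/H$ is abelian, so $\log_{M/H}$ is a group isomorphism onto the vector group $\mathfrak m/\mathfrak h$. Consequently $\Omega$ is a discrete cocompact subgroup, that is, a full lattice.

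\emph{The torus quotient.} By Lemma~\ref{lem:malcev}(ii), $H\Gamma_M=\pi^{-1}(\pi(\Gamma_M))$. This set is closed as the preimage of a discrete, hence closed, subgroup, and it is normal as the preimage of a subgroup of an abelian group. The composite
\[
M\to M/H\xrightarrow{\log}\mathfrak m/\mathfrak h\to(\mathfrak m/\mathfrak h)/\Omega
\]
is then a continuous surjective homomorphism with kernel $H\Gamma_M$. I will check that it induces a topological isomorphism; the quotient map $M/H\to\mathfrak m/\mathfrak h$ is a homeomorphism, so this is just the second isomorphism theorem for topological groups. The target is compact and connected.

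\emph{Characters.} Characters of $V/\Omega$ into $\mathbb R/\mathbb Z$ are exactly the maps $v\mapsto\ell(v)\bmod\mathbb Z$ with $\ell\in V^*$ and $\ell(\Omega)\subseteq\mathbb Z$. This holds because a continuous homomorphism lifts to a continuous additive map $V\to\mathbb R$, which is linear. Their differentials are therefore exactly $\Omega^*$.

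The only delicate point, and the step I expect to be the main obstacle, is the bookkeeping of rational structures when applying Proposition~\ref{prop:malcev-package}(iii) to $M$ rather than to $G$. Specifically, I must verify that $\mathfrak h$ is rational with respect to $\Gamma_M$, and not merely with respect to $\Gamma$. This follows from Proposition~\ref{prop:malcev-package}(ii), which gives $\operatorname{span}_{\mathbb Q}\log\Gamma_M=\mathfrak m\cap\mathfrak g_{\mathbb Q}$, together with the inclusion $\mathfrak h\subseteq\mathfrak m$.
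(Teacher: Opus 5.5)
Your proposal is correct and follows the paper's proof essentially step by step. In (i) the paper likewise uses centrality of $Z$ for the subgroup and normality claims and the Mal'cev criterion for rationality; in (ii) it likewise identifies $M/H$ with the vector group $\mathfrak m/\mathfrak h$, uses Lemma~\ref{lem:malcev}(ii) to write $H\Gamma_M=\pi^{-1}(\pi(\Gamma_M))$, and then applies the quotient isomorphism theorem and torus duality, while your extra details (the BCH identity giving $HZ=\exp(\mathfrak h+\mathfrak z)$, and the check that $\mathfrak h$ is rational relative to $\Gamma_M$ because $\mathfrak h\cap\mathfrak m\cap\mathfrak g_{\mathbb Q}=\mathfrak h\cap\mathfrak g_{\mathbb Q}$) fill in points the paper leaves implicit.
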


\begin{proof}
For (i), the multiplication map $H\times Z\to G$, $(h,z)\mapsto hz$, is
continuous and has image $HZ$. Since $H\times Z$ is connected, $HZ$ is
connected. Since $Z$ is central, $HZ$ is a subgroup and
$H$ is normal in $HZ$. The Lie algebra of $HZ$ is the sum of the rational
subalgebras $\operatorname{Lie}(H)$ and $\operatorname{Lie}(Z)$.
Rationality then follows from the Mal'cev criterion.

For (ii), $M$ and $H$ are simply connected by
Lemma~\ref{lem:connected-subgroups-simply-connected}. By
Proposition~\ref{prop:malcev-package}, $\Gamma_M$ is a lattice in $M$ and
$\pi(\Gamma_M)$ is a lattice in $M/H$. Since $[M,M]\subseteq H$, the
quotient $M/H$ is
a simply connected abelian Lie group and is therefore
identified, via the exponential map, with the vector group
$\mathfrak m/\mathfrak h$.
Thus $\Omega$ is a full lattice.
By Lemma~\ref{lem:malcev}(ii),
$H\Gamma_M=\pi^{-1}(\pi(\Gamma_M))$.
The subgroup $\pi(\Gamma_M)$ is closed because it is a lattice and normal
because $M/H$ is abelian. Hence its inverse image $H\Gamma_M$ is closed
and normal. The quotient isomorphism theorem gives
\[
M/(H\Gamma_M)\cong(M/H)/\pi(\Gamma_M).
\]
Under the exponential identification $M/H\cong\mathfrak m/\mathfrak h$,
the subgroup $\pi(\Gamma_M)$ corresponds to $\Omega$, proving
\eqref{eq:torus-quotient-general}. The description of the character
differentials then follows from the standard duality between a vector torus
and the dual lattice of its defining lattice.
\end{proof}

\begin{lemma}\label{lem:character-lift}
Assume the hypotheses of Lemma~\ref{lem:rational-torus}(ii), and put
$K:=M/(H\Gamma_M)$. For every continuous character
$\chi:K\to\mathbb R/\mathbb Z$,
the pullback of $\chi$ along the quotient map
$M\to K$
has a unique continuous lift
$\widetilde f_\chi:M\to\mathbb R$ satisfying
$\widetilde f_\chi(e)=0$. This lift is a group homomorphism. Its differential
$\ell_\chi:=d_e\widetilde f_\chi$ annihilates $\mathfrak h$ and descends to
an element of $\Omega^*$. As $\chi$ ranges over the character group
$\widehat K$, the descended functionals are exactly the elements of
$\Omega^*$ and therefore span $(\mathfrak m/\mathfrak h)^*$ over
$\mathbb R$.
\end{lemma}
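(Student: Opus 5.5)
The plan is covering-space theory for $M\to K$, followed by the identification from Lemma~\ref{lem:rational-torus}(ii). Let $q:M\to K=M/(H\Gamma_M)$ be the quotient homomorphism. By Lemma~\ref{lem:connected-subgroups-simply-connected}, $M$ is connected and simply connected, and $\mathbb R\to\mathbb R/\mathbb Z$ is a covering map. Hence the continuous map $\chi\circ q:M\to\mathbb R/\mathbb Z$ has a unique continuous lift $\widetilde f_\chi:M\to\mathbb R$ with $\widetilde f_\chi(e)=0$.

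To see that the lift is a homomorphism, fix $g\in M$ and compare the two maps $h\mapsto\widetilde f_\chi(gh)$ and $h\mapsto\widetilde f_\chi(g)+\widetilde f_\chi(h)$. Both are continuous lifts of $h\mapsto\chi(q(g))+\chi(q(h))=\chi(q(gh))$, and they agree at $h=e$. By uniqueness of lifts on the connected space $M$, they coincide.

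Next, $\widetilde f_\chi$ is a continuous homomorphism $M\to\mathbb R$, hence smooth, so $\ell_\chi:=d_e\widetilde f_\chi$ is a Lie algebra homomorphism $\mathfrak m\to\mathbb R$. On the connected group $H$, $\widetilde f_\chi$ takes values in $\mathbb Z$, because $q(H)=0$. So it is constant, hence zero, on $H$. Therefore $\ell_\chi$ annihilates $\mathfrak h$, and $\widetilde f_\chi$ factors through $M/H\cong\mathfrak m/\mathfrak h$.

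On this vector group, a continuous homomorphism to $\mathbb R$ is linear: it is additive and continuous. So $\widetilde f_\chi\circ\exp$ equals the descended functional $\bar\ell_\chi$. For $\gamma\in\Gamma_M$ we have $\widetilde f_\chi(\gamma)\in\mathbb Z$, since $\chi(q(\gamma))=0$. Hence $\bar\ell_\chi(\Omega)\subseteq\mathbb Z$, that is, $\bar\ell_\chi\in\Omega^*$.

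For surjectivity, let $\ell\in\Omega^*$. The map $v\mapsto \ell(v)\bmod\mathbb Z$ on $(\mathfrak m/\mathfrak h)/\Omega$ is a continuous character. Transported through the isomorphism \eqref{eq:torus-quotient-general}, it gives $\chi\in\widehat K$. The lift of this $\chi$ is $\ell\circ\log_{M/H}\circ\pi$, by uniqueness of lifts, so its descended functional is $\ell$. This is exactly the duality statement already recorded in Lemma~\ref{lem:rational-torus}(ii).

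Finally, $\Omega$ is a full lattice, so $\Omega^*$ is a full lattice in $(\mathfrak m/\mathfrak h)^*$. In particular, it contains a basis, hence spans $(\mathfrak m/\mathfrak h)^*$ over $\mathbb R$.

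The only point needing care is the identification $\widetilde f_\chi=\bar\ell_\chi\circ\log_{M/H}\circ\pi$. This rests on the vector-group identification of $M/H$ from Lemma~\ref{lem:rational-torus}(ii). Everything else is routine lifting theory.
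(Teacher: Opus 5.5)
Your proposal is correct and follows essentially the same route as the paper. You lift $\chi\circ q$ through $\mathbb R\to\mathbb R/\mathbb Z$ using simple connectivity of $M$, show the lift is a homomorphism that vanishes on the connected group $H$ and is integral on $\Gamma_M$, and use $\Omega^*$-duality for both surjectivity and spanning. The differences are cosmetic: you prove the homomorphism property by fixing $g$ and comparing two lifts of $h\mapsto\chi(q(gh))$, whereas the paper observes that the integer-valued defect on $M\times M$ is continuous and hence zero. You also make explicit the identification $\widetilde f_\chi=\bar\ell_\chi\circ\log_{M/H}\circ\pi$, which the paper uses only implicitly.
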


\begin{proof}
Since $M$ is simply connected, the pullback of $\chi$ lifts uniquely through
the covering map
$\mathbb R\to\mathbb R/\mathbb Z$
after fixing the value at $e$. For
$x,y\in M$, the defect
\[
\widetilde f_\chi(xy)
-
\widetilde f_\chi(x)
-
\widetilde f_\chi(y)
\]
is integer-valued and continuous on the connected space $M\times M$. It
vanishes at $(e,e)$ and is therefore identically zero. Hence
$\widetilde f_\chi$ is a continuous group homomorphism and is therefore
smooth. Since the pullback of $\chi$ vanishes modulo $\mathbb Z$ on
$H\Gamma_M$, the lift takes integer values on $H\Gamma_M$. Its restriction
to the connected group $H$ is therefore constant and is zero because
$\widetilde f_\chi(e)=0$. Thus
$\ell_\chi:=d_e\widetilde f_\chi$ annihilates $\mathfrak h$ and induces a
functional on $\mathfrak m/\mathfrak h$. Moreover,
$\widetilde f_\chi(\Gamma_M)\subseteq\mathbb Z$, so the induced functional
takes integer values on $\Omega$ and hence belongs to $\Omega^*$.
Conversely, every element of $\Omega^*$ defines a character of
$(\mathfrak m/\mathfrak h)/\Omega$. The final spanning assertion follows
because $\Omega^*$ is a full lattice in $(\mathfrak m/\mathfrak h)^*$.
\end{proof}

\begin{corollary}\label{cor:identity-orbit-comparison}
Let $G$ be a connected and simply connected nilpotent Lie group with lattice
$\Gamma$, and let $H,K\leq G$ be connected Lie subgroups. If both
$H\Gamma/\Gamma$ and $K\Gamma/\Gamma$ are closed and
$H\Gamma/\Gamma=K\Gamma/\Gamma$, then $H=K$.
\end{corollary}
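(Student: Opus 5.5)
The plan is to reduce the statement to Lemma~\ref{lem:malcev}(i), which recovers a connected rational subgroup from its closed orbit. Write $Y:=H\Gamma/\Gamma=K\Gamma/\Gamma$ and let $\pi_\Gamma:G\to G/\Gamma$ be the quotient map. Since both orbits are closed, Lemma~\ref{lem:malcev}(i) applies to $H$ and to $K$ separately. It shows that $H$ is the connected component containing $e$ of $\pi_\Gamma^{-1}(Y)=H\Gamma$, and that $K$ is the connected component containing $e$ of $\pi_\Gamma^{-1}(Y)=K\Gamma$.

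The key step is to note that the preimage depends only on $Y$. We have
\[
H\Gamma=\pi_\Gamma^{-1}(Y)=K\Gamma,
\]
since $\pi_\Gamma^{-1}(H\Gamma/\Gamma)=H\Gamma$ by saturation, and likewise $\pi_\Gamma^{-1}(K\Gamma/\Gamma)=K\Gamma$. So $H$ and $K$ are both the identity component of the same closed subset $H\Gamma=K\Gamma$ of $G$. Connected components of a fixed topological space are uniquely determined, hence $H=K$.

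The only point that needs care is the component identification in Lemma~\ref{lem:malcev}(i), which is where simple connectivity of $G$ and discreteness of $\Gamma$ enter. Since we may assume that lemma, no real obstacle remains.

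If one wanted to avoid that lemma, an alternative route is through Lie algebras. Near $e$, the set $H\Gamma$ is a disjoint union of translates $H\gamma$, so its intersection with a small neighborhood of $e$ is a neighborhood of $e$ in $H$. Hence $\operatorname{Lie}(H)$ is determined by the germ of $H\Gamma$ at $e$, and the same holds for $K$. Equal germs give $\operatorname{Lie}(H)=\operatorname{Lie}(K)$, and Lemma~\ref{lem:connected-subgroups-simply-connected} then gives $H=\exp(\operatorname{Lie}(H))=K$. The delicate part of this route is showing that the translates $H\gamma$ with $\gamma\notin H$ stay away from $e$, which uses closedness of $H\Gamma$. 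I would present the first argument and mention the second only as a remark.
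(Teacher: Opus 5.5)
Your proposal is correct and is essentially the paper's argument: both orbits have the same full preimage $H\Gamma=K\Gamma$ under $\pi_\Gamma$, and Lemma~\ref{lem:malcev}(i) identifies $H$ and $K$ as the identity component of that set, so $H=K$. Your Lie-algebra remark is not needed, and you were right to keep it as an aside.
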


\begin{proof}
The inverse images of the two closed orbits under the
quotient map $G\to G/\Gamma$ coincide. By
Lemma~\ref{lem:malcev}(i), their identity components are $H$ and $K$.
\end{proof}

\subsubsection{Horizontal tori and horizontal characters}
Let $G$ be a connected and simply connected nilpotent Lie group with lattice
$\Gamma$. Denote the abelianization map by
$q_{\mathrm{ab}}:G\to G_{\mathrm{ab}}:=G/[G,G]$, and put
$\Gamma_{\mathrm{ab}}:=q_{\mathrm{ab}}(\Gamma)$. By
Proposition~\ref{prop:malcev-package}(iv), $\Gamma_{\mathrm{ab}}$ is a
lattice in the vector group $G_{\mathrm{ab}}$. Let
$\mathfrak g_{\mathrm{ab}}:=\mathfrak g/[\mathfrak g,\mathfrak g]$ and
define the horizontal lattice by
\begin{equation}\label{eq:horizontal-lattice}
\Lambda_{\mathrm{hor}}
:=
\log_{G_{\mathrm{ab}}}(\Gamma_{\mathrm{ab}})
\subseteq
\mathfrak g_{\mathrm{ab}}.
\end{equation}
The \emph{horizontal torus} of $G/\Gamma$ is
\[
G/[G,G]\Gamma
\cong
G_{\mathrm{ab}}/\Gamma_{\mathrm{ab}}
\cong
\mathfrak g_{\mathrm{ab}}/\Lambda_{\mathrm{hor}}.
\]

A functional $\eta\in\mathfrak g^*$ is \emph{horizontal} if
$\eta([\mathfrak g,\mathfrak g])=0$.
Such a functional induces a unique functional on
$\mathfrak g_{\mathrm{ab}}$, which we also denote by $\eta$.
A horizontal functional is \emph{rational} if
$\eta(\mathfrak g_{\mathbb Q})\subseteq\mathbb Q$; equivalently, the
induced functional takes rational values on $\Lambda_{\mathrm{hor}}$.
It is
\emph{integral} if $\eta(\Lambda_{\mathrm{hor}})\subseteq\mathbb Z$.
Every rational horizontal functional has a positive
integer multiple that is integral.

If $\eta$ is integral and horizontal, then
\[
\rho_\eta(g\Gamma)
:=
\eta\bigl(\log_{G_{\mathrm{ab}}}(q_{\mathrm{ab}}(g))\bigr)
+
\mathbb Z
\]
defines a continuous map from $G/\Gamma$ to
$\mathbb R/\mathbb Z$. This map is the pullback of a continuous character
of the horizontal torus.
We call $\rho_\eta$ the horizontal
character associated with $\eta$. Equivalently,
$\rho_\eta(\exp X\Gamma)=\eta(X)+\mathbb Z$.
If $\eta\neq0$, this character, and hence $\rho_\eta$, is surjective.

\subsection{Polynomial mappings and equidistribution}

\subsubsection{Integer-valued polynomials}

A polynomial
$q\in\mathbb Q[t_1,\ldots,t_\ell]$
is \emph{integer-valued} if
$q(\mathbb Z^\ell)\subseteq\mathbb Z$.

If
$\mathbf r:\mathbb Z^s\longrightarrow\mathbb Z^\ell
$
has integer-valued polynomial coordinates, then
$q\circ\mathbf r$ is again an integer-valued polynomial. We use this fact
when reparameterizing polynomial mappings.

\subsubsection{Polynomial mappings into nilpotent groups}
We use the following ordered product convention for the polynomial mappings needed below.
Let $G$ be a nilpotent group. A map
$h:\mathbb Z^\ell\to G
$
is a \emph{polynomial mapping} if there exist fixed elements
$a_1,\ldots,a_r\in G$ and integer-valued polynomials
$q_1,\ldots,q_r$ such that
\begin{equation}\label{eq:Leibman-polynomial-definition}
h(\mathbf n)
=
a_1^{q_1(\mathbf n)}
\cdots
a_r^{q_r(\mathbf n)}.
\end{equation}

Such representations need not be unique, and factor order matters when
$G$ is nonabelian; constant exponent polynomials are allowed.

Polynomial mappings are closed under products, inverses, postcomposition
with homomorphisms, and polynomial reparameterizations; see
\cite{Leibman1998,Leibman2002}.

\subsubsection{Ordinary polynomial maps}

Let $V$ be a finite-dimensional real vector space. A map
$P:\mathbb R^m\to V$ is an \emph{ordinary polynomial map} if its coordinate
functions in one, and hence every, basis of $V$ are real polynomials.

\begin{lemma}\label{lem:polynomial-identity}
Let $V$ be a finite-dimensional real vector space, let $W\leq V$ be a
linear subspace, and let
$P:\mathbb R^m\to V$
be an ordinary polynomial map. If
$P(\mathbf n)\in W$ for every $\mathbf n\in\mathbb Z^m$,
then
$P(\mathbf t)\in W$
for every $\mathbf t\in\mathbb R^m$.
In particular, an ordinary polynomial map that vanishes on
$\mathbb Z^m$ vanishes identically.
\end{lemma}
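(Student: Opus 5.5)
The plan is to reduce to the scalar case and then induct on the number of variables. Choose a basis of $V$ whose first $\dim W$ vectors span $W$, or equivalently choose linear functionals $\lambda_1,\ldots,\lambda_s\in V^*$ with $W=\bigcap_j\ker\lambda_j$. Each composite $\lambda_j\circ P$ is a real polynomial in $m$ variables that vanishes on $\mathbb Z^m$. It therefore suffices to prove the final assertion for real-valued polynomials, namely that a polynomial $f\in\mathbb R[t_1,\ldots,t_m]$ vanishing on $\mathbb Z^m$ is the zero polynomial. Once this holds, $\lambda_j(P(\mathbf t))=0$ for all $\mathbf t\in\mathbb R^m$ and all $j$, so $P(\mathbf t)\in W$.

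For the scalar claim I will induct on $m$. When $m=1$, a nonzero real polynomial of degree $D$ has at most $D$ roots, whereas $\mathbb Z$ is infinite, so $f=0$. For the inductive step, write
\[
f(t_1,\ldots,t_m)=\sum_{k=0}^{D} f_k(t_1,\ldots,t_{m-1})\,t_m^k .
\]
Fix $(n_1,\ldots,n_{m-1})\in\mathbb Z^{m-1}$. The one-variable polynomial $t_m\mapsto f(n_1,\ldots,n_{m-1},t_m)$ vanishes on $\mathbb Z$, so by the case $m=1$ all of its coefficients $f_k(n_1,\ldots,n_{m-1})$ are zero. Each $f_k$ therefore vanishes on $\mathbb Z^{m-1}$, and the induction hypothesis gives $f_k=0$. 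Hence $f=0$.

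The argument is elementary. The only step requiring any care is to extract coefficients, rather than merely values, in the inductive step, which the finite-roots argument provides.
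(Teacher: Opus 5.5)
Your proposal is correct and follows essentially the same route as the paper: the paper also cuts out $W$ with linear functionals (elements of $(V/W)^*$ composed with the quotient map), reduces to scalar polynomials vanishing on $\mathbb Z^m$, and concludes by induction on $m$. The only difference is that you write out the induction step via coefficient extraction, which the paper merely cites as ``by induction on $m$''.
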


\begin{proof}
Let $\pi:V\to V/W$ be the quotient map. For every
$\ell\in(V/W)^*$, the scalar polynomial
$\ell\circ\pi\circ P$ vanishes on $\mathbb Z^m$ and hence vanishes
identically, by induction on $m$. Since $(V/W)^*$ separates points,
$\pi\circ P=0$.
\end{proof}

\begin{lemma}\label{lem:polynomial-antiderivative}
Let $Q:\mathbb R^m\to\mathbb R$ be continuously differentiable. If every
partial derivative $\partial_jQ$ is an ordinary real polynomial, then
$Q$ is an ordinary real polynomial. More precisely,
\[
Q(\mathbf t)
=
Q(\mathbf 0)
+
\int_0^1
\sum_{j=1}^m
t_j\,\partial_jQ(s\mathbf t)\,ds.
\]
\end{lemma}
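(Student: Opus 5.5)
The plan is to verify the integral formula directly by the fundamental theorem of calculus along the segment from $\mathbf 0$ to $\mathbf t$, and then read off polynomiality. Fix $\mathbf t\in\mathbb R^m$ and put $\phi(s):=Q(s\mathbf t)$ for $s\in[0,1]$. Since $Q$ is continuously differentiable, $\phi$ is continuously differentiable. The chain rule gives
\[
\phi'(s)=\sum_{j=1}^m t_j\,\partial_jQ(s\mathbf t).
\]
Hence $Q(\mathbf t)-Q(\mathbf 0)=\phi(1)-\phi(0)=\int_0^1\phi'(s)\,ds$, which is exactly the displayed identity. Only continuity of the partials is needed here, and it holds by hypothesis (indeed polynomial partials are automatically continuous).

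It remains to show that the right-hand side is an ordinary real polynomial in $\mathbf t$. Each $\partial_jQ$ is a polynomial, so I will write it as a finite sum of monomials $c_\beta\mathbf t^\beta$. Then
\[
t_j\,\partial_jQ(s\mathbf t)=\sum_\beta c_\beta\,s^{|\beta|}\,t_j\mathbf t^\beta.
\]
Integrating in $s$ over $[0,1]$ termwise replaces $s^{|\beta|}$ by $1/(|\beta|+1)$. The result is a finite linear combination of monomials in $\mathbf t$, hence a polynomial. Adding the constant $Q(\mathbf 0)$ preserves this, so $Q$ is a polynomial.

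There is no genuine obstacle. The only point deserving care is that termwise integration is legitimate, which it is because the sum is finite. Note also that no compatibility condition on the $\partial_jQ$ (such as $\partial_i\partial_jQ=\partial_j\partial_iQ$) needs to be checked, since $Q$ is given in advance and the formula simply recovers it.
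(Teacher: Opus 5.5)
Your proposal is correct and follows essentially the same route as the paper: apply the fundamental theorem of calculus to $s\mapsto Q(s\mathbf t)$ via the chain rule, then observe that the right-hand side is polynomial in $\mathbf t$. Your explicit monomial expansion, where integrating in $s$ turns $s^{|\beta|}$ into $1/(|\beta|+1)$, simply spells out the step the paper states in one line.
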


\begin{proof}
Applying the fundamental theorem of calculus to
$s\mapsto Q(s\mathbf t)$ gives the displayed identity. Its right-hand
side is polynomial in $\mathbf t$ because each $\partial_jQ$ is
polynomial.
\end{proof}

\subsubsection{Uniform and well distribution}

Let $X$ be a compact metric space and let $m$ be a Borel probability
measure on $X$. A sequence $(x_n)_{n\in\mathbb N}$ is
\emph{uniformly distributed with respect to $m$} if
\[
\lim_{N\to\infty}
\frac1N\sum_{n=1}^N f(x_n)
=
\int_X f\,dm
\]
for every $f\in C(X)$.

Let $L\leq\mathbb Z^r$ be a subgroup. A \emph{F{\o}lner sequence} in
$L$ is a sequence $(\Phi_N)_{N\geq1}$ of nonempty finite subsets of $L$
such that
\[
\lim_{N\to\infty}
\frac{|(\Phi_N+\mathbf u)\mathbin{\triangle}\Phi_N|}
     {|\Phi_N|}
=
0
\qquad
\text{for every }\mathbf u\in L.
\]
A mapping $x:L\to X$ is \emph{well distributed with respect to $m$} if,
for every F{\o}lner sequence $(\Phi_N)$ in $L$ and every $f\in C(X)$,
\[
\lim_{N\to\infty}
\frac1{|\Phi_N|}
\sum_{\mathbf n\in\Phi_N}f(x(\mathbf n))
=
\int_X f\,dm.
\]

For a compact abelian group $K$ with Haar probability measure $m_K$, the Weyl character criterion states that
$(x_n)_{n\in\mathbb N}$ is
uniformly distributed with respect to $m_K$
if and only if
\[
\lim_{N\to\infty}
\frac1N\sum_{n=1}^N
\exp\bigl(2\pi i\chi(x_n)\bigr)
=
0
\]
for every nontrivial continuous character
$\chi:K\to\mathbb R/\mathbb Z$.

For a vector torus $V/\Omega$, its character group is identified with the
dual lattice $\Omega^*$ defined above; the character corresponding to
$\ell\in\Omega^*$ is
\[
\chi_\ell(v+\Omega)=\ell(v)+\mathbb Z.
\]

We use the following polynomial form of Weyl's criterion.

\begin{proposition}\label{prop:weyl-polynomial-criterion}
Let
$Q(t)=\beta_0+\beta_1t+\cdots+\beta_dt^d\in\mathbb R[t].
$
Then the sequence $(Q(n)+\mathbb Z)_{n\in\mathbb N}$ is uniformly
distributed in $\mathbb R/\mathbb Z$ if and only if at least one of the
coefficients $\beta_1,\ldots,\beta_d$ is irrational.
More generally, let $V$ be a finite-dimensional real vector space, let
$\Omega\leq V$ be a full lattice, and let
$P(t)=v_0+v_1t+\cdots+v_dt^d
$
be a polynomial with coefficients in $V$. Then
$(P(n)+\Omega)_{n\in\mathbb N}$ is uniformly distributed in $V/\Omega$
if and only if, for every nonzero $\ell\in\Omega^*$, at least one of the
numbers $\ell(v_1),\ldots,\ell(v_d)$ is irrational.
\end{proposition}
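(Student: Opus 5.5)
The plan is to prove the one-dimensional statement first and then deduce the vector version by applying it to the linear functionals in $\Omega^*$.

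For the one-dimensional case I will use Weyl's criterion on $\mathbb R/\mathbb Z$, whose characters are $x\mapsto hx$ with $h\in\mathbb Z\setminus\{0\}$. Equidistribution is therefore equivalent to
\[
\frac1N\sum_{n=1}^N e\bigl(hQ(n)\bigr)\to0
\qquad\text{for every } h\neq0,
\]
where $e(x)=\exp(2\pi i x)$.

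\emph{Sufficiency.} Suppose some $\beta_j$ with $j\geq1$ is irrational. Then $hQ-h\beta_0$ has an irrational nonconstant coefficient for every $h\ne0$, and the constant $\beta_0$ only multiplies the average by a unimodular factor. So it suffices to invoke Weyl's equidistribution theorem for polynomials with an irrational nonconstant coefficient. I would prove it in two stages.
\begin{enumerate}
\item Suppose the leading coefficient is irrational. Induct on the degree using van der Corput's inequality: the differenced polynomials $Q(n+u)-Q(n)$, for $u\neq0$, have degree $d-1$ and leading coefficient $du\beta_d$, which is still irrational. The base case $d=1$ is the geometric-series bound for $\sum e(n\theta)$ with $\theta\notin\mathbb Q$.
\item In general, let $j$ be the largest index with $\beta_j$ irrational. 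The higher coefficients $\beta_{j+1},\dots,\beta_d$ are rational; let $D$ be a common denominator. Split $n$ into residue classes $n=Dm+r$. On each class the higher-degree part is constant modulo $1$. The remaining polynomial in $m$ has degree $j$ and leading coefficient $D^j\beta_j$, which is irrational, so stage 1 applies to each class.
\end{enumerate}

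\emph{Necessity.} Suppose $\beta_1,\dots,\beta_d$ are all rational, and let $D$ be a common denominator. Then $Q(n)-\beta_0$ is $D$-periodic modulo $1$ and takes values in $\frac1D\mathbb Z/\mathbb Z$. Hence $e\bigl(D(Q(n)-\beta_0)\bigr)=1$ for all $n$, so the character $h=D$ has average $e(D\beta_0)$, of modulus $1$, and the sequence is not equidistributed.

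\emph{Vector version.} The characters of $V/\Omega$ are $\chi_\ell$ with $\ell\in\Omega^*$, and
\[
\chi_\ell\bigl(P(n)+\Omega\bigr)=\ell(P(n))+\mathbb Z,
\qquad
\ell(P(n))=\ell(v_0)+\ell(v_1)n+\cdots+\ell(v_d)n^d .
\]
By Weyl's criterion on $V/\Omega$, equidistribution holds if and only if $\frac1N\sum_n e(\ell(P(n)))\to0$ for all nonzero $\ell\in\Omega^*$.
\begin{itemize}
\item If every such $\ell$ has an irrational coefficient among $\ell(v_1),\dots,\ell(v_d)$, the scalar sufficiency argument with $h=1$ gives the required vanishing.
\item Conversely, if some nonzero $\ell$ has all of $\ell(v_1),\dots,\ell(v_d)$ rational, take the common denominator $D$. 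Then $D\ell\in\Omega^*$ is a nonzero character whose average has modulus $1$, so equidistribution fails.
\end{itemize}

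The main technical step is the van der Corput induction in stage 1, specifically verifying that differencing preserves the irrationality of the leading coefficient. The rest is bookkeeping with residue classes and duality. Since the statement is classical, I would cite Weyl (or Kuipers–Niederreiter) for the scalar theorem and present only the duality reduction.
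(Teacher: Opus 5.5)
Your proposal is correct and follows essentially the same route as the paper. Both use Weyl's theorem for the case of an irrational nonconstant coefficient, clear denominators in the all-rational case to get a nontrivial character whose exponential average is a unimodular constant, and obtain the vector statement by applying the scalar criterion to $\ell\circ P$ for each nonzero $\ell\in\Omega^*$; your van der Corput induction and residue-class reduction simply unpack the Weyl theorem that the paper cites directly.
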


\begin{proof}
The implication for a polynomial with an irrational nonconstant
coefficient follows from Weyl's equidistribution theorem
\cite{Weyl1916}. If all nonconstant coefficients are rational, clearing
denominators yields a nontrivial character whose exponential average is a
nonzero constant. Applying the scalar criterion to $\ell\circ P$ for
every nonzero $\ell\in\Omega^*$ proves the vector statement.
\end{proof}

The irrationality condition in
Proposition~\ref{prop:weyl-polynomial-criterion} is preserved under every
affine reparameterization $n\mapsto an+b$, where
$a\in\mathbb Z\setminus\{0\}$ and $b\in\mathbb Z$. 
Indeed, for each nonzero $\ell\in\Omega^*$, let $j$ be the largest index
for which $\ell(v_j)$ is irrational. The coefficient of $n^j$ in
$\ell(P(an+b))$ is $a^j\ell(v_j)$ plus a rational number and is therefore
irrational.

\section{Affine reduction and exact interpolation}
\label{sec:affine-suspension}

We first replace the nilrotation in Theorem~\ref{thm:main} by a
unipotent affine system on a nilmanifold presented by a connected and
simply connected group. We then realize this affine system as a
translation on a larger nilmanifold using the suspension construction
\cite[Section~4]{DaniShahSharma2015}. The construction also gives the
interpolation and logarithmic derivative formulas used below.

\subsection{The affine model}
Let $N$ be a nilpotent Lie group with lattice $\Lambda$. If
$A\in\operatorname{Aut}(N)$ satisfies $A(\Lambda)=\Lambda$ and $b\in N$,
then
\[
F_{b,A}(x\Lambda):=bA(x)\Lambda
\]
defines an affine transformation of $N/\Lambda$. It is called
\emph{unipotent} when $A$ is unipotent.
Its inverse is induced by $x\mapsto A^{-1}(b^{-1}x)$.

The following lemma gives the affine model used below.

\begin{lemma}\label{lem:affine-reduction}
Under the assumptions of Theorem~\ref{thm:main}, there exist a connected and
simply connected nilpotent Lie group $N$, a lattice $\Lambda\leq N$, a
unipotent automorphism $A\in\operatorname{Aut}(N)$ satisfying
$A(\Lambda)=\Lambda$, and an element $b\in N$ such that $(X,T)$ is
topologically conjugate to the affine system
\begin{equation}\label{eq:affine-map}
F:N/\Lambda\longrightarrow N/\Lambda,
\qquad
F(x\Lambda)=bA(x)\Lambda.
\end{equation}
In particular, $F$ is minimal.
\end{lemma}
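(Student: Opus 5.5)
We start from $X=G/\Gamma$ connected, so Lemma~\ref{lem:nilmanifold-components} gives $G=G^0\Gamma$. Write $a=\alpha_0\gamma^{-1}$ with $\alpha_0\in G^0$ and $\gamma\in\Gamma$, and put $\Gamma_0:=G^0\cap\Gamma$. Then the inclusion $G^0\hookrightarrow G$ induces a homeomorphism $\iota:G^0/\Gamma_0\to G/\Gamma$. Conjugation by $\gamma^{-1}$ preserves the normal subgroup $G^0$ and also $\Gamma$, so $A_0:=\operatorname{Int}(\gamma^{-1})|_{G^0}$ is an automorphism of $G^0$ with $A_0(\Gamma_0)=\Gamma_0$. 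For $x\in G^0$ we compute
\[
a x\Gamma=\alpha_0\gamma^{-1}x\gamma\,\gamma^{-1}\Gamma=\alpha_0A_0(x)\Gamma .
\]
Hence $\iota$ conjugates $T_a$ to the affine map $x\Gamma_0\mapsto\alpha_0A_0(x)\Gamma_0$ on $G^0/\Gamma_0$.

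Next we show that $A_0$ is unipotent. Since $G$ is nilpotent, $\operatorname{Ad}(\gamma^{-1})$ acting on $\mathfrak g=\operatorname{Lie}(G^0)$ is unipotent. Indeed, $\operatorname{Ad}(\gamma^{-1})-\operatorname{id}$ maps $\mathfrak g_j$ into $\mathfrak g_{j+1}$, where $\mathfrak g_j$ is the filtration induced by the lower central series $G_j$ of $G$ intersected with $G^0$ (equivalently, the Lie algebras of $(G_j)^0$). The filtration is finite because $G$ is nilpotent. This filtration step is the main point to check carefully, and I would argue it via $\gamma^{-1}\exp(tX)\gamma\exp(-tX)\in G_{j+1}$ for $\exp(tX)\in G_j$, then differentiate in $t$. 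Thus $d_eA_0=\operatorname{Ad}(\gamma^{-1})|_{\mathfrak g}$ is unipotent.

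Now pass to the universal cover $\pi:N\to G^0$ using Lemma~\ref{lem:universal-cover-package}. Then $N$ is connected, simply connected and nilpotent, and $\Lambda:=\pi^{-1}(\Gamma_0)$ is a lattice with $N/\Lambda\cong G^0/\Gamma_0$. The lift $A:=\widetilde{A_0}$ is unipotent and satisfies $A(\Lambda)=\Lambda$. Choose any $b\in\pi^{-1}(\alpha_0)$. For $x\in N$,
\[
\pi\bigl(bA(x)\bigr)=\alpha_0A_0(\pi(x)),
\]
so the homeomorphism $N/\Lambda\to G^0/\Gamma_0$ intertwines $F(x\Lambda)=bA(x)\Lambda$ with the affine map above. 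Composing the two conjugacies conjugates $(X,T)$ to $(N/\Lambda,F)$.

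Finally, minimality is invariant under topological conjugacy, so $F$ is minimal because $T$ is.
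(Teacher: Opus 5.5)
Your proof is correct and follows the paper's route: the same decomposition $a=\alpha_0\gamma^{-1}$, the same automorphism $A_0=\operatorname{Int}(\gamma^{-1})|_{G^0}$ preserving $G^0\cap\Gamma$, and the same lift through the universal cover via Lemma~\ref{lem:universal-cover-package}. The one place you diverge is the proof that $A_0$ is unipotent, and there the paper's argument is more economical.

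Your level-by-level argument differentiates a curve lying in $G_{j+1}$ and concludes that its velocity lies in a Lie algebra $\mathfrak g_{j+1}$. This presupposes that the abstract subgroups $G_j$ of the possibly disconnected group $G$ have a Lie algebra that captures the tangents of smooth curves inside them. That holds if $G_{j+1}$ is closed, by Cartan's theorem. In general it follows from Yamabe's theorem applied to the arc component of $G_{j+1}$. Either way, it has to be stated.

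The paper needs no structural information about the $G_j$. Set $\delta(x):=\gamma^{-1}x\gamma x^{-1}=[\gamma,x^{-1}]$. Then $\delta(G_j)\subseteq G_{j+1}$ holds purely set-theoretically, so $\delta^{\circ s}\equiv e$. Since $\delta(e)=e$, the chain rule alone gives $(\operatorname{Ad}(\gamma^{-1})-\operatorname{id})^s=(d_e\delta)^s=d_e(\delta^{\circ s})=0$.
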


\begin{proof}
Write $X=G/\Gamma$, and suppose that $T$ is induced by left translation by
$a\in G$. Put $N_0:=G^0$. Since $X$ is connected,
it follows from
Lemma~\ref{lem:nilmanifold-components} that
$G=N_0\Gamma$.
Let $\Lambda_0:=N_0\cap\Gamma$. The natural map
\[
\phi_0:N_0/\Lambda_0\longrightarrow G/\Gamma,
\qquad
\phi_0(x\Lambda_0)=x\Gamma,
\]
is a homeomorphism, and $\Lambda_0$ is a lattice in $N_0$.

Choose $b_0\in N_0$ and $\gamma\in\Gamma$ such that
$a=b_0\gamma^{-1}$.
Define
\[
A_0:N_0\longrightarrow N_0,
\qquad
A_0(x):=\gamma^{-1}x\gamma.
\]
Equivalently,
$A_0=\operatorname{Int}(\gamma^{-1})|_{N_0}$. Since $N_0$ is normal in
$G$ and $\gamma\in\Gamma$, the automorphism $A_0$ preserves
$\Lambda_0=N_0\cap\Gamma$. Moreover, under $\phi_0$, left translation by
$a$ corresponds to
\[
x\Lambda_0\longmapsto b_0A_0(x)\Lambda_0.
\]
Indeed,
\[
ax\Gamma
=b_0\gamma^{-1}x\Gamma
=b_0\gamma^{-1}x\gamma\Gamma
=b_0A_0(x)\Gamma.
\]

We next verify that $A_0$ is unipotent.
Define the smooth map $\delta:G\to G$ by
\[
\delta(x):=\gamma^{-1}x\gamma x^{-1}=[\gamma,x^{-1}].
\]
Since $\delta(G_j)\subseteq G_{j+1}$, if $G$ has nilpotency step $s$,
then $\delta^{\circ s}\equiv e$. As $\delta(e)=e$,
\[
0=d_e(\delta^{\circ s})=(d_e\delta)^s.
\]
On $\operatorname{Lie}(N_0)$,
\[
d_e\delta=d_eA_0-\operatorname{id}_{\operatorname{Lie}(N_0)}.
\]
Hence $A_0$ is unipotent.

Let $\pi:N\to N_0$ be the universal covering homomorphism. By
Lemma~\ref{lem:universal-cover-package},
$\Lambda:=\pi^{-1}(\Lambda_0)$ is a lattice in $N$, and $\pi$ induces a
homeomorphism
\[
\overline\pi:N/\Lambda\longrightarrow N_0/\Lambda_0.
\]
The automorphism $A_0$ has a unique lift
$A\in\operatorname{Aut}(N)$ satisfying
\[
\pi\circ A=A_0\circ\pi.
\]
The lift $A$ is unipotent and satisfies $A(\Lambda)=\Lambda$. Choose
$b\in N$ with $\pi(b)=b_0$, and define $F$ by
\eqref{eq:affine-map}. Then
\[
\Phi:=\phi_0\circ\overline\pi:N/\Lambda\longrightarrow G/\Gamma
\]
is a homeomorphism satisfying $\Phi\circ F=T\circ\Phi$. Indeed, for every
$x\in N$,
\[
\begin{aligned}
\Phi(F(x\Lambda))
&=b_0A_0(\pi(x))\Gamma\\
&=b_0\gamma^{-1}\pi(x)\gamma\Gamma\\
&=a\pi(x)\Gamma\\
&=T(\Phi(x\Lambda)).
\end{aligned}
\]
Thus $(X,T)$ and $(N/\Lambda,F)$ are topologically conjugate. The minimality
of $F$ follows from that of $T$.
\end{proof}

\subsection{Nilpotent suspension and exact interpolation}
\label{sec:suspension}
Let $N,\Lambda,A,b$, and $F$ be as in
Lemma~\ref{lem:affine-reduction}. Put
$\mathfrak n:=\operatorname{Lie}(N)$ and $A_*:=d_eA$.
Since $A_*$ is unipotent, its logarithm
\[
\mathcal D:=\log A_*
=
\sum_{q\geq1}\frac{(-1)^{q+1}}{q}
(A_*-\operatorname{id}_{\mathfrak n})^q
\]
is a finite sum and is nilpotent.

To see that $\mathcal D$ is a derivation, fix $X,Y\in\mathfrak n$ and define
\[
P(r)
:=
\exp(r\mathcal D)[X,Y]
-
[\exp(r\mathcal D)X,\exp(r\mathcal D)Y]
\qquad (r\in\mathbb R).
\]
Then $P$ is a vector-valued polynomial. It vanishes for every
$r\in\mathbb Z$, since $\exp(r\mathcal D)=A_*^r$ is a Lie algebra
automorphism.
Hence $P$ vanishes identically. Differentiating at $r=0$ shows
\[
\mathcal D[X,Y]
=
[\mathcal D X,Y]+[X,\mathcal D Y].
\]

For $u\in\mathbb R$, set $A_*^u:=\exp(u\mathcal D)$. Using the exponential
diffeomorphism of $N$, define
\[
A^u(\exp X):=\exp(A_*^uX)
\qquad
(X\in\mathfrak n).
\]
Then $(A^u)_{u\in\mathbb R}$ is a one-parameter group of automorphisms of
$N$, and $A^1=A$.

Define
\[
\widehat N
:=
N\rtimes_{(A^u)_{u\in\mathbb R}}\mathbb R,
\qquad
\widehat\Lambda
:=
\Lambda\rtimes_{(A^r)_{r\in\mathbb Z}}\mathbb Z,
\]
with multiplication
\[
(x,u)(y,v)
=
\bigl(xA^u(y),u+v\bigr).
\]

\begin{lemma}
\label{lem:nilpotent-suspension-package}
The Lie group $\widehat N$ is connected, simply connected, and nilpotent,
and $\widehat\Lambda$ is a discrete cocompact subgroup of $\widehat N$.
The map
\[
\iota:N/\Lambda\longrightarrow\widehat N/\widehat\Lambda,
\qquad
\iota(x\Lambda):=(x,0)\widehat\Lambda,
\]
is a homeomorphism onto the fiber over $0+\mathbb Z$ of the natural map
\[
\widehat N/\widehat\Lambda\longrightarrow\mathbb R/\mathbb Z,
\qquad
(x,u)\widehat\Lambda\longmapsto u+\mathbb Z.
\]
Moreover, if $\alpha=(b,1)$, then the nilrotation $T_\alpha$ induced by $\alpha$ preserves this fiber and
\[
T_\alpha\circ\iota=\iota\circ F.
\]
\end{lemma}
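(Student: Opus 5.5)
To prove Lemma~\ref{lem:nilpotent-suspension-package}, I would first check the Lie-theoretic properties of $\widehat N$. Since $(A^u)_{u\in\mathbb R}$ is a continuous one-parameter group of automorphisms, $\widehat N$ is a Lie group. As a manifold it is $N\times\mathbb R$, so it is connected and simply connected. Its Lie algebra is the semidirect sum $\widehat{\mathfrak n}=\mathfrak n\rtimes_{\mathcal D}\mathbb R$, with bracket
\[
[(X,s),(Y,t)]=\bigl([X,Y]+s\mathcal DY-t\mathcal DX,\,0\bigr).
\]
For nilpotency I would use the filtration $\mathfrak n^{(k)}$ defined recursively by $\mathfrak n^{(1)}=\mathfrak n$ and $\mathfrak n^{(k+1)}=[\mathfrak n,\mathfrak n^{(k)}]+\mathcal D\mathfrak n^{(k)}$. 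This terminates because $\mathcal D$ is a nilpotent derivation and $\mathfrak n$ is nilpotent. Indeed, the derivation property gives $\mathcal D\mathfrak n_j\subseteq\mathfrak n_j$, so $\mathcal D$ induces nilpotent maps on each graded piece $\mathfrak n_j/\mathfrak n_{j+1}$. One can therefore refine the lower central series into a flag on which both $\operatorname{ad}\mathfrak n$ and $\mathcal D$ act strictly downward. Then $\widehat{\mathfrak n}_2\subseteq\mathfrak n$, and the higher terms descend along this flag, so $\widehat{\mathfrak n}$ is nilpotent. A connected simply connected Lie group with nilpotent Lie algebra is nilpotent. I expect this flag bookkeeping to be the main obstacle. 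The cleanest route is to choose a basis adapted to a common strictly-lowering flag for the commuting-up-to-lower-terms family $\{\operatorname{ad}X\}\cup\{\mathcal D\}$, by Engel's theorem applied to the Lie algebra of nilpotent operators they generate.

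For $\widehat\Lambda$: it is a subgroup because $A^r(\Lambda)=\Lambda$ for $r\in\mathbb Z$. It is discrete because it is the product $\Lambda\times\mathbb Z$ inside $N\times\mathbb R$. For cocompactness, take a compact $K\subseteq N$ with $K\Lambda=N$. Given $(x,u)$, write $u=r+s$ with $r\in\mathbb Z$ and $s\in[0,1)$. Then $(x,u)(\lambda,-r)=(xA^u(\lambda),s)$. Using $A^u(\Lambda)=A^{s}(\Lambda)$, we need to pick $\lambda$ with $xA^u(\lambda)\in\bigcup_{s\in[0,1]}A^{s}(K)$, which is compact. Equivalently, write $A^{-u}(x)=A^{-r}(A^{-s}x)$ and absorb the integer part using $A^{-r}\Lambda=\Lambda$. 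Hence $\widehat N=\widehat K\widehat\Lambda$ with $\widehat K=\bigl(\bigcup_{s\in[0,1]}A^s(K)\bigr)\times[0,1]$ compact.

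For the fiber statement: the map $(x,u)\widehat\Lambda\mapsto u+\mathbb Z$ is well defined, since right multiplication by $(\lambda,r)$ changes $u$ by $r\in\mathbb Z$. The fiber over $0$ consists of the cosets $(x,r)\widehat\Lambda=(x,0)(e,r)\widehat\Lambda=(x,0)\widehat\Lambda$, so $\iota$ is surjective onto it. For injectivity, $(x,0)\widehat\Lambda=(y,0)\widehat\Lambda$ forces $(x,0)^{-1}(y,0)=(x^{-1}y,0)\in\widehat\Lambda$, that is, $x\Lambda=y\Lambda$. The map $\iota$ is continuous and injective from a compact space onto the closed fiber in a Hausdorff space, so it is a homeomorphism onto the fiber. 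Finally, $(b,1)(x,0)=(bA(x),1)=(bA(x),0)(e,1)$, and $(e,1)\in\widehat\Lambda$. This gives $T_\alpha\iota(x\Lambda)=\iota(F(x\Lambda))$, and in particular $T_\alpha$ preserves the fiber.
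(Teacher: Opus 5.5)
Your proof is correct, but it differs from the paper's in two places.

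\textbf{Nilpotency.} The paper expands Lie monomials in $\widehat{\mathfrak n}=\mathfrak n\rtimes_{\mathcal D}\mathbb R S$ via the derivation rule into terms $[\mathcal D^{a_1}X_1,\ldots,\mathcal D^{a_q}X_q]$. Such a term vanishes if $q>s$ or if some $a_i\geq r$, where $\mathfrak n_{s+1}=0$ and $\mathcal D^r=0$. This gives the explicit bound that every monomial of length greater than $sr$ vanishes. You instead refine the lower central series by the subspaces $\mathfrak n_{j+1}+\mathcal D^i\mathfrak n_j$. This yields a single flag that both $\operatorname{ad}\mathfrak n$ and $\mathcal D$ push strictly downward, which is sound and avoids tracking bracketings.

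Drop the appeal to Engel's theorem, though. Engel requires every element $\operatorname{ad}X+c\mathcal D$ of the generated Lie algebra to be nilpotent, and nilpotency of the generators alone does not give this: the nilpotent elements $e,f$ generate $\mathfrak{sl}_2$. The appeal could be repaired by Jacobson's theorem on weakly closed sets, since $\operatorname{ad}\mathfrak n\cup\mathbb R\mathcal D$ is closed under brackets. However, your explicit refinement already suffices.

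\textbf{Cocompactness.} The paper builds a homeomorphism $\Theta$ from $\widehat N/\widehat\Lambda$ onto the mapping torus of $A$. From it the paper reads off both compactness and the identification of the zero fiber with $N/\Lambda$. You instead exhibit the compact set $\widehat K=\bigl(\bigcup_{s\in[0,1]}A^s(K)\bigr)\times[0,1]$ with $\widehat N=\widehat K\widehat\Lambda$. You then obtain the homeomorphism onto the fiber from the compact-to-Hausdorff argument. Your route is more elementary; the paper's gives a global description of $\widehat N/\widehat\Lambda$ as a mapping torus.

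The intertwining $T_\alpha\circ\iota=\iota\circ F$ is the same computation in both, namely right multiplication by $(e,\pm1)\in\widehat\Lambda$.
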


\begin{proof}
Let
\[
\widehat{\mathfrak n}
:=
\mathfrak n\rtimes_{\mathcal D}\mathbb R S
\]
be the Lie algebra of $\widehat N$, so that
\[
[S,X]=\mathcal D X
\qquad
(X\in\mathfrak n).
\]
Choose $s,r\in\mathbb N$ such that
$\mathfrak n_{s+1}=\{0\}$ and $\mathcal D^r=0$. We claim that every Lie
monomial in $\widehat{\mathfrak n}$ of length greater than $sr$ vanishes.

A Lie monomial containing no letter from $\mathfrak n$ vanishes as soon as
its length is at least two. Suppose that a monomial contains
$q\geq1$ letters $X_1,\ldots,X_q\in\mathfrak n$ and $a$ letters $S$.
Repeatedly applying the derivation identity expresses such a monomial as a
linear combination of terms of the form
\[
[\mathcal D^{a_1}X_1,\ldots,\mathcal D^{a_q}X_q],
\qquad
a_1+\cdots+a_q=a,
\]
with possibly different bracketings. Such a term is zero if $q>s$. If
$q\leq s$ and $q+a>sr$, then
\[
a>q(r-1),
\]
so some $a_i\geq r$, and the term is again zero. Thus
$\widehat{\mathfrak n}$ is nilpotent, and hence $\widehat N$ is nilpotent.
The underlying manifold of $\widehat N$ is $N\times\mathbb R$, so it is
connected and simply connected. Since
$A^m(\Lambda)=\Lambda$ for every $m\in\mathbb Z$,
$\widehat\Lambda$ is a subgroup, and it is discrete.

Let
\[
\mathcal T_A
:=
\bigl((N/\Lambda)\times\mathbb R\bigr)/\!\sim,
\qquad
(x\Lambda,u+m)\sim(A^m(x)\Lambda,u)
\quad (m\in\mathbb Z).
\]
The map
\[
\Theta:\widehat N/\widehat\Lambda\longrightarrow\mathcal T_A,
\qquad
\Theta\bigl((x,u)\widehat\Lambda\bigr)
:=
[A^{-u}(x)\Lambda,u],
\]
is well defined. Indeed, right multiplication by
$(\lambda,m)\in\widehat\Lambda$ changes the representative to
\[
\bigl(xA^u(\lambda),u+m\bigr),
\]
whose image is equivalent to $[A^{-u}(x)\Lambda,u]$. Its inverse is
\[
[x\Lambda,u]\longmapsto(A^u(x),u)\widehat\Lambda.
\]
Changing $x$ to $x\lambda$ changes the image by right multiplication
by $(\lambda,0)$, while the defining relation of $\mathcal T_A$
corresponds to right multiplication by $(e,m)$.
Thus $\widehat N/\widehat\Lambda$ is homeomorphic to the mapping torus
\[
\bigl((N/\Lambda)\times[0,1]\bigr)/
\bigl((x\Lambda,1)\sim(A(x)\Lambda,0)\bigr)
\]
and is therefore compact.

Under $\Theta$, one has
\[
\Theta(\iota(x\Lambda))=[x\Lambda,0],
\]
so $\iota$ is a homeomorphism onto the fiber over $0+\mathbb Z$.
Moreover,
\[
\begin{aligned}
T_\alpha(\iota(x\Lambda))
&=\alpha(x,0)\widehat\Lambda\\
&=(bA(x),1)\widehat\Lambda\\
&=(bA(x),0)\widehat\Lambda\\
&=\iota(F(x\Lambda)),
\end{aligned}
\]
where the third equality follows by right multiplication by
$(e,-1)\in\widehat\Lambda$.
\end{proof}

Set
\[
\tau:=(e,1),
\qquad
\alpha:=(b,1)\in\widehat N,
\]
and write
\[
T_0:=\log\tau,
\qquad
X_0:=\log\alpha,
\qquad
v:=X_0-T_0.
\]
The differential at the identity of the time homomorphism
$\widehat N\to\mathbb R$, $(x,u)\mapsto u$, maps both $X_0$ and $T_0$ to
$1$; hence $v\in\mathfrak n$. Identifying $N$ with the subgroup
$N\times\{0\}$ of $\widehat N$, define
\begin{equation}\label{eq:def-beta}
\beta(u):=\alpha^u\tau^{-u}\in N
\qquad
(u\in\mathbb R).
\end{equation}

The following lemma gives the required interpolation and tangent
directions.

\begin{lemma}\label{lem:interpolation}
For every $r\in\mathbb Z$ and $x\in N$,
\begin{equation}\label{eq:F-interpolation}
F^r(x\Lambda)
=
\beta(r)A^r(x)\Lambda.
\end{equation}
Moreover, for every $u\in\mathbb R$,
\begin{equation}\label{eq:beta-log-derivative}
\beta(u)^{-1}\beta'(u)
=
A_*^uv.
\end{equation}
Define
\begin{equation}\label{eq:def-g}
g(\mathbf t)
:=
\bigl(\beta(p_1(\mathbf t)),\ldots,\beta(p_d(\mathbf t))\bigr)
\in N^d
\qquad
(\mathbf t\in\mathbb R^m),
\end{equation}
and define linear maps
$E_{\mathbf t},\Psi_{j,\mathbf t}:\mathfrak n\to\mathfrak n^d$ by
\begin{align}
E_{\mathbf t}(Y)
&:=
\bigl(A_*^{p_1(\mathbf t)}Y,\ldots,A_*^{p_d(\mathbf t)}Y\bigr),
\label{eq:def-E}\\
\Psi_{j,\mathbf t}(Y)
&:=
\bigl(
\partial_jp_1(\mathbf t)A_*^{p_1(\mathbf t)}Y,
\ldots,
\partial_jp_d(\mathbf t)A_*^{p_d(\mathbf t)}Y
\bigr).
\label{eq:def-Psi}
\end{align}
Then
\begin{equation}\label{eq:g-log-derivative}
g(\mathbf t)^{-1}\partial_jg(\mathbf t)
=
\Psi_{j,\mathbf t}(v)
\qquad
(1\leq j\leq m).
\end{equation}
For each fixed $Y\in\mathfrak n$ and each $1\leq j\leq m$, both
$\mathbf t\mapsto E_{\mathbf t}(Y)$ and
$\mathbf t\mapsto\Psi_{j,\mathbf t}(Y)$ are ordinary polynomial maps.

\end{lemma}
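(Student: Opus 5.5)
The plan is to work inside the suspension group $\widehat N$ and use two facts: $\alpha$ and $\tau$ commute with their own real powers, and conjugation by $\tau^u$ acts on $N$ as $A^u$.

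\textbf{Step 1: conjugation by $\tau^u$.} Since $\tau=(e,1)$ lies in the one-parameter subgroup $\{(e,u)\}$, we have $\tau^u=(e,u)$. The multiplication rule then gives
\[
\tau^u(x,0)\tau^{-u}=(A^u(x),0).
\]

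\textbf{Step 2: the interpolation formula \eqref{eq:F-interpolation}.} For $r\in\mathbb Z$ write
\[
\alpha^r=\beta(r)\tau^r .
\]
By Lemma~\ref{lem:nilpotent-suspension-package}, $T_\alpha^r\circ\iota=\iota\circ F^r$. Using Step 1,
\[
\alpha^r(x,0)\widehat\Lambda=\beta(r)\tau^r(x,0)\tau^{-r}\tau^r\widehat\Lambda=\beta(r)(A^r(x),0)\tau^r\widehat\Lambda .
\]
Since $\tau^r=(e,r)\in\widehat\Lambda$, this equals $(\beta(r)A^r(x),0)\widehat\Lambda$. Applying $\iota^{-1}$ gives \eqref{eq:F-interpolation}. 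We also check that $\beta(u)\in N$: the time coordinate of $\alpha^u$ is $u$, because the time map is a homomorphism to $\mathbb R$ whose differential sends $X_0$ to $1$, so $\beta(u)$ has time coordinate $0$.

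\textbf{Step 3: the logarithmic derivative \eqref{eq:beta-log-derivative}.} Differentiate the product $\beta(u)=\alpha^u\tau^{-u}$ with the product rule for $\omega_L$. We have $\omega_L(\alpha^u)=X_0$ and $\omega_L(\tau^{-u})=-T_0$, so
\[
\omega_L(\beta)=\operatorname{Ad}(\tau^u)X_0-T_0=\operatorname{Ad}(\tau^u)(X_0-T_0)=\operatorname{Ad}(\tau^u)v ,
\]
where the middle equality uses $\operatorname{Ad}(\tau^u)T_0=T_0$. By Step 1, $\operatorname{Ad}(\tau^u)$ restricted to $\mathfrak n$ is $d_eA^u=A_*^u=\exp(u\mathcal D)$; this also follows from $\operatorname{Ad}(\exp uT_0)=\exp(u\,\operatorname{ad}T_0)$ and $\operatorname{ad}T_0|_{\mathfrak n}=\mathcal D$. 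Since $v\in\mathfrak n$, this gives \eqref{eq:beta-log-derivative}. This is the main point to check: one must get the sign and side conventions right so that the result is $A_*^u v$ and not $A_*^{-u}v$. The left trivialization with $\tau^{-u}$ on the right produces $\operatorname{Ad}((\tau^{-u})^{-1})=\operatorname{Ad}(\tau^u)$, which is correct.

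\textbf{Step 4: the formula for $g$.} Write $g=(\beta\circ p_1,\ldots,\beta\circ p_d)$ in $N^d$; the logarithmic derivative is computed coordinatewise. By the chain rule, coordinate $i$ of $g(\mathbf t)^{-1}\partial_j g(\mathbf t)$ is
\[
\partial_jp_i(\mathbf t)\,\beta(p_i(\mathbf t))^{-1}\beta'(p_i(\mathbf t))=\partial_jp_i(\mathbf t)\,A_*^{p_i(\mathbf t)}v ,
\]
which is \eqref{eq:g-log-derivative}.

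\textbf{Step 5: polynomiality.} Since $\mathcal D$ is nilpotent, $A_*^u=\sum_{q<r}u^q\mathcal D^q/q!$ is a polynomial in $u$ with operator coefficients. Composing with the polynomials $p_i$ and multiplying by $\partial_jp_i$ keeps this polynomial, so $\mathbf t\mapsto E_{\mathbf t}(Y)$ and $\mathbf t\mapsto\Psi_{j,\mathbf t}(Y)$ are ordinary polynomial maps.
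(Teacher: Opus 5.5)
Your proof is correct and follows the paper's argument essentially step for step: interpolation via $\alpha^r=\beta(r)\tau^r$ with $\tau^r=(e,r)\in\widehat\Lambda$, the $\omega_L$ product rule with $\operatorname{Ad}(\tau^u)$ fixing $T_0$ and acting as $A_*^u$ on $\mathfrak n$, the coordinatewise chain rule, and nilpotency of $\mathcal D$ for polynomiality. You also explicitly check that $\tau^u=(e,u)$, the conjugation formula, and that $\beta(u)$ has time coordinate $0$, which the paper leaves implicit.
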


\begin{proof}

By Lemma~\ref{lem:nilpotent-suspension-package},
\[
\iota(F^r(x\Lambda))
=
T_\alpha^r(\iota(x\Lambda))
=
\alpha^r(x,0)\widehat\Lambda.
\]
Since $\beta(r)=\alpha^r\tau^{-r}$, one has
$\alpha^r=(\beta(r),r)$, and hence
\[
\alpha^r(x,0)\widehat\Lambda
=
\bigl(\beta(r)A^r(x),r\bigr)\widehat\Lambda
=
\bigl(\beta(r)A^r(x),0\bigr)\widehat\Lambda.
\]
The injectivity of $\iota$ now implies \eqref{eq:F-interpolation}.

By \eqref{eq:def-beta},
\[
\beta(u)=\exp(uX_0)\exp(-uT_0).
\]
Using the product formula for left logarithmic derivatives, we obtain
\[
\begin{aligned}
\beta(u)^{-1}\beta'(u)
&=
\operatorname{Ad}(\exp(uT_0))X_0-T_0\\
&=
\operatorname{Ad}(\tau^u)(T_0+v)-T_0\\
&=
A_*^uv.
\end{aligned}
\]
Here $\operatorname{Ad}(\tau^u)$ fixes $T_0$ and restricts to $A_*^u$ on
$\mathfrak n$. This proves \eqref{eq:beta-log-derivative}. Applying
\eqref{eq:beta-log-derivative} coordinatewise and using the chain rule with
respect to $t_j$ gives \eqref{eq:g-log-derivative}.

Finally, since $\mathcal D$ is nilpotent, there exists $J\geq0$ such that
\[
A_*^{p_i(\mathbf t)}
=
\exp\bigl(p_i(\mathbf t)\mathcal D\bigr)
=
\sum_{q=0}^{J}\frac{p_i(\mathbf t)^q\mathcal D^q}{q!}.
\]
Thus $\mathbf t\mapsto E_{\mathbf t}(Y)$ is an ordinary polynomial map. Multiplying its
$i$-th coordinate by $\partial_jp_i(\mathbf t)$ shows that
$\mathbf t\mapsto\Psi_{j,\mathbf t}(Y)$ is also an ordinary polynomial map.
\end{proof}

We shall use \eqref{eq:F-interpolation} to identify the original
polynomial slices and \eqref{eq:g-log-derivative} to control their
tangent directions.

\section{Orbit components and the horizontal obstruction}
\label{sec:components}

The following moving-subnilmanifold form of Leibman's polynomial orbit
theorem follows from \cite[Theorem~B$^*$]{Leibman2005Zd}; see also
\cite[p.~375]{BergelsonLeibmanLesigne2008}.

\begin{theorem}\label{thm:Leibman-orbit-closure}
Let $U/\Theta$ be a compact nilmanifold, let
$V\subseteq U/\Theta$ be a connected closed subnilmanifold, and let
$h:\mathbb Z^\ell\to U$ be a polynomial mapping in the sense of
\eqref{eq:Leibman-polynomial-definition}. Then there exist a connected
closed subgroup $H\leq U$, an integer $J\geq1$, and elements
$y_0,\ldots,y_{J-1}\in U$ such that
\[
\overline{\bigcup_{\mathbf n\in\mathbb Z^\ell}h(\mathbf n)V}
=
\bigsqcup_{\nu=0}^{J-1}Hy_\nu\Theta/\Theta.
\]
The orbits on the right are precisely the connected components of the
closure.
\end{theorem}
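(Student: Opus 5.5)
The plan is to reduce the moving-subnilmanifold statement to Leibman's theorem on the closure of a single polynomial point orbit \cite[Theorem~B$^*$]{Leibman2005Zd}. I would do this by replacing the connected subnilmanifold $V$ with the closure of a linear orbit of a single point.

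\emph{Step 1: realize $V$ as a point-orbit closure.} Write $V=Kz\Theta/\Theta$, where $K\leq U$ is a connected closed subgroup and $z\in U$. Then $V$ is a connected nilmanifold on which $K$ acts transitively, namely $V\cong K/(K\cap z\Theta z^{-1})$. I would choose $k\in K$ such that the translation by $k$ on $V$ is ergodic, and hence minimal. This is the standard fact that a generic element of a connected group acting transitively on a connected nilmanifold projects to an irrational element of its horizontal torus, combined with Leibman's (or Parry's) criterion. Minimality gives
\[
V=\overline{\{k^{r}z\Theta:\ r\in\mathbb Z\}}.
\]

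\emph{Step 2: pass to a polynomial point orbit.} Define the polynomial mapping
\[
\tilde h:\mathbb Z^{\ell+1}\to U,\qquad \tilde h(\mathbf n,r):=h(\mathbf n)k^{r}.
\]
It is again of the form \eqref{eq:Leibman-polynomial-definition}, since we simply append the factor $k^{r}$. Each $h(\mathbf n)$ acts on $U/\Theta$ as a homeomorphism, so
\[
h(\mathbf n)V=\overline{\{h(\mathbf n)k^rz\Theta: r\in\mathbb Z\}}.
\]
Taking the union over $\mathbf n$ and then the closure gives
\[
\overline{\bigcup_{\mathbf n}h(\mathbf n)V}
=\overline{\{\tilde h(\mathbf n,r)z\Theta:(\mathbf n,r)\in\mathbb Z^{\ell+1}\}}.
\]
The right-hand side is the orbit closure of the point $z\Theta$ under a polynomial mapping.

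\emph{Step 3: apply Leibman's theorem and identify the components.} Leibman's theorem for polynomial mappings of $\mathbb Z^{\ell+1}$ gives a finite-index sublattice $L\leq\mathbb Z^{\ell+1}$ and a connected closed subgroup $H$ with the following property: for each coset $\mathbf c+L$, the closure of $\tilde h(\mathbf c+L)z\Theta$ is a connected subnilmanifold $Hy_{\mathbf c}\Theta/\Theta$. The subgroup $H$ is the same for all cosets, because the cosets are obtained from one another by polynomial reparameterization and the identity components agree. Hence the closure is a finite union of sets $Hy_\nu\Theta/\Theta$.

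Two such sets with the same connected $H$ either coincide or are disjoint, since each is an orbit of $H$. After discarding repetitions, the union becomes disjoint. Each piece is closed and connected, and there are finitely many of them, so each piece is also relatively open in the union. Therefore the pieces are exactly the connected components.

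\emph{Main obstacle.} I expect the main difficulty to be guaranteeing a \emph{common} connected $H$ across the cosets of $L$, rather than different subgroups $H_\nu$. If the citation only yields possibly different subgroups, I would fall back on Leibman's refined statement: the orbit closure is $\bigcup_\nu g_\nu Y_0$ for a single connected subnilmanifold $Y_0=H\Theta'/\Theta'$. Then $g_\nu Y_0=(g_\nu Hg_\nu^{-1})g_\nu\Theta/\Theta$, and I would use that these conjugates can be chosen equal after replacing $L$ by a smaller finite-index sublattice. This is the content of \cite[p.~375]{BergelsonLeibmanLesigne2008}.
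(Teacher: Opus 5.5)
Your route is the one the paper relies on. The paper gives no proof of this statement; it records it as a consequence of Leibman's point-orbit theorem \cite[Theorem~B$^*$]{Leibman2005Zd}, with \cite[p.~375]{BergelsonLeibmanLesigne2008} for the moving-subnilmanifold form. Your Steps~1--2 are a correct way to make that derivation explicit. A generic $k\in K$ acts minimally on the connected nilmanifold $V\cong K/(K\cap z\Theta z^{-1})$. Appending the factor $k^r$ then turns the closure of $\bigcup_{\mathbf n}h(\mathbf n)V$ into the closure of a single polynomial point orbit. Your identification of finitely many disjoint closed connected orbits of one group with the connected components is also fine.

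The weak point is the common subgroup $H$ in Step~3, and neither of your justifications works.

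First, the maps $\mathbf m\mapsto\tilde h(\mathbf c+\mathbf m)$, $\mathbf m\in L$, are different polynomial mappings for different $\mathbf c$. Calling them ``reparameterizations'' of one another says nothing about the stabilizers of their orbit closures.

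Second, the fallback has no mechanism. Once the coset closures are connected, shrinking $L$ does not change them; this is the reparameterization phenomenon of Proposition~\ref{prop:connected-orbit-reparameterization}. So a piece $(g_\nu Hg_\nu^{-1})g_\nu\Theta/\Theta$ with $g_\nu Hg_\nu^{-1}\neq H$ would stay that way. This is not a matter of presentation either: if $H_1,H_2$ are connected and closed and $H_1y\Theta/\Theta=H_2y\Theta/\Theta$ is closed, then $H_1=H_2$. To see this, note that $y^{-1}H_iy$ is the identity component of the closed set $y^{-1}H_iy\Theta$.

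So the common $H$ must be taken from the cited theorem itself, exactly as the paper does: Leibman's conclusion, in the form recorded in \cite{BergelsonLeibmanLesigne2008}, supplies one connected closed $H$ for all components. One way to see why is linearization. The polynomial orbit is the image, under a map induced by a homomorphism, of a linear orbit $\tilde a^n\tilde x$ on a larger nilmanifold. That linear orbit closure is $\tilde H\tilde x$, its components are $\tilde H^0\tilde a^j\tilde x$, and $\tilde H^0\trianglelefteq\tilde H$ is shared by all of them.

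Cite that form directly rather than arguing it. The point matters later: Lemma~\ref{lem:tangent-invariance} and the finite-phase argument in Lemma~\ref{lem:character-derivative-elimination} both use that every component of $C$ is an orbit of the same $H_0$.
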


\begin{proposition}\label{prop:moving-subnilmanifold-haar}
Let $X=G/\Gamma$ be a compact nilmanifold, let $V\subseteq X$ be a
connected subnilmanifold, and let
$h:\mathbb Z^\ell\to G$ be a polynomial mapping. Put
\[
C:=\overline{\bigcup_{\mathbf n\in\mathbb Z^\ell}h(\mathbf n)V}.
\]
If $C$ is connected, then $C$ is a connected subnilmanifold and,
for every F{\o}lner sequence $(\Phi_N)$ in $\mathbb Z^\ell$ and every
$\varphi\in C(X)$,
\[
\lim_{N\to\infty}\frac1{|\Phi_N|}
\sum_{\mathbf n\in\Phi_N}
\int_V \varphi(h(\mathbf n)x)\,dm_V(x)
=
\int_C \varphi\,dm_C.
\]
\end{proposition}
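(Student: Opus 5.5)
The plan is to derive both assertions from the full form of Leibman's theorem on polynomial orbits of subnilmanifolds, \cite[Theorem~B$^*$]{Leibman2005Zd}. Theorem~\ref{thm:Leibman-orbit-closure} is the topological shadow of that theorem. The first step is purely topological. Apply Theorem~\ref{thm:Leibman-orbit-closure} with $U=G$, $\Theta=\Gamma$ to write
\[
C=\bigsqcup_{\nu=0}^{J-1}Hy_\nu\Gamma/\Gamma ,
\]
where the orbits on the right are exactly the connected components of $C$. If $C$ is connected, then $J=1$ and $C=Hy_0\Gamma/\Gamma$ with $H$ connected and closed. This orbit equals the closed set $C$, so it is closed. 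Hence $y_0^{-1}Hy_0$ is rational with respect to $\Gamma$, and $C$ is a connected subnilmanifold carrying the Haar measure $m_C$.

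For the averages, I would use the distributional part of Leibman's theorem, in the form recorded in \cite[p.~375]{BergelsonLeibmanLesigne2008}. It supplies a finite-index subgroup $L\leq\mathbb Z^\ell$ and, for each coset $\mathbf r+L$, a component $C_{\mathbf r}$ of $C$ with the following property: for every F{\o}lner sequence $(\Psi_N)$ in $L$ and every $\varphi\in C(X)$,
\[
\frac1{|\Psi_N|}\sum_{\mathbf n\in\Psi_N}\int_V\varphi\bigl(h(\mathbf r+\mathbf n)x\bigr)\,dm_V(x)
\longrightarrow\int_{C_{\mathbf r}}\varphi\,dm_{C_{\mathbf r}} .
\]
Equivalently, the reparameterized map $\mathbf n\mapsto h(\mathbf r+\mathbf n)V$ on $L$ is well distributed in $C_{\mathbf r}$. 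Polynomial reparameterization along $L\cong\mathbb Z^\ell$ preserves the class of polynomial mappings, so this applies to each coset. Since $J=1$, every $C_{\mathbf r}$ equals $C$, and all the coset limits coincide with $\int_C\varphi\,dm_C$.

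It then remains to pass from cosets to an arbitrary F{\o}lner sequence $(\Phi_N)$ in $\mathbb Z^\ell$. I would fix coset representatives $\mathbf r_1,\dots,\mathbf r_K$ with $K=[\mathbb Z^\ell:L]$ and split $\Phi_N$ into the pieces $\Phi_N\cap(\mathbf r_i+L)$. The F{\o}lner property gives $|\Phi_N\cap(\mathbf r_i+L)|/|\Phi_N|\to1/K$, because translating by the $\mathbf r_i$ nearly permutes these pieces. It also shows that each $(\Phi_N\cap(\mathbf r_i+L))-\mathbf r_i$ is a F{\o}lner sequence in $L$. Averaging the $K$ identical limits with weights tending to $1/K$ then yields the stated identity.

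The main obstacle is making sure the cited result really delivers \emph{well} distribution of the averaged quantity $\int_V\varphi(h(\mathbf n)x)\,dm_V$ along arbitrary F{\o}lner sequences on each coset, not merely convergence along $[1,N]^\ell$. If only the point-orbit version is available, I would first write $V=Kg\Gamma/\Gamma$ with $K$ connected and rational. I would then replace the family $h(\mathbf n)V$ by the single polynomial orbit of the point $(g\Gamma,e(K\cap g\Gamma g^{-1}))$ under $\mathbf n\mapsto(h(\mathbf n),\cdot)$ acting on a suitable product nilmanifold, and apply Leibman's well-distribution theorem for polynomial sequences \cite{Leibman2005Zd} there. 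Disintegrating over the $K$-direction then recovers the averaged statement.
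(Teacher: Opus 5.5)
Your proposal is correct and follows essentially the same route as the paper. The paper proves this proposition simply by invoking the connected case of the Bergelson--Leibman--Lesigne moving-subnilmanifold theorem \cite{BergelsonLeibmanLesigne2008}, and your additions unpack that citation soundly: you read off $J=1$ from Theorem~\ref{thm:Leibman-orbit-closure} to obtain a single closed $H$-orbit, and you recombine the coset-wise well-distribution statements through the F{\o}lner splitting $\Phi_N\cap(\mathbf r_i+L)$. The point-orbit fallback is therefore unnecessary.
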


\begin{proof}
This is the connected case of the moving-subnilmanifold
equidistribution theorem
\cite{BergelsonLeibmanLesigne2008}.
\end{proof}

View
$N/\Lambda$ as the fiber over $0+\mathbb Z$ of
$\widehat N/\widehat\Lambda$, and put
$V:=\Delta_{N/\Lambda}^d$. The subgroup $\Delta_N^d$ is connected and
$\Delta_N^d\cap\widehat\Lambda^d=\Delta_\Lambda^d$; hence, by
Lemma~\ref{lem:malcev}(i), $V$ is a connected closed
subnilmanifold of $(\widehat N/\widehat\Lambda)^d$.

Define
\[
h(\mathbf n)
:=
\bigl(
\alpha^{p_1(\mathbf n)},\ldots,
\alpha^{p_d(\mathbf n)}
\bigr).
\]
Each coordinate is polynomial, so
$h:\mathbb Z^m\to\widehat N^d$ is a polynomial mapping.

We next identify $h(\mathbf n)V$ with the corresponding polynomial
diagonal slice. Fix $\mathbf n\in\mathbb Z^m$ and $x\in N$. For
$1\leq i\leq d$,
\[
\alpha^{p_i(\mathbf n)}(x,0)(e,-p_i(\mathbf n))
=
\bigl(
\beta(p_i(\mathbf n))A^{p_i(\mathbf n)}(x),0
\bigr).
\]
Since $(e,-p_i(\mathbf n))\in\widehat\Lambda$, this element and
$\alpha^{p_i(\mathbf n)}(x,0)$ represent the same coset. By
\eqref{eq:F-interpolation}, its zero-fiber image is
$F^{p_i(\mathbf n)}(x\Lambda)$. Hence $h(\mathbf n)V$ is the
polynomial diagonal slice indexed by $\mathbf n$.

Let
\[
C
:=
\overline{\mathcal O_{\mathbf p}(\Delta_{N/\Lambda}^d)}
\subseteq
(N/\Lambda)^d.
\]
Applying Theorem~\ref{thm:Leibman-orbit-closure} to $h$ and $V$ in
$(\widehat N/\widehat\Lambda)^d$,
we obtain a connected closed subgroup
$H_0\leq\widehat N^d$, an integer $J\geq1$, and elements
$y_0,\ldots,y_{J-1}\in\widehat N^d$ such that
\[
C
=
\bigsqcup_{\nu=0}^{J-1}
H_0y_\nu\widehat\Lambda^d/\widehat\Lambda^d.
\]

Let
$\operatorname{time}:\widehat N^d\to\mathbb R^d$ be the product
homomorphism whose coordinates record the suspension times. Since
$\operatorname{time}(\widehat\Lambda^d)=\mathbb Z^d$, it induces a
continuous map
\[
\overline{\operatorname{time}}:
\widehat N^d/\widehat\Lambda^d
\longrightarrow
(\mathbb R/\mathbb Z)^d.
\]
Its zero fiber is naturally identified with $(N/\Lambda)^d$, since
$\widehat\Lambda^d\cap N^d=\Lambda^d$ and every class in this fiber has
a representative in $N^d$.

Every component above lies in the zero fiber. Fix $\nu$. Since
$y_\nu\widehat\Lambda^d$ lies in this fiber,
$\operatorname{time}(y_\nu)\in\mathbb Z^d$. For $g\in H_0$, the same is
true of $gy_\nu\widehat\Lambda^d$, so
$\operatorname{time}(g)\in\mathbb Z^d$. Thus
$\operatorname{time}(H_0)$ is a connected subset of $\mathbb Z^d$
containing $0$, and hence equals $\{0\}$. Therefore $H_0\leq N^d$.
After changing representatives, we may assume that $y_\nu\in N^d$.
Consequently,
\begin{equation}\label{eq:FU}
C
=
\bigsqcup_{\nu=0}^{J-1}
H_0y_\nu\Lambda^d/\Lambda^d.
\end{equation}

Since $p_i(\mathbf{0})=0$ for every $i$, the diagonal
$\Delta_{N/\Lambda}^d$ is contained in $C$. Since the diagonal is connected
and contains the identity coset $\Lambda^d$, it lies in the unique connected
component
of $C$ containing $\Lambda^d$. Denote this component by $C_0$ and reindex
the decomposition in \eqref{eq:FU} so that
$C_0=H_0y_0\Lambda^d/\Lambda^d$. Since $\Lambda^d\in C_0$, we have
$y_0\in H_0\Lambda^d$, and therefore
\begin{equation}\label{eq:C0}
C_0=H_0\Lambda^d/\Lambda^d.
\end{equation}
Since the orbit in \eqref{eq:C0} is closed, $H_0$ is rational relative
to $\Lambda^d$. 
By Lemma~\ref{lem:malcev}(i), $H_0$ is the connected component
containing the identity in the full inverse image
$H_0\Lambda^d$ of $C_0$ under the quotient map
$N^d\to (N/\Lambda)^d$.
Since $\Delta_{N/\Lambda}^d\subseteq C_0$, this full inverse image
contains $\Delta_N^d$. The subgroup $\Delta_N^d$ is connected and
contains the identity, so
$\Delta_N^d\subseteq H_0$.

\begin{lemma}\label{lem:finite-phases}
Let $Q\in\mathbb R[t_1,\ldots,t_m]$. If
$\bigl\{\exp\bigl(2\pi iQ(\mathbf n)\bigr):
\mathbf n\in\mathbb Z^m\bigr\}
$
is finite, then every nonconstant coefficient of $Q$ is rational. In
particular,
$\partial_jQ(\mathbf k)\in\mathbb Q$ for every
$\mathbf k\in\mathbb Z^m$ and every $1\leq j\leq m$.
\end{lemma}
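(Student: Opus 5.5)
The finiteness hypothesis forces $Q(\mathbf n)$ modulo $\mathbb Z$ to take only finitely many values. I would turn this into a statement about one fixed integer $L$ and then read off the coefficients of $Q$ by finite differences.

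First I reduce to rationality of values. If the set $\{\exp(2\pi iQ(\mathbf n))\}$ has $r$ elements, then $Q(\mathbf n)+\mathbb Z$ ranges over a finite set $S\subseteq\mathbb R/\mathbb Z$. I would avoid showing directly that $S$ consists of rational points. Instead I take finite differences, which removes the constant term.

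For $\mathbf h\in\mathbb Z^m$ set $\Delta_{\mathbf h}Q(\mathbf t):=Q(\mathbf t+\mathbf h)-Q(\mathbf t)$. Iterating the difference operator in the coordinate directions $e_1,\dots,e_m$ with multiplicities gives, for each multi-index $\boldsymbol\kappa\neq\mathbf 0$, a number $\Delta^{\boldsymbol\kappa}Q(\mathbf 0)$. This number is an integer combination of values $Q(\mathbf n)$ whose coefficients sum to $0$. It therefore lies in the finite set $D:=\{\sum c_i s_i\}$, which is finite once the lengths of the combinations are bounded by $\deg Q$.

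A finiteness argument alone does not give rationality, so the key step is a pigeonhole over dilations. Consider the polynomials $Q_k(\mathbf t):=Q(k\mathbf t)$ for $k\in\mathbb N$. Each $Q_k$ satisfies the same hypothesis, with values in the same set $S$. Then $\Delta^{\boldsymbol\kappa}Q_k(\mathbf 0)$ takes values in the finite set $D$ modulo $\mathbb Z$.

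Fix a top-degree monomial. By induction downward on degree, I may assume all higher-degree coefficients are already known to be rational. After multiplying $Q$ by a common denominator, their contribution is integral modulo a controlled term.

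For a monomial $\mathbf t^{\boldsymbol\kappa}$ with $|\boldsymbol\kappa|$ maximal among the remaining terms, $\Delta^{\boldsymbol\kappa}Q_k(\mathbf 0)$ equals $\boldsymbol\kappa!\,k^{|\boldsymbol\kappa|}c_{\boldsymbol\kappa}$ plus rational terms from the already-handled higher monomials. Call this coefficient $c=c_{\boldsymbol\kappa}$ and set $\kappa:=|\boldsymbol\kappa|$. So the numbers $k^{\kappa}c+\rho_k$, with $\rho_k$ rational of bounded denominator, lie in a finite set modulo $\mathbb Z$ for all $k$.

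Replacing $k$ by multiples of that denominator makes $\rho_k\in\mathbb Z$. Then $\{k^\kappa c \bmod 1\}$ is finite along an arithmetic progression. If $c$ were irrational, Weyl's theorem (Proposition~\ref{prop:weyl-polynomial-criterion}) applied to $k\mapsto (Mk)^\kappa c$ gives equidistribution, which contradicts finiteness. Hence $c\in\mathbb Q$, and the induction reaches every nonconstant coefficient.

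There is a simpler alternative I would try first. Restrict $Q$ to lines $\mathbf n=\mathbf a+k\mathbf e$. The one-variable sequence $k\mapsto Q(\mathbf a+k\mathbf e)$ takes finitely many values modulo $1$, so it is not equidistributed. Moreover, every character of it, $\exp(2\pi i hQ)$ for $h\in\mathbb Z$, takes finitely many values. Applying Weyl's criterion to $hQ$ along sub-progressions should show that every nonconstant coefficient of the one-variable restriction is rational. Indeed, by the affine reparameterization remark after Proposition~\ref{prop:weyl-polynomial-criterion}, an irrational coefficient gives equidistribution along every progression, and that is impossible for a finite-valued sequence.

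Choosing enough lines recovers each multivariate coefficient as a rational combination of one-variable coefficients. For example, for each $\boldsymbol\kappa$ take directions $\mathbf e\in\{0,\dots,\deg Q\}^m$ and use polynomial interpolation in $\mathbf e$ of the leading-type coefficients of $Q(k\mathbf e)$ in $k$. This interpolation step is the main obstacle: I must check that the linear system recovering the coefficients $c_{\boldsymbol\kappa}$ ($\boldsymbol\kappa\neq\mathbf 0$) from the line coefficients has rational entries and is invertible.

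The final assertion then follows at once. $\partial_jQ$ has rational coefficients, since only nonconstant coefficients of $Q$ enter it, so it takes rational values at $\mathbf k\in\mathbb Z^m$.
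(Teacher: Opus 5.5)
Your first argument is correct and complete. It takes finite differences of the dilated polynomial $Q(k\mathbf t)$ at $\mathbf t=\mathbf 0$, then applies Weyl in $k$, with a downward induction on degree. It rests on two facts:
\begin{itemize}
\item $(\Delta^{\boldsymbol\kappa}Q_k)(\mathbf 0)$ is a fixed integer combination of the values $Q(k\boldsymbol\epsilon)$, $\boldsymbol\epsilon\leq\boldsymbol\kappa$, so it is finite-valued modulo $\mathbb Z$ uniformly in $k$;
\item $(\Delta^{\boldsymbol\kappa}\mathbf t^{\boldsymbol\beta})(\mathbf 0)$ is an integer that vanishes unless $\boldsymbol\beta\geq\boldsymbol\kappa$ componentwise. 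Hence the other monomials of degree $|\boldsymbol\kappa|$ drop out, and only higher-degree, already-rational coefficients enter $\rho_k$.
\end{itemize}
Restricting $k$ to multiples of the common denominator $q$ then makes $\rho_k$ integral, since $c_{\boldsymbol\beta}(qk')^{|\boldsymbol\beta|}\in\mathbb Z$ for $|\boldsymbol\beta|\geq1$.

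The paper uses the same two ingredients in the opposite order. This is essentially your \emph{simpler alternative}, restricted to lines through the origin.
\begin{itemize}
\item For $\mathbf v\in\mathbb Z^m$, it applies the full one-variable Weyl criterion to $s\mapsto Q(s\mathbf v)$. The $s^r$-coefficient of this polynomial is the homogeneous part $Q_r(\mathbf v)$, so $Q_r(\mathbf v)\in\mathbb Q$ for every $r\geq1$.
\item The \emph{interpolation obstacle} you flag is then removed by the exact formula $c_{\boldsymbol\alpha}=\frac{1}{\alpha_1!\cdots\alpha_m!}(\Delta_1^{\alpha_1}\cdots\Delta_m^{\alpha_m}Q_r)(\mathbf 0)$ for $|\boldsymbol\alpha|=r$. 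This is a rational combination of values of $Q_r$ at integer points.
\end{itemize}

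Separating the homogeneous parts is what lets the paper avoid any induction: each $Q_r$ is handled in isolation, with no contamination from higher degrees. Your ordering needs only the monomial case of Weyl, but pays for it with the induction that clears $\rho_k$.

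You could drop that induction as well. The map $k\mapsto(\Delta^{\boldsymbol\kappa}Q_k)(\mathbf 0)$ is itself a one-variable polynomial with finite image modulo $\mathbb Z$, and its $k^{|\boldsymbol\kappa|}$-coefficient is $\boldsymbol\kappa!\,c_{\boldsymbol\kappa}$ with $|\boldsymbol\kappa|\geq1$. Proposition~\ref{prop:weyl-polynomial-criterion} then gives $c_{\boldsymbol\kappa}\in\mathbb Q$ directly.
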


\begin{proof}
Write $Q=\sum_{r=0}^D Q_r$, where $Q_r$ is homogeneous of degree $r$. 
For $\mathbf v\in\mathbb Z^m$, the polynomial
$q_{\mathbf v}(s):=Q(s\mathbf v)$ has finite image modulo one on
$\mathbb Z$.
By
Weyl's polynomial equidistribution theorem
(Proposition~\ref{prop:weyl-polynomial-criterion}), every nonconstant
coefficient of $q_{\mathbf v}$ is rational. The coefficient of $s^r$ is
$Q_r(\mathbf v)$, so
\[
Q_r(\mathbf v)\in\mathbb Q
\qquad
(r\geq1,\ \mathbf v\in\mathbb Z^m).
\]

Let $Q_r(\mathbf t)=\sum_{|\boldsymbol\alpha|=r}
 c_{\boldsymbol\alpha}\mathbf t^{\boldsymbol\alpha}$. For
$1\leq i\leq m$, let
$\Delta_iP(\mathbf t):=P(\mathbf t+\mathbf e_i)-P(\mathbf t)$. Since
$Q_r$ is homogeneous of degree $r$,
\[
c_{\boldsymbol\alpha}
=
\frac{1}{\alpha_1!\cdots\alpha_m!}
\bigl(
\Delta_1^{\alpha_1}\cdots
\Delta_m^{\alpha_m}Q_r
\bigr)(\mathbf{0}).
\]

Since the right-hand side is a rational linear combination of the
rational values $Q_r(\mathbf v)$ at integer points, every
$c_{\boldsymbol\alpha}$ is rational; the assertion about
$\partial_jQ(\mathbf k)$ follows.
\end{proof}

\begin{lemma}\label{lem:horizontal-obstruction}
Assume that the affine transformation $F(x\Lambda)=bA(x)\Lambda$ is minimal.
Let $\eta\in\mathfrak n^*$ be a nonzero functional that is rational
relative to the Mal'cev structure determined by $\Lambda$ and satisfies
\[
\eta([\mathfrak n,\mathfrak n])=0,
\qquad
\eta\circ A_*=\eta.
\]
Then $\eta(v)\notin\mathbb Q$, 
where $v=X_0-T_0$ is defined immediately before
\eqref{eq:def-beta}.
\end{lemma}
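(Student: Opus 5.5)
Assume for contradiction that $\eta(v)\in\mathbb Q$. We will produce a nonconstant continuous function $\phi:N/\Lambda\to\mathbb R/\mathbb Z$ that is invariant under some power of $F$. Such a function contradicts minimality: minimality of $F$ is inherited by $F^k$ on each of finitely many closed pieces, and a connected minimal nilsystem is totally minimal. Here $N/\Lambda$ is connected because $N$ is connected, so Proposition~\ref{prop:minimal-total-minimal} applies via the conjugacy of Lemma~\ref{lem:affine-reduction}. The function $\phi$ will be a horizontal character. Since $\eta$ is rational and horizontal, some positive integer multiple $c\eta$ is integral, so $\rho:=\rho_{c\eta}:N/\Lambda\to\mathbb R/\mathbb Z$ is a well-defined, continuous, surjective map. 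In particular it is nonconstant.

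The key step is to compute $\rho\circ F^r$ using the interpolation \eqref{eq:F-interpolation}. For $x=\exp X$ we have $F^r(x\Lambda)=\beta(r)A^r(x)\Lambda$. Because $c\eta$ kills $[\mathfrak n,\mathfrak n]$, the map $\widetilde\rho:N\to\mathbb R$, $\exp Y\mapsto c\eta(Y)$, is a homomorphism: the BCH correction terms all lie in $[\mathfrak n,\mathfrak n]$. We compute $\widetilde\rho(A^r(x))=c\eta(A_*^rX)=c\eta(X)$ using $\eta\circ A_*=\eta$. For the other factor, the derivative of $u\mapsto\widetilde\rho(\beta(u))$ equals $c\eta(\beta(u)^{-1}\beta'(u))=c\eta(A_*^uv)=c\eta(v)$ by \eqref{eq:beta-log-derivative}, again using $A_*$-invariance. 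Since $A_*^u=\exp(u\mathcal D)$ and $\eta\circ\mathcal D=0$ (differentiate $\eta\circ A_*^u$, which is polynomial in $u$ and constant on the integers), this holds for all real $u$. With $\beta(0)=e$ we obtain $\widetilde\rho(\beta(r))=rc\,\eta(v)$, and therefore
\[
\rho(F^r(x\Lambda))=\rho(x\Lambda)+rc\,\eta(v)+\mathbb Z .
\]

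Now write $c\,\eta(v)=p/k$ with $k\in\mathbb N$. Then $\rho\circ F^k=\rho$, so $\rho$ is $F^k$-invariant and nonconstant. Choose a point $y$; the $F^k$-orbit closure of $y$ lies in the proper closed set $\rho^{-1}(\rho(y))$, which is a proper subset because $\rho$ is surjective. This contradicts the minimality of $F^k$. Alternatively, one can avoid total minimality altogether by using $e^{2\pi i k\rho}$, which is continuous, nonconstant and $F$-invariant, contradicting minimality of $F$ directly.

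The step I expect to need care is verifying that $\eta\circ\mathcal D=0$, so that $\eta(A_*^uv)=\eta(v)$ also at non-integer $u$. This reduces to Lemma~\ref{lem:polynomial-identity}: the map $u\mapsto\eta\circ A_*^u-\eta$ is polynomial and vanishes on $\mathbb Z$, hence vanishes identically. With that, the phase computation above goes through cleanly.
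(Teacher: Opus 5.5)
Your proposal is correct and follows essentially the same route as the paper. You pass to an integral multiple of $\eta$ and use the homomorphic lift $\widetilde\rho$ of the horizontal character. You show $\eta\circ A_*^u=\eta$ for all real $u$ by polynomial interpolation, and integrate $\frac{d}{du}\widetilde\rho(\beta(u))=c\,\eta(v)$ to see that $F$ factors onto the circle rotation by $c\,\eta(v)$, which minimality forces to be irrational.

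Your second ending, via the $F$-invariant nonconstant function $e^{2\pi i k\rho}$, is exactly the paper's final step. It makes the detour through total minimality unnecessary.
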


\begin{proof}
Since $\eta$ is rational and horizontal, there exists
$q\in\mathbb N$ such that $\eta_0:=q\eta$ is integral on the horizontal
lattice $\Lambda_{\mathrm{hor}}$. Let
\[
\rho_{\eta_0}:N/\Lambda\longrightarrow\mathbb R/\mathbb Z
\]
be the horizontal character constructed after
\eqref{eq:horizontal-lattice}. 
The map $\rho_{\eta_0}$ factors through a nontrivial character of the
connected horizontal torus and is therefore surjective. The corresponding lift to $N$,
\[
\widetilde\rho_{\eta_0}:N\longrightarrow\mathbb R,
\qquad
\widetilde\rho_{\eta_0}(\exp X):=\eta_0(X),
\]
is a group homomorphism and maps $\Lambda$ into $\mathbb Z$.

Since $\mathcal D=\log A_*$ is nilpotent, the map
$u\mapsto\eta_0\circ A_*^u$ is polynomial. It equals $\eta_0$ on
$\mathbb Z$, and hence
\[
\eta_0\circ A_*^u=\eta_0
\qquad (u\in\mathbb R).
\]

Using \eqref{eq:beta-log-derivative} and the fact that
$\widetilde\rho_{\eta_0}$ is a homomorphism, we obtain
\[
\frac{d}{du}
\widetilde\rho_{\eta_0}(\beta(u))
=
\eta_0\bigl(\beta(u)^{-1}\beta'(u)\bigr)
=
\eta_0(A_*^uv)
=
\eta_0(v).
\]
Since $\beta(0)=e$ and $\beta(1)=b$, integration over $[0,1]$ gives
$\widetilde\rho_{\eta_0}(b)=\eta_0(v)$.
Moreover, $\eta_0\circ A_*=\eta_0$ implies
$\widetilde\rho_{\eta_0}\circ A=\widetilde\rho_{\eta_0}$. Hence, for
every $x\in N$,
\[
\rho_{\eta_0}(F(x\Lambda))
=
\rho_{\eta_0}(x\Lambda)+\eta_0(v)+\mathbb Z.
\]
Thus $F$ factors onto the circle rotation by
$\eta_0(v)+\mathbb Z$. Minimality of $F$ forces this rotation to be
minimal, so $\eta_0(v)\notin\mathbb Q$ and hence
$\eta(v)\notin\mathbb Q$.
\end{proof}

\section{Central induction}
\label{sec:induction}

We retain the notation of Section~\ref{sec:components}: $C$ is the
polynomial diagonal orbit closure,
$C_0=H_0\Lambda^d/\Lambda^d$ is its component containing the identity
coset, and $g:\mathbb R^m\to N^d$ is the exact interpolation defined in
\eqref{eq:def-g}. Write
$\mathfrak h_0:=\operatorname{Lie}(H_0)$.

We prove Proposition~\ref{prop:strong-inclusion} by induction on the
nilpotency step, passing to the last central quotient and then
eliminating the residual torus by tangent and character arguments.

\begin{proposition}\label{prop:strong-inclusion}
For every $\mathbf t\in\mathbb R^m$, $g(\mathbf t)\in H_0$.
Consequently, $C=C_0$, so $C$ is connected.
\end{proposition}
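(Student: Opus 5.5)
The plan is to argue by induction on the nilpotency step $s$ of $N$, proving a slightly stronger statement that is stable under central quotients. Besides $g(\mathbf t)\in H_0$, I would also carry the inclusion $E_{\mathbf t}(\mathfrak n)\subseteq\mathfrak h_0$ for all $\mathbf t\in\mathbb R^m$. This second inclusion is needed because, by \eqref{eq:F-interpolation}, the slice indexed by $\mathbf n$ equals $g(\mathbf n)D_{\mathbf n}\Lambda^d/\Lambda^d$, where $D_{\mathbf t}:=\exp E_{\mathbf t}(\mathfrak n)$ is a twisted diagonal subgroup. So both pieces must lie in $H_0$ to conclude $\mathcal O_{\mathbf p}(\Delta^d_{N/\Lambda})\subseteq C_0$; closedness of $C_0$ then gives $C=C_0$. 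The base case is $N$ abelian, which I would treat together with the induction step with $Z$ equal to all of $N$.

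For the induction step, let $Z:=N_s$ be the last nontrivial term of the lower central series. It is central, rational by Proposition~\ref{prop:malcev-package}(i), and $A$-invariant. Pass to $\bar N:=N/Z$ with lattice $\bar\Lambda$ and the induced affine map $\bar F$. This map is minimal as a factor of $F$, and its unipotent automorphism, suspension and interpolation are the images of $A$, $\beta$ and $g$. The closure $C$ projects onto the corresponding closure $\bar C$, and $C_0$ projects onto $\bar C_0$. By Corollary~\ref{cor:identity-orbit-comparison}, the identity subgroup $\bar H_0$ of $\bar C_0$ is the image of $H_0$. The induction hypothesis then gives
\[
g(\mathbf t)\in M:=H_0Z^d,
\qquad
E_{\mathbf t}(\mathfrak n)\subseteq\mathfrak h_0+\mathfrak z^d .
\]
By Lemma~\ref{lem:rational-torus}(i), $M$ is connected and rational, and $[M,M]=[H_0,H_0]\subseteq H_0$. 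Hence Lemma~\ref{lem:rational-torus}(ii) supplies a torus $K=M/(H_0\Gamma_M)$. By Lemma~\ref{lem:character-lift}, it suffices to show that for every character $\chi$ of $K$, with lift $\widetilde f_\chi$ and functional $\ell=\ell_\chi$ vanishing on $\mathfrak h_0$, two things hold: $Q(\mathbf t):=\widetilde f_\chi(g(\mathbf t))\equiv 0$ and $\ell\circ E_{\mathbf t}\equiv0$. This is enough because these functionals span $(\mathfrak m/\mathfrak h_0)^*$.

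By \eqref{eq:g-log-derivative}, $\partial_jQ(\mathbf t)=\ell(\Psi_{j,\mathbf t}(v))$, which is polynomial by Lemma~\ref{lem:interpolation}. So $Q$ is a real polynomial with $Q(\mathbf 0)=0$ by Lemma~\ref{lem:polynomial-antiderivative}. At integer $\mathbf n$, the point $g(\mathbf n)\Lambda^d$ lies in $C\cap M\Lambda^d/\Lambda^d$, which meets only finitely many of the $J$ components $H_0y_\nu\Lambda^d/\Lambda^d$. Each such component projects to a single point of $K$, so $\{\exp(2\pi iQ(\mathbf n))\}$ is finite. Lemma~\ref{lem:finite-phases} then makes every nonconstant coefficient of $Q$ rational.

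The main obstacle is converting this rationality into a contradiction via Lemma~\ref{lem:horizontal-obstruction}. My first attempt would be as follows. Write $\ell$ restricted to $\mathfrak z^d$ as $(\zeta_1,\dots,\zeta_d)$, and use $\Delta_{\mathfrak n}^d\subseteq\mathfrak h_0$ together with the form of $\Psi$. Since $\Psi_{j,\mathbf t}(v)$ has coordinates $\partial_jp_i(\mathbf t)A_*^{p_i(\mathbf t)}v$, expanding $A_*^{u}=\sum_q u^q\mathcal D^q/q!$ gives $\partial_jQ$ as a combination of the numbers $\ell$ applied to vectors built from $\mathcal D^qv$. The top nonvanishing power of $\mathcal D$ on the relevant piece should produce a functional $\eta$ on $\mathfrak n$ that is horizontal, $A_*$-invariant, and rational (rationality of $\ell$ is inherited from $\Omega^*$). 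Its coefficient in $Q$ is $\eta(v)$ times a nonzero rational derived from the $p_i$. Lemma~\ref{lem:horizontal-obstruction} then forces $\eta(v)\notin\mathbb Q$ unless $\eta=0$. I would organize this as a downward induction on the power of $\mathcal D$: each stage kills one layer of $\ell$, using the same polynomial identity argument (Lemma~\ref{lem:polynomial-identity}) to pass from integer to real parameters. A parallel argument applied to $\ell\circ E_{\mathbf t}$ should show that the twisted diagonal directions also lie in $\mathfrak h_0$. Once all layers vanish, $Q\equiv0$ and $\ell\circ E_{\mathbf t}\equiv0$, which closes the induction. The delicate point I expect is ensuring that the surviving functional is genuinely horizontal on all of $\mathfrak n$, and not merely on the central layer. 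I would try to handle this by exploiting that $\ell$ kills $\Delta^d_{\mathfrak n}\subseteq\mathfrak h_0$ and that brackets of $\Delta_{\mathfrak n}^d$ with $\mathfrak h_0$ stay in $\mathfrak h_0$.
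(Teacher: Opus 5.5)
Your setup matches the paper: the induction on the step, the passage to $N/N_s$ giving $g(\mathbf t)\in M=H_0Z^d$, the residual torus $K$, the polynomiality of $Q=\widetilde f_\chi\circ g$, and the rationality of its nonconstant coefficients from finitely many phases. But the step you flag as the main obstacle is a genuine gap, and the expansion in powers of $\mathcal D$ cannot close it as described. The functional $\ell=\ell_\chi$ is defined only on $\mathfrak m=\mathfrak h_0+\mathfrak z^d$, so a functional on $\mathfrak n$ requires a linear map $\mathfrak n\to\mathfrak m$. The individual pieces $\bigl(\partial_jp_i\,p_i^q\mathcal D^qv/q!\bigr)_i$ of $\Psi_{j,\mathbf t}(v)$ need not lie in $\mathfrak m$. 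Restricting $\ell$ to $\mathfrak z^d$ gives something evaluable only on vectors that land in $\mathfrak z^d$.

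The missing idea is Lemma~\ref{lem:polynomial-tangent}: $\Psi_{j,\mathbf k}(\mathfrak n)\subseteq\mathfrak m$ for integer $\mathbf k$. This is where minimality is used a second time. The subspace $S_{\mathbf k,j}=\Psi_{j,\mathbf k}^{-1}(\mathfrak m)$ is a rational, $A_*$-invariant ideal containing $v$, since $g$ takes values in $M$. If it were proper, then $\beta(u)$, and hence $b$, would lie in $\exp(S_{\mathbf k,j})$. Then $F$ would fix the identity coset of a nontrivial quotient nilmanifold, contradicting minimality. With this, $\eta_{j,\mathbf k}:=\ell\circ\Psi_{j,\mathbf k}$ is a well-defined rational element of $\mathfrak n^*$, and $\eta_{j,\mathbf k}(v)=\partial_jQ(\mathbf k)$ is rational directly. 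No downward induction on powers of $\mathcal D$ is needed.

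The second missing ingredient is the full inclusion $E_{\mathbf t}(\mathfrak n)\subseteq\mathfrak h_0$ at the current step. You propose an unspecified character argument starting from the weaker inductive information $E_{\mathbf t}(\mathfrak n)\subseteq\mathfrak h_0+\mathfrak z^d$. In fact it follows directly from geometry (Lemma~\ref{lem:tangent-invariance}):
\begin{itemize}
\item the connected slice through $g(\mathbf n)\Lambda^d$ lies in a single $H_0$-orbit, so $\operatorname{Ad}(g(\mathbf n))E_{\mathbf n}(\mathfrak n)\subseteq\mathfrak h_0$;
\item $g(\mathbf n)\in M$ normalizes $\mathfrak h_0$, which removes the $\operatorname{Ad}$;
\item the polynomial identity principle extends the inclusion to real $\mathbf t$.
\end{itemize}

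This full inclusion is exactly what gives the two properties you need.
\begin{itemize}
\item Horizontality of $\eta_{j,\mathbf k}$ comes from $[\Psi_{j,\mathbf k}(X),E_{\mathbf k}(Y)]=\Psi_{j,\mathbf k}([X,Y])$, together with $E_{\mathbf k}(Y)\in\mathfrak h_0$ and $[\mathfrak m,\mathfrak h_0]\subseteq\mathfrak h_0$. The plain diagonal $\Delta^d_{\mathfrak n}$ you propose does not pair correctly with the twisted vectors $\Psi_{j,\mathbf k}(X)$.
\item $A_*$-invariance comes from $\partial_jE_{\mathbf t}(Y)=\Psi_{j,\mathbf t}(\mathcal DY)\in\mathfrak h_0$. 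This fails if you only know $E_{\mathbf t}(\mathfrak n)\subseteq\mathfrak h_0+\mathfrak z^d$, because $\ell$ need not vanish on $\mathfrak z^d$.
\end{itemize}

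Finally, you do not need to carry $E_{\mathbf t}(\mathfrak n)\subseteq\mathfrak h_0$ through the induction to conclude $C=C_0$. Each slice is connected and contains $g(\mathbf n)\Lambda^d\in C_0$, so it lies in $C_0$.
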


\subsection{Passage to the last central quotient}

Let $N$ have nilpotency step $s\geq1$, and put
\[
Z:=N_s,
\qquad
\mathfrak z:=\operatorname{Lie}(Z).
\]
The subgroup $Z$ is connected, central, rational, and characteristic. In
particular, it is $A$-invariant.

\begin{lemma}\label{lem:central-quotient}
Let $q:N\to\overline N:=N/Z$ be the quotient homomorphism, put
$\overline\Lambda:=q(\Lambda)$, and let
$q_d:=q^{\times d}:N^d\to\overline N^d$. Suppose that, for the induced
affine system on $\overline N/\overline\Lambda$, the polynomial diagonal orbit closure
$\overline C$ is connected and the corresponding exact interpolation
$q_d\circ g$ takes values in the connected rational subgroup
$\overline H_0\leq\overline N^d$ that defines the component containing
the identity coset. Then
\[
q_d(H_0)=\overline H_0,\quad
g(\mathbb R^m)\subseteq M:=H_0Z^d.
\]
Moreover, $M$ is connected and rational, $H_0\trianglelefteq M$, and
\[
\operatorname{Lie}(M)=\mathfrak h_0+\mathfrak z^d.
\]
\end{lemma}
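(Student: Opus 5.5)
The plan has three parts: identify $q_d(H_0)$ with $\overline H_0$ by comparing orbit closures downstairs, lift the inclusion of $q_d\circ g$ to an inclusion of $g$ in $H_0Z^d$, and read off the structural properties of $M$ from Lemma~\ref{lem:rational-torus}(i).

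\textbf{Step 1: $q_d(H_0)=\overline H_0$.} The quotient map $\bar q_d:(N/\Lambda)^d\to(\overline N/\overline\Lambda)^d$ intertwines $F$ with the induced affine map $\overline F(q(x)\overline\Lambda)=q(b)\overline A(q(x))\overline\Lambda$; this is well defined because $Z$ is $A$-invariant.

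Hence $\bar q_d$ maps the polynomial diagonal slices of $C$ onto those of $\overline C$. By compactness, $\bar q_d(C)=\overline C$. Since $\bar q_d$ is continuous and $C_0$ is connected, $\bar q_d(C_0)=q_d(H_0)\overline\Lambda^d/\overline\Lambda^d$ is a connected subset of $\overline C$ containing the identity coset.

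By Lemma~\ref{lem:universal-cover-package}-type facts, $q_d(H_0)$ is a connected subgroup. The image $q_d(H_0)\overline\Lambda^d/\overline\Lambda^d$ is compact, hence closed, so $q_d(H_0)$ is rational by Proposition~\ref{prop:malcev-package}(ii), once we know it is closed. Closedness follows from Lemma~\ref{lem:connected-subgroups-simply-connected}, since $\overline N$ is simply connected by Proposition~\ref{prop:malcev-package}(iii).

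It remains to show the orbit equals $\overline C_0=\overline C$. The components $C_\nu=H_0y_\nu\Lambda^d/\Lambda^d$ map to the translates $q_d(H_0)q_d(y_\nu)\overline\Lambda^d$, and these finitely many closed sets cover the connected set $\overline C$.

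I expect this to be the main obstacle. Finitely many translates of a closed orbit covering a connected orbit closure does not immediately force the cover to have a single piece. I would argue by dimension. Each image is a connected subnilmanifold of dimension $\dim q_d(H_0)$ and all of them lie in $\overline C$. Also $\overline C$ is itself a single orbit $\overline H_0\overline\Lambda^d$ and is covered by these finitely many images, so by Baire one image has nonempty interior in $\overline C$. This gives $\dim q_d(H_0)=\dim\overline H_0$.

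Together with $q_d(H_0)\overline\Lambda^d\subseteq\overline H_0\overline\Lambda^d$, Lemma~\ref{lem:malcev}(i) gives $q_d(H_0)\subseteq\overline H_0$, because the identity component of $\overline H_0\overline\Lambda^d$ is $\overline H_0$. Equal dimension and connectedness then give equality; alternatively, Corollary~\ref{cor:identity-orbit-comparison} applies once the orbits are shown to coincide.

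\textbf{Step 2: $g(\mathbb R^m)\subseteq M$.} We have $q_d^{-1}(\overline H_0)=q_d^{-1}(q_d(H_0))=H_0\ker q_d=H_0Z^d$. The hypothesis gives $q_d(g(\mathbf t))\in\overline H_0$, and therefore $g(\mathbf t)\in H_0Z^d=M$.

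\textbf{Step 3: structure of $M$.} Apply Lemma~\ref{lem:rational-torus}(i) with $H=H_0$ and with the central subgroup $Z^d$. Here $H_0$ is rational by \eqref{eq:C0}. The subgroup $Z^d$ is connected, central and rational because $Z=N_s$ is rational by Proposition~\ref{prop:malcev-package}(i). The lemma yields that $M$ is connected and rational, that $H_0\trianglelefteq M$, and that $\operatorname{Lie}(M)=\mathfrak h_0+\mathfrak z^d$.
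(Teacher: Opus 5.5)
Your proposal is correct and has the same three-part structure as the paper: identify $q_d(H_0)$ with $\overline H_0$, pull back through $\ker q_d=Z^d$, then apply Lemma~\ref{lem:rational-torus}(i). The real divergence is at the step you flagged as the main obstacle, and that obstacle is not actually present. The sets $q_d(H_0)q_d(y_\nu)\overline\Lambda^d/\overline\Lambda^d$ are orbits of the single group $q_d(H_0)$ acting on the left, so any two of them are equal or disjoint. After discarding repetitions you have finitely many pairwise disjoint closed sets covering $\overline C$, so each is relatively open. Connectedness of $\overline C$ then leaves only the orbit through the identity coset, giving $\overline C=q_d(H_0)\overline\Lambda^d/\overline\Lambda^d=\overline H_0\overline\Lambda^d/\overline\Lambda^d$ at once. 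Lemma~\ref{lem:malcev}(i), equivalently Corollary~\ref{cor:identity-orbit-comparison}, then gives $q_d(H_0)=\overline H_0$; this is the paper's argument. Your Baire-plus-dimension route is also valid. A piece with nonempty interior in $\overline C$ forces $\dim q_d(H_0)=\dim\overline H_0$ by invariance of domain, using that closed orbits are embedded so the subspace and manifold topologies agree. The identity-component argument gives $q_d(H_0)\subseteq\overline H_0$, and equality follows. However, it needs this extra manifold and dimension-theoretic input, whereas the orbit-partition argument is purely point-set. One small point the paper verifies and you take for granted is that $q_d\circ g$ really is the exact interpolation of the quotient system. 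The paper checks this via the suspension homomorphism $(x,u)\mapsto(q(x),u)$, which sends $\beta(u)$ to $q(\beta(u))$. As the lemma is phrased this is part of the hypothesis, but it is what lets the induction hypothesis be applied.
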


\begin{proof}
By Proposition~\ref{prop:malcev-package}(iii), $\overline\Lambda$ is a
lattice in the connected and simply connected nilpotent Lie group $\overline N$. 
Since $\mathfrak z$ is $\mathcal D$-invariant, each $A^u$ preserves
$Z$ and descends to $\overline N$. Consequently,
$(x,u)\mapsto(q(x),u)$ is a homomorphism of the corresponding
suspension groups and maps $\beta(u)$ to $q(\beta(u))$, the quotient
interpolation curve. Hence the exact interpolation in the quotient is
$q_d\circ g$.

Let $q_X:(N/\Lambda)^d\to(\overline N/\overline\Lambda)^d$ be the induced factor map.
Since $C$ is compact and $q_X$ maps each polynomial slice to the
corresponding quotient slice, one has $\overline C=q_X(C)$. Applying
$q_X$ to \eqref{eq:FU} expresses $\overline C$ as a finite union of
closed $q_d(H_0)$-orbits. After removing repetitions, these orbits form a finite disjoint
decomposition of $\overline C$ into relatively open sets. Since
$\overline C$ is connected and contains the identity coset,
\[
\overline C
=
q_d(H_0)\overline\Lambda^d/\overline\Lambda^d.
\]

The group $q_d(H_0)$ is connected and, by
Lemma~\ref{lem:connected-subgroups-simply-connected}, closed. Since the
orbit $q_d(H_0)\overline\Lambda^d/\overline\Lambda^d=\overline C$
is closed, $q_d(H_0)$ is rational relative to
$\overline\Lambda^d$.   As $\overline C$ is connected, both
$q_d(H_0)$ and $\overline H_0$ define the component containing the
identity, so
\[
q_d(H_0)\overline\Lambda^d/\overline\Lambda^d
=
\overline H_0\overline\Lambda^d/\overline\Lambda^d.
\]
Taking full inverse images under the quotient map
$\overline N^d\to\overline N^d/\overline\Lambda^d$ shows
\[
q_d(H_0)\overline\Lambda^d
=
\overline H_0\overline\Lambda^d.
\]
By Lemma~\ref{lem:malcev}(i), the connected components containing the
identity in the two sides are $q_d(H_0)$ and $\overline H_0$,
respectively. Hence
\[
q_d(H_0)=\overline H_0.
\]

By hypothesis, $q_d(g(\mathbf t))\in\overline H_0=q_d(H_0)$ for every
$\mathbf t\in\mathbb R^m$. Since $\ker q_d=Z^d$, it follows that
$g(\mathbf t)\in H_0Z^d=M$. Finally, $Z^d$ is connected, central, and
rational in $N^d$. By Lemma~\ref{lem:rational-torus}(i), $M$ is connected and rational,
$H_0\trianglelefteq M$, and
$\operatorname{Lie}(M)=\mathfrak h_0+\mathfrak z^d$.
\end{proof}

\subsection{Tangent directions forced by the polynomial slices}

\begin{lemma}\label{lem:tangent-invariance}
Under the hypotheses and conclusions of Lemma~\ref{lem:central-quotient},
\[
E_{\mathbf t}(\mathfrak n)\subseteq\mathfrak h_0
\qquad
(\mathbf t\in\mathbb R^m).
\]
If $A_\Delta:=A^{\times d}$, then
\[
A_\Delta(H_0)=H_0,
\qquad
(A_\Delta)_*(\mathfrak h_0)=\mathfrak h_0,
\qquad
(A_\Delta)_*(\mathfrak m)=\mathfrak m,
\]
where $\mathfrak m:=\operatorname{Lie}(M)$.
\end{lemma}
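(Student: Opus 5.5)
The plan is to prove the two assertions separately. The invariance statements use only the $F^{\times d}$-invariance of $C$. The tangent inclusion comes from comparing each integer slice with the component of \eqref{eq:FU} containing it, and then interpolating with Lemma~\ref{lem:polynomial-identity}. The one extra input is the conclusion $g(\mathbb R^m)\subseteq M=H_0Z^d$ of Lemma~\ref{lem:central-quotient}, which lets $\operatorname{Ad}(g(\mathbf n))$ be removed.

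\emph{Invariance under $A_\Delta$.} Put $F_\Delta:=F^{\times d}$, so that $F_\Delta(x\Lambda^d)=b_\Delta A_\Delta(x)\Lambda^d$ with $b_\Delta:=(b,\ldots,b)$.
\begin{enumerate}[label=\textup{(\arabic*)}]
\item Since $F_\Delta(F^{p_1(\mathbf n)}x,\ldots,F^{p_d(\mathbf n)}x)=(F^{p_1(\mathbf n)}Fx,\ldots,F^{p_d(\mathbf n)}Fx)$, the homeomorphism $F_\Delta$ maps $C$ into itself. It therefore maps the component $C_0$ into a single component $H_0y\Lambda^d/\Lambda^d$ of \eqref{eq:FU}.
\item By \eqref{eq:C0}, $F_\Delta(C_0)=b_\Delta A_\Delta(H_0)\Lambda^d/\Lambda^d$. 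Since $b_\Delta\in\Delta_N^d\subseteq H_0$, this set contains the point $b_\Delta\Lambda^d$ of $C_0$. Hence it lies in $C_0$, and after left translation by $b_\Delta^{-1}\in H_0$ we get $A_\Delta(H_0)\Lambda^d/\Lambda^d\subseteq H_0\Lambda^d/\Lambda^d$.
\item The connected group $A_\Delta(H_0)$ is contained in $H_0\Lambda^d$ and contains $e$, so Lemma~\ref{lem:malcev}(i) gives $A_\Delta(H_0)\subseteq H_0$. Both groups are connected of the same dimension, hence $A_\Delta(H_0)=H_0$ and $(A_\Delta)_*\mathfrak h_0=\mathfrak h_0$.
\item Since $Z$ is characteristic, $(A_\Delta)_*\mathfrak z^d=\mathfrak z^d$. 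Combined with $\mathfrak m=\mathfrak h_0+\mathfrak z^d$ from Lemma~\ref{lem:central-quotient}, this gives $(A_\Delta)_*\mathfrak m=\mathfrak m$.
\end{enumerate}

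\emph{Tangent inclusion.} Fix $\mathbf n\in\mathbb Z^m$ and $Y\in\mathfrak n$, and consider the curve
\[
c(s):=g(\mathbf n)\bigl(A^{p_1(\mathbf n)}(\exp sY),\ldots,A^{p_d(\mathbf n)}(\exp sY)\bigr).
\]
By \eqref{eq:F-interpolation}, the curve $c(s)\Lambda^d$ lies in the slice $h(\mathbf n)V\subseteq C$. That slice is connected, so it lies in the component of $C$ through $g(\mathbf n)\Lambda^d$, which is $H_0g(\mathbf n)\Lambda^d/\Lambda^d$. The preimage of this orbit in $N^d$ is $H_0g(\mathbf n)\Lambda^d$. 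Its connected component through $g(\mathbf n)$ is $H_0g(\mathbf n)$, by Lemma~\ref{lem:malcev}(i) applied to the rational group $g(\mathbf n)^{-1}H_0g(\mathbf n)$. Since the continuous curve $c$ starts at $g(\mathbf n)$, it stays in $H_0g(\mathbf n)$.

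Differentiating at $s=0$ and using Lemma~\ref{lem:tangent-trivialization}, the right logarithmic derivative gives $\operatorname{Ad}(g(\mathbf n))E_{\mathbf n}(Y)\in\mathfrak h_0$. Lemma~\ref{lem:central-quotient} supplies $g(\mathbf n)\in M=H_0Z^d$. The subgroup $H_0$ is normal in $M$, and $Z^d$ is central, so $\operatorname{Ad}(g(\mathbf n))$ preserves $\mathfrak h_0$. Hence $E_{\mathbf n}(Y)\in\mathfrak h_0$ for every $\mathbf n\in\mathbb Z^m$.

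By Lemma~\ref{lem:interpolation}, $\mathbf t\mapsto E_{\mathbf t}(Y)$ is an ordinary polynomial map. Lemma~\ref{lem:polynomial-identity} then gives $E_{\mathbf t}(Y)\in\mathfrak h_0$ for all real $\mathbf t$.

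\emph{Main obstacle.} The delicate step is the identification of the component through $g(\mathbf n)\Lambda^d$ with $H_0g(\mathbf n)\Lambda^d/\Lambda^d$, together with the passage from the quotient to a curve in the group. This is exactly where the hypothesis $g(\mathbf n)\in M$ is indispensable. Without it, the argument only yields $E_{\mathbf n}(\mathfrak n)\subseteq\operatorname{Ad}(g(\mathbf n))^{-1}\mathfrak h_0$, which is not enough.
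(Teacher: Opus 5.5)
Your proposal is correct and follows essentially the same argument as the paper. The tangent inclusion comes from lifting the slice curve through $g(\mathbf n)$, removing $\operatorname{Ad}(g(\mathbf n))$ via $g(\mathbf n)\in M\trianglerighteq H_0$, and interpolating with Lemma~\ref{lem:polynomial-identity}; the $A_\Delta$-invariance comes from $F^{\times d}$ preserving $C_0$ together with Lemma~\ref{lem:malcev}(i). Where you differ (the explicit argument via the conjugate subgroup $g(\mathbf n)^{-1}H_0g(\mathbf n)$ that the lifted curve stays in $H_0g(\mathbf n)$, and obtaining $A_\Delta(H_0)=H_0$ from an inclusion plus a dimension count rather than from $A_\Delta(C_0)=C_0$), the differences are cosmetic and, if anything, make the paper's brief steps more explicit.
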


\begin{proof}
For $\mathbf n\in\mathbb Z^m$, let
\[
\Sigma_{\mathbf n}
:=
\left\{
\bigl(F^{p_1(\mathbf n)}x,\ldots,F^{p_d(\mathbf n)}x\bigr):
 x\in N/\Lambda
\right\}.
\]
This set is connected and contained in $C$, so it lies in one of the
connected components in \eqref{eq:FU}. At
$g(\mathbf n)\Lambda^d$, a lift to $N^d$ of the curve in
$\Sigma_{\mathbf n}$ induced by $Y\in\mathfrak n$ is
\[
s\longmapsto g(\mathbf n)\exp\bigl(sE_{\mathbf n}(Y)\bigr).
\]
By Lemma~\ref{lem:tangent-trivialization}(ii), the right logarithmic
derivative of this lifted curve at $s=0$ is
$\operatorname{Ad}(g(\mathbf n))E_{\mathbf n}(Y)$. The component
containing $g(\mathbf n)\Lambda^d$ is the orbit of that point under the
left action of $H_0$. Under the same local lift, right trivialization
identifies the tangent space at $g(\mathbf n)$ of the lifted $H_0$-orbit
with $\mathfrak h_0$. Therefore
\[
\operatorname{Ad}(g(\mathbf n))E_{\mathbf n}(\mathfrak n)
\subseteq
\mathfrak h_0.
\]
By Lemma~\ref{lem:central-quotient}, $g(\mathbf n)\in M$, and
$H_0\trianglelefteq M$. Hence $\operatorname{Ad}(g(\mathbf n))$ preserves
$\mathfrak h_0$, and we obtain
$E_{\mathbf n}(\mathfrak n)\subseteq\mathfrak h_0$ for every
$\mathbf n\in\mathbb Z^m$. For each fixed $Y\in\mathfrak n$, the map
$\mathbf t\mapsto E_{\mathbf t}(Y)$ is an ordinary polynomial map into
$\mathfrak n^d$. By Lemma~\ref{lem:polynomial-identity},
$E_{\mathbf t}(\mathfrak n)\subseteq\mathfrak h_0$ for all
$\mathbf t\in\mathbb R^m$.

The map $F^{\times d}$ preserves every $\Sigma_{\mathbf n}$ and hence
$C$. It sends the identity coset to
$b_\Delta\Lambda^d$, where
$b_\Delta:=(b,\ldots,b)\in\Delta_N^d\subseteq H_0$, and therefore
preserves $C_0$. Since
$F^{\times d}=\lambda_{b_\Delta}\circ A_\Delta$ and
$\lambda_{b_\Delta}$ preserves $C_0$, it follows that
$A_\Delta(C_0)=C_0$.

The inverse image of $C_0$ under the quotient map
$N^d\to N^d/\Lambda^d$ is $H_0\Lambda^d$. The automorphism
$A_\Delta$ preserves $\Lambda^d$ and fixes the identity, so it maps the
connected component containing the identity in this inverse image onto
itself. By Lemma~\ref{lem:malcev}(i), that component is $H_0$. Thus
$A_\Delta(H_0)=H_0$, and differentiation gives
$(A_\Delta)_*(\mathfrak h_0)=\mathfrak h_0$. Finally, $Z=N_s$ is
characteristic, so $A(Z)=Z$. Hence $A_\Delta(M)=M$ and
$(A_\Delta)_*(\mathfrak m)=\mathfrak m$.
\end{proof}

\begin{lemma}\label{lem:polynomial-tangent}
Under the hypotheses and conclusions of
Lemma~\ref{lem:central-quotient} and
Lemma~\ref{lem:tangent-invariance},
\[
\Psi_{j,\mathbf t}(\mathfrak n)\subseteq\mathfrak m
\qquad
(\mathbf t\in\mathbb R^m,\ 1\leq j\leq m).
\]
\end{lemma}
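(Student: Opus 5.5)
The plan is to show that the set of directions $Y$ for which the conclusion holds is all of $\mathfrak n$, arguing by contradiction with Lemma~\ref{lem:horizontal-obstruction}. Concretely, define
\[
W:=\bigl\{Y\in\mathfrak n:\ \Psi_{j,\mathbf t}(Y)\in\mathfrak m\ \text{for all }\mathbf t\in\mathbb R^m,\ 1\leq j\leq m\bigr\}.
\]
The goal is $W=\mathfrak n$. I would prove this by showing that $W$ is a rational, $A_*$-invariant ideal containing $v$, and then using minimality to rule out $W\neq\mathfrak n$.

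\emph{Step 1: $v\in W$.} By Lemma~\ref{lem:central-quotient}, the smooth curve $\mathbf t\mapsto g(\mathbf t)$ takes values in the closed connected subgroup $M$. Hence its left logarithmic derivatives lie in $\mathfrak m$. By \eqref{eq:g-log-derivative} these derivatives are exactly $\Psi_{j,\mathbf t}(v)$, so $v\in W$.

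\emph{Step 2: $W$ is an $A_*$-invariant ideal.} Since each $A_*^{p_i(\mathbf t)}$ is a Lie algebra automorphism, for $Y,U\in\mathfrak n$ we have
\[
[E_{\mathbf t}(Y),\Psi_{j,\mathbf t}(U)]=\Psi_{j,\mathbf t}([Y,U]).
\]
By Lemma~\ref{lem:tangent-invariance}, $E_{\mathbf t}(Y)\in\mathfrak h_0\subseteq\mathfrak m$, and $\mathfrak m$ is a Lie algebra. Hence $[\mathfrak n,W]\subseteq W$. Next, $(A_\Delta)_*\Psi_{j,\mathbf t}(Y)=\Psi_{j,\mathbf t}(A_*Y)$ because $A_*$ commutes with every $A_*^{u}$. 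Combined with $(A_\Delta)_*\mathfrak m=\mathfrak m$, this gives $A_*W=W$. Finally, $\mathfrak z\subseteq W$, since $\Psi_{j,\mathbf t}(\mathfrak z)\subseteq\mathfrak z^d\subseteq\mathfrak m$.

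\emph{Step 3: $W$ is rational.} For fixed $Y$, the map $\mathbf t\mapsto\Psi_{j,\mathbf t}(Y)$ is an ordinary polynomial map, so by Lemma~\ref{lem:polynomial-identity} it suffices to impose the condition at $\mathbf t=\mathbf n\in\mathbb Z^m$. Thus
\[
W=\bigcap_{\mathbf n\in\mathbb Z^m,\ j}\Psi_{j,\mathbf n}^{-1}(\mathfrak m).
\]
Each $\Psi_{j,\mathbf n}$ is a rational linear map: its coordinates are integers $\partial_jp_i(\mathbf n)$ times integer powers of $A_*$, which preserve $\mathfrak n_{\mathbb Q}$ because $A(\Lambda)=\Lambda$. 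Moreover $\mathfrak m$ is rational by Lemma~\ref{lem:central-quotient}. Preimages of rational subspaces under rational maps, and intersections of rational subspaces, are rational, so $W$ is rational.

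\emph{Step 4: conclusion.} Suppose $W\neq\mathfrak n$. Then $W':=W+[\mathfrak n,\mathfrak n]\neq\mathfrak n$. Indeed, if $W+[\mathfrak n,\mathfrak n]=\mathfrak n$, then $\mathfrak n/W$ is a nilpotent Lie algebra equal to its own derived algebra, hence zero. The subspace $W'$ is rational and $A_*$-invariant, by Proposition~\ref{prop:malcev-package}(i) and Step~2. So $A_*$ induces a unipotent rational map on the nonzero rational space $(\mathfrak n/W')^*$. The fixed space of a unipotent map is nonzero and, being the kernel of a rational map, is defined over $\mathbb Q$. This yields a nonzero rational functional $\eta$ that is horizontal, satisfies $\eta\circ A_*=\eta$, and vanishes on $W\ni v$. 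Lemma~\ref{lem:horizontal-obstruction} asserts $\eta(v)\notin\mathbb Q$, contradicting $\eta(v)=0$. Hence $W=\mathfrak n$, which is the claim.

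The main obstacle I expect is Step~3, namely verifying carefully that $W$ is rational. This relies on the Mal'cev structure, the integrality of the $\partial_jp_i(\mathbf n)$, and the rationality of $\mathfrak m$. Rationality is exactly what is needed to make $\eta$ rational, so that Lemma~\ref{lem:horizontal-obstruction} applies.
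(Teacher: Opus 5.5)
Your proof is correct, and its skeleton matches the paper's. Both arguments consider the set of $Y$ with $\Psi_{j,\cdot}(Y)\in\mathfrak m$. Both show that this set is a rational $A_*$-invariant ideal containing $v$, using \eqref{eq:g-log-derivative}, the bracket identity $[\Psi_{j,\mathbf t}(X),E_{\mathbf t}(Y)]=\Psi_{j,\mathbf t}([X,Y])$, and Lemma~\ref{lem:tangent-invariance}. Both then use minimality of $F$ to force it to be all of $\mathfrak n$.

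There are two differences.

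First, the paper fixes an integer point $\mathbf k$ and works with $S_{\mathbf k,j}=\Psi_{j,\mathbf k}^{-1}(\mathfrak m)$, which is visibly rational. It extends to real $\mathbf t$ only at the very end. You intersect over all real $\mathbf t$ and all $j$ at once. You therefore need Lemma~\ref{lem:polynomial-identity} up front to rewrite $W$ as an intersection of rational preimages at integer points, which stabilizes after finitely many terms.

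Second, and more substantively, the contradiction is obtained differently.
\begin{itemize}
\item The paper exponentiates $S_{\mathbf k,j}$ to a proper rational normal subgroup invariant under every $A^u$. It integrates $\beta(u)^{-1}\beta'(u)=A_*^uv$ in the quotient to get $b\in H_{\mathbf k,j}$. Then $F$ induces a map with a fixed point on a nontrivial nilmanifold factor, contradicting minimality.
\item You abelianize instead. You use that a proper ideal $W$ of a nilpotent Lie algebra satisfies $W+[\mathfrak n,\mathfrak n]\neq\mathfrak n$, and that a unipotent rational map has a nonzero rational fixed functional. You then apply Lemma~\ref{lem:horizontal-obstruction} to an $\eta$ with $\eta(v)=0$.
\end{itemize}

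Your route reuses a lemma the paper needs later anyway and avoids building the quotient nilmanifold, at the cost of those two small linear-algebra facts. The paper's route works with the whole quotient and needs no horizontal reduction. The two share the same mechanism of integrating a logarithmic derivative along $\beta$. The paper applies it to the nonabelian quotient $N/H_{\mathbf k,j}$, while Lemma~\ref{lem:horizontal-obstruction} applies it on the horizontal torus.
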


\begin{proof}
Fix $\mathbf k\in\mathbb Z^m$ and $1\leq j\leq m$, and define
\[
S_{\mathbf k,j}
:=
\{Y\in\mathfrak n:\Psi_{j,\mathbf k}(Y)\in\mathfrak m\}.
\]

First, $S_{\mathbf k,j}$ is rational.
Since $p_i(\mathbf k)$ and $\partial_jp_i(\mathbf k)$ are integers,
$A_*$ preserves the rational structure, and $\mathfrak m$ is rational,
the map $\Psi_{j,\mathbf k}$ is rational. Hence
$S_{\mathbf k,j}=\Psi_{j,\mathbf k}^{-1}(\mathfrak m)$
is rational.

Second, $S_{\mathbf k,j}$ is an ideal of $\mathfrak n$. For
$X,Y\in\mathfrak n$, the coordinatewise bracket identity gives
\[
[\Psi_{j,\mathbf k}(X),E_{\mathbf k}(Y)]
=
\Psi_{j,\mathbf k}([X,Y]).
\]
If $X\in S_{\mathbf k,j}$, then $\Psi_{j,\mathbf k}(X)\in\mathfrak m$, while
$E_{\mathbf k}(Y)\in\mathfrak h_0$ by Lemma~\ref{lem:tangent-invariance}. Since
$\mathfrak h_0$ is an ideal in $\mathfrak m$, the bracket on the left lies
in $\mathfrak h_0\subseteq\mathfrak m$. Thus
$[X,Y]\in S_{\mathbf k,j}$.

Third, $S_{\mathbf k,j}$ is invariant under $A_*$. The relation
\[
\Psi_{j,\mathbf k}(A_*Y)
=
(A_\Delta)_*\Psi_{j,\mathbf k}(Y)
\]
and Lemma~\ref{lem:tangent-invariance} imply
$A_*(S_{\mathbf k,j})\subseteq S_{\mathbf k,j}$. 
Since $A_*$ is invertible and $S_{\mathbf k,j}$ is finite-dimensional,
this inclusion is an equality. Thus $S_{\mathbf k,j}$ is invariant under
$A_*-\operatorname{id}_{\mathfrak n}$, hence under
$\mathcal D=\log A_*$ and under
$A_*^u=\exp(u\mathcal D)$ for every $u\in\mathbb R$.

Finally, $v\in S_{\mathbf k,j}$. Indeed,
$g(\mathbb R^m)\subseteq M$ by Lemma~\ref{lem:central-quotient}; hence
the partial left logarithmic derivatives of $g$ take values in
$\mathfrak m$. Equation~\eqref{eq:g-log-derivative} then yields
$\Psi_{j,\mathbf k}(v)\in\mathfrak m$.

Suppose that $S_{\mathbf k,j}\neq\mathfrak n$, and set
$H_{\mathbf k,j}:=\exp(S_{\mathbf k,j})$. 
By the preceding properties, $H_{\mathbf k,j}$ is a proper connected
closed normal rational subgroup of $N$ invariant under every $A^u$.
Let
\[
\pi_{\mathbf k,j}:N\to N/H_{\mathbf k,j}
\]
be the quotient homomorphism. By
Proposition~\ref{prop:malcev-package}(iii),
$N/H_{\mathbf k,j}$ is a nontrivial connected and simply connected
nilpotent Lie group, and $\pi_{\mathbf k,j}(\Lambda)$ is a lattice.
Hence
\[
(N/H_{\mathbf k,j})/\pi_{\mathbf k,j}(\Lambda)
\]
is a nontrivial compact nilmanifold and a factor of $N/\Lambda$.

Since $v\in S_{\mathbf k,j}$ and $S_{\mathbf k,j}$ is
$A_*^u$-invariant, \eqref{eq:beta-log-derivative} implies that the quotient
curve
$u\mapsto\pi_{\mathbf k,j}(\beta(u))$
has zero left logarithmic derivative. By
Lemma~\ref{lem:zero-logarithmic-derivative}, this curve is constant. 
Since $\beta(0)=e$ and $\beta(1)=b$, it follows that
$b\in H_{\mathbf k,j}$. Thus the transformation induced by $F$ on
this nontrivial quotient fixes the identity coset. But this quotient is
a nontrivial factor of the minimal system $(N/\Lambda,F)$ and is
therefore minimal, a contradiction. Hence $S_{\mathbf k,j}=\mathfrak n$.

We have proved
$\Psi_{j,\mathbf k}(\mathfrak n)\subseteq\mathfrak m$ for every
$\mathbf k\in\mathbb Z^m$. For each fixed $Y\in\mathfrak n$, the map
$\mathbf t\mapsto\Psi_{j,\mathbf t}(Y)$ is an ordinary polynomial map
into $\mathfrak n^d$. Applying Lemma~\ref{lem:polynomial-identity} to its
composition with the quotient map
$\mathfrak n^d\to\mathfrak n^d/\mathfrak m$ completes the proof.
\end{proof}

\subsection{The residual torus and its characters}

Let $\pi_0:M\to M/H_0$ be the quotient homomorphism, and put
\[
\Lambda_M:=M\cap\Lambda^d,
\qquad
K:=M/(H_0\Lambda_M).
\]
Since $M$ is a connected Lie subgroup of the connected and simply connected
nilpotent group $N^d$, it is closed and simply connected by
Lemma~\ref{lem:connected-subgroups-simply-connected}. Moreover,
$M=H_0Z^d$ with $Z^d$ central, so $[M,M]\subseteq H_0$. By
Lemma~\ref{lem:rational-torus}(ii), $H_0\Lambda_M$ is a closed normal
subgroup of $M$ and
\[
K\cong(\mathfrak m/\mathfrak h_0)/\Omega,
\qquad
\Omega:=\log_{M/H_0}\bigl(\pi_0(\Lambda_M)\bigr),
\]
where $\Omega$ is a full lattice. In particular, $K$ is a compact connected
torus.

\begin{lemma}\label{lem:residual-torus-characters}
For every continuous character $\chi\in\widehat K$, let
$\widetilde f_\chi:M\to\mathbb R$ be the homomorphic lift of the pullback
of $\chi$ along the quotient map $M\to K$ described in
Lemma~\ref{lem:character-lift}, and put
$\ell_\chi:=d_e\widetilde f_\chi\in\mathfrak m^*$. Then
\[
\ell_\chi(\mathfrak h_0)=0,
\qquad
\ell_\chi([\mathfrak m,\mathfrak m])=0,
\qquad
\ell_\chi(\log\Lambda_M)\subseteq\mathbb Z.
\]
Define
\[
Q_\chi(\mathbf t):=\widetilde f_\chi(g(\mathbf t))
\]
and, for $1\leq j\leq m$, define
\[
\eta_{\chi,j,\mathbf t}
:=
\ell_\chi\circ\Psi_{j,\mathbf t}
\in\mathfrak n^*.
\]
Then $Q_\chi$ is a real polynomial with $Q_\chi(\mathbf 0)=0$, and
\[
\partial_jQ_\chi(\mathbf t)
=
\eta_{\chi,j,\mathbf t}(v).
\]
For every $\mathbf k\in\mathbb Z^m$ and every $1\leq j\leq m$, the
functional $\eta_{\chi,j,\mathbf k}$ is rational, horizontal, and
$A_*$-invariant.
\end{lemma}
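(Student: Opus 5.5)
The proof runs through the claims of Lemma~\ref{lem:residual-torus-characters} in order. Each one comes either from Lemma~\ref{lem:character-lift} or from the tangent and rationality data gathered in Lemmas~\ref{lem:tangent-invariance} and~\ref{lem:polynomial-tangent}.

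\emph{Properties of $\ell_\chi$.} Lemma~\ref{lem:character-lift}, applied with $(M,H)=(M,H_0)$, shows that $\widetilde f_\chi$ is a continuous homomorphism $M\to\mathbb R$ that vanishes on $H_0$. Hence $\ell_\chi$ annihilates $\mathfrak h_0$. A homomorphism into the abelian group $\mathbb R$ kills $[M,M]$, and so $\ell_\chi$ vanishes on $[\mathfrak m,\mathfrak m]$. For $\lambda\in\Lambda_M$ the value $\widetilde f_\chi(\lambda)$ is an integer, since the pullback of $\chi$ is trivial on $\Lambda_M$. Because $\widetilde f_\chi(\exp X)=\ell_\chi(X)$ (a homomorphism to $\mathbb R$ is linear along one-parameter subgroups), we get $\ell_\chi(\log\Lambda_M)\subseteq\mathbb Z$.

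\emph{The polynomial $Q_\chi$ and its derivative.} By Lemma~\ref{lem:central-quotient}, $g(\mathbb R^m)\subseteq M$, so $Q_\chi$ is defined. Also $Q_\chi(\mathbf 0)=\widetilde f_\chi(e)=0$ because $\beta(0)=e$. Since $\widetilde f_\chi$ is a homomorphism, the chain rule gives $\partial_jQ_\chi(\mathbf t)=\ell_\chi\bigl(g(\mathbf t)^{-1}\partial_jg(\mathbf t)\bigr)$, which equals $\ell_\chi(\Psi_{j,\mathbf t}(v))$ by \eqref{eq:g-log-derivative}. This expression makes sense because $\Psi_{j,\mathbf t}(v)\in\mathfrak m$ by Lemma~\ref{lem:polynomial-tangent}. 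For $\mathbf t\mapsto\Psi_{j,\mathbf t}(v)$ is an ordinary polynomial map, and $\ell_\chi$ is linear, so each partial derivative is a polynomial. Lemma~\ref{lem:polynomial-antiderivative} then shows that $Q_\chi$ is a polynomial.

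\emph{Properties of $\eta_{\chi,j,\mathbf k}$ for $\mathbf k\in\mathbb Z^m$.} The functional is defined on all of $\mathfrak n$ by Lemma~\ref{lem:polynomial-tangent}.

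Horizontality: the coordinatewise bracket identity $\Psi_{j,\mathbf k}([X,Y])=[\Psi_{j,\mathbf k}(X),E_{\mathbf k}(Y)]$ holds, and both entries lie in $\mathfrak m$ because $E_{\mathbf k}(Y)\in\mathfrak h_0\subseteq\mathfrak m$. Hence $\Psi_{j,\mathbf k}([X,Y])\in[\mathfrak m,\mathfrak m]$, which $\ell_\chi$ kills.

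$A_*$-invariance: we have $\Psi_{j,\mathbf k}(A_*Y)=(A_\Delta)_*\Psi_{j,\mathbf k}(Y)$. Write
\[
(A_\Delta)_*W-W=\bigl((A_\Delta)_*-\operatorname{id}\bigr)W,
\qquad W:=\Psi_{j,\mathbf k}(Y)\in\mathfrak m .
\]
Modulo $\mathfrak h_0$, only the $\mathfrak z^d$-component of $W$ matters. That component is fixed by $A_*$, because $A$ acts on $\mathfrak z=\mathfrak n_s$ through the action induced on $\mathfrak n_{\rm ab}^{\otimes s}$ by iterated brackets. Since $A_*$ is unipotent on $\mathfrak n_{\rm ab}$, this action need not be trivial, so this is the step to check carefully.

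The cleaner route I would try first: $\Psi_{j,\mathbf k}(A_*Y)-\Psi_{j,\mathbf k}(Y)=\Psi_{j,\mathbf k}((A_*-\mathrm{id})Y)$, and $\mathcal D Y\in\mathfrak n$. Then $\ell_\chi\circ\Psi_{j,\mathbf k}\circ\mathcal D=0$ should follow from writing $\Psi_{j,\mathbf k}(\mathcal D Y)$ as a derivative of $E$-type terms landing in $\mathfrak h_0$. Indeed, differentiating $E_{\mathbf t}(Y)\in\mathfrak h_0$ in $t_j$ gives
\[
\bigl(\partial_jp_i(\mathbf t)\,\mathcal D A_*^{p_i(\mathbf t)}Y\bigr)_i\in\mathfrak h_0,
\]
which is exactly $\Psi_{j,\mathbf t}(\mathcal D Y)$. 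Hence $\eta_{\chi,j,\mathbf k}\circ\mathcal D=0$, and therefore $\eta\circ A_*^u=\eta$. This derivative trick is the key point and I expect it to be the main obstacle. It holds for all real $\mathbf t$.

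Rationality: $\Psi_{j,\mathbf k}$ is a rational linear map $\mathfrak n\to\mathfrak m$, since $p_i(\mathbf k)$ and $\partial_jp_i(\mathbf k)$ are integers and $A_*$ preserves $\mathfrak n_{\mathbb Q}$. Moreover $\ell_\chi$ is rational on $\mathfrak m_{\mathbb Q}=\mathfrak m\cap(\mathfrak n_{\mathbb Q})^d$, because that space is spanned over $\mathbb Q$ by $\log\Lambda_M$ (Proposition~\ref{prop:malcev-package}(ii)), on which $\ell_\chi$ is integer-valued. The composition $\eta_{\chi,j,\mathbf k}$ is therefore rational.
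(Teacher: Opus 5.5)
Your proposal is correct and follows the paper's proof essentially step by step. The properties of $\ell_\chi$ come from Lemma~\ref{lem:character-lift}, and the formula for $\partial_jQ_\chi$ from functoriality of the left logarithmic derivative together with \eqref{eq:g-log-derivative} and Lemma~\ref{lem:polynomial-antiderivative}. Rationality and horizontality come from the rational map $\Psi_{j,\mathbf k}$ and the bracket identity $\Psi_{j,\mathbf k}([X,Y])=[\Psi_{j,\mathbf k}(X),E_{\mathbf k}(Y)]$. The $A_*$-invariance comes from differentiating $E_{\mathbf t}(Y)\in\mathfrak h_0$ to obtain $\Psi_{j,\mathbf t}(\mathcal D Y)\in\mathfrak h_0$, exactly as in the paper.

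You should delete your first attempt at $A_*$-invariance through the $\mathfrak z^d$-component. Its claim that $A_*$ fixes $\mathfrak z$ is false in general, already for $s=1$ where $\mathfrak z=\mathfrak n$. The derivative argument you then adopt makes that route unnecessary.
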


\begin{proof}
By Lemmas~\ref{lem:rational-torus}(ii) and
\ref{lem:character-lift}, $\widetilde f_\chi$ is a homomorphism,
$\ell_\chi$ annihilates $\mathfrak h_0$, and
$\ell_\chi(\log\Lambda_M)\subseteq\mathbb Z$. Since
$[\mathfrak m,\mathfrak m]\subseteq\mathfrak h_0$, it also annihilates
$[\mathfrak m,\mathfrak m]$. Moreover, since $M$ is rational,
\[
\operatorname{span}_{\mathbb Q}\log\Lambda_M
=
\mathfrak m\cap(\mathfrak n_{\mathbb Q})^d,
\]
so $\ell_\chi$ is rational relative to the Mal'cev structure on
$\mathfrak m$.

By Lemma~\ref{lem:polynomial-tangent}, the image of
$\Psi_{j,\mathbf t}$ lies in $\mathfrak m$. Functoriality of the left
logarithmic derivative under the homomorphism $\widetilde f_\chi$,
together with \eqref{eq:g-log-derivative}, yields
\[
\partial_jQ_\chi(\mathbf t)
=
\ell_\chi\bigl(g(\mathbf t)^{-1}\partial_jg(\mathbf t)\bigr)
=
\ell_\chi(\Psi_{j,\mathbf t}(v))
=
\eta_{\chi,j,\mathbf t}(v).
\]
The right-hand side is a real polynomial in $\mathbf t$. Since this holds
for every $j$ and $Q_\chi(\mathbf 0)=0$, it follows from
Lemma~\ref{lem:polynomial-antiderivative} that $Q_\chi$ is a real polynomial.

Fix $\mathbf k\in\mathbb Z^m$ and $1\leq j\leq m$. The map
$\Psi_{j,\mathbf k}$ is rational and takes $\mathfrak n_{\mathbb Q}$ into
$\mathfrak m\cap(\mathfrak n_{\mathbb Q})^d$. Hence
$\eta_{\chi,j,\mathbf k}$ is rational. For $X,Y\in\mathfrak n$, the
coordinatewise bracket identity from the proof of
Lemma~\ref{lem:polynomial-tangent} yields
\[
\eta_{\chi,j,\mathbf k}([X,Y])
=
\ell_\chi([\Psi_{j,\mathbf k}(X),E_{\mathbf k}(Y)])
=0,
\]
because $\Psi_{j,\mathbf k}(X)\in\mathfrak m$,
$E_{\mathbf k}(Y)\in\mathfrak h_0$, and
$[\mathfrak m,\mathfrak h_0]\subseteq\mathfrak h_0$. Thus
$\eta_{\chi,j,\mathbf k}$ is horizontal.

Finally, for $Y\in\mathfrak n$, differentiating the definition of
$E_{\mathbf t}$ implies
\[
\partial_jE_{\mathbf t}(Y)
=
\Psi_{j,\mathbf t}(\mathcal D Y).
\]
By Lemma~\ref{lem:tangent-invariance},
$E_{\mathbf t}(Y)\in\mathfrak h_0$ for every $\mathbf t$, so
$\Psi_{j,\mathbf t}(\mathcal D Y)\in\mathfrak h_0$.
Evaluating at $\mathbf t=\mathbf k$ and applying $\ell_\chi$ gives
\[
\eta_{\chi,j,\mathbf k}(\mathcal D Y)=0
\qquad (Y\in\mathfrak n).
\]
Hence $\eta_{\chi,j,\mathbf k}\circ\mathcal D=0$. Since
$A_*=\exp(\mathcal D)$, it follows that
\[
\eta_{\chi,j,\mathbf k}\circ A_*
=
\eta_{\chi,j,\mathbf k}.
\]

This finishes the proof.
\end{proof}

\subsection{Finite component phases and character elimination}

\begin{lemma}\label{lem:character-derivative-elimination}
For every $\chi\in\widehat K$, every $1\leq j\leq m$, and every
$\mathbf t\in\mathbb R^m$, one has
$\eta_{\chi,j,\mathbf t}=0$.
Consequently, $\Psi_{j,\mathbf t}(v)\in\mathfrak h_0$ whenever
$\mathbf t\in\mathbb R^m$ and $1\leq j\leq m$, and
$g(\mathbb R^m)\subseteq H_0$.
\end{lemma}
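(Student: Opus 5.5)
The plan is to show that the phase polynomial $Q_\chi$ has rational nonconstant coefficients, and then play this against Lemma~\ref{lem:horizontal-obstruction}.

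\emph{Step 1: the phases of $Q_\chi$ take finitely many values.} For $\mathbf n\in\mathbb Z^m$, the point $g(\mathbf n)\Lambda^d$ lies in $C$, since it is the slice point at $x=\Lambda$. Hence it lies in one of the finitely many components $H_0y_\nu\Lambda^d/\Lambda^d$ of \eqref{eq:FU}. Since $g(\mathbf n)\in M$, write $g(\mathbf n)=h y_\nu\lambda$ with $h\in H_0$ and $\lambda\in\Lambda^d$. Then $y_\nu\lambda\in M$, so each component meeting $g(\mathbb Z^m)$ determines a coset of $H_0\Lambda_M$ in $M$. I expect to need some care to check that $y_\nu\lambda$ can be chosen in $M$ consistently and gives a single well-defined class in $K$. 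It does: two such representatives differ by an element of $H_0\Lambda^d\cap M=H_0\Lambda_M$, because $H_0\subseteq M$. So the image of $g(\mathbb Z^m)$ in $K$ is finite, of size at most $J$. Consequently $\{\exp(2\pi iQ_\chi(\mathbf n))\}$ is finite. By Lemma~\ref{lem:finite-phases}, $\partial_jQ_\chi(\mathbf k)\in\mathbb Q$ for all $\mathbf k\in\mathbb Z^m$.

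\emph{Step 2: kill the functionals at integer points.} By Lemma~\ref{lem:residual-torus-characters}, $\partial_jQ_\chi(\mathbf k)=\eta_{\chi,j,\mathbf k}(v)$. Moreover, $\eta_{\chi,j,\mathbf k}$ is rational, horizontal and $A_*$-invariant. If it were nonzero, Lemma~\ref{lem:horizontal-obstruction} (using minimality of $F$, Lemma~\ref{lem:affine-reduction}) would force $\eta_{\chi,j,\mathbf k}(v)\notin\mathbb Q$, contradicting Step~1. Hence $\eta_{\chi,j,\mathbf k}=0$ for all integer $\mathbf k$.

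\emph{Step 3: extend to all real $\mathbf t$.} For fixed $Y$, the map $\mathbf t\mapsto\ell_\chi(\Psi_{j,\mathbf t}(Y))$ is a real polynomial vanishing on $\mathbb Z^m$. By Lemma~\ref{lem:polynomial-identity}, $\eta_{\chi,j,\mathbf t}=0$ for all $\mathbf t$. Since $\Psi_{j,\mathbf t}(v)\in\mathfrak m$ (Lemma~\ref{lem:polynomial-tangent}), it is annihilated by every $\ell_\chi$. By Lemma~\ref{lem:character-lift}, these functionals span $(\mathfrak m/\mathfrak h_0)^*$, so $\Psi_{j,\mathbf t}(v)\in\mathfrak h_0$.

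\emph{Step 4: conclude $g(\mathbb R^m)\subseteq H_0$.} The curve $s\mapsto\pi_0(g(s\mathbf t))$ in the vector group $M/H_0$ has left logarithmic derivative $\sum_j t_j\,d\pi_0(\Psi_{j,s\mathbf t}(v))=0$. By Lemma~\ref{lem:zero-logarithmic-derivative} it is constant, and $g(\mathbf 0)=e$, so $g(\mathbf t)\in H_0$. The main obstacle I anticipate is Step~1, namely the bookkeeping that makes the phase map $\mathbf n\mapsto\chi(g(\mathbf n)H_0\Lambda_M)$ genuinely well defined and finite-valued. The remaining steps are direct applications of the stated lemmas.
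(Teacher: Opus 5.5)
Your proposal is correct and follows the paper's proof essentially step for step. The steps are: finiteness of the phases because the class of $g(\mathbf n)$ in $K$ depends only on its component in \eqref{eq:FU}; rationality of $\partial_jQ_\chi(\mathbf k)=\eta_{\chi,j,\mathbf k}(v)$ via Lemma~\ref{lem:finite-phases}; vanishing of $\eta_{\chi,j,\mathbf k}$ by Lemma~\ref{lem:horizontal-obstruction}; extension to real $\mathbf t$ by Lemma~\ref{lem:polynomial-identity}; the spanning property of the $\ell_\chi$ to get $\Psi_{j,\mathbf t}(v)\in\mathfrak h_0$; and the zero-logarithmic-derivative path argument. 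Your bookkeeping in Step~1, namely $y_\nu\lambda\in M$ and $H_0\Lambda^d\cap M=H_0\Lambda_M$, is equivalent to the paper's observation that $g(\mathbf n')=h\,g(\mathbf n)\lambda$ forces $\lambda\in\Lambda_M$.
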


\begin{proof}
Fix $\chi\in\widehat K$ and $1\leq j\leq m$.
We first show that the phases
\[
\left\{
\exp\bigl(2\pi iQ_\chi(\mathbf n)\bigr):
 \mathbf n\in\mathbb Z^m
\right\}
\]
form a finite set. By Lemma~\ref{lem:central-quotient}, every $g(\mathbf n)$ lies in
$M$. If $g(\mathbf n)\Lambda^d$ and
$g(\mathbf n')\Lambda^d$ lie in the same component of
\eqref{eq:FU}, then
\[
g(\mathbf n')=h\,g(\mathbf n)\lambda
\]
for some $h\in H_0$ and $\lambda\in\Lambda^d$. 
Since $h,g(\mathbf n),g(\mathbf n')\in M$, the displayed identity
forces
$\lambda\in M\cap\Lambda^d=\Lambda_M$. As
$\widetilde f_\chi$ vanishes on $H_0$ and is integer-valued on
$\Lambda_M$,
\[
Q_\chi(\mathbf n')-Q_\chi(\mathbf n)\in\mathbb Z.
\]
Thus the phase depends only on the component containing the
corresponding basepoint. Since $C$ has finitely many components, the
phase set is finite.

By Lemma~\ref{lem:finite-phases},
\[
\eta_{\chi,j,\mathbf k}(v)
=
\partial_jQ_\chi(\mathbf k)
\in\mathbb Q
\qquad
(\mathbf k\in\mathbb Z^m).
\]
By Lemma~\ref{lem:residual-torus-characters}, the functional
$\eta_{\chi,j,\mathbf k}$ is rational, horizontal, and $A_*$-invariant. If it were
nonzero, Lemma~\ref{lem:horizontal-obstruction} would imply
$\eta_{\chi,j,\mathbf k}(v)\notin\mathbb Q$, a contradiction. Therefore
$\eta_{\chi,j,\mathbf k}=0$ for every $\mathbf k\in\mathbb Z^m$.

For each fixed $\chi$ and $j$, the map
$\mathbf t\mapsto\eta_{\chi,j,\mathbf t}$ is an ordinary polynomial map from
$\mathbb R^m$ to $\mathfrak n^*$. Lemma~\ref{lem:polynomial-identity} yields
$\eta_{\chi,j,\mathbf t}=0$ for every $\mathbf t\in\mathbb R^m$.

Since $\chi$ and $j$ were arbitrary, the preceding conclusion holds
for every $\chi\in\widehat K$, every $1\leq j\leq m$, and every
$\mathbf t\in\mathbb R^m$.
Moreover, Lemma~\ref{lem:polynomial-tangent} shows
$\Psi_{j,\mathbf t}(v)\in\mathfrak m$.
By Lemma~\ref{lem:character-lift}, as $\chi$ ranges over
$\widehat K$, the differentials $\ell_\chi$ span the dual of
$\mathfrak m/\mathfrak h_0$. Since
\[
\eta_{\chi,j,\mathbf t}
=
\ell_\chi\circ\Psi_{j,\mathbf t}
=
0,
\]
we have
\[
\ell_\chi\bigl(\Psi_{j,\mathbf t}(v)\bigr)=0
\qquad
(\chi\in\widehat K).
\]
Thus the image of $\Psi_{j,\mathbf t}(v)$ in
$\mathfrak m/\mathfrak h_0$ is annihilated by its full dual, and hence
\[
\Psi_{j,\mathbf t}(v)\in\mathfrak h_0.
\]

With $\pi_0$ as above, fix $\mathbf t\in\mathbb R^m$ and consider the
path
\[
c_{\mathbf t}(s):=\pi_0(g(s\mathbf t))
\qquad (0\leq s\leq1).
\]
By the chain rule and \eqref{eq:g-log-derivative}, its left logarithmic
derivative is
\[
\omega_L(c_{\mathbf t})(s)
=
\sum_{j=1}^m
 t_j\,d_e\pi_0\bigl(\Psi_{j,s\mathbf t}(v)\bigr)
=0.
\]
By Lemma~\ref{lem:zero-logarithmic-derivative}, $c_{\mathbf t}$ is constant.
Since $g(\mathbf 0)=e$, one has
$c_{\mathbf t}(0)=H_0$, and therefore $g(\mathbf t)\in H_0$. This holds
for every $\mathbf t\in\mathbb R^m$.
\end{proof}

\subsection{Completion of the induction}

\begin{proof}[Proof of Proposition~\ref{prop:strong-inclusion}]
We argue by induction on the nilpotency step of $N$, proving simultaneously
that $g(\mathbb R^m)\subseteq H_0$ and $C=C_0$.

If $N$ has step $0$, then $N=\{e\}$ and both conclusions are immediate.
Now let $N$ have step $s\geq1$, and assume the conclusions for all smaller
steps. Put $Z=N_s$. The quotient $N/Z$ has step at most $s-1$, and the induced affine
system is minimal because it is a factor of $(N/\Lambda,F)$. Applying the induction hypothesis to this
quotient verifies the hypotheses of Lemma~\ref{lem:central-quotient}. When
$s=1$, the quotient is trivial, so the same argument starts from $M=N^d$;
thus the abelian case is included in the induction.

The preceding arguments show that $g(\mathbb R^m)\subseteq H_0$.

For every $\mathbf n\in\mathbb Z^m$, the slice $\Sigma_{\mathbf n}$ is
connected and contains the point $g(\mathbf n)\Lambda^d\in C_0$. Since $C_0$
is a connected component of $C$, it follows that
$\Sigma_{\mathbf n}\subseteq C_0$.

Since $C_0$ is closed and
$C=\overline{\bigcup_{\mathbf n\in\mathbb Z^m}\Sigma_{\mathbf n}}$,
we have $C\subseteq C_0$; the reverse inclusion is immediate, so
$C=C_0$.
\end{proof}

\begin{proof}[Proof of Theorem~\ref{thm:main}]
By Lemma~\ref{lem:affine-reduction}, a coordinatewise conjugacy maps the
original polynomial diagonal orbit closure onto the corresponding
closure in the minimal unipotent affine model. The latter is connected
by Proposition~\ref{prop:strong-inclusion}; hence so is the original
orbit closure.
\end{proof}

\section{Recurrence and combinatorial consequences}
\label{sec:applications}

This section derives the recurrence consequences of
Theorem~\ref{thm:main}, followed by symbolic and coloring applications.

\subsection{Syndetic recurrence in prescribed residue classes}

A set $Q\subseteq\mathbb Z$ is \emph{syndetic} if there exists
$L\in\mathbb N$ such that every interval of $L$ consecutive integers meets
$Q$. For a nonempty open set $U\subseteq X$ and
$\mathbf p=(p_1,\ldots,p_d)\in\mathbb Z[t]^d$, write
\[
R_{\mathbf p}(U)
:=
\Bigl\{
 n\in\mathbb Z:
 U\cap\bigcap_{i=1}^dT^{-p_i(n)}U\neq\emptyset
\Bigr\}.
\]

\begin{corollary}\label{cor:return-times}
Let $(X,T)$ be a topological dynamical system, let $k,d\in\mathbb N$, and
assume that $T^k$ is minimal. Let $U\subseteq X$ be nonempty and open, and
let $p_1,\ldots,p_d\in\mathbb Z[t]$ satisfy $p_i(0)=0$. Then, for every
$j\in\mathbb Z$, the set
\[
Q_{\mathbf p,k,j}(U)
:=
\Bigl\{
q\in\mathbb Z:
U\cap\bigcap_{i=1}^dT^{-p_i(kq+j)}U\neq\emptyset
\Bigr\}
\]
is syndetic. Consequently,
\begin{equation}\label{eq:syndetic-slice}
R_{\mathbf p}(U)\cap(k\mathbb Z+j)
=
kQ_{\mathbf p,k,j}(U)+j
\end{equation}
is syndetic in $\mathbb Z$.
\end{corollary}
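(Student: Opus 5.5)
The plan is to derive Corollary~\ref{cor:return-times} from Theorem~\ref{thm:main} through the equivalence theorem \cite[Theorem~4.23]{GlasscockEtAl2026}. That theorem states that \textup{(Conn)} is equivalent to the syndetic form of \textup{(Top)}, and we use only the direction from \textup{(Conn)} to the syndetic statement.

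\textbf{Step 1: verify \textup{(Conn)}.} Let $(X,T)$ be a connected minimal nilsystem. By Proposition~\ref{prop:minimal-total-minimal} it is totally minimal. Theorem~\ref{thm:main}, applied with $m=1$, then shows that $\overline{\mathcal O_{\mathbf p}(\Delta_X^d)}$ is connected for every family $\mathbf p$ with $p_i(0)=0$. Zero polynomials are handled by the common shift observation from the introduction. This is exactly the input that \cite[Theorem~4.23]{GlasscockEtAl2026} requires.

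\textbf{Step 2: apply the equivalence theorem.} Take $(X,T)$ with $T^k$ minimal, a nonempty open $U$, and $0\le j<k$. The equivalence theorem yields that $Q_{\mathbf p,k,j}(U)$ is syndetic. I will check that its hypotheses match ours. It is stated for arbitrary topological systems with $T^k$ minimal, and it uses connectedness in totally minimal nilsystems, which is what Step 1 provides. Any hypothesis there of distinctness or nonconstancy of the $p_i$ can be removed as follows. Deleting repeated polynomials and zero polynomials does not change $Q_{\mathbf p,k,j}(U)$, because a zero polynomial contributes the factor $T^{0}U=U$.

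\textbf{Step 3: arbitrary residues and the displayed identity.} For an arbitrary $j\in\mathbb Z$, write $j=kc+j_0$ with $0\le j_0<k$. Then $Q_{\mathbf p,k,j}(U)=Q_{\mathbf p,k,j_0}(U)-c$, and translates of syndetic sets are syndetic. The identity \eqref{eq:syndetic-slice} follows directly from the definitions, since $n\in k\mathbb Z+j$ exactly when $n=kq+j$. Finally, if every block of $L$ consecutive integers meets $Q_{\mathbf p,k,j}(U)$, then every block of $kL$ consecutive integers meets $kQ_{\mathbf p,k,j}(U)+j$, so the slice in \eqref{eq:syndetic-slice} is syndetic in $\mathbb Z$.

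\textbf{Main obstacle.} The only real risk is that the formulation of \cite[Theorem~4.23]{GlasscockEtAl2026} differs from ours, for example if it is stated only in the essentially distinct case or for a fixed residue normalization. Should that happen, the fallback is to rerun its proof using the argument sketched in the introduction. First pass to the maximal infinite-step pro-nilfactor, which is characteristic by \cite{Qiu2023,HuangShaoYe2025}; it may be replaced by a finite-step factor following \cite{YeYu2025}. Then use the return set comparison of \cite{GlasscockEtAl2026}, which shows that the additional factor return times do not form a piecewise syndetic set. In the nilfactor itself, connectedness from Theorem~\ref{thm:main} together with the equidistribution in Proposition~\ref{prop:moving-subnilmanifold-haar} shows that the orbit closure is a single connected subnilmanifold. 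Consequently every residue class of $q$ produces a polynomial orbit that meets $U^{d+1}$ on a syndetic set.
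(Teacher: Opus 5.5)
Your proposal is correct and follows essentially the same route as the paper. You discard repeated (and zero) polynomials, use Proposition~\ref{prop:minimal-total-minimal} and Theorem~\ref{thm:main} to supply the connectedness hypothesis of \cite[Theorem~4.23]{GlasscockEtAl2026}, and then handle arbitrary $j$ via $Q_{\mathbf p,k,j}(U)=Q_{\mathbf p,k,j_0}(U)-c$ and the gap bound for $kQ_{\mathbf p,k,j}(U)+j$. One small point: the direction of Proposition~\ref{prop:minimal-total-minimal} you actually need is that the totally minimal nilsystems in that equivalence theorem are connected and minimal, so that Theorem~\ref{thm:main} applies to them. This is harmless because the proposition is an equivalence, and your fallback paragraph is not needed.
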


\begin{proof}
Retain one representative of each repeated polynomial. This does not
change $Q_{\mathbf p,k,j}(U)$. By the standing convention, the reduced
family consists of essentially distinct nonconstant polynomials vanishing
at the origin.  For $0\leq j<k$,
Proposition~\ref{prop:minimal-total-minimal} shows that the minimal
nilrotations appearing in the nilsystem formulation of
\cite[Theorem~4.23]{GlasscockEtAl2026} are totally minimal.
Theorem~\ref{thm:main} therefore verifies the connectedness hypothesis in
that equivalence theorem, whose prescribed-residue conclusion is exactly
the syndeticity of $Q_{\mathbf p,k,j}(U)$. For an arbitrary
$j\in\mathbb Z$,
write $j=j_0+k\ell$ with
$0\leq j_0<k$; then
$Q_{\mathbf p,k,j}(U)=Q_{\mathbf p,k,j_0}(U)-\ell$.
Equation~\eqref{eq:syndetic-slice} follows from the definitions, and the
image of a syndetic subset of $\mathbb Z$ under $q\mapsto kq+j$ is
syndetic.
\end{proof}

\subsection{Congruence invariance of polynomial correlations}
\label{subsec:measure-recurrence}

\subsubsection{Uniform Ces\`aro averages and characteristic factors}

\begin{lemma}\label{lem:folner-uc}
Let $a:\mathbb Z\to\mathbb C$ be bounded and let $L\in\mathbb C$. The
following statements are equivalent:
\begin{enumerate}[label=\textup{(\roman*)}]
\item for every F{\o}lner sequence $(\Phi_N)_{N\in\mathbb{N}}$ in $\mathbb Z$,
\[
\lim_{N\to\infty}
\frac{1}{|\Phi_N|}
\sum_{n\in\Phi_N}a(n)
=L;
\]
\item
\begin{equation}\label{eq:moving-interval-averages}
\lim_{N\to\infty}
\sup_{M\in\mathbb Z}
\left|
\frac{1}{N}
\sum_{n=M}^{M+N-1}a(n)-L
\right|
=0.
\end{equation}
\end{enumerate}
If $a$ has a limit along every F{\o}lner sequence, all these limits
coincide.
\end{lemma}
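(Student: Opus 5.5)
The plan is to prove the two implications separately and then deduce the final remark from (ii).

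\emph{(ii) implies (i).} Fix a F{\o}lner sequence $(\Phi_N)$ in $\mathbb Z$ and $\varepsilon>0$. By (ii), choose $L_0$ such that every interval of length $L_0$ has average within $\varepsilon$ of $L$. Tile $\mathbb Z$ by the intervals $I_r:=[rL_0,(r+1)L_0)$, and split $\Phi_N$ into two parts.
\begin{enumerate}
\item The union of those $I_r$ entirely contained in $\Phi_N$. The sum over each such block equals $L_0L$ up to an error of $L_0\varepsilon$.
\item The leftover points. Each leftover point lies in a block $I_r$ that meets both $\Phi_N$ and its complement. Such a point is therefore within distance $L_0$ of the boundary, so it belongs to $\bigcup_{|u|\leq L_0}(\Phi_N+u)\mathbin{\triangle}\Phi_N$.
\end{enumerate}
By the F{\o}lner property applied to the finitely many shifts $|u|\leq L_0$, the leftover set has cardinality $o(|\Phi_N|)$. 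Since $a$ is bounded by some $B$, the average over $\Phi_N$ is within $\varepsilon+o(1)\cdot(2B+|L|)$ of $L$. Letting $N\to\infty$ and then $\varepsilon\to0$ gives (i).

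\emph{(i) implies (ii).} Argue by contraposition. If (ii) fails, there exist $\varepsilon>0$, $N_r\to\infty$ and $M_r\in\mathbb Z$ with
\[
\left|\frac1{N_r}\sum_{n=M_r}^{M_r+N_r-1}a(n)-L\right|\geq\varepsilon .
\]
The intervals $\Phi_r:=[M_r,M_r+N_r)$ form a F{\o}lner sequence, because $|(\Phi_r+u)\mathbin{\triangle}\Phi_r|\leq2|u|$ while $|\Phi_r|=N_r\to\infty$. Along this F{\o}lner sequence the averages do not converge to $L$, which contradicts (i).

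\emph{Final remark.} Suppose $a$ has a limit along every F{\o}lner sequence, and let $(\Phi_N),(\Phi'_N)$ be two of them with limits $L_1,L_2$. The interleaved sequence $\Phi_1,\Phi'_1,\Phi_2,\Phi'_2,\dots$ is again F{\o}lner, since the F{\o}lner condition is a limit along the sequence and both subsequences satisfy it. Its averages converge by hypothesis, and they have subsequential limits $L_1$ and $L_2$, so $L_1=L_2$.

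The only step requiring care is the boundary counting in (ii)$\Rightarrow$(i), namely bounding the leftover points uniformly through the finitely many shifts $|u|\leq L_0$. Everything else is routine.
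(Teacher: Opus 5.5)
Your proof is correct. The implication (i)$\Rightarrow$(ii) by contraposition with the witnessing intervals, and the interleaving argument for the final remark, are exactly what the paper does. Note that the remark is proved by interleaving, not ``from (ii)'' as your opening sentence says.

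The genuine difference is in (ii)$\Rightarrow$(i). The paper decomposes $\Phi_N$ intrinsically into its maximal interval components. It uses the one-dimensional identity $|\mathcal I_N|=\tfrac12|(\Phi_N+1)\mathbin{\triangle}\Phi_N|$ to show that the components of length $<L_0$ contain only $o(|\Phi_N|)$ points. It then applies (ii) to every remaining component, which requires choosing $L_0$ so that all intervals of length \emph{at least} $L_0$ have averages within $\varepsilon$ of $L$.

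You instead impose a fixed grid of blocks of length exactly $L_0$. This means you need (ii) only at the single length $L_0$. The price is that you invoke the F{\o}lner condition for all shifts $0<|u|<L_0$ rather than just $u=1$. Your boundary claim checks out: if $n\in\Phi_N$ shares a block with some $m\notin\Phi_N$, then $n\in\Phi_N\setminus(\Phi_N+(n-m))$.

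Both arguments are equally short. The paper's version exploits the fact that subsets of $\mathbb Z$ split into intervals. Your tiling argument adapts more readily to settings where that fails, such as F{\o}lner sequences in $\mathbb Z^r$ with uniform averages over boxes.
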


\begin{proof}
If (ii) fails, a sequence of intervals witnessing the failure is a
F{\o}lner sequence, so (i) fails. Conversely, assume (ii), and fix
$\varepsilon>0$. Choose
$L_0$ so that the average over every interval of length at least $L_0$ is
within $\varepsilon$ of $L$. Let $\mathcal I_N$ be the collection of
maximal interval components of $\Phi_N$, and let $S_N$ be the union of
those components whose lengths are less than $L_0$. Since
\[
|\mathcal I_N|
=\frac12\lvert(\Phi_N+1)\mathbin{\triangle}\Phi_N\rvert,
\]
we have
\[
|S_N|
\leq L_0|\mathcal I_N|
=\frac{L_0}{2}\lvert(\Phi_N+1)\mathbin{\triangle}\Phi_N\rvert
=o(|\Phi_N|).
\]
On every remaining component the average of $a-L$ has absolute value at
most $\varepsilon$, whereas
\[
\frac{1}{|\Phi_N|}
\left|\sum_{n\in S_N}(a(n)-L)\right|
\leq
(\lVert a\rVert_\infty+|L|)\frac{|S_N|}{|\Phi_N|}
=o(1).
\]
Thus the average over $\Phi_N$ differs from $L$ by at most
$\varepsilon+o(1)$, proving (i). Finally,
interleaving two F{\o}lner
sequences shows that two putative limits must agree.
\end{proof}

When these equivalent conditions hold, we write
$\UClim_{n\in\mathbb Z}a(n)=L$.

A \emph{pro-nilsystem} is a measure-preserving inverse limit of nilsystems,
with Haar measure at each finite stage and transition maps
intertwining the nilrotations. We use the following consequence of
polynomial characteristic factor theory.

\begin{lemma}\label{lem:polynomial-characteristic-pronil}
Let $(X,\mathcal B,\mu,T)$ be an ergodic system. Let
$q_1,\ldots,q_s\in\mathbb Z[t]$ be nonconstant and essentially distinct,
meaning that $q_i-q_{i'}$ is nonconstant whenever $i\neq i'$. No assumption
is made on their constant terms. Then there exists a pro-nilsystem factor
$\pi:(X,\mathcal B,\mu,T)
\to
(Z,\mathcal Z,\nu,R)
$
depending only on the system and the polynomial family, with the following
property. For $f_0,\ldots,f_s\in L^\infty(\mu)$, define
$f_i^Z\in L^\infty(\nu)$ by
\[
f_i^Z\circ\pi
=
\mathbb E(f_i\mid\pi^{-1}\mathcal Z)
\qquad (0\leq i\leq s),
\]
and put
\[
\begin{aligned}
a_X(n)
&:=
\int_X f_0(x)
\prod_{i=1}^s f_i(T^{q_i(n)}x)
\,d\mu(x),\\
a_Z(n)
&:=
\int_Z f_0^Z(z)
\prod_{i=1}^s f_i^Z(R^{q_i(n)}z)
\,d\nu(z).
\end{aligned}
\]
Then, for every F{\o}lner sequence $(\Phi_N)_{N\in\mathbb{N}}$ in $\mathbb Z$,
\begin{equation}\label{eq:characteristic-pronilfactor}
\lim_{N\to\infty}
\frac{1}{|\Phi_N|}
\sum_{n\in\Phi_N}
\bigl(a_X(n)-a_Z(n)\bigr)
=0.
\end{equation}
A single factor may be chosen simultaneously for any prescribed finite
collection of polynomial families.
\end{lemma}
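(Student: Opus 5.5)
We plan to derive Lemma~\ref{lem:polynomial-characteristic-pronil} from the known polynomial characteristic factor theorems of Host--Kra \cite{HostKra2005Polynomial} and Leibman \cite{Leibman2005Averages}. For an ergodic system these results give, for each finite polynomial family, some $k_0\in\mathbb N$ such that the Host--Kra factor $\mathcal Z_{k_0}$ is characteristic. By the Host--Kra structure theorem this factor is an inverse limit of $k_0$-step nilsystems with Haar measure, so it is a pro-nilsystem in the sense used above. We will take $(Z,\mathcal Z,\nu,R)$ to be this factor. It depends only on the system and on $k_0$, and $k_0$ in turn depends only on the family (through its degree and size). For finitely many families we take the maximum of the corresponding $k_0$. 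Since the factors $\mathcal Z_k$ increase in $k$, a factor that is characteristic for one family remains characteristic when $k$ is enlarged. This gives the simultaneous choice.

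The key analytic input is a Gowers--Host--Kra seminorm bound of the following form: for essentially distinct nonconstant $q_i$, uniformly over Følner sequences,
\[
\limsup_{N\to\infty}\sup_{(\Phi_N)}\Bigl|\frac1{|\Phi_N|}\sum_{n\in\Phi_N}\int f_0\prod_i f_i\circ T^{q_i(n)}\,d\mu\Bigr|\leq C\min_i\|f_i\|_{U^{k_0}}.
\]
This is obtained by PET induction combined with van der Corput's lemma, and both tools work over arbitrary Følner sequences. Constant terms play no role: writing $q_i=\tilde q_i+c_i$ and replacing $f_i$ by $f_i\circ T^{c_i}$ preserves the seminorm. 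The function $f_0$ is handled by the standard trick of adjoining it as the coefficient of the zero polynomial, or by pairing it via Cauchy--Schwarz with the averaged product.

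Given this bound, we decompose $f_i=\mathbb E(f_i\mid\mathcal Z_{k_0})+(f_i-\mathbb E(f_i\mid\mathcal Z_{k_0}))$ for $1\le i\le s$. The second piece has vanishing $U^{k_0}$ seminorm, because $\|f\|_{U^{k}}=0$ exactly when $\mathbb E(f\mid\mathcal Z_{k-1})=0$. We therefore index the factor so that the seminorm level matches, enlarging $k_0$ by one if needed. Expanding the product multilinearly, every term containing at least one such piece has zero Følner average. In the remaining term, which involves only the projections, $f_0$ may then be replaced by its own projection, since the product of the other functions is $\mathcal Z$-measurable and the factor is $T$-invariant. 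Pushing forward along $\pi$ turns $a_X$ into $a_Z$ in the average, which gives \eqref{eq:characteristic-pronilfactor}.

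The main obstacle is uniformity over arbitrary Følner sequences. The classical papers state their results for intervals $[1,N]$, and sometimes also for $[M,N]$ with $N-M\to\infty$. Via Lemma~\ref{lem:folner-uc}, the latter suffices only once the limit is known to exist. To handle this, I would first run the PET/van der Corput argument for moving intervals $[M,M+N)$ with the supremum over $M$ taken inside, which is the uniform version that Leibman's and Host--Kra's proofs give verbatim. I would then apply Lemma~\ref{lem:folner-uc} to the difference $a_X-a_Z$, whose uniform Cesàro limit along moving intervals is $0$; this makes the Følner limit in \eqref{eq:characteristic-pronilfactor} equal to $0$.
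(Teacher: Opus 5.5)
Your proposal is correct and follows essentially the paper's argument: the paper also absorbs the constant terms into the functions, invokes the polynomial characteristic-factor theorem to replace $f_1,\ldots,f_s$ by their projections onto a Host--Kra factor via a telescoping (multilinear) argument, replaces $f_0$ using $\mathcal Z$-measurability of the remaining product, realizes the factor as a pro-nilsystem by \cite[Theorem~10.1]{HostKra2005Nil}, and uses the nesting of the factors for finitely many families. The only difference is your F{\o}lner-uniformity detour. The paper cites \cite[Theorem~3]{Leibman2005Averages}, which is already stated along every F{\o}lner sequence, so rerunning PET over moving intervals and then applying Lemma~\ref{lem:folner-uc} is valid but unnecessary. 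Also, only the qualitative vanishing is needed, not the linear $\min_i$ seminorm bound with a supremum over F{\o}lner sequences inside the $\limsup$, which as you wrote it is not meaningful.
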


\begin{proof}
Subtract the constants $q_i(0)$ and absorb the resulting shifts into the
functions. By \cite[Theorem~3]{Leibman2005Averages}, there exists $r$,
depending only on the polynomial family, such that
$\mathcal Z_{r-1}$ is characteristic along every F{\o}lner sequence.
A telescoping argument therefore allows
$f_1,\ldots,f_s$ to be replaced by their conditional expectations onto
$\mathcal Z_{r-1}$. After these replacements, the remaining product is
$\mathcal Z_{r-1}$-measurable, since this factor is $T$-invariant.
Hence $f_0$ may also be replaced by its conditional expectation onto
$\mathcal Z_{r-1}$.

By \cite[Theorem~10.1]{HostKra2005Nil}, the factor
$\mathcal Z_{r-1}$ is an inverse limit of nilsystems. 
A pro-nilsystem realization of $\mathcal Z_{r-1}$ gives
\eqref{eq:characteristic-pronilfactor}. For finitely many polynomial
families, the nesting of the Host--Kra factors allows us to take the
largest of the corresponding characteristic orders.
\end{proof}

\subsubsection{Polynomial reparameterization and stability}
\label{subsec:polynomial-reparameterization}

We use the following polynomial reparameterization result
\cite[Proposition~4.13]{GlasscockEtAl2026}.

\begin{proposition}\label{prop:connected-orbit-reparameterization}
Let $X=G/\Gamma$ be a compact nilmanifold, let $V\subseteq X$ be a
connected subnilmanifold, and let $g:\mathbb Z\to G$ be a polynomial
mapping. Put
\[
C_g(V)
:=
\overline{\bigcup_{n\in\mathbb Z}g(n)V}.
\]
If $C_g(V)$ is connected, then, for every nonconstant
$r\in\mathbb Z[t]$,
\begin{equation}\label{eq:connected-orbit-reparameterization}
\overline{\bigcup_{n\in\mathbb Z}g(r(n))V}
=
C_g(V).
\end{equation}
\end{proposition}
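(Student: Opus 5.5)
We plan to prove the proposition by comparing the component structures of the two orbit closures. The reparameterized closure is
\[
C':=\overline{\bigcup_{n\in\mathbb Z}g(r(n))V}.
\]
The inclusion $C'\subseteq C_g(V)$ is trivial. By Proposition~\ref{prop:moving-subnilmanifold-haar}, $C:=C_g(V)$ is a connected subnilmanifold, say $C=Hy\Gamma/\Gamma$ with $H$ connected. The map $n\mapsto g(r(n))$ is again a polynomial mapping, and $V\subseteq C$ is connected. Applying Theorem~\ref{thm:Leibman-orbit-closure} to it gives
\[
C'=\bigsqcup_{\nu<J}H'y_\nu\Gamma/\Gamma,
\]
with $H'\leq H$ connected and closed. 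It therefore suffices to show that $J=1$ and that $H'$ acts transitively on $C$.

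\textbf{Step 1.} We show that some component of $C'$ has full dimension in $C$. Let $q$ be large and consider the progressions $n\mapsto g(qn+j)$. Then $C$ is the finite union of their orbit closures, each of which is a finite union of subnilmanifolds. By Baire category, one of these subnilmanifolds has nonempty interior in the connected subnilmanifold $C$, and hence equals $C$. We aim to run the same reasoning with $r$ in place of the linear reparameterization. This step is not automatic, since $r(\mathbb Z)$ has density zero. For this reason, Step~2 is the real argument.

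\textbf{Step 2 (main step).} We detect the components by characters, imitating Lemmas~\ref{lem:residual-torus-characters} and~\ref{lem:character-derivative-elimination}. Pass to the universal cover setting, so that $H$ is simply connected. Reduce, by induction on the nilpotency step via the last central quotient (as in Lemma~\ref{lem:central-quotient}), to the situation where the obstruction is a residual compact torus
\[
K=M/(H'\Gamma_M),
\]
with $[M,M]\subseteq H'$ and $M\leq H$. Fix a character $\chi\in\widehat K$ and write $Q(n):=\widetilde f_\chi(g(n))$, which we expect to be an ordinary real polynomial after interpolation, as in Lemma~\ref{lem:interpolation}. Since $C'$ has finitely many components, the phases $\exp(2\pi iQ(r(n)))$ take only finitely many values. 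Lemma~\ref{lem:finite-phases} then shows that every nonconstant coefficient of $Q\circ r$ is rational.

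We next show that this forces the nonconstant coefficients of $Q$ to be rational. Write $r(t)=at^e+\cdots$ with $a\neq0$ an integer. The coefficient of $t^{e\deg Q}$ in $Q\circ r$ is $a^{\deg Q}$ times the leading coefficient of $Q$. Descending through the degrees, each successive coefficient of $Q$ is recovered as a rational combination of the coefficients of $Q\circ r$ and the already-determined higher coefficients of $Q$; the constant term plays no role.

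\textbf{Step 3.} Once the nonconstant coefficients of $Q$ are rational, the phases $\exp(2\pi iQ(n))$, $n\in\mathbb Z$, take finitely many values. This means that $\chi$ separates finitely many closed pieces of $C$ that each contain some $g(n)V$. Since $C$ is connected, $\chi$ must be trivial on the image of $C$. Hence every character of $K$ that is trivial on $C'$ is trivial on $C$, so the residual torus obstruction vanishes and $H'=H$. Then $J=1$ and $C'=C$.

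I expect the main obstacle to be setting up Step~2 in this generality: $V$ is an arbitrary connected subnilmanifold and $g$ an arbitrary polynomial mapping, so the clean affine interpolation of Section~\ref{sec:affine-suspension} has to be replaced by a Leibman-type lift to a connected group. My first attempt would be to use the suspension trick coordinatewise on the factors $a_i^{q_i(n)}$ of $g$.
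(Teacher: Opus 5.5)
The paper does not prove this proposition; it imports it from \cite[Proposition~4.13]{GlasscockEtAl2026}. Your sketch therefore has to stand on its own, and it has a genuine gap at the step you yourself flag. Several pieces are sound:
\begin{itemize}
\item the inclusion $C'\subseteq C$;
\item the fact that $C$ is a connected subnilmanifold;
\item the Leibman decomposition of $C'$;
\item the coefficient-descent observation (if $Q\circ r$ has rational nonconstant coefficients and $r\in\mathbb Z[t]$ is nonconstant, then so does $Q$), which is correct and is the right arithmetic input;
\item using connectedness of $C$ in place of the minimality argument of Lemma~\ref{lem:horizontal-obstruction}, which is the right substitution.
\end{itemize}
When $X$ is a torus and $G$ a vector group, your outline essentially works, because characters vanishing on the direction of $V$ are constant on every translate $g(n)V$.

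In general, however, Step~2 never constructs the phase it analyzes. The lift $\widetilde f_\chi$ lives on the connected group $M$, whereas $g(n)$ lies in a possibly disconnected $G$. Nothing shows that the $\chi$-phase of a point $g(n)x_0$, $x_0\in V$, is a real polynomial in $n$ modulo $1$. In the paper this came from an exact interpolation with polynomial logarithmic derivative. That interpolation was available only because of the affine structure $F(x\Lambda)=bA(x)\Lambda$: one unipotent automorphism and one suspension parameter. For $g(n)=a_1^{q_1(n)}\cdots a_r^{q_r(n)}$ with factors in different components, suspending ``coordinatewise'' does not produce a single connected nilpotent group in which the whole sequence, together with the moving slices, varies polynomially. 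You need a genuine reduction to a connected, simply connected ambient group, and without it Lemma~\ref{lem:finite-phases} has nothing to act on.

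Even granting such a phase, Step~3 does not follow as written. To pass from finitely many values of $\exp(2\pi iQ(n))$ to constancy of $\chi$ on $C$, you need a continuous function $\phi$ on $C$ induced by $\chi$. This first requires $C$ to lie in a single $M$-orbit; that can be extracted from the induction hypothesis and connectedness of the image of $C$ in $X/Z$, but you must say so. The function $\phi$ must also be constant on \emph{every} slice $g(n)V$.

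For $n\in r(\mathbb Z)$ this is automatic, since the connected slice lies in one $H'$-orbit. For the remaining $n$ it is not: writing $V=Lx_0$, the slice direction $\operatorname{Ad}(g(n))\operatorname{Lie}(L)$ moves with $n$. Knowing only that a polynomial phase has rational nonconstant coefficients and is integral on $r(\mathbb Z)$ cannot bridge this. For example, $s\mapsto s/2$ is integral on $r(\mathbb Z)=2\mathbb Z$ for $r(t)=2t$, but not on $\mathbb Z$. If $\phi$ varied along the slices with $n\notin r(\mathbb Z)$, then $\phi(C)$ could be an arc, and connectedness of $C$ would give nothing.

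What is missing is the analogue of Lemma~\ref{lem:tangent-invariance}. You need to show $\operatorname{Ad}(g(n))\operatorname{Lie}(L)\subseteq\operatorname{Lie}(H')$ for all $n\in\mathbb Z$, by applying the polynomial identity principle (Lemma~\ref{lem:polynomial-identity}) to $n\mapsto\operatorname{Ad}(g(n))Y$ along the infinite set $r(\mathbb Z)$. Once that is in place, the correct conclusion is not directly ``$H'=H$''. Rather, $C$ lies in a single fiber of the map to $K$, hence in a single $H'$-orbit, which is a component of $C'$; therefore $C=C'$.
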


\begin{corollary}\label{cor:reparameterization}
Let $X=G/\Gamma$ be a compact connected nilmanifold, let
$T(g\Gamma)=ag\Gamma$ be a minimal nilrotation, and let
$\mathbf p=(p_1,\ldots,p_d)\in\mathbb Z[t]^d$ satisfy $p_i(0)=0$. If
$r\in\mathbb Z[t]$ is nonconstant, then
\[
\overline{\mathcal O_{\mathbf p\circ r}(\Delta_X^d)}
=
\overline{\mathcal O_{\mathbf p}(\Delta_X^d)},
\qquad
\mathbf p\circ r
:=
(p_1\circ r,\ldots,p_d\circ r).
\]
In particular, restricting the parameter to the progression $kn+j$, with
$k\in\mathbb Z\setminus\{0\}$ and $j\in\mathbb Z$, does not change the
polynomial diagonal orbit closure.
\end{corollary}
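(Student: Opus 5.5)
The plan is to realize the polynomial diagonal orbit closure as a moving-subnilmanifold orbit closure and then apply Proposition~\ref{prop:connected-orbit-reparameterization}, with connectedness supplied by Theorem~\ref{thm:main}.

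\textbf{Step 1: the product presentation.} Work in $X^d=G^d/\Gamma^d$. Put
\[
V:=\Delta_X^d=\Delta_G^d\Gamma^d/\Gamma^d .
\]
It is closed and homeomorphic to $X$, hence connected. To have it presented by a connected group, write $G=G^0\Gamma$ (Lemma~\ref{lem:nilmanifold-components}). Then $V=\Delta_{G^0}^d\Gamma^d/\Gamma^d$, and $\Delta_{G^0}^d\cap\Gamma^d=\Delta^d_{G^0\cap\Gamma}$ is a lattice in $\Delta_{G^0}^d$. So $V$ is a connected subnilmanifold.

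\textbf{Step 2: the polynomial mapping.} Define $h:\mathbb Z\to G^d$ by
\[
h(n)=\bigl(a^{p_1(n)},\ldots,a^{p_d(n)}\bigr).
\]
This is a polynomial mapping in the sense of \eqref{eq:Leibman-polynomial-definition}. For each $n$,
\[
h(n)V=\{(T^{p_1(n)}x,\ldots,T^{p_d(n)}x):x\in X\},
\]
so $C_h(V)=\overline{\mathcal O_{\mathbf p}(\Delta_X^d)}$.

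\textbf{Step 3: connectedness and reparameterization.} Theorem~\ref{thm:main} applies because $X$ is connected, $T$ is minimal and $p_i(0)=0$; hence $C_h(V)$ is connected. For nonconstant $r$, the composite $h\circ r$ is again a polynomial mapping, and its slices are
\[
h(r(n))V=\{(T^{p_1(r(n))}x,\ldots,T^{p_d(r(n))}x):x\in X\}.
\]
Thus the left side of \eqref{eq:connected-orbit-reparameterization} is exactly $\overline{\mathcal O_{\mathbf p\circ r}(\Delta_X^d)}$, and the proposition gives the claimed equality.

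Note that $\mathbf p\circ r$ need not vanish at $0$. This does not matter, because the argument only uses connectedness of the closure for the original family $\mathbf p$. Under the standing convention the $p_i$ are nonzero and $r$ is nonconstant, so each $p_i\circ r$ is nonzero.

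\textbf{Step 4: arithmetic progressions.} Take $r(n)=kn+j$ with $k\neq0$; this is nonconstant. The orbit of $\mathbf p\circ r$ over $n\in\mathbb Z$ is precisely the orbit of $\mathbf p$ with parameters restricted to $k\mathbb Z+j$.

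\textbf{Main obstacle.} The only delicate point is Step 1: checking that $\Delta_X^d$ is a \emph{connected} subnilmanifold when $G$ is disconnected. The presentation via $G^0$ handles this. One could equally use the suspension model of Section~\ref{sec:components}, where $V$ was already shown to be a connected subnilmanifold.
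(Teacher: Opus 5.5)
Your proposal is correct and follows the paper's proof exactly. You establish connectedness via Theorem~\ref{thm:main} and then apply Proposition~\ref{prop:connected-orbit-reparameterization} to the polynomial mapping $n\mapsto(a^{p_1(n)},\ldots,a^{p_d(n)})$ and $V=\Delta_X^d$. Your Step~1 also supplies a verification that the paper leaves implicit: $\Delta_X^d=\Delta_{G^0}^d\Gamma^d/\Gamma^d$ is a connected subnilmanifold even when $G$ is disconnected.
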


\begin{proof}
Apply Theorem~\ref{thm:main} and
Proposition~\ref{prop:connected-orbit-reparameterization} to the polynomial
mapping
\[
n\longmapsto
(a^{p_1(n)},\ldots,a^{p_d(n)})
\]
and the connected subnilmanifold $\Delta_X^d$.
\end{proof}

\begin{corollary}\label{cor:arithmetic-reparametrized-haar}
Let $X=G/\Gamma$ be a compact nilmanifold, let $V\subseteq X$ be a
connected subnilmanifold, and let $g:\mathbb Z\to G$ be a polynomial
mapping. Assume that
\[
C:=\overline{\bigcup_{n\in\mathbb Z}g(n)V}
\]
is connected. Then, for every $a\in\mathbb Z\setminus\{0\}$ and
$b\in\mathbb Z$,
\[
\overline{\bigcup_{n\in\mathbb Z}g(an+b)V}=C.
\]
Moreover, for every F{\o}lner sequence $(\Phi_N)$ in $\mathbb Z$ and every
$\varphi\in C(X)$,
\begin{equation}\label{eq:arithmetic-reparametrized-haar}
\lim_{N\to\infty}
\frac{1}{|\Phi_N|}
\sum_{n\in\Phi_N}
\int_V \varphi(g(an+b)x)\,dm_V(x)
=
\int_C \varphi\,dm_C.
\end{equation}
\end{corollary}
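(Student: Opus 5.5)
The plan is to derive the first assertion from Proposition~\ref{prop:connected-orbit-reparameterization}, and the second from Proposition~\ref{prop:moving-subnilmanifold-haar}, applied to the reparameterized polynomial mapping.

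\textbf{Step 1 (equality of closures).} Take $r(t):=at+b\in\mathbb Z[t]$. It is nonconstant because $a\neq0$. Since $C$ is connected by hypothesis, Proposition~\ref{prop:connected-orbit-reparameterization} applies directly to $g$, $V$ and $r$. Identity \eqref{eq:connected-orbit-reparameterization} then gives
\[
\overline{\bigcup_{n\in\mathbb Z}g(an+b)V}=C .
\]

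\textbf{Step 2 (Haar averages).} Put $g_{a,b}(n):=g(an+b)$. I first check that this is again a polynomial mapping in the sense of \eqref{eq:Leibman-polynomial-definition}. If $g(n)=a_1^{q_1(n)}\cdots a_r^{q_r(n)}$ with integer-valued $q_i$, then $g_{a,b}(n)=a_1^{q_1(an+b)}\cdots a_r^{q_r(an+b)}$. Each $q_i\circ r$ is integer-valued, since integer-valued polynomials are closed under composition with integer-valued polynomial maps.

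By Step~1, the orbit closure of $V$ under $g_{a,b}$ equals $C$, which is connected. Proposition~\ref{prop:moving-subnilmanifold-haar}, applied with $\ell=1$ to $g_{a,b}$ and $V$, then shows two things. First, $C$ is a connected subnilmanifold, so $m_C$ is defined. Second, for every F{\o}lner sequence $(\Phi_N)$ in $\mathbb Z$ and every $\varphi\in C(X)$, the averages $\frac1{|\Phi_N|}\sum_{n\in\Phi_N}\int_V\varphi(g_{a,b}(n)x)\,dm_V(x)$ converge to $\int_C\varphi\,dm_C$. This is exactly \eqref{eq:arithmetic-reparametrized-haar}.

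\textbf{Possible obstacle.} The only point needing care is that the measure $m_C$ is intrinsic, that is, determined by $C$ as a subnilmanifold rather than by the parameterization. It is the unique invariant probability measure for the connected group $H$ defining $C$. Theorem~\ref{thm:Leibman-orbit-closure} and Corollary~\ref{cor:identity-orbit-comparison} show that this group is determined by the closed orbit $C$ itself. Hence the limit measure produced for $g_{a,b}$ is the same $m_C$ that appears for $g$, and no further argument is required.
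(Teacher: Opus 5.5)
Your proposal is correct and matches the paper's proof: apply Proposition~\ref{prop:connected-orbit-reparameterization} with $r(n)=an+b$, then apply Proposition~\ref{prop:moving-subnilmanifold-haar} to $n\mapsto g(an+b)$, whose orbit closure is the same connected set $C$. Your closing remark about $m_C$ is not needed, since Proposition~\ref{prop:moving-subnilmanifold-haar} already yields $m_C$ for the set $C$; note also that Corollary~\ref{cor:identity-orbit-comparison} is stated only for connected, simply connected $G$, so it does not literally apply here.
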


\begin{proof}
Apply Proposition~\ref{prop:connected-orbit-reparameterization} to
$r(n)=an+b$, followed by
Proposition~\ref{prop:moving-subnilmanifold-haar}.
\end{proof}

\begin{lemma}\label{lem:correlation-L1-stability}
Let $(Y,\nu)$ be a probability space, let $S_0,\ldots,S_d$ be
measure-preserving transformations, and suppose that
$f_i,f_i'\in L^\infty(\nu)$ satisfy
$\|f_i\|_\infty,\|f_i'\|_\infty\leq B$. Then
\[
\left|
\int_Y\prod_{i=0}^df_i(S_i y)\,d\nu(y)
-
\int_Y\prod_{i=0}^df_i'(S_i y)\,d\nu(y)
\right|
\leq
B^d\sum_{i=0}^d\|f_i-f_i'\|_1.
\]
Consequently, the same bound applies uniformly to every term of a
polynomial correlation sequence and to every average over a moving
interval.
\end{lemma}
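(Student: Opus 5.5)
The plan is to telescope, replacing one factor at a time. For $0\le r\le d+1$, set
\[
I_r:=\int_Y \prod_{i<r} f_i'(S_i y)\prod_{i\ge r} f_i(S_i y)\,d\nu(y),
\]
so that $I_0$ is the first integral and $I_{d+1}$ is the second. The triangle inequality then reduces the claim to bounding each consecutive difference $|I_r-I_{r+1}|$ by $B^d\|f_r-f_r'\|_1$.

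Fix $r$. The two integrands $I_r$ and $I_{r+1}$ differ only in the $r$-th factor. Hence
\[
I_r-I_{r+1}=\int_Y \bigl(f_r-f_r'\bigr)(S_r y)\prod_{i\ne r} g_i(S_i y)\,d\nu(y),
\]
where each $g_i$ is either $f_i$ or $f_i'$. The $d$ remaining factors are each bounded by $B$ almost everywhere. We need that composition with a measure-preserving map preserves essential bounds and null sets, which holds because $\nu(S_i^{-1}E)=\nu(E)$. Their product is therefore bounded by $B^d$. So
\[
|I_r-I_{r+1}|\le B^d\int_Y |f_r-f_r'|(S_r y)\,d\nu(y)=B^d\|f_r-f_r'\|_1,
\]
where the last equality uses invariance of $\nu$ under $S_r$. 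Summing over $r=0,\dots,d$ gives the stated bound.

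For the consequence, apply this bound at each $n$ with $S_0=\mathrm{id}$ and $S_i=T^{p_i(n)}$. The bound does not depend on the particular transformations, so it is uniform in $n$. Averaging the termwise inequality over any interval $\{M,\dots,M+N-1\}$ preserves it, uniformly in $M$ and $N$.

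I see no real obstacle here. The only point to state with care is that a.e. bounds survive composition with the $S_i$, and this follows directly from measure preservation.
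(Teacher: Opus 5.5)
Your proposal is correct and matches the paper's proof: the paper just says to telescope, changing one factor at a time. You have written out the intermediate integrals $I_r$ and the measure-preservation step that bounds each consecutive difference by $B^d\|f_r-f_r'\|_1$.
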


\begin{proof}
Expand the difference as a telescoping sum, changing one factor at a time.
\end{proof}

\subsubsection{Progression invariance on pro-nilsystems}

\begin{proposition}\label{prop:pronil-progression-average}
Let $(Y,m_Y,R)$ be a pro-nilsystem, and suppose that $R^k$ is ergodic for
some $k\in\mathbb N$. 
Let $p_1,\ldots,p_d\in\mathbb Z[t]$ satisfy $p_i(0)=0$, and let
$f_0,\ldots,f_d\in L^\infty(m_Y)$. Define
\[
c(n)
:=
\int_Y f_0(y)\prod_{i=1}^d
f_i(R^{p_i(n)}y)\,dm_Y(y).
\]
Then there exists $L\in\mathbb C$, independent of $j$, such that
\begin{equation}\label{eq:pronil-progression-average}
\begin{aligned}
&\lim_{N\to\infty}
\sup_{M\in\mathbb Z}
\left|
\frac{1}{N}
\sum_{n=M}^{M+N-1}c(n)-L
\right|
=0,\\
&\lim_{N\to\infty}
\sup_{M\in\mathbb Z}
\left|
\frac{1}{N}
\sum_{q=M}^{H+N-1}c(kq+j)-L
\right|
=0
\qquad
\text{for every fixed }j\in\mathbb Z.
\end{aligned}
\end{equation}
\end{proposition}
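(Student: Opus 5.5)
The plan is to reduce first to a single finite nilsystem stage and continuous functions, and then to compute both limits explicitly. By Lemma~\ref{lem:correlation-L1-stability}, each $c(n)$ changes by at most $B^d\sum_i\|f_i-f_i'\|_1$, uniformly in $n$, when the $f_i$ are replaced by bounded approximants. The same bound holds for every moving-interval average. Since $Y$ is an inverse limit of nilsystems, the union of the spaces $C(Y_\ell)\circ\pi_\ell$ is dense in $L^1(m_Y)$, and we may approximate within this union keeping sup norms bounded by $B$. A uniform Cauchy-type argument then reduces \eqref{eq:pronil-progression-average} to the case where $Y=G/\Gamma$ is a nilsystem with Haar measure, $R=T_a$, and the $f_i$ are continuous. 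The limits $L$ obtained at the finite stages converge, and this yields the limit for $Y$. Ergodicity of $R^k$ passes to each factor stage, so $T_a^k$ is ergodic on $X:=G/\Gamma$.

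On the finite nilsystem, let $X_0,\ldots,X_{\kappa-1}$ be the connected components, with $X_0=G^0\Gamma/\Gamma$ as in Lemma~\ref{lem:nilmanifold-components}. The nilrotation $T_a$ permutes them cyclically, since it is ergodic and hence minimal. Because $T_a^k$ is ergodic, $\gcd(k,\kappa)=1$. The power $T_a^\kappa$ preserves each $X_r$, and $(X_r,T_a^\kappa)$ is a minimal nilsystem on a connected nilmanifold. Write
\[
c(n)=\sum_{r}\int_{X_r}f_0\prod_i f_i\circ T_a^{p_i(n)}\,dm_X .
\]
For $n$ in a residue class $n\equiv s\pmod{\kappa}$, write $n=\kappa u+s$. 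Then $p_i(\kappa u+s)=p_i(s)+\kappa Q_{i,s}(u)$ with $Q_{i,s}(0)=0$, so the shift $a^{p_i(s)}$ becomes a fixed translation carrying $X_r$ to $X_{r+p_i(s)}$.

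The key input is Theorem~\ref{thm:main}, applied to the system $(X_r,T_a^\kappa)$ with the family $(Q_{i,s})_i$; we discard constant members using the common shift trick. It shows that the diagonal orbit closure $C_r$ of $\Delta^d_{X_r}$ is connected. Corollary~\ref{cor:arithmetic-reparametrized-haar}, with $V=\Delta^{d+1}$ and $h(u)=(e,a^{\kappa Q_{1,s}(u)},\ldots)$, then gives uniform Cesàro convergence along every progression $u=a'u'+b'$. The limit is $\int_{C_r}$ of a fixed continuous function composed with the constant translation $(e,a^{p_1(s)},\ldots,a^{p_d(s)})$. Call this limit $L_{r,s}$; it depends on $s$ only through the residue classes $p_i(s)\bmod\kappa$.

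Now $c(kq+j)$ runs over $q$. The class $s=kq+j\bmod\kappa$ is equidistributed among all residues modulo $\kappa$, because $\gcd(k,\kappa)=1$. Within each class, $q$ runs over a progression, so $u$ runs over an arithmetic progression in $\mathbb Z$, and Corollary~\ref{cor:arithmetic-reparametrized-haar} applies. Averaging therefore gives $L:=\kappa^{-1}\sum_{s\bmod\kappa}\sum_r L_{r,s}$ for every $j$, and the same value arises for the full sequence $c(n)$, which is the case $k=1$. Lemma~\ref{lem:folner-uc} converts Følner convergence into the sup-over-$M$ form.

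The main obstacle is the bookkeeping in this component step. One must check that the reparameterized orbit closure on each $X_r$ is the one covered by Theorem~\ref{thm:main}: connectedness is required for $T_a^\kappa$ on the component, not for $T_a$ on $X$. One must also check that the constant shifts $a^{p_i(s)}$ only relabel which component each coordinate visits. If some $Q_{i,s}$ coincide or become constant, I would merge coordinates and use the common shift invariance noted in the introduction.
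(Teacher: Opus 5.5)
Your proposal is correct and follows essentially the paper's proof: reduction to a finite nilsystem stage via Lemma~\ref{lem:correlation-L1-stability}, splitting into the $\kappa$ components with $\gcd(k,\kappa)=1$, applying Theorem~\ref{thm:main} to $T_a^\kappa$ on a component together with Corollary~\ref{cor:arithmetic-reparametrized-haar}, with the zero polynomial attached to $f_0$ handled by a common shift, and finally noting that $q\mapsto kq+j$ permutes the residue classes modulo $\kappa$. Two small points remain. First, your assertion that $(X_r,T_a^\kappa)$ is a minimal nilsystem needs the short justification the paper gives, namely presenting $X_0$ as $H/(H\cap\Gamma)$ with $H=\overline{\langle G^0,a^\kappa\rangle}$. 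Second, your side remark that $L_{r,s}$ depends on $s$ only through $p_i(s)\bmod\kappa$ is unjustified, since the closure $C_r$ also depends on the $Q_{i,s}$; it is harmless, however, because it is never used and $L$ averages over all $s$.
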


\begin{proof}
\emph{Nilsystem case.}
Assume first that $Y=G/\Gamma$ is a nilsystem and $R=T_a$. Since $R^k$ is
ergodic, both $R^k$ and $R$ are minimal. Let $\kappa$ be the number of
connected components, let $Y_0$ be the component containing the identity
coset, and write
\[
Y_s:=R^sY_0
\qquad
(s\in\mathbb Z/\kappa\mathbb Z).
\]
The action of $R^k$ on the components is $s\mapsto s+k$, so its
transitivity implies
\begin{equation}\label{eq:component-coprimality}
\gcd(k,\kappa)=1.
\end{equation}
Moreover, $S:=R^\kappa|_{Y_0}$ is minimal: a proper nonempty closed
$S$-invariant subset of $Y_0$ would generate a proper nonempty closed
$R$-invariant subset of $Y$.

Since $a^\kappa\in G^0\Gamma$, set
\[
H:=\overline{\langle G^0,a^\kappa\rangle}.
\]
Then $H\Gamma=G^0\Gamma$, $H\cap\Gamma$ is a lattice in $H$, and
\[
Y_0\cong H/(H\cap\Gamma),
\]
with $S$ induced by $a^\kappa$. Although $H$ need not be connected,
$Y_0$ is connected, which is the hypothesis required in
Theorem~\ref{thm:main}. Let $m_0$ denote Haar probability measure on
$Y_0$.

Set the auxiliary polynomial $p_0\equiv0$.
For $0\leq u<\kappa$ and $0\leq i\leq d$, put
\[
P_{i,u}(\ell)
:=
\frac{p_i(\kappa\ell+u)-p_i(u)}{\kappa}
\in\mathbb Z[\ell].
\]
The integrality follows from the binomial theorem, and $P_{i,u}(0)=0$.
For $0\leq s<\kappa$, define
\[
f_{i,s,u}(y):=f_i(R^{s+p_i(u)}y)
\qquad (y\in Y_0).
\]
Since the components have equal Haar measure,
\begin{equation}\label{eq:component-correlation-splitting}
c(\kappa\ell+u)
=
\frac{1}{\kappa}
\sum_{s=0}^{\kappa-1}
\int_{Y_0}
\prod_{i=0}^d
f_{i,s,u}(S^{P_{i,u}(\ell)}y)
\,dm_0(y).
\end{equation}

Let
\[
C_u
:=
\overline{
\bigcup_{\ell\in\mathbb Z}
\bigl(
S^{P_{0,u}(\ell)}\times\cdots\times
S^{P_{d,u}(\ell)}
\bigr)
\Delta_{Y_0}^{d+1}
}.
\]
The polynomial $P_{0,u}$ is identically zero. By the common shift
identity, adding $M\ell$ to every $P_{i,u}$ does not change $C_u$.
Choose $M\in\mathbb Z$ outside the finite exceptional set for which one
of the shifted polynomials vanishes identically. Then
Theorem~\ref{thm:main} and
Proposition~\ref{prop:moving-subnilmanifold-haar} show that $C_u$ is a
connected subnilmanifold. If the $f_i$ are continuous, set
\[
F_{s,u}(x_0,\ldots,x_d)
:=
\prod_{i=0}^df_{i,s,u}(x_i),
\qquad
L_u
:=
\frac{1}{\kappa}
\sum_{s=0}^{\kappa-1}
\int_{C_u}F_{s,u}\,dm_{C_u}.
\]
For continuous $f_i$, applying
Corollary~\ref{cor:arithmetic-reparametrized-haar} to each $F_{s,u}$
shows that
$\ell\mapsto c(\kappa(a\ell+b)+u)$ has uniform Ces\`aro limit $L_u$
for every $a\in\mathbb Z\setminus\{0\}$ and $b\in\mathbb Z$.
For bounded measurable $f_i$, approximate the finitely many functions
$f_{i,s,u}$ in $L^1(m_0)$ by uniformly bounded continuous functions.
Lemma~\ref{lem:correlation-L1-stability} transfers this conclusion and
shows that $L_u$ is independent of the approximations. Hence
\begin{equation}\label{eq:component-slice-limit}
\UClim_{\ell\in\mathbb Z}
c\bigl(\kappa(a\ell+b)+u\bigr)
=
L_u
\qquad
(a\in\mathbb Z\setminus\{0\},\ b\in\mathbb Z).
\end{equation}

Splitting moving intervals into residue classes modulo $\kappa$ yields
\begin{equation}\label{eq:full-component-average}
\UClim_{n\in\mathbb Z}c(n)
=
\frac{1}{\kappa}
\sum_{u=0}^{\kappa-1}L_u.
\end{equation}
Fix $j\in\mathbb Z$.
For $0\leq t<\kappa$, write
\[
kt+j=\kappa v_t+u_t
\qquad
(0\leq u_t<\kappa).
\]
Then
$k(\kappa\ell+t)+j=\kappa(k\ell+v_t)+u_t$.
By \eqref{eq:component-coprimality}, the map $t\mapsto u_t$ permutes
$\{0,\ldots,\kappa-1\}$. Combining this observation with
\eqref{eq:component-slice-limit} and again splitting moving intervals into
residue classes yields
\begin{equation}\label{eq:sliced-component-average}
\UClim_{q\in\mathbb Z}c(kq+j)
=
\frac{1}{\kappa}
\sum_{u=0}^{\kappa-1}L_u,
\end{equation}
independently of $j$. This proves the proposition for nilsystems.

\emph{Passage to the inverse limit.}
Now let $Y$ be a pro-nilsystem. Successive common refinements yield
increasing finite-stage factor $\sigma$-algebras $\mathcal F_m$ such that
\[
\max_{0\leq i\leq d}
\left\lVert
f_i-\mathbb E(f_i\mid\mathcal F_m)
\right\rVert_1
\leq2^{-m}.
\] 
Put
\[
f_i^{(m)}:=\mathbb E(f_i\mid\mathcal F_m),
\qquad
B:=\max\Big\{1,\max_{0\leq i\leq d}\lVert f_i\rVert_\infty\Big\},
\]
and
\[
\delta_m
:=
B^d\sum_{i=0}^d\lVert f_i^{(m)}-f_i\rVert_1.
\]
Then $\delta_m\to0$. Lemma~\ref{lem:correlation-L1-stability} shows that
every full or sliced finite-stage correlation differs from its original
counterpart by at most $\delta_m$, uniformly in the sequence index and in
$j\in\mathbb Z$.

The transformation induced by $R^k$ on each finite stage factor is
ergodic. Let $L_m$ be the common uniform Ces\`aro limit at stage $m$.
Comparing stages $m$ and $m'$ and then passing to their uniform Ces\`aro
limits gives
\[
\lvert L_m-L_{m'}\rvert\leq\delta_m+\delta_{m'}.
\]
Thus $(L_m)$ is Cauchy; let $L:=\lim_{m\to\infty}L_m$. For either the full
correlation sequence, or the sliced correlation sequence corresponding to
a fixed $j\in\mathbb Z$, write $A_{N,H}$ and $A_{N,H}^{(m)}$ for the
respective moving-interval averages at the original level and at stage
$m$. Then
\[
\sup_{H\in\mathbb Z}
\lvert A_{N,H}-A_{N,H}^{(m)}\rvert
\leq\delta_m
\]
for every $N$. The finite-stage result gives convergence to $L_m$, uniformly
in $H$, in the case of the full sequence and, separately, for each fixed $j$ in the
sliced case. It follows that
\[
\limsup_{N\to\infty}\sup_{H\in\mathbb Z}
\lvert A_{N,H}-L\rvert
\leq\delta_m+\lvert L_m-L\rvert.
\]
Letting $m\to\infty$ proves
\eqref{eq:pronil-progression-average} for the full sequence and,
separately, for each fixed sliced sequence. The limiting value $L$ is
independent of $j$.
\end{proof}

\subsubsection{Proof of the measure-theoretic results}

\begin{proof}[Proof of Theorem~\ref{thm:measure-common-limit}]
Since $T^k$ is ergodic, so is $T$. Removing repeated polynomials does not
change the correlation sequence, since $\mathbf1_A^2=\mathbf1_A$.
After relabeling, write the resulting essentially distinct family of
nonconstant polynomials as $p_1,\ldots,p_d$.

For $0\leq j<k$, set
\[
r_{i,j}(t):=p_i(kt+j).
\]
Each sliced family $(r_{i,j})_i$ is again nonconstant and essentially
distinct. Apply Lemma~\ref{lem:polynomial-characteristic-pronil}
simultaneously to the original family and the $k$ sliced families. This
yields a single pro-nilsystem factor
$
\pi:(X,\mathcal B,\mu,T)
\to
(Z,\mathcal Z,\nu,R)
$
that is characteristic for every family. Since $R^k$ is a factor of $T^k$,
it is ergodic.

Choose $h\in L^\infty(\nu)$ with $0\leq h\leq1$ almost everywhere and
satisfying
\[
h\circ\pi
=
\mathbb E(\mathbf1_A\mid\pi^{-1}\mathcal Z),
\]
and define
\[
b(n)
:=
\int_Z h(z)\prod_{i=1}^d h(R^{p_i(n)}z)\,d\nu(z).
\]
By Proposition~\ref{prop:pronil-progression-average}, $b$ and, for each
fixed $j$, the sequence $q\mapsto b(kq+j)$ have the same uniform
Ces\`aro limit $C$.
By \eqref{eq:characteristic-pronilfactor}, applied to the original and
sliced families, the averages of $c-b$ and, for each fixed $j$, of
$q\mapsto c(kq+j)-b(kq+j)$ converge to zero
along every F{\o}lner sequence.
By Lemma~\ref{lem:folner-uc}, these convergences are uniform over moving
intervals, and \eqref{eq:measure-moving-intervals} follows.
\end{proof}

\begin{proof}[Proof of Corollary~\ref{cor:measure-positive-threshold}]
By polynomial multiple recurrence
\cite{BergelsonLeibman1996},
\[
\liminf_{N\to\infty}
\frac1N\sum_{n=0}^{N-1}c(n)>0.
\]
Since these averages converge to $C$ by
Theorem~\ref{thm:measure-common-limit}, we have $C>0$.

Fix $0<\varepsilon<C$. If, for some $j$, the set
$\{q:c(kq+j)>\varepsilon\}$ were not syndetic, it would miss arbitrarily
long intervals. The average on any such interval would be at most
$\varepsilon$, contradicting
\eqref{eq:measure-moving-intervals}.
\end{proof}

\begin{remark}
Each syndetic superlevel set is infinite, so $q$ may be chosen so that
$kq+j\neq0$.
\end{remark}

Theorem~\ref{thm:measure-common-limit} and
Corollary~\ref{cor:measure-positive-threshold} imply the invertible form of
\cite[Conjecture~1.6]{GlasscockEtAl2026}.  
The assumption that $T^k$ is ergodic cannot be replaced by
ergodicity of $T$ alone: for $k\geq2$, addition by $1$ on
$\mathbb Z/k\mathbb Z$ is ergodic but its $k$-th power is the identity, and
for $A=\{0\}$ and $p(n)=n$ the correlation vanishes at every time $kq+1$.

\subsection{Symbolic and coloring consequences}

Let $\mathcal A$ be a finite alphabet, let $S$ be the left shift on
$\mathcal A^{\mathbb Z}$, and, for $x\in\mathcal A^{\mathbb Z}$, put
$X_x:=\overline{\{S^ax:a\in\mathbb Z\}}$.

\begin{corollary}\label{cor:symbolic}
Suppose that $(X_x,S^k)$ is minimal for some $k\in\mathbb N$. Let
$\alpha\in\mathcal A$ occur in $x$, and let
$p_1,\ldots,p_d\in\mathbb Z[t]$ satisfy $p_i(0)=0$. Then, for every
$j\in\mathbb Z$, the set
\[
\begin{aligned}
Q_{\alpha,k,j}
:=
\bigl\{
q\in\mathbb Z:
&\text{ there exists }a\in\mathbb Z\text{ such that}\\
&x(a)=\alpha,\quad\text{and}\\
&x(a+p_i(kq+j))=\alpha\quad(1\leq i\leq d)
\bigr\}
\end{aligned}
\]
is syndetic. For every $q\in Q_{\alpha,k,j}$, the integers $a$ realizing
the configuration form a syndetic set.
\end{corollary}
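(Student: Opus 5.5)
The plan is to apply Corollary~\ref{cor:return-times} to the symbolic system $(X_x,S)$ with the cylinder set
\[
U:=\{y\in X_x: y(0)=\alpha\},
\]
which is open in $X_x$. It is nonempty because $\alpha$ occurs in $x$: if $x(a_0)=\alpha$, then $S^{a_0}x\in U$. Since $(X_x,S^k)$ is minimal, the hypotheses of Corollary~\ref{cor:return-times} hold, and for every $j\in\mathbb Z$ the set $Q_{\mathbf p,k,j}(U)$ is syndetic. It therefore suffices to show that $Q_{\mathbf p,k,j}(U)= Q_{\alpha,k,j}$, or at least that the latter contains the former.

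For the inclusion $Q_{\mathbf p,k,j}(U)\subseteq Q_{\alpha,k,j}$, fix $q\in Q_{\mathbf p,k,j}(U)$ and write $n:=kq+j$. The set
\[
W:=U\cap\bigcap_{i=1}^d S^{-p_i(n)}U
\]
is a nonempty open subset of $X_x$. For the left shift, $S^{-p_i(n)}U=\{y: y(p_i(n))=\alpha\}$, so $W$ is a cylinder. Since $x$ has dense orbit in $X_x$, some $S^ax$ lies in $W$. This gives $x(a)=\alpha$ and $x(a+p_i(n))=\alpha$ for $1\le i\le d$, so $q\in Q_{\alpha,k,j}$. The reverse inclusion follows by taking $y=S^ax$, although it is not needed for syndeticity.

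For the second claim, fix $q\in Q_{\alpha,k,j}$. Then $W\neq\emptyset$ by the realizing $a$. The set of realizing integers is exactly $\{a: S^ax\in W\}$. Here I use that $(X_x,S)$ is minimal, because $S^k$ is minimal and so any $S$-invariant closed set is $S^k$-invariant. In a minimal system, the return-time set of any point to a nonempty open set is syndetic: by compactness, finitely many translates $S^{-m}W$ cover $X_x$. Hence the realizing set is syndetic.

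The main point requiring care is only the bookkeeping of the shift convention, namely $(S^my)(0)=y(m)$, which is what identifies $S^{-p}U$ with the condition at coordinate $p$. Beyond that, the argument is a direct translation of Corollary~\ref{cor:return-times}.
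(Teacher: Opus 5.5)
Your proposal is correct and follows essentially the same route as the paper. You apply Corollary~\ref{cor:return-times} to the cylinder $\{y\in X_x:y(0)=\alpha\}$, identify the resulting set with $Q_{\alpha,k,j}$ using density of the orbit of $x$, and obtain the second claim from minimality of $S$ together with syndeticity of entrance times into a nonempty open set; your explicit handling of the shift convention and of the compactness argument just spells out steps the paper leaves implicit.
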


\begin{proof}
Let $U_\alpha:=\{y\in X_x:y(0)=\alpha\}$. By
Corollary~\ref{cor:return-times}, the set of $q$ for which
\[
U_\alpha\cap\bigcap_{i=1}^dS^{-p_i(kq+j)}U_\alpha
\]
is nonempty is syndetic. Since the orbit of $x$ is dense  in $X_x$, this set is
exactly $Q_{\alpha,k,j}$. For each $q\in Q_{\alpha,k,j}$, minimality of
$S^k$ implies that $S$ is minimal, so the entrance times of $x$ into the
displayed open set are syndetic. 
\end{proof}

We use the terminology of \cite{GHSWY2025}. One-sided
sequences are indexed by $\mathbb N=\{1,2,\ldots\}$ and
$(Sx)(r):=x(r+1)$. A partition
$\mathbb N=N_1\cup\cdots\cup N_s$ is \emph{irreducible of type $k$},
where $k\geq2$, if there exist an index $i$ and a nonzero uniformly
recurrent sequence $\omega\in\{0,1\}^{\mathbb N}$ in the forward orbit
closure of $\mathbf1_{N_i}$ such that every finite word occurring in
$\omega$ has an occurrence beginning at a position of the form $kn+1$ for some
$n\in\mathbb N$.

\begin{lemma}\label{lem:dense-power-orbit-minimal}
Let $R:Y\to Y$ be a minimal continuous surjection of a compact metric
space, and let $k\geq1$. If some $y\in Y$ has dense forward $R^k$-orbit,
then $R^k$ is minimal.
\end{lemma}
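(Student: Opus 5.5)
The plan is to compare $R^k$-orbits of all points with the forward $R^k$-orbit closure $O:=\overline{\{R^{kn}y:n\geq0\}}$. By hypothesis $O=Y$. We want to show that every point $z\in Y$ has dense forward $R^k$-orbit. The standard way is to find an $R^k$-minimal subset and show it is all of $Y$.

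First, choose by Zorn's lemma a minimal subset $M\subseteq Y$ for $R^k$. That is, $M$ is nonempty, closed, satisfies $R^k(M)\subseteq M$, and is minimal with these properties. Then set
\[
M_i:=R^i(M)\qquad (0\leq i<k).
\]
Each $M_i$ is compact. It is also $R^k$-invariant and $R^k$-minimal, because the image of an $R^k$-minimal set under the commuting continuous map $R^i$ is again $R^k$-minimal; any proper closed invariant subset would pull back to one in $M$. The union $\bigcup_{i<k}M_i$ is closed and $R$-invariant, since $R(M_{k-1})=R^k(M)\subseteq M$. By minimality of $R$ this union equals $Y$.

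Second, use the point $y$. It lies in some $M_i$. Since $M_i$ is closed and $R^k$-invariant, the whole forward $R^k$-orbit of $y$ stays in $M_i$, so $Y=O\subseteq M_i$. Hence $M_i=Y$. Therefore $Y$ itself is $R^k$-minimal, so every point has dense forward $R^k$-orbit, and $R^k$ is minimal.

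The only delicate point is checking that $R^i(M)$ is $R^k$-minimal when $R$ is merely a continuous surjection rather than a homeomorphism. Suppose $F\subsetneq R^i(M)$ is nonempty, closed and $R^k$-invariant. Then $(R^i)^{-1}(F)\cap M$ is a nonempty, closed, $R^k$-invariant subset of $M$. By minimality of $M$ it equals $M$, which forces $R^i(M)\subseteq F$, a contradiction. Actually the argument does not even need this: it suffices that some $M_i$ contains $y$. This requires the $M_i$ to cover $Y$, which needs only closedness and $R$-invariance of their union.
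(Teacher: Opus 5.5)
Your proof is correct and is essentially the paper's argument: take an $R^k$-minimal set $M$, note that the translates $R^i(M)$, $0\leq i<k$, cover $Y$ by minimality of $R$, and conclude that the translate containing $y$ must be all of $Y$. The paper phrases this last step via a disjoint clopen partition, whereas you use the $R^k$-minimality of $R^i(M)$ directly. Your closing remark that the argument ``does not even need'' this $R^k$-minimality is mistaken: it is exactly what turns $M_i=Y$ into minimality of $R^k$. Since you proved it, nothing is missing.
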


\begin{proof}
Let $M$ be a minimal nonempty closed $R^k$-invariant subset. By minimality
of $R$, the sets $R^rM$, $0\leq r<k$, cover $Y$ and, after repetitions are
removed, form a pairwise disjoint clopen partition into
$R^k$-invariant sets. The forward $R^k$-orbit of $y$ remains in one member
of this partition and is dense in $Y$, so the partition has one member and
$M=Y$.
\end{proof}

\begin{lemma}\label{lem:natural-extension-minimal}
Let $R:Y\to Y$ be a minimal continuous surjection of a compact metric space.
Then its natural extension $(\widetilde Y,\widetilde R)$ is minimal.
\end{lemma}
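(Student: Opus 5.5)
The natural extension is
\[
\widetilde Y:=\{(y_r)_{r\geq0}\in Y^{\mathbb N_0}: R(y_{r+1})=y_r\ \text{for all } r\},
\qquad
\widetilde R\bigl((y_r)_r\bigr):=(R y_0,y_0,y_1,\ldots),
\]
with the product topology. We argue by showing directly that every $\widetilde R$-orbit is dense. Since $\widetilde Y$ is compact metric, it suffices to treat basic open sets depending on finitely many coordinates. Because $R(y_{r+1})=y_r$, the coordinate $y_r$ determines all $y_s$ with $s<r$. Hence the basic open sets may be taken of the form
\[
W=\{(y_s)\in\widetilde Y: y_r\in U\}
\]
for a single index $r$ and an open set $U\subseteq Y$. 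Such a set is nonempty exactly when $U\neq\emptyset$, because $R$ is surjective and every point of $Y$ therefore has a full backward chain of preimages.

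Fix $\widetilde y=(y_s)\in\widetilde Y$ and a nonempty open $U\subseteq Y$. For $n\geq r$, the $r$-th coordinate of $\widetilde R^n\widetilde y$ is $R^{n-r}y_0$. So we need some $n\geq r$ with $R^{n-r}y_0\in U$, that is, some $m\geq0$ with $R^m y_0\in U$. This follows because the forward $R$-orbit of $y_0$ is dense, by minimality of $R$.

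The one point needing care is the meaning of minimality for a noninvertible map. Here it means there is no proper nonempty closed set $K$ with $R(K)\subseteq K$, which is equivalent to every forward orbit being dense: the closure of a forward orbit is a closed forward-invariant set. With this, the forward $\widetilde R$-orbit of every point of $\widetilde Y$ is dense, so $\widetilde R$ is minimal.

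I expect no serious obstacle. The only detail to check is that the sets $W$ form a base, which holds because projections to the first $r+1$ coordinates are determined by the $r$-th coordinate.
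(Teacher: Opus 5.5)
The paper states this lemma without proof, treating it as standard, so there is no argument in the text to compare against. Your proof is correct and complete, and it supplies the omitted verification.

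The key identity is that for $n\geq r$ the $r$-th coordinate of $\widetilde R^{\,n}\widetilde y$ is $R^{n-r}y_0$. One step deserves an explicit sentence, and it is where continuity of $R$ enters. A basic set $\{y_0\in U_0,\ldots,y_r\in U_r\}\cap\widetilde Y$ equals $\{y_r\in U\}\cap\widetilde Y$, where
\[
U=\bigcap_{s=0}^{r}R^{-(r-s)}(U_s).
\]
This $U$ is open because $R$ is continuous, and that openness is what lets you invoke density of the forward $R$-orbit of $y_0$.

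The remark that $W\neq\emptyset$ whenever $U\neq\emptyset$ is true but not needed for the density argument.

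Because you show that every forward $\widetilde R$-orbit is dense, you get minimality in the strongest sense. This covers the lemma's use with $R=S^k$ in Corollary~\ref{cor:irreducible-partition}.

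An equivalent route works through closed invariant sets. If $\widetilde K$ is nonempty, closed and $\widetilde R$-invariant, then
\[
\pi_r(\widetilde K)\supseteq\pi_r(\widetilde R^{\,r}\widetilde K)=\pi_0(\widetilde K)=Y
\]
for every $r$. Hence $\widetilde K$ meets every basic open set, so $\widetilde K=\widetilde Y$. This rests on the same identity $\pi_r\circ\widetilde R^{\,r}=\pi_0$ that drives your argument.
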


\begin{corollary}\label{cor:irreducible-partition}
Let $\mathbb N=N_1\cup\cdots\cup N_s$ be irreducible of type $k$, with
witness pair $(i,\omega)$. Let $p_1,\ldots,p_d\in\mathbb Z[t]$ satisfy
$p_i(0)=0$. For every $0\leq j<k$, the set
\[
\begin{aligned}
\bigl\{
q\in\mathbb N:
&\text{ there exists }a\in\mathbb N\text{ such that}\\
&a,
 a+p_1(kq+j),\ldots,a+p_d(kq+j)
 \in N_i
\bigr\}
\end{aligned}
\]
has bounded gaps in $\mathbb N$.
\end{corollary}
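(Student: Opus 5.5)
The plan is to transfer Corollary~\ref{cor:return-times} from an invertible minimal system built out of $\omega$ back to the one-sided sequence $\mathbf 1_{N_i}$.

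\textbf{Step 1: the one-sided system.} Let $Y$ be the forward orbit closure of $\omega$ in $\{0,1\}^{\mathbb N}$ under the one-sided shift $S$. Since $\omega$ is uniformly recurrent, $(Y,S)$ is minimal, and in particular $S$ is a continuous surjection of $Y$.

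\textbf{Step 2: minimality of $S^k$.} Consider the closed $S^k$-invariant set $W:=\overline{\{S^{kn}\omega:n\geq0\}}$. Every finite word of $\omega$ occurs at a position of the form $kn+1$, so every cylinder of $Y$ meets $W$. Hence $W=Y$ and $\omega$ has dense forward $S^k$-orbit. Lemma~\ref{lem:dense-power-orbit-minimal} then shows that $S^k$ is minimal on $Y$.

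\textbf{Step 3: passage to an invertible system.} Let $(\widetilde Y,\widetilde S)$ be the natural extension. It is minimal by Lemma~\ref{lem:natural-extension-minimal}, and it is invertible. The natural extension of $S^k$ is $\widetilde S^k$, so $\widetilde S^k$ is minimal by the same lemma applied to the minimal surjection $S^k$.

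\textbf{Step 4: return times upstairs.} Set $U:=\{\tilde y:\pi(\tilde y)(1)=1\}$, where $\pi$ is the coordinate projection to $Y$. This set is open, and it is nonempty because $\omega\neq0$ and $\pi$ is onto. Corollary~\ref{cor:return-times} applied to $(\widetilde Y,\widetilde S)$ gives that the set of $q\in\mathbb Z$ with
\[
U\cap\bigcap_{i}\widetilde S^{-p_i(kq+j)}U\neq\emptyset
\]
is syndetic. Intersecting a syndetic subset of $\mathbb Z$ with $\mathbb N$ leaves a set with bounded gaps.

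\textbf{Step 5: transfer to $N_i$.} This is the main obstacle, because $p_i(kq+j)$ may be negative and one-sided shifts cannot be applied directly. For a good $q$, let $D:=\min(0,\min_i p_i(kq+j))$ and take $\tilde y$ in the intersection. Then $z:=\pi(\widetilde S^{D}\tilde y)\in Y$ satisfies
\[
z(1-D)=1,\qquad z\bigl(1-D+p_i(kq+j)\bigr)=1\quad(1\leq i\leq d),
\]
and all these positions are at least $1$. The pattern of $1$'s at these finitely many positions is an open condition. Since $z$ lies in the orbit closure of $\omega$, some $S^r\omega$ has $1$'s at the same positions. In turn $\omega$ lies in the forward orbit closure of $\mathbf 1_{N_i}$, so some $S^{r'}\mathbf 1_{N_i}$ has $1$'s there. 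Taking $a:=r'+1-D\geq1$ gives $a$ and every $a+p_i(kq+j)$ in $N_i$.

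\textbf{Expected difficulty.} The delicate part is the bookkeeping in Step 5: handling negative polynomial values and making sure every index stays positive. Checking that $\widetilde S^k$ is minimal in Step 3 is standard.
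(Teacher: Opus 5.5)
Your proposal is correct and follows essentially the same route as the paper: minimality of $S^k$ via Lemma~\ref{lem:dense-power-orbit-minimal}, passage to the natural extension, the syndetic return-time corollary applied upstairs, and a finite-pattern transfer back to $\mathbf 1_{N_i}$. Your identification of the natural extension of $S^k$ with $\widetilde S^k$ is made explicit in the paper by the map $(x_0,x_{-1},\ldots)\mapsto(x_0,x_{-k},x_{-2k},\ldots)$, and your shift by $D=\min(0,\min_i p_i(kq+j))$ does the job of the paper's appeal to uniform recurrence, which places the occurrence of the block far enough to the right that all coordinates are positive.
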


\begin{proof}
Let $X_\omega$ be the one-sided orbit closure of $\omega$. Uniform
recurrence implies that $(X_\omega,S)$ is minimal, while the defining
occurrence condition implies that the forward $S^k$-orbit of $\omega$ is
dense. Hence, by Lemma~\ref{lem:dense-power-orbit-minimal},
$(X_\omega,S^k)$ is minimal. Let $(\widetilde X_\omega,\widetilde S)$ be the natural extension of
$(X_\omega,S)$, realized as a two-sided subshift with the same finite
language as $X_\omega$. In inverse-limit coordinates, the map
\[
(x_0,x_{-1},x_{-2},\ldots)
\longmapsto
(x_0,x_{-k},x_{-2k},\ldots)
\]
conjugates $(\widetilde X_\omega,\widetilde S^k)$ with the natural
extension of $(X_\omega,S^k)$. Hence
$(\widetilde X_\omega,\widetilde S^k)$ is minimal by
Lemma~\ref{lem:natural-extension-minimal}. Choose
$\widetilde\omega\in\widetilde X_\omega$ whose one-sided projection is
$\omega$.

Apply Corollary~\ref{cor:symbolic} to $\widetilde\omega$ and the symbol
$1$. It follows that there is a syndetic set $Q\subseteq\mathbb Z$ such
that, for every $q\in Q$, the common finite language contains a block whose
entries at a translate of
\[
\{0,p_1(kq+j),\ldots,p_d(kq+j)\}
\]
are all $1$. For a fixed $q\in Q\cap\mathbb N$, uniform recurrence allows us to choose
an occurrence of this block in $\omega$ sufficiently far to the right
that all the corresponding coordinates are positive. Choose a prefix of
$\omega$ containing the entire occurrence. Since $\omega$ belongs to the
forward orbit closure of $\mathbf1_{N_i}$, this prefix occurs in
$\mathbf1_{N_i}$ at some nonnegative translate. It therefore yields a
witness $a\in\mathbb N$ for this $q$.

Since $Q$ is syndetic in $\mathbb Z$, the set
$Q\cap\mathbb N$ has bounded gaps in $\mathbb N$.
\end{proof}

For the linear family $p_\ell(t)=\ell t$, the preceding corollary recovers
the coloring consequence following \cite[Theorem~D]{GHSWY2025}. More
generally, the same cell $N_i$ works for every finite polynomial family,
although the gap bound may depend on the family and the residue class.

\section{A counterexample to Leibman's conjectured lower central series identity}
\label{sec:counterexample}

We prove Theorem~\ref{thm:counterexample} by constructing a disconnected
nilpotent Lie group $G$ of step three whose identity component
$N=G^0$ has step two. For the family $p_i(n)=i\,n$, $1\leq i\leq4$, the
associated subgroup satisfies
\[
\widehat H_2\subsetneq\widehat H\cap N_2^4.
\]
Thus the identity asserted in
\cite[Conjecture~11.4]{Leibman2010} fails at level $2$ of the lower
central series.

We denote the lower central series of $N$, $G$, and $\widehat H$ by
$(N_r)$, $(G_r)$, and $(\widehat H_r)$, respectively.

\subsection{Construction of the nilsystem}

Let $N$ be the Lie group with underlying manifold $\mathbb R^5$, coordinates
$(x,y,z,u,w)$, and multiplication
\begin{equation}\label{eq:N-law}
(x,y,z,u,w)(x',y',z',u',w')
=
(x+x',y+y',z+z'+xy',u+u',w+w').
\end{equation}
Thus $N$ is the direct product of the real Heisenberg group and two central
copies of $\mathbb R$. Let $X,Y,Z,U,W$ denote the corresponding basis
vectors of $\mathfrak n:=\operatorname{Lie}(N)$.
Up to antisymmetry, the only nonzero bracket among
these basis vectors is $[X,Y]=Z$. Hence
$N_2=[N,N]=\exp(\mathbb R Z)$ and $N_3=\{e\}$.

Let $\Lambda:=\mathbb Z^5$. This is the product of the standard integer
Heisenberg lattice with $\mathbb Z^2$, and hence is a lattice in $N$.
Define $B:N\to N$ by
\begin{equation}\label{eq:def-B}
B(x,y,z,u,w):=(x,y,z+u,u+w,w).
\end{equation}

\begin{lemma}\label{lem:B}
The map $B$ is a unipotent automorphism of the Lie group $N$ satisfying
$B(\Lambda)=\Lambda$. Its differential is given by
\[
\begin{aligned}
B_*X&=X, & B_*Y&=Y, & B_*Z&=Z,\\
B_*U&=U+Z, & B_*W&=W+U.
\end{aligned}
\]
\end{lemma}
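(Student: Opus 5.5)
The proof is a direct verification from the multiplication law \eqref{eq:N-law} and the formula \eqref{eq:def-B}. I would carry out four checks: that $B$ is a bijective homomorphism, that it preserves $\Lambda$, that its differential is as stated, and that this differential is unipotent.

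\emph{Homomorphism.} Write $g=(x,y,z,u,w)$ and $g'=(x',y',z',u',w')$. The product $B(g)B(g')$ is
\[
(x+x',\,y+y',\,z+u+z'+u'+xy',\,u+w+u'+w',\,w+w').
\]
The element $B(gg')$ is
\[
(x+x',\,y+y',\,(z+z'+xy')+(u+u'),\,(u+u')+(w+w'),\,w+w').
\]
These coincide, because the cocycle term $xy'$ involves only the coordinates $x,y$, and $B$ leaves those unchanged. Hence $B$ is a homomorphism. It is a bijection, with inverse
\[
(x,y,z,u,w)\mapsto(x,y,\,z-u+w,\,u-w,\,w),
\]
which one checks by composing. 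Both $B$ and its inverse are polynomial maps, so $B$ is a Lie group automorphism.

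\emph{Lattice.} The map $B$ and its inverse have integer coefficients, so $B(\mathbb Z^5)\subseteq\mathbb Z^5$ and $B^{-1}(\mathbb Z^5)\subseteq\mathbb Z^5$. Therefore $B(\Lambda)=\Lambda$.

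\emph{Differential.} I would read off $B_*$ from the one-parameter subgroups generated by the basis vectors. With respect to \eqref{eq:N-law}, the coordinate lines $t\mapsto(t,0,0,0,0)$, and likewise for the other four coordinates, are one-parameter subgroups, since $xy'=0$ along each of them. They therefore equal $\exp(tX),\dots,\exp(tW)$. Applying $B$ gives
\[
B(\exp tU)=(0,0,t,t,0)=\exp(tU)\exp(tZ).
\]
This is valid because $U$ and $Z$ are central and commute. Differentiating at $t=0$ gives $B_*U=U+Z$. In the same way, $B(\exp tW)=(0,0,0,t,t)$, which gives $B_*W=W+U$. The directions $X$, $Y$, $Z$ are fixed by $B$ pointwise, so $B_*$ fixes $X$, $Y$ and $Z$.

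\emph{Unipotence.} Set $D:=B_*-\operatorname{id}$. Then $D$ maps $W\mapsto U\mapsto Z\mapsto 0$ and kills $X$ and $Y$. Hence $D^3=0$, and $B_*$ is unipotent.

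No step is a real obstacle. The only point needing care is the identification of the coordinate axes with exponential directions, and the centrality of $U$, $Z$, $W$ settles it.
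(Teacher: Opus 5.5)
Your proposal is correct and matches the paper's proof essentially step for step. The paper also verifies the homomorphism property by substituting into the multiplication law, exhibits the same inverse with integer coefficients to get $B(\Lambda)=\Lambda$, reads off $B_*$ from the coordinate formula for $B$ (which is linear in these coordinates), and concludes unipotence from $(B_*-\operatorname{id}_{\mathfrak n})^3=0$.

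One small correction to your closing remark: the $X$ and $Y$ coordinate axes are one-parameter subgroups because $xy'$ vanishes along them, as you say in the body, not because of centrality.
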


\begin{proof}
Substitution into \eqref{eq:N-law} shows that $B(gg')=B(g)B(g')$. Its
inverse is
\[
B^{-1}(x,y,z,u,w)
=
(x,y,z-u+w,u-w,w).
\]
Both $B$ and $B^{-1}$ have coordinate formulas with integer coefficients,
so
$B(\Lambda)=\Lambda$. The differential formulas follow from
\eqref{eq:def-B}. In particular,
$(B_*-\operatorname{id}_{\mathfrak n})W=U$,
$(B_*-\operatorname{id}_{\mathfrak n})U=Z$, and
$(B_*-\operatorname{id}_{\mathfrak n})$ vanishes on
$\operatorname{span}_{\mathbb R}\{X,Y,Z\}$. Thus
$(B_*-\operatorname{id}_{\mathfrak n})^3=0$.
\end{proof}

Form the semidirect product
\[
G:=N\rtimes_{B^{-1}}\mathbb Z,
\]
with multiplication
\[
(g,m)(h,n)
:=
\bigl(gB^{-m}(h),m+n\bigr).
\]
Let $\gamma:=(e,1)$. Then
\begin{equation}\label{eq:gamma-relation}
\gamma^{-1}g\gamma=B(g)
\qquad
(g\in N).
\end{equation}
Set
\[
\Gamma:=\Lambda\rtimes_{B^{-1}}\mathbb Z
=
\langle\Lambda,\gamma\rangle.
\]

\begin{proposition}\label{prop:G-structure}
The group $G$ is nilpotent of step three, and
\[
G^0=N,
\qquad
G_2=\exp(\mathbb R U+\mathbb R Z),
\qquad
G_3=\exp(\mathbb R Z),
\qquad
G_4=\{e\}.
\]
Moreover, $\Gamma$ is a lattice in $G$, and the natural map
\begin{equation}\label{eq:N-quotient-G-quotient}
N/\Lambda\longrightarrow G/\Gamma,
\qquad
g\Lambda\longmapsto g\Gamma,
\end{equation}
is a homeomorphism. In particular, $G/\Gamma$ is connected.
\end{proposition}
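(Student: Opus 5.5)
We prove Proposition~\ref{prop:G-structure} by direct computation in the semidirect product, in three stages: identity component, lower central series, and the lattice with the homeomorphism \eqref{eq:N-quotient-G-quotient}.

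\emph{Identity component.} As a manifold, $G$ is $N\times\mathbb Z$ with the product topology. Since $N\cong\mathbb R^5$ is connected and $\mathbb Z$ is discrete, the identity component is $N\times\{0\}$. Hence $G^0=N$.

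\emph{Lower central series.} Every element of $G$ has the form $g\gamma^m$ with $g\in N$. By \eqref{eq:gamma-relation}, commutators are generated by two types.
\begin{itemize}
\item Commutators inside $N$, which give $N_2=\exp(\mathbb R Z)$.
\item Commutators $[\gamma,g]$ for $g\in N$. These equal $\gamma^{-1}g^{-1}\gamma g=B(g)^{-1}g$ up to ordering conventions.
\end{itemize}
Using \eqref{eq:def-B}, the second type contributes elements of the form $(0,0,\ast,w',0)$, with arbitrary real $w'$ and arbitrary central $z$-component once $u$ varies. Thus $G_2$ is the closed connected abelian subgroup $\exp(\mathbb R U+\mathbb R Z)$. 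We check that $G_2$ is normal and that the generated subgroup is exactly this set; it is a central vector subgroup of $N$ and is $B$-invariant.

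Next, $G_3=[G,G_2]$. Brackets of $N$ with $G_2$ vanish because $U$ and $Z$ are central in $N$. Commutators of $\gamma$ with $\exp(uU+zZ)$ give $B$-differences, which are $\exp(uZ)$. Hence $G_3=\exp(\mathbb R Z)$.

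Finally, $\exp(\mathbb R Z)$ is central in $N$ and fixed by $B$, so $G_4=\{e\}$ and $G$ has step three.

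\emph{Lattice and homeomorphism.} $\Gamma$ is a subgroup because $B^{\pm1}(\Lambda)=\Lambda$ by Lemma~\ref{lem:B}. It is discrete because $\Lambda$ is discrete in $N$ and the $\mathbb Z$-coordinate is discrete. Since $G=N\Gamma$ (write $(g,m)=(g,0)\gamma^m$), the map in \eqref{eq:N-quotient-G-quotient} is surjective.

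For injectivity, suppose $g\Gamma=g'\Gamma$ with $g,g'\in N$. Then $g^{-1}g'\in N\cap\Gamma=\Lambda$, using the $\mathbb Z$-coordinate. So the map is a continuous bijection from the compact space $N/\Lambda$ onto the Hausdorff space $G/\Gamma$, hence a homeomorphism. It follows that $G/\Gamma$ is compact, so $\Gamma$ is cocompact and therefore a lattice. Connectedness of $G/\Gamma$ follows from that of $N/\Lambda$, or equivalently from Lemma~\ref{lem:nilmanifold-components} via $G=G^0\Gamma$.

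\emph{Main obstacle.} The only real obstacle is to show that $G_2$ is all of $\exp(\mathbb R U+\mathbb R Z)$, and not just a countable subgroup. The key point is that $g$ ranges over the connected group $N$, so the $u$- and $z$-coordinates of the commutators $[\gamma,g]$ sweep out all real values. We also need the sign and order conventions in \eqref{eq:gamma-relation} to line up; we will verify them explicitly.
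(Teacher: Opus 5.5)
Your proposal is correct and follows essentially the same route as the paper. Explicit commutators with $\gamma$ (indeed $[\gamma,(x,y,z,u,w)]=B(g)^{-1}g=(0,0,-u,-w,0)$ exactly, with no convention ambiguity) give the lower bounds $\exp(\mathbb RU+\mathbb RZ)\subseteq G_2$ and $\exp(\mathbb RZ)\subseteq G_3$. The upper bounds come from normality of these central, $B$-invariant subgroups together with $B$ acting trivially modulo them; the paper phrases this as ``$B$ induces the identity on $N/\exp(\mathbb RU+\mathbb RZ)$, so the quotient is abelian,'' which is equivalent to your normal-closure-of-generator-commutators argument. The topological part, a continuous bijection from compact $N/\Lambda$ to Hausdorff $G/\Gamma$ using $N\cap\Gamma=\Lambda$, is the same as the paper's.
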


\begin{proof}
Put
\[
K_U:=\exp(\mathbb R U+\mathbb R Z),
\qquad
K_Z:=\exp(\mathbb R Z).
\]
Both $K_U$ and $K_Z$ are central subgroups of $N$
and invariant under $B$, hence normal in $G$.
Using \eqref{eq:gamma-relation} and the commutator
convention $[g,h]=g^{-1}h^{-1}gh$, we obtain
\[
\begin{aligned}
[\exp(tW),\gamma]&=\exp(tU),\\
[\exp(tU),\gamma]&=\exp(tZ),\\
[\exp(sX),\exp(tY)]&=\exp(stZ).
\end{aligned}
\]
It follows that $K_U\subseteq G_2$ and $K_Z\subseteq G_3$. Conversely,
$[N,N]=K_Z\subseteq K_U$, and $B$ induces the identity on $N/K_U$.
Thus $G/K_U$ is abelian and $G_2=K_U$.
Since $K_U$ is central in $N$ and $B$ acts trivially
on $K_U/K_Z$, $[G,K_U]\subseteq K_Z$. The reverse inclusion follows from
the second displayed commutator, so $G_3=K_Z$.
Finally, $B$ fixes $K_Z$ pointwise, so $K_Z$ is central in $G$; hence
$G_4=\{e\}$.

As a topological space, $G=N\times\mathbb Z$, and its connected components
are the sets $N\gamma^m$, $m\in\mathbb Z$. Hence $G^0=N$. Under the same
identification, $\Gamma=\Lambda\times\mathbb Z$ is discrete and closed.
Every element of $G$ has the form $g\gamma^m$ and represents the same right
$\Gamma$-coset as $g\in N$.
Since $N\cap\Gamma=\Lambda$, two elements of $N$
represent the same point of $G/\Gamma$ exactly when they lie in the same
right $\Lambda$-coset.
Thus \eqref{eq:N-quotient-G-quotient} is a
continuous bijection from the compact space $N/\Lambda$ to the Hausdorff
space $G/\Gamma$, and hence is a homeomorphism. It follows in particular
that $\Gamma$ is cocompact.
\end{proof}

Fix a transcendental number $\xi\in\mathbb R$, set
\[
\alpha_0:=(\xi,\xi^2,0,0,\xi^3)\in N,
\qquad
a:=\alpha_0\gamma^{-1}\in G,
\]
and let $T_a$ be the corresponding nilrotation on $G/\Gamma$. Under the
homeomorphism~\eqref{eq:N-quotient-G-quotient}, $T_a$ is conjugate to the
affine transformation
\begin{equation}\label{eq:F-counterexample}
F(g\Lambda):=\alpha_0B(g)\Lambda.
\end{equation}
Indeed,
$a g\Gamma=\alpha_0B(g)\Gamma$. Moreover,
$\langle G^0,a\rangle=G$, because
$\gamma^{-1}=\alpha_0^{-1}a$.

\begin{proposition}\label{prop:total-minimal}
The nilrotation $T_a$, equivalently the affine transformation $F$, is
totally minimal.
\end{proposition}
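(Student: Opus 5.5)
By Proposition~\ref{prop:minimal-total-minimal}, and since $G/\Gamma$ is connected by Proposition~\ref{prop:G-structure}, it suffices to prove that $T_a$ is minimal. Because $T_a$ is conjugate to $F$, we may work with $F$ on $N/\Lambda$. For nilsystems, minimality is equivalent to ergodicity, and by the standard Leon Green / Parry criterion it is enough to check ergodicity of the induced rotation on the horizontal torus $G/G_2\Gamma$. So I would reduce to showing that the projection of $a$ to $G/G_2\Gamma$ generates a dense subgroup. Because $G$ is disconnected, I will apply the criterion in its affine form: Parry's theorem for unipotent affine transformations of nilmanifolds says $F$ is ergodic if and only if the induced affine map on the maximal torus factor $N/[N,N]\Lambda$ is ergodic.

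The horizontal torus of $N/\Lambda$ is $\mathbb T^4$ with coordinates $(x,y,u,w)$, where $z$ is killed by $[N,N]=\exp(\mathbb RZ)$. The induced map there is
\[
(x,y,u,w)\mapsto(x+\xi,\;y+\xi^2,\;u+w,\;w+\xi^3).
\]
This is a skew product: a rotation on $(x,y,w)$ by $(\xi,\xi^2,\xi^3)$ together with the cocycle $u\mapsto u+w$.

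The rotation by $(\xi,\xi^2,\xi^3)$ is ergodic because $1,\xi,\xi^2,\xi^3$ are linearly independent over $\mathbb Q$, which holds since $\xi$ is transcendental. For the skew extension, I would use the standard criterion for unipotent affine maps of tori (Hahn). Ergodicity holds precisely when no nontrivial character $\chi$ with $\chi\circ A=\chi$ satisfies $\chi(\text{translation})\in\mathbb Q$. The $A$-invariant characters are those that vanish on the image of $A-\mathrm{id}$, namely characters $(k_1,k_2,0,k_4)$ with no $u$-component. Their values on the translation are $k_1\xi+k_2\xi^2+k_4\xi^3$, which is irrational unless all $k_i=0$. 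So the torus map is ergodic, and hence $F$ is ergodic. Since Haar measure is then the unique invariant measure, $F$ is minimal.

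For total minimality, the same computation applies to $F^n$. The translation part on the torus becomes $(n\xi,n\xi^2,\ast,n\xi^3)$, and the linear part is still unipotent with the same invariant characters, because $\ker(A^n-\mathrm{id})^{\top}=\ker(A-\mathrm{id})^{\top}$ for unipotent $A$. The value $n(k_1\xi+k_2\xi^2+k_4\xi^3)$ is still irrational. Alternatively, one can simply invoke Proposition~\ref{prop:minimal-total-minimal} once minimality is known.

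The main obstacle is justifying the reduction of minimality of the affine map $F$, equivalently of the nilrotation on a disconnected group, to the horizontal torus. I would cite Parry's criterion for affine transformations, or alternatively realize $F$ as a nilrotation on the suspension of Lemma~\ref{lem:nilpotent-suspension-package} and apply the Leon Green criterion there. In the latter approach, the suspension's horizontal torus involves the extra time coordinate. There, the functional $\eta$ from Lemma~\ref{lem:horizontal-obstruction} matches exactly the invariant horizontal characters computed above, and a time-direction character only detects the circle factor $u+\mathbb Z$, which is handled by the fiber identification.
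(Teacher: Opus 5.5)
Your proof is correct, but it takes a different route from the paper's. The paper never passes through ergodicity. For each $q\geq1$ it applies Leibman's density criterion for polynomial orbits on the connected nilmanifold $G/\Gamma$ (Theorem~C of \cite{Leibman2005Zd}, which allows the disconnected presenting group $G$) to $n\mapsto a^{qn}$. It then shows that \emph{every} orbit of the map induced by $F^q$ on the full torus $N/(N_2\Lambda)\cong\mathbb T^4$ is equidistributed, by writing the orbit out explicitly and testing all nontrivial characters. The characters with $m_u\neq0$ need the irrational quadratic coefficient $m_uq^2\xi^3/2$.

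You do three things differently. First, you reduce total minimality to minimality via Proposition~\ref{prop:minimal-total-minimal}; the paper could have done the same. Second, you replace density by ergodicity using Parry's theorem for affine maps of $N/\Lambda$ with $N$ connected and simply connected. Third, you apply the Hahn criterion, under which only $A$-invariant characters (those with $k_3=0$) matter, because Fourier coefficients of an invariant function vanish along infinite orbits of the dual action. So your computation is purely linear. The price is that you must upgrade ergodicity to minimality, which you correctly obtain from the conjugacy with the nilrotation $T_a$ (it carries $m_{N/\Lambda}$ to $m_{G/\Gamma}$) and the fact that ergodic nilrotations are minimal. You also need Parry's affine reduction as an extra external input, whereas the paper's single citation delivers topological density directly. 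Your invariant characters are exactly the characters of $G/G_2\Gamma\cong\mathbb T^3$, so your check amounts to the Leon Green condition on the projection $(\xi,\xi^2,\xi^3)$ of $a$.

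One small correction to your suspension alternative. $T_\alpha$ is never ergodic on $\widehat N/\widehat\Lambda$, because it preserves the time coordinate modulo $1$. Green's criterion must therefore be applied to the nilflow $s\mapsto\alpha^s$, whose time-one return map to the zero fiber is $F$. The characters $\eta+c\,dt$ then give exactly the condition $\eta(v)\notin\mathbb Z$ for nonzero integral, horizontal, $A_*$-invariant $\eta$.
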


\begin{proof}
Fix $q\geq1$. Since $G/\Gamma$ is connected, $G^0=N$,
$G^0\cap\Gamma=\Lambda$, and $[G^0,G^0]=N_2$, the maximal factor
torus in \cite[Theorem~C]{Leibman2005Zd} is
\[
G^0/\bigl((G^0\cap\Gamma)[G^0,G^0]\bigr)
\cong
N/(N_2\Lambda)
\cong
\mathbb T^4.
\]
Applying the criterion to $n\mapsto a^{qn}$, it suffices to prove that
every orbit of the transformation induced by $F^q$ on this torus is
dense.
The transformation induced by $F$ on this torus is
\[
(x,y,u,w)
\longmapsto
(x+\xi,y+\xi^2,u+w,w+\xi^3).
\]
Fix an initial point $(x_0,y_0,u_0,w_0)$.
After $n$ iterations of $F^q$, its coordinates are
\[
\begin{aligned}
x_n&=x_0+qn\xi,
&
 y_n&=y_0+qn\xi^2,\\
w_n&=w_0+qn\xi^3,
&
 u_n&=u_0+qn w_0+\binom{qn}{2}\xi^3.
\end{aligned}
\]
Let $(m_x,m_y,m_u,m_w)\in\mathbb Z^4\setminus\{0\}$ define a nontrivial
character. If $m_u\neq0$, the associated phase polynomial has an irrational
quadratic coefficient $m_uq^2\xi^3/2$. If $m_u=0$, the coefficient of $n$
is
\[
q(m_x\xi+m_y\xi^2+m_w\xi^3),
\]
which is irrational because $\xi$ is transcendental and
$(m_x,m_y,m_w)\neq(0,0,0)$. 
By Proposition~\ref{prop:weyl-polynomial-criterion}, every orbit of
$F^q$ on the factor torus is dense. Leibman's criterion therefore
implies that $F^q$ is minimal. Since $q\geq1$ was arbitrary, $F$ is
totally minimal.
\end{proof}

\subsection{Leibman's subgroup and its commutator subgroup}
Let
\[
\boldsymbol\sigma
:=
(\sigma_1,\sigma_2,\sigma_3,\sigma_4)
:=
(1,2,3,4),
\qquad
p_i(n):=\sigma_i n.
\]
For $j\geq0$, write
$\boldsymbol\sigma^j:=(\sigma_1^j,\ldots,\sigma_4^j)$, and put
$\mathbf 1:=\boldsymbol\sigma^0$.
Let
\[
\delta_N:N\longrightarrow N^4,
\qquad
\delta_N(x):=(x,x,x,x),
\]
so that $\Delta_N^4=\delta_N(N)$.
With $\gamma=(e,1)$ fixed, define, for $\alpha\in N$ and
$r\in\mathbb Z$,
\[
g_\alpha(r):=(\alpha\gamma^{-1})^r\gamma^r.
\]
Its image under the quotient homomorphism
$G\to G/N\cong\mathbb Z$ is zero, so $g_\alpha(r)\in N$.
The subgroup relevant to \eqref{eq:intro-lower-central-identity} is
\begin{equation}\label{eq:counterexample-Hhat}
\widehat H
:=
\left\langle
\Delta_N^4,
\bigl(g_\alpha(\sigma_1n),\ldots,g_\alpha(\sigma_4n)\bigr):
\alpha\in N,\ n\in\mathbb Z
\right\rangle
\leq N^4.
\end{equation}

For the fixed decomposition $a=\alpha_0\gamma^{-1}$, this is precisely
the subgroup in \eqref{eq:intro-Hhat} appearing in
\cite[Conjecture~11.4]{Leibman2010}, with ambient group $N=G^0$.
No topological closure is taken. Thus $\widehat H_2$ is the internal
commutator subgroup of $\widehat H$, and the identity at level $2$ is
\[
\widehat H_2=\widehat H\cap N_2^4.
\]

The relation \eqref{eq:gamma-relation} gives
\begin{equation}\label{eq:g-alpha-recurrence}
g_\alpha(r+1)=g_\alpha(r)B^r(\alpha),
\qquad
g_\alpha(0)=e,
\end{equation}
for every $r\in\mathbb Z$.

Let $\mathrm x,\mathrm y:N\to\mathbb R$ denote the first two coordinate
homomorphisms. For
$\mathbf g=(g_1,\ldots,g_4)\in N^4$, write
\[
\mathbf x(\mathbf g)
:=
(\mathrm x(g_1),\ldots,\mathrm x(g_4)),
\qquad
\mathbf y(\mathbf g)
:=
(\mathrm y(g_1),\ldots,\mathrm y(g_4)).
\]
Define 
\[
\iota_Z:(\mathbb R^4,+)\longrightarrow N_2^4,
\qquad
\iota_Z(z_1,\ldots,z_4)
:=
\bigl(\exp(z_1Z),\ldots,\exp(z_4Z)\bigr).
\]
For a subspace $E\leq\mathbb R^4$, let
$\iota_Z(E):=\{\iota_Z(z):z\in E\}$.

\begin{lemma}\label{lem:H2-exact}
One has
\[
\widehat H_2
=
\iota_Z\bigl(
\operatorname{span}_{\mathbb R}
\{\mathbf 1,\boldsymbol\sigma,\boldsymbol\sigma^2\}
\bigr).
\]
\end{lemma}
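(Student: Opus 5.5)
The plan is a direct computation inside $N^4$. It rests on two facts: $N^4$ is two-step nilpotent, and its commutator subgroup $N_2^4=\iota_Z(\mathbb R^4)$ is central. Write $\mathrm x,\mathrm y$ for the first two coordinate homomorphisms, as above. Using \eqref{eq:N-law}, one checks that for $g,h\in N$ the commutator $[g,h]$ equals $\exp\bigl((\mathrm x(g)\mathrm y(h)-\mathrm x(h)\mathrm y(g))Z\bigr)$. So in $N^4$,
\[
[\mathbf g,\mathbf h]=\iota_Z\bigl(\mathbf x(\mathbf g)\mathbf y(\mathbf h)-\mathbf x(\mathbf h)\mathbf y(\mathbf g)\bigr),
\]
with coordinatewise products. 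This expression is bilinear in the horizontal data $(\mathbf x,\mathbf y)$. Because commutators in a two-step group are central and bimultiplicative, $[\mathbf g\mathbf g',\mathbf h]=[\mathbf g,\mathbf h][\mathbf g',\mathbf h]$ and $[\mathbf g^{-1},\mathbf h]=[\mathbf g,\mathbf h]^{-1}$. Hence $\widehat H_2$ is the subgroup generated by the commutators of pairs of the listed generators of $\widehat H$ in \eqref{eq:counterexample-Hhat}.

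The first step is to compute the horizontal data of the generators. For a diagonal generator $\delta_N(\alpha)$, one has $\mathbf x=\mathrm x(\alpha)\mathbf 1$ and $\mathbf y=\mathrm y(\alpha)\mathbf 1$. For $g_\alpha(r)$, the recurrence \eqref{eq:g-alpha-recurrence} gives $g_\alpha(r)=\alpha B(\alpha)\cdots B^{r-1}(\alpha)$ for $r\ge0$, with the analogous formula for $r<0$. Since $B$ fixes the $x$ and $y$ coordinates and these are additive homomorphisms, $\mathrm x(g_\alpha(r))=r\,\mathrm x(\alpha)$ and $\mathrm y(g_\alpha(r))=r\,\mathrm y(\alpha)$. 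Thus the generator $(g_\alpha(\sigma_in))_i$ has $\mathbf x=n\,\mathrm x(\alpha)\boldsymbol\sigma$ and $\mathbf y=n\,\mathrm y(\alpha)\boldsymbol\sigma$.

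It follows that every element of $\widehat H$ has $\mathbf x,\mathbf y\in\operatorname{span}_{\mathbb R}\{\mathbf 1,\boldsymbol\sigma\}$. By the commutator formula, every commutator then lies in $\iota_Z(\operatorname{span}\{\mathbf 1,\boldsymbol\sigma,\boldsymbol\sigma^2\})$, which gives the inclusion $\subseteq$.

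For the reverse inclusion, I would exhibit arbitrary real multiples of each of the three vectors.
\begin{enumerate}
\item Two diagonal generators with data $(s,0)$ and $(0,t)$ give $\iota_Z(st\,\mathbf 1)$.
\item A diagonal generator $\delta_N(\exp(sX))$ together with the generator for $\alpha=\exp(tY)$, $n=1$, gives $\iota_Z(st\,\boldsymbol\sigma)$.
\item The generators for $\alpha=\exp(sX)$ and $\alpha=\exp(tY)$, both with $n=1$, give $\iota_Z(st\,\boldsymbol\sigma^2)$.
\end{enumerate}
Since $s,t$ are arbitrary reals and $\iota_Z$ is a homomorphism, the subgroup generated by these commutators is $\iota_Z$ of the full real span. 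This gives equality.

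The step I expect to need the most care is the reduction to commutators of generators. Because no closure is taken, the argument must show that every element of $\widehat H_2$ is a finite product of such commutators. This should follow from bimultiplicativity: each element of $\widehat H$ is a word in the generators, and expanding $[\mathbf g,\mathbf h]$ for words $\mathbf g,\mathbf h$ yields a product of generator commutators. Sign and order conventions, and the negative-$r$ case of $g_\alpha$, affect only signs, which do not matter because the target sets are real subspaces.
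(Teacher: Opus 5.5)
Your proof is correct and follows the paper's argument essentially step for step. Both show that every element of $\widehat H$ has horizontal data in $\operatorname{span}_{\mathbb R}\{\mathbf 1,\boldsymbol\sigma\}$, use the bilinear commutator formula to get $\subseteq$, and obtain $\supseteq$ from the same three commutator pairs (diagonal/diagonal, diagonal/slice, slice/slice in the $X$ and $Y$ directions).

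The reduction to commutators of generators that you flag as delicate is not needed. Your $\subseteq$ argument already applies the commutator formula to arbitrary pairs of elements of $\widehat H$, which is exactly what the paper does.
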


\begin{proof}
Put
\[
V:=\operatorname{span}_{\mathbb R}
\{\mathbf 1,\boldsymbol\sigma\}.
\]
For every $\mathbf g\in\Delta_N^4$, both
$\mathbf x(\mathbf g)$ and $\mathbf y(\mathbf g)$ lie in
$\mathbb R\mathbf 1$. For
\[
\mathbf g_{\alpha,n}
:=
\bigl(g_\alpha(\sigma_i n)\bigr)_{i=1}^4,
\]
the fact that $B$ fixes the $x$ and $y$ coordinates, together with
\eqref{eq:g-alpha-recurrence}, gives
\[
\mathbf x(\mathbf g_{\alpha,n})
=
n\,\mathrm x(\alpha)\boldsymbol\sigma,
\qquad
\mathbf y(\mathbf g_{\alpha,n})
=
n\,\mathrm y(\alpha)\boldsymbol\sigma.
\]
Since $\mathbf x$ and $\mathbf y$ are homomorphisms, it follows that
\[
\mathbf x(\mathbf g),\mathbf y(\mathbf g)\in V
\qquad
(\mathbf g\in\widehat H).
\]

For $\mathbf g,\mathbf h\in\widehat H$, the commutator formula in $N^4$
gives
\[
[\mathbf g,\mathbf h]
=
\iota_Z\bigl(
\mathbf x(\mathbf g)\odot\mathbf y(\mathbf h)
-
\mathbf x(\mathbf h)\odot\mathbf y(\mathbf g)
\bigr),
\]
where $\odot$ denotes coordinatewise multiplication. Since
\[
\operatorname{span}_{\mathbb R}
\{u\odot v:u,v\in V\}
=
\operatorname{span}_{\mathbb R}
\{\mathbf 1,\boldsymbol\sigma,\boldsymbol\sigma^2\},
\]
we obtain
\[
\widehat H_2
\subseteq
\iota_Z\bigl(
\operatorname{span}_{\mathbb R}
\{\mathbf 1,\boldsymbol\sigma,\boldsymbol\sigma^2\}
\bigr).
\]

For the reverse inclusion, define, for $a,b\in\mathbb R$,
\[
\begin{aligned}
D_X(a)&:=\delta_N(\exp(aX)),
&
D_Y(b)&:=\delta_N(\exp(bY)),\\
C_X(a)&:=\bigl(g_{\exp(aX)}(\sigma_i)\bigr)_{i=1}^4,
&
C_Y(b)&:=\bigl(g_{\exp(bY)}(\sigma_i)\bigr)_{i=1}^4.
\end{aligned}
\]
These elements belong to $\widehat H$. Since $B_*X=X$ and $B_*Y=Y$,
\eqref{eq:g-alpha-recurrence} gives
\[
C_X(a)=\bigl(\exp(a\sigma_iX)\bigr)_{i=1}^4,
\qquad
C_Y(b)=\bigl(\exp(b\sigma_iY)\bigr)_{i=1}^4.
\]
Taking coordinatewise commutators yields
\[
\begin{aligned}
[D_X(a),D_Y(b)]
&=\iota_Z(ab\mathbf 1),\\
[D_X(a),C_Y(b)]
&=\iota_Z(ab\boldsymbol\sigma),\\
[C_X(a),C_Y(b)]
&=\iota_Z(ab\boldsymbol\sigma^2).
\end{aligned}
\]
Taking $b=1$ and varying $a$ gives the images of all three real lines.
Since these subgroups are central, their products give the image of
their linear span, proving the reverse inclusion.
\end{proof}

\subsection{The additional cubic direction}

For $t\in\mathbb R$, put $\alpha_t:=\exp(tW)$. Since $Z,U,W$ commute and
\[
B_*^jW=W+jU+\binom{j}{2}Z,
\]
we have
\[
B^j(\alpha_t)
=
\exp\bigl(t(W+jU+\binom{j}{2}Z)\bigr)
\qquad (j\in\mathbb Z).
\]

For $r\in\mathbb Z$ and $j\geq0$, we use the polynomial convention
\[
\binom{r}{j}
:=
\frac{r(r-1)\cdots(r-j+1)}{j!}.
\]

For $r\in\mathbb Z$, put
\[
R_t(r):=
\exp\bigl(t(rW+\binom{r}{2}U+\binom{r}{3}Z)\bigr).
\]
The identities
\[
\binom{r+1}{2}-\binom{r}{2}=r,
\qquad
\binom{r+1}{3}-\binom{r}{3}=\binom{r}{2}
\]
give
\[
R_t(0)=e,
\qquad
R_t(r+1)=R_t(r)B^r(\alpha_t)
\qquad (r\in\mathbb Z).
\]
Since this two-sided recurrence is uniquely determined by its value at
$r=0$, comparison with \eqref{eq:g-alpha-recurrence} gives
\begin{equation}\label{eq:g-alpha-W}
g_{\alpha_t}(r)
=
R_t(r)
=
\exp\bigl(t(rW+\binom{r}{2}U+\binom{r}{3}Z)\bigr)
\qquad (r\in\mathbb Z).
\end{equation}

For $t\in\mathbb R$ and $n\in\mathbb Z$, define
\[
\mathbf c_t(n)
:=
\bigl(g_{\alpha_t}(\sigma_i n)\bigr)_{i=1}^4
\in\widehat H.
\]

\begin{lemma}\label{lem:cubic-witness}
For every $t\in\mathbb R$,
\begin{equation}\label{eq:cubic-witness}
\mathbf c_{3t}(1)\mathbf c_{-3t}(2)\mathbf c_t(3)
=
\iota_Z(t\boldsymbol\sigma^3).
\end{equation}
Consequently,
$\iota_Z(\mathbb R\boldsymbol\sigma^3)
\subseteq\widehat H\cap N_2^4$.
\end{lemma}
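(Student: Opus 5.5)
The identity is a direct computation inside the abelian subgroup $\exp(\mathbb R W+\mathbb R U+\mathbb R Z)^4$ of $N^4$. Since $W,U,Z$ commute, this subgroup is a vector group. By \eqref{eq:g-alpha-W}, each $\mathbf c_t(n)$ lies in it and has logarithm
\[
t\bigl(n\boldsymbol\sigma\otimes W+\textstyle\binom{\sigma_in}{2}U+\binom{\sigma_in}{3}Z\bigr)_{i=1}^4 .
\]
The product on the left of \eqref{eq:cubic-witness} is therefore the exponential of a sum, computed coordinatewise. For coordinate $i$ we must check three coefficient identities. The $W$ coefficient is $3t\sigma_i-6t\sigma_i+3t\sigma_i=0$. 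The $U$ coefficient is $3t\binom{\sigma_i}{2}-3t\binom{2\sigma_i}{2}+t\binom{3\sigma_i}{2}$. The $Z$ coefficient is $3t\binom{\sigma_i}{3}-3t\binom{2\sigma_i}{3}+t\binom{3\sigma_i}{3}$.

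To verify these, view $n\mapsto P(\sigma n)$ with the weights $(3,-3,1)$ at $n=1,2,3$. This is the third finite difference $\Delta^3$ at $0$ of the polynomial $n\mapsto P(\sigma n)$, because the weight at $n=0$ is $-1$ and $P(0)=0$ for $P=\binom{\cdot}{j}$, $j\ge1$. Since $\binom{\sigma n}{2}$ has degree $2$ in $n$, its third difference vanishes. The polynomial $\binom{\sigma n}{3}$ has leading term $\sigma^3n^3/6$, so its third difference is $\sigma^3$. Hence the $U$ coefficient is $0$ and the $Z$ coefficient is $t\sigma_i^3$. This gives $\iota_Z(t\boldsymbol\sigma^3)$. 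The only point needing care is the bookkeeping of signs and the vanishing at $n=0$, so I will write out $\Delta^3 f(0)=f(3)-3f(2)+3f(1)-f(0)$ explicitly.

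For the consequence, each $\mathbf c_s(n)$ belongs to $\widehat H$ by definition, taking $\alpha=\alpha_s\in N$. The product therefore lies in $\widehat H$. It also lies in $N_2^4=\exp(\mathbb R Z)^4$. Letting $t$ range over $\mathbb R$ gives $\iota_Z(\mathbb R\boldsymbol\sigma^3)\subseteq\widehat H\cap N_2^4$.
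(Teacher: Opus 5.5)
Your proposal is correct and follows essentially the paper's own argument. Like the paper, you add logarithms coordinatewise in the vector group $\exp(\mathbb R W+\mathbb R U+\mathbb R Z)^4$ and check that the $W$ and $U$ coefficients vanish while the $Z$ coefficient is $t\sigma_i^3$. The one addition is that you justify $3\binom{r}{3}-3\binom{2r}{3}+\binom{3r}{3}=r^3$ (and the vanishing of the $U$ coefficient) via the third finite difference at $0$. The paper simply asserts this identity.
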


\begin{proof}
All three factors in \eqref{eq:cubic-witness} lie in the abelian subgroup
$\exp(\mathbb R Z+\mathbb R U+\mathbb R W)^4$, so their logarithms add.
Fix $1\leq i\leq4$ and put $r:=\sigma_i$.
By \eqref{eq:g-alpha-W}, the logarithms of the three factors in
coordinate $i$ are
\[
\begin{aligned}
&3t\bigl(rW+\binom{r}{2}U+\binom{r}{3}Z\bigr),\\
&-3t\bigl(2rW+\binom{2r}{2}U+\binom{2r}{3}Z\bigr),\\
&t\bigl(3rW+\binom{3r}{2}U+\binom{3r}{3}Z\bigr).
\end{aligned}
\]
In this sum, the coefficients of $W$ and $U$ vanish, while
\[
3\binom r3-3\binom{2r}{3}+\binom{3r}{3}=r^3.
\]
Thus the coordinate indexed by $i$ is
$\exp(t\sigma_i^3Z)$, proving \eqref{eq:cubic-witness}.
\end{proof}
\begin{proposition}\label{prop:H-final}
For the group $\widehat H$ in \eqref{eq:counterexample-Hhat},
\[
\widehat H\cap N_2^4=N_2^4.
\]
Consequently,
\[
\widehat H_2
\subsetneq
\widehat H\cap N_2^4,
\qquad
(\widehat H\cap N_2^4)/\widehat H_2
\cong
(\mathbb R,+).
\]
\end{proposition}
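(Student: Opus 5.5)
Since $N_2^4=\iota_Z(\mathbb R^4)$, the equality $\widehat H\cap N_2^4=N_2^4$ is equivalent to the inclusion $\iota_Z(\mathbb R^4)\subseteq\widehat H$. We prove this inclusion by combining the two preceding lemmas. By Lemma~\ref{lem:H2-exact}, $\widehat H_2=\iota_Z(\operatorname{span}_{\mathbb R}\{\mathbf 1,\boldsymbol\sigma,\boldsymbol\sigma^2\})\subseteq\widehat H$. By Lemma~\ref{lem:cubic-witness}, $\iota_Z(\mathbb R\boldsymbol\sigma^3)\subseteq\widehat H$. Since $\iota_Z$ is a homomorphism from $(\mathbb R^4,+)$ and $\widehat H$ is a group, $\widehat H$ contains $\iota_Z(E)$, where
\[
E:=\operatorname{span}_{\mathbb R}\{\mathbf 1,\boldsymbol\sigma,\boldsymbol\sigma^2,\boldsymbol\sigma^3\}.
\]
The four vectors $\boldsymbol\sigma^j$, $0\leq j\leq3$, form the rows of the Vandermonde matrix on the distinct nodes $1,2,3,4$. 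This matrix is invertible, so $E=\mathbb R^4$. Hence $\widehat H\cap N_2^4=N_2^4$.

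For the strict inclusion and the quotient, we use the identification $\iota_Z\colon\mathbb R^4\to N_2^4$, which is an isomorphism of topological groups. Under it,
\[
(\widehat H\cap N_2^4)/\widehat H_2\cong\mathbb R^4/\operatorname{span}_{\mathbb R}\{\mathbf 1,\boldsymbol\sigma,\boldsymbol\sigma^2\}.
\]
This quotient of a real vector space by a three-dimensional (Vandermonde) subspace is a one-dimensional real vector space, hence isomorphic to $(\mathbb R,+)$ as a topological group. In particular it is nonzero, which gives $\widehat H_2\subsetneq\widehat H\cap N_2^4$.

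No step poses a real obstacle; the substantive computations are already contained in Lemmas~\ref{lem:H2-exact} and \ref{lem:cubic-witness}. The only points needing care are that $\widehat H$ is an abstract subgroup, so we must use that the elements $\iota_Z(z)$ with $z$ in the span are genuinely finite products of generators (which holds because $\iota_Z$ is additive and both lemmas give full real lines), and that the topology on the quotient is the one induced from $N_2^4\cong\mathbb R^4$.
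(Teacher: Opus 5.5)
Your proposal is correct and follows essentially the same route as the paper's proof: combine Lemma~\ref{lem:H2-exact} and Lemma~\ref{lem:cubic-witness}, use the nonzero Vandermonde determinant to get $\iota_Z(\mathbb R^4)\subseteq\widehat H$, and then identify the quotient with $\mathbb R^4/\operatorname{span}_{\mathbb R}\{\mathbf 1,\boldsymbol\sigma,\boldsymbol\sigma^2\}\cong(\mathbb R,+)$ via the topological isomorphism $\iota_Z$. Your remark about $\widehat H$ being an abstract subgroup is handled correctly, since both lemmas supply entire real lines and $\iota_Z$ is additive.
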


\begin{proof}
The Vandermonde
determinant
\[
\det(\sigma_i^{j-1})_{1\leq i,j\leq4}
=
\prod_{1\leq i<j\leq4}(\sigma_j-\sigma_i)
=12
\]
is nonzero. Hence
$\mathbf 1,\boldsymbol\sigma,\boldsymbol\sigma^2,
\boldsymbol\sigma^3$ form a basis of $\mathbb R^4$.
Lemma~\ref{lem:H2-exact} gives
\[
\iota_Z\bigl(
\mathbb R\mathbf 1+
\mathbb R\boldsymbol\sigma+
\mathbb R\boldsymbol\sigma^2
\bigr)
\subseteq\widehat H,
\]
while Lemma~\ref{lem:cubic-witness} gives
\[
\iota_Z(\mathbb R\boldsymbol\sigma^3)\subseteq\widehat H.
\]
Therefore
$N_2^4=\iota_Z(\mathbb R^4)\subseteq\widehat H$, and hence
$\widehat H\cap N_2^4=N_2^4$.

Let
\[
E:=
\operatorname{span}_{\mathbb R}
\{\mathbf 1,\boldsymbol\sigma,\boldsymbol\sigma^2\}.
\]
Since $\iota_Z$ is a topological group isomorphism,
$\widehat H_2=\iota_Z(E)$,
$\widehat H\cap N_2^4=\iota_Z(\mathbb R^4)$, and $E$ is a closed
subspace of codimension one, we have
\[
(\widehat H\cap N_2^4)/\widehat H_2
\cong
\mathbb R^4/E
\cong
(\mathbb R,+)
\]
as topological groups.
\end{proof}

\begin{proof}[Proof of Theorem~\ref{thm:counterexample}]
By the construction of $N$ and
Proposition~\ref{prop:G-structure}, $G$ is nilpotent of step three,
$\Gamma$ is a lattice,
$G^0=N\cong H_3(\mathbb R)\times\mathbb R^2$, and $G/\Gamma$ is
connected.
Moreover,
$\gamma^{-1}=\alpha_0^{-1}a$ implies
$G=\langle G^0,a\rangle$, while
Proposition~\ref{prop:total-minimal} gives total minimality.
For $p_i(n)=i\,n$, the subgroup in
\eqref{eq:counterexample-Hhat} is exactly that in
\eqref{eq:intro-Hhat}. Proposition~\ref{prop:H-final} now gives the
required strict containment and quotient isomorphism.
\end{proof}

\begin{remark}
The parameter $\xi$ is used only to prove total minimality; the subgroup
calculation depends only on $B$ and $\boldsymbol\sigma$.
\end{remark}

\appendix

\section{A two-step model for the central induction}
\label{sec:two-step-model}

This appendix illustrates the central induction in the first
noncommutative case for the family
\[
p_1(n)=n,\qquad p_2(n)=2n,\qquad p_3(n)=n^2.
\]
It is not used in the proof of Theorem~\ref{thm:main}.

Throughout this section, let $N$ be a connected, simply connected nilpotent
Lie group of step at most two, let $\Lambda\leq N$ be a lattice, and let
$A\in\operatorname{Aut}(N)$ be a unipotent automorphism satisfying
$A(\Lambda)=\Lambda$. Fix $b\in N$ and assume that the affine transformation
\begin{equation}\label{eq:model-affine-map}
  F:N/\Lambda\longrightarrow N/\Lambda,
  \qquad F(x\Lambda)=bA(x)\Lambda,
\end{equation}
is minimal. We do not assume that $A=\operatorname{id}_N$.

For $n\in\mathbb Z$, put
\[
\Sigma_n:=
\left\{
\bigl(F^nx,F^{2n}x,F^{n^2}x\bigr):
x\in N/\Lambda
\right\}.
\]
Then
\begin{equation}\label{eq:model-orbit-closure}
C:=\overline{\bigcup_{n\in\mathbb Z}\Sigma_n}.
\end{equation}

\begin{proposition}\label{prop:two-step-model}
Under the preceding hypotheses, $C$ is connected.
\end{proposition}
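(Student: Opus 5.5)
The proposition is a special case of Proposition~\ref{prop:strong-inclusion}, with $m=1$, $d=3$ and $\mathbf p=(n,2n,n^2)$. Since this is an illustrative appendix, the plan is to run the central induction explicitly in the two-step setting, following Sections~\ref{sec:affine-suspension}--\ref{sec:induction}.

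First, embed $F$ in the nilpotent suspension of Lemma~\ref{lem:nilpotent-suspension-package}. Form the exact interpolation
\[
g(t)=\bigl(\beta(t),\beta(2t),\beta(t^2)\bigr)\in N^3,
\]
with $\beta$ as in \eqref{eq:def-beta}. Theorem~\ref{thm:Leibman-orbit-closure} then gives the finite decomposition \eqref{eq:FU}, with $C_0=H_0\Lambda^3/\Lambda^3$ and $\Delta_N^3\subseteq H_0$.

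Next, treat the abelian quotient. Put $Z:=[N,N]$, which is central, rational and $A$-invariant, and pass to $\overline N=N/Z$. There the induced system is a minimal unipotent affine map of a torus. The same argument in step one (with $M=\overline N^3$) yields $q_3(g(\mathbb R))\subseteq\overline H_0$. The key point is the character argument of Lemma~\ref{lem:character-derivative-elimination}: a character of $\overline N^3/\overline H_0\overline\Lambda^3$ gives a real polynomial phase $Q_\chi(t)$ whose values at integers take finitely many values mod $1$. Hence $Q_\chi'(k)\in\mathbb Q$ for every $k\in\mathbb Z$ by Lemma~\ref{lem:finite-phases}. On the other hand, $Q_\chi'(k)=\eta(v)$ for a rational, horizontal, $A_*$-invariant functional $\eta$. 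By Lemma~\ref{lem:horizontal-obstruction} this forces $\eta=0$.

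Then, by Lemma~\ref{lem:central-quotient}, $g(\mathbb R)\subseteq M:=H_0Z^3$. Here I will verify concretely the three facts used in the top layer.
\begin{enumerate}
\item By Lemma~\ref{lem:tangent-invariance}, $E_t(\mathfrak n)=\{(A_*^tY,A_*^{2t}Y,A_*^{t^2}Y)\}\subseteq\mathfrak h_0$. This comes from the slices $\Sigma_n$ being connected and lying in single components, and it extends from integer $t$ to real $t$ by polynomial interpolation.
\item By Lemma~\ref{lem:polynomial-tangent}, $\Psi_t(Y)=(A_*^tY,2A_*^{2t}Y,2tA_*^{t^2}Y)$ lies in $\mathfrak m$. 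The proof shows that $S_k=\Psi_k^{-1}(\mathfrak m)$ is a rational, $A_*$-invariant ideal containing $v$. If it were proper, minimality would be contradicted, since $b$ would become trivial in the quotient.
\item Characters of the residual torus $K=M/H_0\Lambda_M$ give polynomials $Q_\chi$ with $Q_\chi'(k)=\ell_\chi(\Psi_k(v))$. Each functional $\ell_\chi\circ\Psi_k$ is rational, horizontal (because $[\Psi_k(X),E_k(Y)]=\Psi_k([X,Y])$ and $\mathfrak h_0$ is an ideal of $\mathfrak m$), and $A_*$-invariant (by differentiating $E_t$). The same rationality-versus-irrationality argument forces it to vanish. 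Hence $\Psi_t(v)\in\mathfrak h_0$ for all real $t$, so $g(\mathbb R)\subseteq H_0$.
\end{enumerate}
Finally, each $\Sigma_n$ is connected and contains $g(n)\Lambda^3\in C_0$, so $\Sigma_n\subseteq C_0$ for every $n$, and therefore $C=C_0$.

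The main obstacle is the finiteness of phases. I need $Q_\chi(n')-Q_\chi(n)\in\mathbb Z$ whenever $g(n)\Lambda^3$ and $g(n')\Lambda^3$ lie in the same component. This requires the lattice element relating them to lie in $\Lambda_M$, which uses $g(n),g(n')\in M$ from the quotient step. The nonlinear entry $n^2$ is harmless here, since $\Psi_t$ remains polynomial in $t$. It only introduces the factor $2t$ in the third coordinate, and this is handled uniformly by Lemma~\ref{lem:polynomial-identity}.
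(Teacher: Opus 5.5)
Your proposal is correct and follows essentially the same route as the paper. The paper also handles the abelian case by the torus-character argument, isolating it as Lemma~\ref{lem:model-abelian}. In the step-two case it passes to $N/[N,N]$ to get $g(\mathbb R)\subseteq M=H_0Z^3$, proves $E_t(\mathfrak n)\subseteq\mathfrak h_0$ and $\Psi_t(\mathfrak n)\subseteq\mathfrak m$ via the ideal $S_k$, and eliminates the residual torus by the rational-versus-irrational phase argument, with finiteness of phases resting on $g(n)\in M$ exactly as you identify.

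The difference is only presentational. You cite the general lemmas of Section~\ref{sec:induction}; as you note, Proposition~\ref{prop:strong-inclusion} with $m=1$, $d=3$ already gives the statement. The appendix instead re-derives each step concretely for the family $(n,2n,n^2)$.
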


We use the suspension notation of Section~\ref{sec:suspension}. Thus
\[
\begin{aligned}
\mathfrak n&=\operatorname{Lie}(N),&
\mathcal D&=\log A_*,\\
\beta(u)&=\alpha^u\tau^{-u},&
v&=\log\alpha-\log\tau\in\mathfrak n.
\end{aligned}
\]
For this family, define
\begin{align}
  g(t)&:=\bigl(\beta(t),\beta(2t),\beta(t^2)\bigr),
  \label{eq:model-g}\\
  E_t(Y)&:=\bigl(A_*^tY,A_*^{2t}Y,A_*^{t^2}Y\bigr),
  \label{eq:model-E}\\
  \Psi_t(Y)&:=\bigl(A_*^tY,2A_*^{2t}Y,
                    2tA_*^{t^2}Y\bigr).
  \label{eq:model-Psi}
\end{align}

For $n\in\mathbb Z$ and $x\in N$, \eqref{eq:F-interpolation} implies
\[
\bigl(
F^n(x\Lambda),F^{2n}(x\Lambda),F^{n^2}(x\Lambda)
\bigr)
=
g(n)\bigl(
A^n(x),A^{2n}(x),A^{n^2}(x)
\bigr)\Lambda^3.
\]
In particular,
\[
g(n)\Lambda^3\in\Sigma_n.
\]
Moreover, for $Y\in\mathfrak n$, the curve in $\Sigma_n$ obtained by
setting $x=\exp(sY)\Lambda$ has the lift
\[
s\longmapsto g(n)\exp\bigl(sE_n(Y)\bigr).
\]

By Lemma~\ref{lem:interpolation},
\begin{equation}\label{eq:model-log-derivative}
  g(t)^{-1}g'(t)=\Psi_t(v).
\end{equation}
For each fixed $Y\in\mathfrak n$, both $t\mapsto E_t(Y)$ and
$t\mapsto\Psi_t(Y)$ are ordinary vector-valued polynomials.

Differentiating $E_t$ gives
\begin{equation}\label{eq:model-E-derivative}
\frac d{dt}E_t(Y)=\Psi_t(\mathcal DY).
\end{equation}

By Theorem~\ref{thm:Leibman-orbit-closure} and the zero-fiber argument in
Section~\ref{sec:components}, there exist a connected rational subgroup
$H_0\leq N^3$ and finitely many $y_0,\ldots,y_{r-1}\in N^3$ such that
\begin{equation}\label{eq:model-finite-components}
  C=\bigsqcup_{\nu=0}^{r-1}H_0y_\nu\Lambda^3/\Lambda^3.
\end{equation}
After reindexing, the component containing the diagonal is
\begin{equation}\label{eq:model-C0}
  C_0=H_0\Lambda^3/\Lambda^3,
  \qquad \Delta_N^3\subseteq H_0.
\end{equation}
Write
\[
  \mathfrak h_0=\operatorname{Lie}(H_0).
\]
It remains to prove that every slice in
\eqref{eq:model-orbit-closure} lies in $C_0$.

\subsection{The abelian base case}

We first record the argument when $N$ is abelian. This will be applied to
the horizontal quotient of the two-step group.

\begin{lemma}\label{lem:model-abelian}
Assume, in addition to the preceding hypotheses, that $N$ is abelian. Then
\begin{equation}\label{eq:model-abelian-strong}
  g(t)\in H_0 \qquad (t\in\mathbb R),
\end{equation}
and consequently $C=C_0$ is connected.
\end{lemma}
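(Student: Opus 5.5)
The argument specializes the induction of Section~\ref{sec:induction} to the abelian case. There the last central quotient is trivial, so the auxiliary group is $M=N^3$, and the whole proof reduces to eliminating the residual torus $N^3/(H_0\Lambda^3)$. When $N$ is abelian, $\mathfrak n$ is a vector space and $\exp$ is a group isomorphism, so every logarithmic derivative is an ordinary derivative.

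\emph{Step 1: tangent data at integer times.} The slice $\Sigma_n$ is connected, so it lies in a single component $H_0y_\nu\Lambda^3/\Lambda^3$. Its tangent directions at $g(n)\Lambda^3$ are $E_n(\mathfrak n)$, and since $N^3$ is abelian the $\operatorname{Ad}$-twist is trivial. Hence $E_n(\mathfrak n)\subseteq\mathfrak h_0$ for all $n\in\mathbb Z$. By Lemma~\ref{lem:polynomial-identity}, this gives $E_t(\mathfrak n)\subseteq\mathfrak h_0$ for all real $t$. Differentiating via \eqref{eq:model-E-derivative} then gives $\Psi_t(\mathcal D\mathfrak n)\subseteq\mathfrak h_0$ for all $t$.

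\emph{Step 2: characters of the residual torus.} Put $K:=N^3/(H_0\Lambda^3)$, a compact connected torus by Lemma~\ref{lem:rational-torus}(ii) with $M=N^3$. For $\chi\in\widehat K$, let $\ell_\chi$ be the differential of its lift from Lemma~\ref{lem:character-lift}; it annihilates $\mathfrak h_0$ and is integral on $\log\Lambda^3$. Set $Q_\chi(t):=\ell_\chi(\log g(t))$ and $\eta_{\chi,t}:=\ell_\chi\circ\Psi_t\in\mathfrak n^*$. Then $Q_\chi$ is a real polynomial with $Q_\chi(0)=0$ and $Q_\chi'(t)=\eta_{\chi,t}(v)$, by \eqref{eq:model-log-derivative}. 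Three properties of $\eta_{\chi,k}$ for $k\in\mathbb Z$ follow.
\begin{itemize}
\item It is rational, because the coefficients $1,2,2k$ and $A_*^{k},A_*^{2k},A_*^{k^2}$ preserve $\mathfrak n_{\mathbb Q}$.
\item It is horizontal, trivially, since $[\mathfrak n,\mathfrak n]=0$.
\item It is $A_*$-invariant, because Step~1 gives $\eta_{\chi,k}\circ\mathcal D=0$.
\end{itemize}

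\emph{Step 3: finite phases and elimination.} If $g(n)\Lambda^3$ and $g(n')\Lambda^3$ lie in the same component, then $g(n')=hg(n)\lambda$ with $h\in H_0$ and $\lambda\in\Lambda^3$, so $Q_\chi(n')-Q_\chi(n)\in\mathbb Z$. Since there are finitely many components, the phases $e^{2\pi iQ_\chi(n)}$ take finitely many values. Lemma~\ref{lem:finite-phases} then gives $\eta_{\chi,k}(v)=Q_\chi'(k)\in\mathbb Q$. If $\eta_{\chi,k}\neq0$, Lemma~\ref{lem:horizontal-obstruction} would force $\eta_{\chi,k}(v)\notin\mathbb Q$, using minimality of $F$. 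Hence $\eta_{\chi,k}=0$ for all integers $k$, and by Lemma~\ref{lem:polynomial-identity} for all real $t$.

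\emph{Step 4: conclusion.} The functionals $\ell_\chi$ span $(\mathfrak n^3/\mathfrak h_0)^*$, so $\Psi_t(v)\in\mathfrak h_0$ for every $t$. Therefore $g'(t)\in\mathfrak h_0$, and integrating from $g(0)=e$ gives $g(t)\in H_0$, which is \eqref{eq:model-abelian-strong}. Each $\Sigma_n$ is connected and contains $g(n)\Lambda^3\in C_0$, so $\Sigma_n\subseteq C_0$. Taking closures yields $C=C_0$.

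The main obstacle is Step~3. The invariant horizontal functional produced by differentiation must be shown to be rational, so that the irrationality coming from minimality can contradict the rationality coming from finiteness of the components. In the abelian case this rationality reduces to the integrality of $2k$ and of the exponents at integer times.
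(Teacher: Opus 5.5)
Your proposal is correct and follows the paper's proof essentially step for step: $E_n(\mathfrak n)\subseteq\mathfrak h_0$ extended to all $t$ by polynomial identity, characters of the torus $N^3/(H_0\Lambda^3)$ giving polynomial phases $Q_\chi$ with $Q_\chi'(t)=\eta_{\chi,t}(v)$, finiteness of the components forcing $\eta_{\chi,k}(v)\in\mathbb Q$, and Lemma~\ref{lem:horizontal-obstruction} eliminating the rational, horizontal, $A_*$-invariant functionals $\eta_{\chi,k}$. Writing $Q_\chi=\ell_\chi\circ\log g$ instead of $\widetilde f_\chi\circ g$ is only a notational simplification available because $N$ is abelian.
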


\begin{proof}
Each $\Sigma_n$ is a connected embedded copy of $N/\Lambda$ and lies in
one of the components in \eqref{eq:model-finite-components}. 

For $Y\in\mathfrak n$, the left and right logarithmic derivatives of the
displayed lifted curve at $s=0$ are both $E_n(Y)$, since $N$ is abelian.
Under the same local lift, either
trivialization identifies the tangent space at $g(n)$ of the lifted
$H_0$-orbit with $\mathfrak h_0$. Hence
\begin{equation}\label{eq:model-abelian-E-in-l-integers}
  E_n(\mathfrak n)\subseteq\mathfrak h_0
  \qquad (n\in\mathbb Z).
\end{equation}
By Lemma~\ref{lem:polynomial-identity},
\begin{equation}\label{eq:model-abelian-E-in-l}
  E_t(\mathfrak n)\subseteq\mathfrak h_0
  \qquad (t\in\mathbb R).
\end{equation}

Because $N^3$ is abelian, the quotient group
\begin{equation}\label{eq:model-abelian-K}
  K:=N^3/(H_0\Lambda^3)
\end{equation}
is a compact connected torus. Let $\chi\in\widehat K$. By
Lemma~\ref{lem:character-lift}, the pullback of $\chi$ along the quotient
homomorphism $N^3\to K$ has a unique continuous lift
\[
  \widetilde f_\chi:N^3\longrightarrow\mathbb R,
  \qquad \widetilde f_\chi(e)=0,
\]
and this lift is a group homomorphism. Let
\[
  \ell_\chi=d_e\widetilde f_\chi\in(\mathfrak n^3)^*.
\]
Then $\ell_\chi$ is rational, annihilates $\mathfrak h_0$, and is integral on
$\log\Lambda^3$. Define
\begin{equation}\label{eq:model-abelian-Q-eta}
  Q_\chi(t):=\widetilde f_\chi(g(t)),
  \qquad
  \eta_{\chi,t}:=\ell_\chi\circ\Psi_t\in\mathfrak n^*.
\end{equation}
By \eqref{eq:model-log-derivative},
\begin{equation}\label{eq:model-abelian-Q-derivative}
  Q_\chi'(t)=\eta_{\chi,t}(v).
\end{equation}
The right-hand side is polynomial in $t$, and $Q_\chi(0)=0$, so $Q_\chi$ is
a real polynomial.

The set
\begin{equation}\label{eq:model-abelian-finite-phases}
  \left\{\exp\bigl(2\pi iQ_\chi(n)\bigr):n\in\mathbb Z\right\}
\end{equation}
is finite. Indeed, the induced quotient map $N^3/\Lambda^3\to K$ is constant
on every component
$H_0y_\nu\Lambda^3/\Lambda^3$, and there are only finitely many components.
By Lemma~\ref{lem:finite-phases},
\begin{equation}\label{eq:model-abelian-rational-derivative}
  \eta_{\chi,k}(v)=Q_\chi'(k)\in\mathbb Q
  \qquad (k\in\mathbb Z).
\end{equation}

Fix $k\in\mathbb Z$. Since $\ell_\chi$ is rational, $A_*$ preserves the Mal'cev rational
structure, and the coefficients $1,2,2k$ in
\eqref{eq:model-Psi} are integers, the functional
$\eta_{\chi,k}$ is rational. It is horizontal because
$N$ is abelian. 
For each fixed $Y\in\mathfrak n$, the curve $t\mapsto E_t(Y)$ takes
values in $\mathfrak h_0$ by
\eqref{eq:model-abelian-E-in-l}. Hence its derivative also lies in
$\mathfrak h_0$. Using \eqref{eq:model-E-derivative}, we obtain
\[
\Psi_t(\mathcal D\mathfrak n)\subseteq\mathfrak h_0.
\]
As $\ell_\chi(\mathfrak h_0)=0$, this gives
\[
\eta_{\chi,t}\circ\mathcal D=0.
\]
Since $A_*=\exp(\mathcal D)$, it follows that
\[
\eta_{\chi,t}\circ A_*=\eta_{\chi,t}.
\]
If $\eta_{\chi,k}$ were nonzero, Lemma~\ref{lem:horizontal-obstruction}
would imply
\[
  \eta_{\chi,k}(v)\notin\mathbb Q,
\]
contradicting \eqref{eq:model-abelian-rational-derivative}. Hence
\begin{equation}\label{eq:model-abelian-eta-integers-zero}
  \eta_{\chi,k}=0
  \qquad (k\in\mathbb Z).
\end{equation}
The map $t\mapsto\eta_{\chi,t}$ is polynomial, so by
Lemma~\ref{lem:polynomial-identity},
\begin{equation}\label{eq:model-abelian-eta-zero}
  \eta_{\chi,t}=0
  \qquad (t\in\mathbb R).
\end{equation}

As $\chi$ ranges over $\widehat K$, the differentials $\ell_\chi$ span the
annihilator of $\mathfrak h_0$ in $(\mathfrak n^3)^*$. Therefore
\eqref{eq:model-abelian-eta-zero}, applied to $v$, implies
\begin{equation}\label{eq:model-abelian-Psi-v-in-l}
  \Psi_t(v)\in\mathfrak h_0
  \qquad (t\in\mathbb R).
\end{equation}
By \eqref{eq:model-log-derivative}, the projection of $g$ to $N^3/H_0$ has
zero left logarithmic derivative. Since $g(0)=e$, this projected path is
constant at the identity coset, proving
\eqref{eq:model-abelian-strong}.

Every $\Sigma_n$ is connected and contains
$g(n)\Lambda^3\in C_0$. Hence $\Sigma_n\subseteq C_0$ for every $n$, and
taking the closure of their union yields $C=C_0$.
\end{proof}

\subsection{Proof in the two-step case}

\begin{proof}[Proof of Proposition~\ref{prop:two-step-model}]
If $N$ is abelian, the conclusion follows from
Lemma~\ref{lem:model-abelian}. We may therefore assume that $N$ has step
exactly two. 
Put
\begin{equation}\label{eq:model-Z}
Z=N_2=[N,N],
\qquad
\mathfrak z=\operatorname{Lie}(Z).
\end{equation}
By Proposition~\ref{prop:malcev-package}(iv), $Z$ is rational, so
\[
\overline\Lambda:=\Lambda Z/Z
\]
is a lattice in $N/Z$. Let
\[
q_N:N^3\longrightarrow (N/Z)^3
\]
be the quotient homomorphism. The group $N/Z$ is connected, simply
connected, and abelian. Since $Z$ is characteristic, $A$ induces a
unipotent automorphism of $N/Z$ preserving $\overline\Lambda$. The
induced affine system on $(N/Z)/\overline\Lambda$ is minimal, being a
factor of \eqref{eq:model-affine-map}.

Let $\overline C$ be the polynomial diagonal orbit closure in the quotient
system, and let $\overline H_0$ be the connected rational subgroup defining
its connected component containing the identity coset.
Since the induced factor map is continuous and $C$ is compact,
$\overline C$ is precisely the image of $C$.
Applying $q_N$ coordinatewise to \eqref{eq:F-interpolation} shows that
the quotient interpolating map is $q_N\circ g$. By
Lemma~\ref{lem:model-abelian},
\begin{equation}\label{eq:model-quotient-strong}
  \overline C
  =\overline H_0\,\overline\Lambda^3/\overline\Lambda^3,
  \qquad
  q_N(g(t))\in\overline H_0
  \qquad (t\in\mathbb R).
\end{equation}

Applying the quotient map to
\eqref{eq:model-finite-components} expresses $\overline C$ as a finite
union of compact $q_N(H_0)$-orbits, each the image of a component of $C$.
After repetitions are removed, distinct orbits are disjoint. Since these
orbits are compact and their union is finite, each is clopen in
$\overline C$. As $\overline C$ is connected, only the orbit containing
the identity remains. Thus
\[
\overline C
=
q_N(H_0)\overline\Lambda^3/\overline\Lambda^3.
\]

The image $q_N(H_0)$ is a connected Lie subgroup of the connected and
simply connected nilpotent group $(N/Z)^3$ and is therefore closed by
Lemma~\ref{lem:connected-subgroups-simply-connected}. The orbit of the
identity coset under $q_N(H_0)$ is closed, so $q_N(H_0)$ is rational
relative to $\overline\Lambda^3$. On the other hand, the connected
component of $\overline C$ containing the identity coset is the
$\overline H_0$-orbit in
\eqref{eq:model-quotient-strong}. Corollary~\ref{cor:identity-orbit-comparison}
therefore implies
\begin{equation}\label{eq:model-qL-barL}
q_N(H_0)=\overline H_0.
\end{equation}
Since $\ker q_N=Z^3$, we have
\[
q_N^{-1}\bigl(q_N(H_0)\bigr)=H_0Z^3.
\]
Together with the second assertion in
\eqref{eq:model-quotient-strong}, this gives
\begin{equation}\label{eq:model-g-in-M}
  g(t)\in M:=H_0Z^3
  \qquad (t\in\mathbb R).
\end{equation}
Since $Z^3$ is central and rational in $N^3$,
Lemma~\ref{lem:rational-torus}(i) yields
\begin{equation}\label{eq:model-M-structure}
  M\text{ is connected and rational},
  \qquad H_0\trianglelefteq M,
  \qquad
  \mathfrak m:=\operatorname{Lie}(M)=\mathfrak h_0+\mathfrak z^3.
\end{equation}

\medskip
\noindent\textit{Tangent directions.}

For $n\in\mathbb Z$, consider again the connected slice $\Sigma_n$. After
lifting $\Sigma_n$ locally through $g(n)$, right trivialization identifies
the tangent space of the lifted slice at $g(n)$ with
\begin{equation}\label{eq:model-slice-tangent}
  \operatorname{Ad}(g(n))E_n(\mathfrak n).
\end{equation}
The slice lies in one $H_0$-orbit. Under the same local lift, right
trivialization identifies the tangent space at $g(n)$ of the lifted
$H_0$-orbit with $\mathfrak h_0$. Hence
\[
  \operatorname{Ad}(g(n))E_n(\mathfrak n)\subseteq\mathfrak h_0.
\]
By \eqref{eq:model-g-in-M}, $g(n)\in M$, and by
\eqref{eq:model-M-structure}, $H_0\trianglelefteq M$. Therefore
$\operatorname{Ad}(g(n))\mathfrak h_0=\mathfrak h_0$, and we conclude that
\begin{equation}\label{eq:model-E-in-l-integers}
  E_n(\mathfrak n)\subseteq\mathfrak h_0
  \qquad (n\in\mathbb Z).
\end{equation}
The polynomial identity principle extends this inclusion to
\begin{equation}\label{eq:model-E-in-l}
  E_t(\mathfrak n)\subseteq\mathfrak h_0
  \qquad (t\in\mathbb R).
\end{equation}

We also record invariance under the diagonal automorphism
$A_\Delta=A^{\times3}$. The homeomorphism $F^{\times3}$ preserves every
slice $\Sigma_n$ and hence preserves $C$. It sends the identity coset to
$b_\Delta\Lambda^3$, where
\[
b_\Delta=(b,b,b)\in\Delta_N^3\subseteq H_0.
\]
Thus $b_\Delta\Lambda^3\in C_0$. Since $F^{\times3}$ permutes the connected
components of $C$, the image $F^{\times3}(C_0)$ is the component containing
$b_\Delta\Lambda^3$, namely $C_0$. Hence
\[
F^{\times3}(C_0)=C_0.
\]
Now
\[
F^{\times3}=\lambda_{b_\Delta}\circ A_\Delta,
\]
and left translation by $b_\Delta\in H_0$ preserves $C_0$. 
Consequently, $A_\Delta(C_0)=C_0$. The full inverse image of $C_0$
under the quotient map $N^3\to N^3/\Lambda^3$ is $H_0\Lambda^3$,
whose identity component is $H_0$ by
Lemma~\ref{lem:malcev}(i). Since $A_\Delta$ preserves both
$\Lambda^3$ and $C_0$, it preserves $H_0\Lambda^3$ and hence its
identity component. Therefore
\begin{equation}\label{eq:model-A-invariance}
A_\Delta(H_0)=H_0,
\qquad
(A_\Delta)_*\mathfrak h_0=\mathfrak h_0,
\qquad
(A_\Delta)_*\mathfrak m=\mathfrak m.
\end{equation}
The last assertion follows because $A(Z)=Z$.

\medskip
\noindent\textit{Polynomial derivative directions.}

The inclusion \eqref{eq:model-g-in-M} implies only that the left logarithmic
derivative $\Psi_t(v)$ lies in $\mathfrak m$. To construct horizontal
functionals, we need the stronger inclusion
$\Psi_t(\mathfrak n)\subseteq\mathfrak m$.

Fix $k\in\mathbb Z$ and define
\begin{equation}\label{eq:model-Sk}
  S_k:=\{Y\in\mathfrak n:\Psi_k(Y)\in\mathfrak m\}.
\end{equation}
We claim that $S_k=\mathfrak n$.

The map $\Psi_k$ is rational, since $A_*$ preserves
$\mathfrak n_{\mathbb Q}$ and $1,2,2k$ are integers. As
$\mathfrak m$ is rational,
\[
S_k=\Psi_k^{-1}(\mathfrak m)
\]
is a rational subspace of $\mathfrak n$. Next,
\begin{equation}\label{eq:model-Psi-A-equivariance}
  \Psi_k(A_*Y)=(A_\Delta)_*\Psi_k(Y),
\end{equation}
so \eqref{eq:model-A-invariance} gives $A_*S_k=S_k$.
Finally, for $X,Y\in\mathfrak n$, coordinatewise preservation of the Lie
bracket yields
\begin{equation}\label{eq:model-bracket-identity}
  [\Psi_k(X),E_k(Y)]=\Psi_k([X,Y]).
\end{equation}
If $X\in S_k$, then $\Psi_k(X)\in\mathfrak m$, while
\eqref{eq:model-E-in-l} gives $E_k(Y)\in\mathfrak h_0$. Since
$H_0\trianglelefteq M$,
\[
  [\mathfrak m,\mathfrak h_0]\subseteq\mathfrak h_0\subseteq\mathfrak m.
\]
Equation~\eqref{eq:model-bracket-identity} therefore shows that
$[X,Y]\in S_k$. Thus $S_k$ is a rational, $A_*$-invariant ideal.

Because $g$ takes values in $M$, its left logarithmic derivative takes values
in $\mathfrak m$. Equation~\eqref{eq:model-log-derivative} yields
\begin{equation}\label{eq:model-v-in-Sk}
  \Psi_k(v)\in\mathfrak m,
  \qquad v\in S_k.
\end{equation}
Suppose that $S_k\neq\mathfrak n$, put $H_k=\exp(S_k)$, and let
$\pi_k:N\to N/H_k$ be the quotient homomorphism. Then $H_k$ is a connected,
closed, normal, rational subgroup. By
Proposition~\ref{prop:malcev-package}(iii), the image $\pi_k(\Lambda)$ is a
lattice in $N/H_k$.
Since $S_k$ is proper, $N/H_k$ has positive dimension; hence
$(N/H_k)/\pi_k(\Lambda)$ is nontrivial.
 Since
$\mathcal D=\log A_*$ is a polynomial in
$A_*-\operatorname{id}_{\mathfrak n}$, the $A_*$-invariance of
$S_k$ implies
\[
  A_*^uS_k=S_k,
  \qquad A^u(H_k)=H_k
  \qquad (u\in\mathbb R).
\]
Using \eqref{eq:model-v-in-Sk} in
$\beta(u)^{-1}\beta'(u)=A_*^uv$, we find that the quotient curve
$u\mapsto\pi_k(\beta(u))$ has zero left logarithmic derivative. By
Lemma~\ref{lem:zero-logarithmic-derivative}, this curve is
constant. Since $\beta(0)=e$,
\begin{equation}\label{eq:model-beta-in-Hk}
  \beta(u)\in H_k \qquad (u\in\mathbb R),
  \qquad b=\beta(1)\in H_k.
\end{equation}
Hence, on the nontrivial compact nilmanifold
\[
  (N/H_k)/\pi_k(\Lambda)
\]
the map induced by $F$ is the automorphism induced by $A$ and fixes the
identity coset. The resulting
system is a factor of the minimal system $(N/\Lambda,F)$ and is therefore
minimal, but a nontrivial minimal system cannot have a fixed point. The
contradiction proves $S_k=\mathfrak n$.
Thus
\begin{equation}\label{eq:model-Psi-in-m-integers}
  \Psi_k(\mathfrak n)\subseteq\mathfrak m
  \qquad (k\in\mathbb Z).
\end{equation}
Applying the polynomial identity principle once more yields
\begin{equation}\label{eq:model-Psi-in-m}
  \Psi_t(\mathfrak n)\subseteq\mathfrak m
  \qquad (t\in\mathbb R).
\end{equation}

\medskip
\noindent\textit{Elimination of the residual central directions.}

It remains to show that the image of $g(t)$ in $M/H_0$ is the identity. Let
\[
\pi_0:M\longrightarrow M/H_0
\]
be the quotient homomorphism, and put
\begin{equation}\label{eq:model-residual-torus}
  \Lambda_M=M\cap\Lambda^3,
  \qquad
  K:=M/(H_0\Lambda_M).
\end{equation}
Since $M$ is a connected subgroup of the connected and simply connected
nilpotent group $N^3$, Lemma~\ref{lem:connected-subgroups-simply-connected}
shows that $M$ is closed and simply connected. Since $M=H_0Z^3$ and $Z^3$
is central, $[M,M]\subseteq H_0$. Lemma~\ref{lem:rational-torus}(ii)
therefore shows that $H_0\Lambda_M$ is a closed normal subgroup of $M$ and
that
\[
K\cong(\mathfrak m/\mathfrak h_0)/\Omega,
\qquad
\Omega:=\log_{M/H_0}\bigl(\pi_0(\Lambda_M)\bigr),
\]
where $\Omega$ is a full lattice. In particular, $K$ is a compact connected
torus.

Fix $\chi\in\widehat K$. By Lemma~\ref{lem:character-lift}, its pullback
to $M$ has a unique continuous homomorphic lift
\[
\widetilde f_\chi:M\longrightarrow\mathbb R,
\qquad
\widetilde f_\chi(e)=0.
\]
Put
\[
\ell_\chi:=d_e\widetilde f_\chi\in\mathfrak m^*.
\]

Then
\begin{equation}\label{eq:model-ell-properties}
  \ell_\chi(\mathfrak h_0)=0,
  \qquad
  \ell_\chi([\mathfrak m,\mathfrak m])=0,
  \qquad
  \ell_\chi(\log\Lambda_M)\subseteq\mathbb Z.
\end{equation}
Since $M$ is rational,
\[
\operatorname{span}_{\mathbb Q}\log\Lambda_M
=
\mathfrak m\cap(\mathfrak n_{\mathbb Q})^3,
\]
and hence $\ell_\chi$ is rational. By \eqref{eq:model-Psi-in-m}, we may define
\begin{equation}\label{eq:model-final-Q-eta}
  Q_\chi(t):=\widetilde f_\chi(g(t)),
  \qquad
  \eta_{\chi,t}:=\ell_\chi\circ\Psi_t\in\mathfrak n^*.
\end{equation}
Equation~\eqref{eq:model-log-derivative} gives
\begin{equation}\label{eq:model-final-Q-derivative}
  Q_\chi'(t)=\eta_{\chi,t}(v).
\end{equation}
The right-hand side is a real polynomial in $t$, and $Q_\chi(0)=0$.
Lemma~\ref{lem:polynomial-antiderivative} therefore shows that $Q_\chi$ is
a real polynomial.

For every $k\in\mathbb Z$, the functional $\eta_{\chi,k}$ is rational. 
Indeed, $A_*^k$ preserves $\mathfrak n_{\mathbb Q}$, and
\eqref{eq:model-Psi-in-m} gives
\[
\Psi_k(\mathfrak n_{\mathbb Q})
\subseteq
\mathfrak m\cap(\mathfrak n_{\mathbb Q})^3.
\]
The rationality of $\ell_\chi$ therefore implies that of
$\eta_{\chi,k}$.
It is
also horizontal. Indeed, by \eqref{eq:model-bracket-identity},
\begin{align*}
  \eta_{\chi,k}([X,Y])
  &=\ell_\chi\bigl(\Psi_k([X,Y])\bigr)\\
  &=\ell_\chi\bigl([\Psi_k(X),E_k(Y)]\bigr)=0,
\end{align*}
because $\Psi_k(X)\in\mathfrak m$, $E_k(Y)\in\mathfrak h_0$, and
$[\mathfrak m,\mathfrak h_0]\subseteq\mathfrak h_0$. Moreover,
differentiating the relation $E_t(Y)\in\mathfrak h_0$ from
\eqref{eq:model-E-in-l} and using \eqref{eq:model-E-derivative} yields
\[
  \Psi_t(\mathcal D\mathfrak n)\subseteq\mathfrak h_0.
\]
Therefore
\[
\eta_{\chi,t}\circ\mathcal D=0.
\]
Since $A_*=\exp(\mathcal D)$, it follows that
\begin{equation}\label{eq:model-final-A-invariant-eta}
\eta_{\chi,t}\circ A_*=\eta_{\chi,t}.
\end{equation}

The phases
\begin{equation}\label{eq:model-final-finite-phases}
  \left\{\exp\bigl(2\pi iQ_\chi(n)\bigr):n\in\mathbb Z\right\}
\end{equation}
form a finite set. To see this, suppose that
$g(n_i)\Lambda^3\in H_0y_\nu\Lambda^3/\Lambda^3$ for $i=1,2$. Write
\[
g(n_i)=h_i y_\nu\lambda_i,
\qquad h_i\in H_0,\quad \lambda_i\in\Lambda^3.
\]
Then
\[
  m_i:=h_i^{-1}g(n_i)=y_\nu\lambda_i\in M
\]
and
\[
  m_1^{-1}m_2=\lambda_1^{-1}\lambda_2\in\Lambda_M.
\]
Thus $g(n_1)$ and $g(n_2)$ have the same image in $K$ and hence determine
the same phase. Since there are only finitely many components, the set in
\eqref{eq:model-final-finite-phases} is finite.

By Lemma~\ref{lem:finite-phases},
\begin{equation}\label{eq:model-final-rational-phase}
  \eta_{\chi,k}(v)=Q_\chi'(k)\in\mathbb Q
  \qquad (k\in\mathbb Z).
\end{equation}
If $\eta_{\chi,k}$ were nonzero,
Lemma~\ref{lem:horizontal-obstruction}, applied using its rationality,
horizontality, and $A_*$-invariance, would imply
\[
  \eta_{\chi,k}(v)\notin\mathbb Q.
\]
This contradicts \eqref{eq:model-final-rational-phase}. Hence
\begin{equation}\label{eq:model-final-eta-integers-zero}
  \eta_{\chi,k}=0
  \qquad (k\in\mathbb Z).
\end{equation}
Since $t\mapsto\eta_{\chi,t}$ is polynomial,
Lemma~\ref{lem:polynomial-identity} yields
\begin{equation}\label{eq:model-final-eta-zero}
  \eta_{\chi,t}=0
  \qquad (t\in\mathbb R).
\end{equation}

By Lemma~\ref{lem:character-lift}, the functionals on
$\mathfrak m/\mathfrak h_0$ induced by the differentials $\ell_\chi$ span
its full dual. Applying \eqref{eq:model-final-eta-zero} to $v$ therefore
yields
\begin{equation}\label{eq:model-final-Psi-v-in-l}
  \Psi_t(v)\in\mathfrak h_0
  \qquad (t\in\mathbb R).
\end{equation}
By \eqref{eq:model-log-derivative} and
\eqref{eq:model-final-Psi-v-in-l}, the projected curve
$\pi_0\circ g$ has zero left logarithmic derivative. Since $g(0)=e$,
Lemma~\ref{lem:zero-logarithmic-derivative} gives
\[
g(t)\in H_0
\qquad (t\in\mathbb R).
\]
Every connected slice $\Sigma_n$ contains
$g(n)\Lambda^3\in C_0$, so $\Sigma_n\subseteq C_0$. Taking the closure of
their union as in \eqref{eq:model-orbit-closure} yields $C=C_0$, which is
connected.
\end{proof}

\end{document}